\renewenvironment{abstract}
{\small
	\begin{center}
		\bfseries \abstractname\vspace{-.5em}\vspace{0pt}
	\end{center}
	\list{}{%
		\setlength{\leftmargin}{15mm}
		\setlength{\rightmargin}{\leftmargin}%
	}%
	\item\relax}
{\endlist}
\newtheorem{thm}{Theorem}[section]
\newtheorem{defn}[thm]{Definition}
\newtheorem{cor}[thm]{Corollary}
\newtheorem{prop}[thm]{Proposition}
\newtheorem{lem}[thm]{Lemma}
\newtheorem{rk}[thm]{Remark}
\title{Positive random walks and an identity for half-space SPDE's}
\author{Shalin Parekh}
\address{Columbia University}
\email{sp3577@columbia.edu}
\date{\today}
\keywords{Stochastic heat equation with multiplicative noise; Anomalous fluctuations; Directed polymer; Dirichlet Laplacian; Brownian meander, Brownian excursion, Concentration of measure.}
\begin{document}

\maketitle

\begin{abstract}\textbf{}
\\
The purpose of this article is threefold. First, we introduce a new type of boundary condition for the multiplicative-noise stochastic heat equation on the half space. This is essentially a Dirichlet boundary condition but with a nontrivial normalization near the boundary which leads to inhomogeneous transition densities (roughly, those of a Brownian \textit{meander}) within the associated chaos series. Secondly, we prove a new convergence result of the directed-polymer partition function in an octant to the multiplicative stochastic heat equation with this type of boundary condition, which in turn involves a detailed analysis of the aforementioned inhomogeneous Markov process. Thirdly, as a corollary, we prove a surprising equality-in-distribution for multiplicative-noise stochastic heat equations on the half space with \textit{different} boundary conditions. This identity may be seen as a precursor for proving Gaussian fluctuation behavior of supercritical half-space KPZ at the origin.
\end{abstract}

\tableofcontents

\newpage

\section{Introduction and Context}

The present work will focus on three related subject areas: uniform measures on collections of nearest-neighbor \textit{non-negative} paths (e.g., Brownian meander), intermediate-disorder directed polymers weighted by such measures, and multiplicative-noise SPDE's in a half-space.
\\
\\
We begin our discussion with multiplicative-noise SPDE's. The multiplicative noise stochastic heat equation has been a popular subject of research within stochastic analysis and mathematical physics in recent years. This equation arises naturally in the context of directed polymers and interacting particle systems, as a \textit{weak scaling limit}. In spatial dimension one, the multiplicative-noise stochastic heat equation is also related to the so-called KPZ equation via the Hopf-Cole transform, and may be solved by the classical Ito-Walsh construction \cite{Wal86} or by more modern techniques such as regularity structures \cite{HL18}. In the present article, we consider the stochastic heat equation with multiplicative noise on a \textit{half-line}: \begin{equation}\label{SHE}\partial_T Z = \frac12 \partial_X^2 Z + Z\xi, \;\;\;\;\;\;\;\;\;\;X \geq 0, T\geq 0, \tag{SHE}\end{equation} where $\xi$ is a Gaussian space-time white noise on $\Bbb R_+ \times \Bbb R_+$. We consider two different types of boundary conditions, Robin and Dirichlet. Let us first let us write the Robin boundary condition of parameter $A \in \Bbb R$: \begin{equation}\label{a}\partial_XZ(T,0) = AZ(T,0).\end{equation} This type of boundary condition has been considered in \cite{CS16, Par18, GPS17, BBCW18} in the context of interacting particle systems, and a robust solution theory has been developed in \cite{GH17} using techniques of \cite{Hai14}. This boundary condition transforms into a Neumann boundary condition for the half-space KPZ equation upon taking the logarithm. Next, we consider the Dirichlet boundary condition for half-space \eqref{SHE}: \begin{equation}\label{b}Z(T,0) = 0.\end{equation} This type of boundary condition was considered (for instance) in \cite{GLD12}, in the context of directed polymers near an absorbing wall. Again, one can make sense of the equation using classical techniques of \cite{Wal86} or more modern ones such as \cite{Hai14}. Our main result compares these two different types of boundary conditions:

\begin{thm}\label{mr} For $A \in \Bbb R$, let $Z_A(T,X)$ denote the solution of \eqref{SHE} with Robin-boundary parameter $A$ as in \eqref{a}, and delta initial data $Z_A(0,X)=\delta_0(X)$. Let $Z_{Dir}(T,X)$ be the solution to \eqref{SHE} with Dirichlet boundary condition \eqref{b}, with initial data $Z_{Dir}(0,X) = e^{B_X-(A+\frac12)X},$ where $B$ is a Brownian motion. Then we have the following equality of distributions: \begin{equation}\label{c} Z_A(T,0) \stackrel{d}{=} \lim_{X\to 0} \frac{Z_{Dir}(T,X)}{X}.
\end{equation}
\end{thm}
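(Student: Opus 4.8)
\textbf{Overall strategy.} Both sides of \eqref{c} will be realized as scaling limits of one family of discrete directed polymers on the octant $\{(i,j):0\le j\le i\}$, and the identity will follow from an \emph{exact} finite-$n$ identity relating the two discrete ensembles, combined with the convergence theorem for such polymers announced above. A proof directly through the chaos expansion does not seem available: a boundary integration by parts matches the relevant kernels only heuristically, since the independent Brownian initial datum on the right makes the two chaos series genuinely different term by term; and moment matching also fails, because the moments of $Z_A(T,0)$ grow super-exponentially in the order, so its law is not moment-determined. A genuine coupling is therefore needed, and the discrete model supplies one.

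\emph{Step 1 (the discrete ensembles).} I would use the polymer on $\{(i,j):0\le j\le i\le n\}$ with i.i.d.\ bulk weights $\exp(n^{-1/4}\omega_{i,j}-\tfrac12 n^{-1/2})$ and i.i.d.\ wall weights on $\{j=0\}$ of mean $1-An^{-1/2}+o(n^{-1/2})$, so that under the intermediate-disorder scaling $T\sim i/n$, $X\sim j/\sqrt n$ the wall produces the Robin condition \eqref{a}. Writing $\mathcal Z^{(n)}_A(i,j)$ for the partition function started from $\delta_0$-type data at $(0,0)$, the known half-space convergence results give $\mathcal Z^{(n)}_A(n,0)\to Z_A(T,0)$. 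On the other side I would use the polymer on the same octant with an \emph{absorbing} wall (only strictly positive paths are summed) and a random entrance weight $\prod_{k=1}^m\eta_k$ attached to the starting site $(0,m)$, the $\eta_k$ built from the same wall weights. This is exactly the discretization whose scaling limit is the meander-type boundary-condition SHE of the convergence theorem; and since the absorbing (Dirichlet) heat kernel $D_t(x,y)=p_t(x-y)-p_t(x+y)$ (with $p_t$ the Gaussian density of variance $t$) vanishes linearly at the wall, with $\lim_{X\to0}D_t(X,y)/X=-2p_t'(y)=\tfrac{2y}{t}p_t(y)$ equal to the Brownian-meander density, that limit is $\lim_{X\to0}Z_{Dir}(T,X)/X$ with initial data $\lim_n\prod_{k\le X\sqrt n}\eta_k=e^{B_X-(A+\frac12)X}$. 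Here the drift $-(A+\tfrac12)$ splits as $-A$, the mean wall tilt (the Robin parameter), plus the $-\tfrac12$ It\^o/Jensen correction, and $B$ is the Brownian motion produced by the central limit theorem, independent of $\xi$ in the limit.

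\emph{Step 2 (the exact discrete identity).} The combinatorial heart is to show that for every $n$ the two partition functions above have the same law. I would prove this by a renewal/excursion decomposition: a Robin-weighted path from $(0,0)$ to $(n,0)$ is a concatenation of excursions above the wall, and the resulting geometric-type sum over the number and shapes of these excursions can be resummed, via a transfer-operator (last-exit) computation at the wall, into a single \emph{dressed} absorbed path started from a random height $m$ carrying entrance weight exactly $\prod_{k=1}^m\eta_k$. This is the discrete shadow of the statement that a Robin boundary is a Dirichlet boundary dressed by an exponential boundary profile; at the continuum level the same mechanism appears in the one-line identity $\int_0^\infty\big(\tfrac{2y}{T}p_T(y)\big)e^{-Ay}\,dy=2p_T(0)-2A\int_0^\infty p_T(y)e^{-Ay}\,dy=G^A_T(0,0)$, where $G^A_t$ is the Robin($A$) heat kernel and one uses $\tfrac{2y}{T}p_T(y)=-2p_T'(y)$ and an integration by parts --- precisely why the profile $e^{-Ay}=\mathbb E[e^{B_y-(A+\frac12)y}]$ appears. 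This continuum computation only matches first moments; the full distributional statement is the content of the discrete identity (alternatively it can be read off from It\^o excursion theory for Robin-reflected Brownian motion).

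\emph{Step 3 (passage to the limit, and the main obstacle).} Granting the discrete identity, \eqref{c} follows by letting $n\to\infty$: the left side converges to $Z_A(T,0)$ by the standard Robin-polymer convergence, and the right side to $\lim_{X\to0}Z_{Dir}(T,X)/X$ with the stated initial data by the convergence theorem for the absorbed/meander-weighted octant polymer, together with the convergence of the discrete entrance weight to $e^{B_X-(A+\frac12)X}$ and its independence from $\xi$; one in fact obtains \eqref{c} jointly in $T\ge0$. The main difficulty is concentrated in the absorbed-side limit: one must show $\mathcal Z^{(n),\mathrm{Dir}}(n,j_n)/(j_n/\sqrt n)$ with $j_n\to\infty$ and $j_n/\sqrt n\to0$ is tight and convergent, which forces a quantitative comparison of the transition kernel of the inhomogeneous near-wall Markov chain (the discrete meander) with the continuum meander density $\tfrac{2x}{t}p_t(x)$, uniform as the entrance point slides down to the wall --- the ``detailed analysis of the inhomogeneous Markov process'' flagged in the abstract --- and one must justify exchanging the limits $X\to0$ and $n\to\infty$, which I would do via an $L^2$/chaos estimate showing $Z_{Dir}(T,X)/X$ is Cauchy in $L^2$ as $X\to0$, uniformly along the discretization.
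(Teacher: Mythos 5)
Your overall architecture is the same as the paper's: realize both sides of \eqref{c} as intermediate-disorder limits of directed polymers in the octant, invoke an exact finite-$n$ identity exchanging wall weights for entrance-column (initial-data) weights, and pass to the limit using a Robin-side convergence result (as in \cite{Wu18}) and a new convergence theorem for the absorbed/meander-weighted polymer at $X=0$ (the content of Theorem \ref{thm2}, Proposition \ref{eq} and Corollary \ref{lim} here). Steps 1 and 3 are therefore in line with the actual proof, and you correctly isolate where the analytic work lies.

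The genuine gap is Step 2. The exact equality in law of the two discrete partition functions is \emph{not} a generic combinatorial fact, and the renewal/excursion (last-exit, transfer-operator) resummation you propose cannot deliver it: the Robin-side ensemble picks up boundary weights at the (random) times its paths visit the wall, while the absorbed-side ensemble carries them as a product over the entrance column, so the two partition functions are functionals of differently placed weight fields and there is no pathwise or weight-by-weight bijection; a resummation at the wall reorganizes the sum but does not identify the laws, and indeed the finite-$n$ identity fails for generic weight distributions. As you yourself concede, your continuum computation only matches first moments, and It\^o excursion theory for a single reflected diffusion says nothing about the joint law of the noise-weighted partition functions. The identity used in the paper is Theorem 8.1 of \cite{BBC18}, which holds specifically for inverse-gamma (log-gamma) weights with the particular parameters of Corollary \ref{cor}, and is a consequence of the half-space Macdonald process machinery (in the lineage of \cite{BR01, OSZ14}); the polymer approximation must then be built with exactly those weights so that the quoted identity applies. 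Without importing that exactly solvable input (or supplying a genuinely new proof of it), your argument does not close, since the whole point of the discrete detour is that the distributional swap of boundary condition and initial data is only known at the integrable level.
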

Let us now discuss the motivation for this result, the contexts in which it has arised, and the implications it holds. The main motivation for this theorem comes from an algebraic identity which is given in Theorem 8.1 of \cite{BBC18}. Specifically, that theorem is an identity-in-distribution for directed polymers with log-gamma weights, and our main goal was to take the SPDE limit of that identity in order to see whether useful information could be obtained in the context of half-space KPZ universality. In turn, this required us to prove a general convergence result for directed polymers (stated below as Theorem \ref{bsk}) which, as we will see, involved analyzing some interesting objects in their own right, such as the Brownian meander.
\\
\\
It is striking (although we will not use it) that right side of \eqref{c} is actually $(\partial_X Z_{Dir})(T,0)$. It is not even clear why the limit in the right side of \eqref{c} should exist, since the spatial regularity of $Z_{Dir}$ is far from differentiable. In Section 4 we prove that the mild form of the Dirichlet-boundary \eqref{SHE} actually exists and also that the limit in \eqref{c} is actually well defined (Corollary \ref{lim}).
\\
\\
Mathematically, we believe that Theorem \ref{mr} is interesting because it hints at an intriguing ``duality" between the initial data of a solution to the half-space \eqref{SHE} and the boundary conditions one imposes on it. This duality can in turn be exploited in order to obtain useful results on quantities of interest. For instance, Theorem \ref{mr} already provides a useful and nontrivial coupling between the Robin-boundary random variables $Z_A(T,X)$ for different values of $A \in \Bbb R.$ Specifically, it shows that for $A<A'$, the random variable $Z_{A'}(T,0)$ is stochastically dominated by $Z_A(T,0).$ Indeed, this follows from Theorem \ref{mr}, together with the fact that $e^{B_X-(A+1/2)X} \geq e^{B_X-(A'+1/2)X}$ for all $X$, and a domination result for the Dirichlet boundary \eqref{SHE} (which says that two solutions coupled to the same noise are dominated for all time if the respective initial data are dominated). Using this, one may potentially obtain useful information about the Neumann-boundary KPZ equation which was considered in \cite{CS16}. It was conjectured in \cite{Par18} that one has the almost-sure convergence $$\lim_{T \to \infty} \frac{1}{T} \log Z_A(T,0) = \begin{cases} -\frac{1}{24}, & A\geq -1/2 \\ (A+1/2)^2-\frac{1}{24}, & A\leq -1/2 \end{cases},$$ which would give the exact law of large numbers for Neumann-boundary KPZ. Unfortunately Theorem \ref{mr} alone is not enough to obtain this strong of a result since it is not entirely clear where the $-\frac1{24}$ term would come from. Nevertheless, some nontrivial quantitative information can be obtained by combining the aforementioned stochastic dominance property with (for instance) Theorem 1.1 of \cite{Par18}. Moreover, it is plausible and even hopeful that a clever use of Theorem \ref{mr} (perhaps combined with some new ideas, techniques, or generalizations) could lead to quantitative results which are close to the above expression. The reason that this is plausible is that a Feynman-Kac representation associated to the right-hand side of \eqref{c} was used in Section 1.3 of \cite{Par18} to obtain those conjectural values in the first place. More than just computing the above limit, we are also interested in computing the limiting distribution of the fluctuations around the mean value. These should be of order $T^{1/2}$ and Gaussian in the case when $A<-1/2$, and they should be of order $T^{1/3}$ and random-matrix theoretic otherwise (with separate cases when $A=-1/2$ and $A>-1/2)$, see for instance \cite{BBCW18, BBCS16, BBC16}.
\\
\\
This brings us to the method of proof of Theorem \ref{mr}. As suggested above, it will be proved using an approximation via \textit{directed polymers} with very specific weights, where a discrete version of this identity holds. This identity comes from the general exactly solvable framework of half-space Macdonald processes which was developed in \cite{BBC18}, which was in turn inspired by works of \cite{COSZ14, BC14, OSZ14, BR01} and much more.
\\
\\
Directed polymers are a natural probabilistic object which were first introduced in \cite{HH85,IS88}. They generalize directed first- and last-passage percolation, and have deep connections to statistical mechanics and stochastic analysis. Specifically, we consider an environment $\{\omega_{i,j}\}$ consisting of iid, mean-zero, finite-variance random variables. The standard deviation of the weights is referred to as \textit{inverse temperature}. One may define a partition function $Z^{\omega}(n,x)$ as a sum over all nearest-neighbor simple-random walk paths of length $n$ starting from $x$, of the product of all weights $e^{\omega_{i,j}}$ along the path. Using this partition function, one may also define random Markovian transition densities associated to this environment $\omega$, wherein a nearest-neighbor path is more likely to travel in a direction with higher weights. Then one may ask many natural questions, such as the existence of infinite-volume limits of these path measures, and their typical fluctuation scale as well as the typical height fluctuation scale of the associated partition function \cite{Com17}.
\\
\\
Many seminal results on these directed polymer systems have been proved, perhaps most notably that there is a phase transition which becomes apparent in high dimensions. Specifically, in spatial dimensions $\ge 3$, there is a strictly positive critical value of the inverse temperature below which weak disorder holds, meaning that the fluctuations of a typical polymer path look like Brownian motion and one may construct infinite-length path measures \cite{CY06, Com17}. Such polymers are said to exhibit \textit{weak disorder.} In contrast, lower-dimensional polymers at any finite inverse-temperature are now known to be characterized by \textit{strong disorder}, meaning that the path fluctuations are quite different and there is no sensible notion of an infinite volume Gibbs measure \cite{Com17}. The results of \cite{AKQ14a,AKQ14b} examined the partition function in a regime which lies in \textit{between} strong and weak disorder. Specifically, in spatial dimension one, they scaled the inverse temperature of the model like $O(n^{-1/4})$ and simultaneously applied diffusive scaling to the partition function, and there they observed that the fluctuations are governed by \eqref{SHE} and that the path measures themselves have a continuum analogue which is formally described by a Radon-Nikodym derivative with respect to Brownian motion, with drift given by the spatial derivative of the KPZ equation. Recent work of \cite{CD18, CSZ18} has investigated the same intermediate-disorder behavior in two spatial dimensions, where the scaling $O(n^{-1/4})$ is replaced by a more complicated logarithmic term. The paper \cite{Wu18} then extended the work of \cite{AKQ14a} to the case of half-space, in the case of \textit{Robin} boundary condition.
\\
\\
We will be interested in the analogous half-space question of intermediate-disorder fluctuations of the directed polymer partition function associated to \textit{uniform non-negative path measures}. Specifically, let
\begin{itemize}
\item $\mathbf P_x^n$ denote the uniform measure on the collection of all nearest-neighbor paths of length $n$ starting from $x$ and never going below $0$.
\item $\omega_{i,j}$ be iid mean-zero, variance-one random variables.
\item $f_n$ be a sequence of functions such that $f_n(n^{1/2}\;\cdot )$ converges (as $n \to \infty$) to some function $f(\cdot)$ in the Holder space $C^{\alpha}_{loc}(\Bbb R_+)$, for all $\alpha \in (0,1/2$).
\end{itemize}
Letting $\mathbf E_x^n$ denote the expectation with respect to $\mathbf P_x^n$, and letting $S$ denote the canonical process associated to $\mathbf P_x^n$, one defines the directed-polymer partition function as follows: 
$$Z_k^{\omega}(n,x):= \mathbf E_x^n\bigg[f_k(S_n)\prod_{i=0}^n (1+k^{-1/4}\omega_{i,S_i})\bigg]. $$ Note that the expectation is taken only with respect to the random walk, \textit{conditional} on the environment $\omega_{i,j}$ (which is always assumed to be independent of the walk). 
We then have the following result, which will be proved in Section 5 below.
\begin{thm}[Theorem \ref{thm2}]\label{bsk} Assume that the $\omega_{i,j}$ have two moments. Consider the rescaled partition function $$\mathscr Z_n(T,X):= Z_n^{\omega}(nT,n^{1/2}X), $$ where the quantity on the right side is defined by linear interpolation for non-integer values. Then $\mathscr Z_n$ converges in law to an SPDE on the half-sapce (this SPDE is explicit and given in mild form by \eqref{spde0} below). The convergence occurs in the sense of finite-dimensional distributions. If we assume that the $\omega_{i,j}$ have $p>8$ moments, then convergence actually occurs in the space $C(\Bbb R_+\times \Bbb R_+)$ with respect the topology of uniform convergence on compact sets.
\end{thm}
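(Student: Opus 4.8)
The plan is to run a \emph{discrete Wiener chaos} argument in the spirit of \cite{AKQ14a} (full-space) and \cite{Wu18} (half-space, Robin), the decisive new ingredient being a local limit theorem for the time-\emph{inhomogeneous} Markov chain underlying the non-negative path measure $\mathbf P_x^n$ — precisely the analysis of the ``Brownian meander''-type process developed in the earlier sections. One expands the multiplicative weight, computes the conditional expectation over the walk term by term, identifies the resulting polynomial chaos with a discretization of the Wiener chaos expansion of the mild solution of \eqref{spde0}, and then passes to the limit using (i) a local CLT for the rescaled transition kernels and (ii) a uniform tail bound on the chaos series.

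Concretely, expanding $\prod_{i=0}^{nT}(1+n^{-1/4}\omega_{i,S_i})$ and applying Fubini for $\mathbf E_x^n$ gives
\[
\mathscr Z_n(T,X)\;=\;\sum_{k\ge 0}\ \sum_{\substack{0\le i_1<\cdots<i_k\le nT\\ y_1,\dots,y_k\ge 0}}\Psi_n\big(T,X;\,i_1,y_1;\dots;i_k,y_k\big)\ \prod_{j=1}^{k}\omega_{i_j,y_j},
\]
where $\Psi_n$ is a product of one-step transition probabilities of $\mathbf P_x^n$ between consecutive pivot sites, times a terminal factor carrying $f_n$. Because the conditioning ``stay $\ge 0$ up to time $nT$'' is global, the one-step kernel from time $i$ to time $i+1$ genuinely depends on the remaining time $nT-i$ — this is the meander inhomogeneity. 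The continuum target admits the \emph{same} expansion, with $\prod_j\omega_{i_j,y_j}$ replaced by an iterated stochastic integral against $\xi$ and $\Psi_n$ replaced by the product of the inhomogeneous (meander) heat kernel $\mathsf p$ from the earlier sections acting on the limiting initial datum $f$.

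The heart of the argument is a \emph{local central limit theorem}: after the parabolic rescaling $i\mapsto i/n$, $y\mapsto y/\sqrt n$, the rescaled one-step kernels of $\mathbf P_x^n$ converge to $\mathsf p$ \emph{uniformly enough — including as the endpoints approach the wall $X=0$ — } that each $k$-fold Riemann sum $\sum\Psi_n\prod\omega$ converges in distribution to the corresponding $k$-fold stochastic integral; the passage from the discrete chaos (with general two-moment weights) to the multiple Wiener integral is then by the standard martingale-CLT/Lindeberg replacement, whose negligibility hypothesis is automatic from the $n^{-1/4}$ scaling (cf.\ \cite{AKQ14a, AKQ14b}). \textbf{I expect this step to be the main obstacle:} the non-negativity constraint makes the chain truly time-inhomogeneous and produces a boundary singularity in $\mathsf p$, so one needs sharp two-sided bounds on the rescaled kernels that remain stable up to the boundary, both to identify the limit and to control the series. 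Granting those kernel estimates, the term-by-term convergence is routine.

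To upgrade term-by-term convergence to convergence of the finite-dimensional laws, I would use the Chapman--Kolmogorov/semigroup property together with a Gaussian-type upper bound for the rescaled kernels (uniform up to the wall) to bound the squared $L^2$ norm of the $k$-th chaos term, uniformly in $n$ and on compacts, by a summable deterministic quantity of the form $C^k T^{k/2}/\Gamma(\tfrac k2+1)$, whose tail is uniformly small; this uses only two moments of $\omega$ and yields the stated finite-dimensional convergence to the mild solution of \eqref{spde0}. Finally, for tightness in $C(\mathbb R_+\times\mathbb R_+)$ under $p>8$, I would apply hypercontractivity for the discrete chaos (or a direct high-moment expansion) to the difference kernel $\Psi_n(T,X;\cdot)-\Psi_n(T',X';\cdot)$ to obtain
\[
\mathbb E\Big[\,\big|\mathscr Z_n(T,X)-\mathscr Z_n(T',X')\big|^{\,p}\,\Big]\ \le\ C\,\big(|T-T'|^{1/4}+|X-X'|^{1/2}\big)^{p},
\]
uniformly in $n$ on compact sets; since the parameter space is two-dimensional, the Kolmogorov continuity criterion converts this into tightness precisely when $p/4>2$, i.e.\ $p>8$, and the finite-dimensional limit then has a continuous version, which is the claimed SPDE. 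The only delicate point here is, once again, uniformity of the kernel bounds up to the boundary.
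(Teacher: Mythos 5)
Your outline follows essentially the same route as the paper: expand the partition function into a discrete polynomial chaos over the inhomogeneous kernels $\mathfrak p_n^N$, feed the rescaled-kernel local limit theorem (Proposition \ref{gence}) and the uniform $L^2$ bound $C^kT^{k/2}/(k/2)!$ (Lemma \ref{killme}) into a Lindeberg-type polynomial-chaos convergence theorem ([CSZ17a, Theorem 2.3]) to get finite-dimensional convergence under two moments, and then obtain tightness from uniform $p$-th moment space-time increment bounds plus Kolmogorov's criterion, which is exactly where the $p>8$ assumption enters. The only small caveat is that for non-Gaussian weights genuine hypercontractivity is not available, so the high-moment increment bounds must come from your parenthetical alternative, a direct Burkholder--Davis--Gundy estimate on the discrete Duhamel recursion (yielding exponents $\theta/2$ and $\theta/4$ for any $\theta<1$, which suffices), and this is precisely what the paper does.
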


This theorem investigates the intermediate-disorder behavior of directed polymers weighted by a highly nontrivial random-walk measure: specifically one conditions the associated random walk to stay positive rather than reinforcing it (e.g. reflecting) at the boundary. This leads to inhomogeneous transition densities within the associated chaos series, which (as we will see) can be related to the derivative $\partial_XZ_{Dir}(T,0)$ near the boundary, i.e., the right-hand side in Theorem \ref{mr}.
\\
\\
We remark that the limiting SPDE \eqref{spde} is a multiplicative-noise heat equation on a half-space with a ``normalized Dirichlet" boundary condition. It has a formal Feynman-Kac interpretation which is given by considering the so-called \textit{Brownian meander} \cite{DIM77, DI77} on a finite-time-interval, and re-weighting it by its integral against a space-time white noise field. More specifically, if $\mathscr P_t^T(X,Y)$ denotes the inhomogeneous Markov transition density at time $t$ of Brownian motion started from $X$ and conditioned to stay positive until time $T\ge t,$ then the limit $\mathscr Z$ in Theorem \ref{bsk} is the solution to the SPDE given in Duhamel-form by
\begin{equation}\label{spde0}
    \mathscr{Z}(T,X) = \int_{\Bbb R^+} \mathscr{P}_T^T(X,Y)f(Y)dY + \int_0^T \int_{\Bbb R^+} \mathscr{P}^T_{T-S}(X,Y) \mathscr{Z}(S,Y) \xi(dYdS).
\end{equation}
An important step towards proving Theorem \ref{mr} will be to make sense of this expression at $X=0$, and then to show that this can in turn be related to the derivative of Dirichlet-boundary SHE at the origin.
\\
\\
It should be noted that we work with a simplified version of the partition function as opposed to much of the previous literature: \cite{AKQ14a, CSY03} and related works. There the partition function $Z_k^{\omega}(n,x)$ is defined with weights $e^{k^{-1/4}\omega_{i,S_i}}$ instead of the quantity $1+k^{-1/4}\omega_{i,S_i}$ which we have used above. The reason for this is that the latter object is mathematically simpler because it is already renormalized (in some sense), and hence leads to simpler proofs and less stringent moment restrictions. However, it should be noted that the exponential version is more natural from the physical point of view, and entire results such as \cite{DZ16} have been devoted to finding the correct renormalization and phase transition behavior for that version, as a function of the moment assumptions.
\\
\\
The proof of Theorem \ref{bsk} will lead to some new technical results related to the uniform measures $\mathbf P_x^n$, i.e., random walk conditioned to stay non-negative. These will be collected in an appendix at the end of the paper. Perhaps most interestingly, we will prove a coupling result for such random walks in the nearest-neighbor case, and then we will use that coupling to show the following concentration property: there exist constants $c,C>0$ (independent of $n,x \ge 0$) such that for all $u>0$ and all $k \leq n$ one has that $$\mathbf P_x^n\big(\sup_{0 \le i\le k} |S_i-x| >u \big) \le Ce^{-cu^2/k}.$$
We remind the reader that $S_i$ is the \textit{conditioned} walk. Such a result extends the work of many known results for random walks conditioned to stay positive. The study of such random walks started with the invariance principle of \cite{Ig74}, further generalized in \cite{Bol76}. Later, the study expanded considerably, with local limit theorems \cite{Car05} and expansions to heavy-tailed increments \cite{CC08}. We will see that some of the estimates we derive are similar in spirit to some of those works, but the intricate details are somewhat different. In the end, we will give original proofs of all of these technical results, because the highly specific estimates needed to prove Theorem \ref{bsk} were not found in those other references (since our random walk does not necessarily start at zero). However, the upshot is that all of the proofs in the appendix will be entirely elementary, using only classical techniques.
\\
\\
\textbf{Outline:} In Section 2, we will state more precise versions of the theorems stated in the introduction, and outline the basic idea of the proof. In Section 3, we will perform an intricate analysis of the transition densities associated to the measures $\mathbf P_x^n$ which will lead to very useful estimates. In Section 4, we will develop the existence and uniqueness theory of the limiting SPDE \eqref{spde} from Theorem \ref{bsk}, and as a corollary we prove that $\partial_X Z_{Dir}(T,0)$ exists. In Section 5, we prove Theorem \ref{bsk} by using the estimates developed in Section 3. In the appendix we derive some elementary but useful concentration bounds for the measures $\mathbf P_x^n$.
\\
\\
\textbf{Acknowledgements:} We thank Ivan Corwin for suggesting that interesting identities could potentially be obtained from Theorem 8.1 of \cite{BBC18}, and also for reading portions of this preliminary draft. The author was partially supported by the Fernholz Foundation's ``Summer Minerva Fellows" program, as well as summer support from Ivan Corwin's NSF grant DMS:1811143.

\section{Main Results}

In this section, we will reformulate and clarify the results stated in the introduction, and we will provide a rough outline of the proof which hopefully shows how those three theorems are intertwined. We always abbreviate $\Bbb R_+$ as nonnegative reals, and $\Bbb Z_{\ge 0}$ as non-negative integers.

\begin{defn}[Mild Solution] Define the Dirichlet-Boundary heat kernel 
\begin{equation}
    \label{dir} P_t^{Dir}(X,Y):= \frac{1}{\sqrt{2\pi t}} \big( e^{-(X-Y)^2/2t} - e^{-(X+Y)^2/2t} \big).
\end{equation}
Let $\xi$ be a space-time white noise defined on a probability space $(\Omega, \mathcal F, \Bbb P),$ and let $\mu$ be a random Borel measure on $\Bbb R_+$. A space-time process $Z_{Dir} = (Z_{Dir}(T,X))_{T,X \ge 0}$ is a mild solution of the Dirichlet-boundary \eqref{SHE} with initial data $\mu$ if $\Bbb P$-almost surely, for all $X,T \ge 0$ one has that $$Z_{Dir}(T,X) = \int_{\Bbb R_+} P_T^{Dir}(X,Y) \mu(dY) + \int_0^T \int_{\Bbb R_+} P_{T-S}^{Dir}(X,Y) Z_{Dir}(S,Y) \xi(dS,dY),$$ where the integral against $\xi$ is meant to be interpreted in the Ito-Walsh sense \cite{Wal86}.
\end{defn}

The definition of the Robin-boundary \eqref{SHE} $Z_A$ is very similar, but one replaces the Dirichlet heat-kernel with the Robin-boundary one throughout. We refer the reader to [Par18, Definition 4.1] for more details.
\\
\\
The proof of Theorem \ref{mr} will be obtained by approximating both $Z_{Dir}$ and $Z_A$ by the partition function of a directed polymer with log-gamma weights, and using a known identity for such directed polymers which allows to switch the boundary weights with those on the initial data without changing the distribution of the partition function along the boundary \cite{BBC18} (Theorem 8.1). The approximation argument will strongly emulate the arguments given in \cite{Wu18, AKQ14a} although there are numerous new challenges which make the convergence result rather difficult and technical. These additional difficulties are a byproduct of the inhomogeneous Markov transition densities for random walks conditioned to stay above zero.
\\
\\
Let us explicitly state the Dirichlet-boundary approximation result now. For each $n \in \Bbb N$, let $\omega^n = \{\omega^n_{i,j}\}_{i \geq j \geq 0}$ denote a random environment in the principal octant of $\Bbb Z^2$, with the following properties:

\begin{itemize}

    \item The random variables $\{\omega^n_{i,j}\}_{i \geq j \geq 1}$ are i.i.d. and so are the random variables $\{\omega^n_{i0}\}_{i \geq 0}$. These two collections are independent.
    
    \item For $j>0$, the $\omega^n_{i,j}$ have finite second moment. Furthermore, one has $\Bbb E[\omega^n_{i,j}] = 0$ and $\Bbb E[(\omega^n_{i,j})^2] = 1+o(1)$ as $n \to \infty$.
    
    \item For $j=0$, $\log(1+n^{-1/4}\omega^n_{i,j})$ has finite second moment; moreover there exist $\mu,\sigma \in \Bbb R$ such that $\Bbb E[\omega^n_{i,j}] = \mu n^{-1/4} +o(n^{-1/4}), $ and $var(\omega^n_{i,j}) = \sigma^2+o(1)$ as $n \to \infty$.
    
\end{itemize}

The following result is the main technical contribution of our paper.

\begin{thm}\label{thm2} Let $\omega^n$ be defined as above. Define the random partition function $$Z_n(p,q):=\sum_{\pi:(0,0) \to (p,q)}2^{-\#\{i\leq p+q\;:\;\pi(i)\neq 0\}} \prod_{i=0}^{p+q} (1+n^{-1/4} \omega^n_{i\pi(i)}),$$ where the sum is taken over all up-right paths from $(0,0)$ to $(p,q)$ which stay in the octant $\{(i,j): i \geq j \geq 0\}$. Let $\Phi$ denote the cdf of a standard normal distribution. We define the rescaled processes $$\mathscr Z_n(T,X) := \frac{1}{2\Phi\big(\frac{X+n^{-1/2}}{\sqrt{T}}\big)-1} \cdot Z_n( nT+n^{1/2}X, nT),\;\;\;\;\;\;\;\;\;\;\;\;T,X \ge 0$$ where we interpolate linearly between integer values of $Z_n$. Assuming that all weights $\omega_{i,j}$ have $p>8$ moments bounded independently of $n$, the processes $\mathscr Z_n$ converges in distribution (as $n \to \infty$, with respect to the locally uniform topology on $C(\Bbb R_+ \times \Bbb R_+)$) to the space-time process \begin{equation}\label{d} \frac{Z_{Dir}(T,X)}{2 \Phi(X/\sqrt{T})-1},\;\;\;\;\;\;\;\;\;\;\;\;T,X \ge 0\end{equation} where $Z_{Dir}$ solves \eqref{SHE} with Dirichlet boundary condition \eqref{b}, and has initial data $Z_{Dir}(0,X)=e^{B_X+(\mu-\frac12 \sigma^2)X}$ where $B$ is a standard Brownian motion. If we only assume that the weights have two moments (not $p>8$), one still has convergence of finite-dimensional marginals.
\end{thm}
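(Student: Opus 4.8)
\textbf{Proof strategy for Theorem \ref{thm2}.} The plan is to run the intermediate-disorder scheme of \cite{AKQ14a, Wu18}: expand $Z_n$ into a discrete polynomial chaos in the environment $\omega^n$, identify the propagators that appear between consecutive noise insertions as the transition kernels of the random walk conditioned to stay nonnegative (the kernels of $\mathbf P^n_\cdot$), pass to the limit term by term, and control the tail of the series uniformly in $n$. The genuinely new feature compared with \cite{AKQ14a, Wu18} is that these propagators are \emph{inhomogeneous}, and under the diffusive scaling they converge not to a Gaussian but to the time-reversed Brownian-meander kernel $\mathscr P$ of \eqref{spde0}, which satisfies $\mathscr P^T_t(X,Y)=P^{Dir}_t(X,Y)\,\frac{2\Phi(Y/\sqrt{T-t})-1}{2\Phi(X/\sqrt T)-1}$. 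The sharp local-limit-theorem comparison of the discrete kernels with $\mathscr P$, together with Gaussian-type tail bounds on both, is exactly what Section 3 and the appendix supply, and I would use those results as a black box.

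First I would write out the expansion. Multilinear expansion of $\prod_{i=0}^{p+q}(1+n^{-1/4}\omega^n_{i\pi(i)})$ followed by interchanging the path-sum with the sum over the set of coordinates at which a noise factor is chosen yields $Z_n(p,q)=\sum_{k\ge 0}n^{-k/4}\sum_{0\le i_1<\cdots<i_k\le p+q}\sum_{y_1,\dots,y_k}\big(\prod_{\ell=1}^k\omega^n_{i_\ell y_\ell}\big)\,\mathcal K_n(\{i_\ell,y_\ell\})$, where the combinatorial coefficient $\mathcal K_n$, by the Markov property of $\mathbf P^n$ (the factor $2^{-\#\{i:\pi(i)\ne 0\}}$ being precisely what turns the count of octant paths through prescribed sites into a $\mathbf P^n$-probability), factorizes into a product of conditioned-walk transition probabilities between successive marked sites and the two endpoints. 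The weights $\{\omega^n_{i0}\}$ on the boundary edge play a role distinct from the bulk weights: they contribute a multiplicative ``boundary factor'' attached to the part of the walk near the wall, and a functional CLT for $X\mapsto\sum_{i\le n^{1/2}X}\log(1+n^{-1/4}\omega^n_{i0})$ shows this factor converges to the geometric Brownian motion $e^{B_X+(\mu-\frac12\sigma^2)X}$ that plays the role of initial datum in \eqref{spde0}. After the diffusive substitution $p=nT+n^{1/2}X$, $q=nT$, and division by the discrete survival probability of $\mathbf P^n$ — which I would first identify as asymptotically $2\Phi(X/\sqrt T)-1=\int_{\Bbb R_+}P^{Dir}_T(X,Y)\,dY$, the shift $X\mapsto X+n^{-1/2}$ being there only so that the divisor is bounded away from $0$ on the lattice (recall $Z_n(nT,nT)$ is an ``endpoint-pinned'' quantity, of size $O(n^{-1/2})$) — Section 3 gives convergence of each fixed-$k$ term of $\mathscr Z_n$ to the corresponding $k$-fold stochastic integral in \eqref{spde0}.

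Next I would prove the moment bounds that turn these termwise limits into genuine convergence of the full series. Because $\Bbb E[(1+n^{-1/4}\omega^n_{ij})^2]=1+o(n^{-1/2})$, a replica (second-moment) computation bounds the $L^2$ norm of the $k$-th term by a $k$-fold space-time convolution of the rescaled kernels with themselves, which the Section 3 estimates render summable in $k$ uniformly in $n$; this uniform control of the tail, combined with the fixed-$k$ limits, already gives convergence of all finite-dimensional distributions, establishing the two-moment part of the theorem. To upgrade to weak convergence in $C(\Bbb R_+\times\Bbb R_+)$ I would establish tightness via a two-parameter Kolmogorov criterion, i.e.\ a bound $\Bbb E\big[|\mathscr Z_n(T,X)-\mathscr Z_n(T',X')|^p\big]\le C(|T-T'|+|X-X'|^2)^{p\theta}$ for some $\theta>0$, locally uniformly. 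This would follow from Rosenthal/hypercontractivity-type estimates for multilinear forms (bounding the $L^p$ norm of the $k$-th chaos term by $C(p)^k$ times its $L^2$ norm, using that the individual coefficients are uniformly small and the $\omega^n$ have $p$ finite moments) combined with Hölder-continuity estimates for $\mathscr P$ and its lattice analogues; the numerical threshold $p>8$ is what this criterion demands, and it must in particular be met uniformly up to the boundary, where the regularity deteriorates.

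The hard part will be exactly that last point: making all of the kernel, convolution, and increment estimates uniform up to the spatial boundary $X=0$ and the corner $(T,X)=(0,0)$, where the divisor $2\Phi((X+n^{-1/2})/\sqrt T)-1$ degenerates and where the conditioned walk is most strongly affected by the wall. This is where the refined meander estimates of Section 3 and the appendix concentration bound $\mathbf P^n_x(\sup_{i\le k}|S_i-x|>u)\le Ce^{-cu^2/k}$ do the real work: they are needed to show that the kernels $\mathscr P^T_t$ and their discrete counterparts obey estimates strong enough that the above convolution and increment bounds stay finite as $X\downarrow 0$, uniformly in $n$. Granting this, the limit of $\mathscr Z_n$ is the continuum series \eqref{spde0}; by the kernel identity above and the well-posedness theory of Section 4, that series equals $Z_{Dir}(T,X)/(2\Phi(X/\sqrt T)-1)$ where $Z_{Dir}$ is the mild solution of the Dirichlet-boundary \eqref{SHE} with initial data $e^{B_X+(\mu-\frac12\sigma^2)X}$, which is exactly the claim.
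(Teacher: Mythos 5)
Your proposal reproduces, in outline, the second half of the paper's argument (the ``convergence in a quadrant'' step): discrete chaos expansion with the conditioned-walk kernels $\mathfrak p_n^N$ as propagators, termwise convergence to Wiener--It\^o integrals via the local-limit results of Section 3 and the polynomial-chaos machinery of \cite{AKQ14a, CSZ17a}, second-moment control of the tail of the series for finite-dimensional convergence under two moments, and hypercontractivity/Kolmogorov estimates for tightness under $p>8$ moments, with the normalization identified as the discrete survival probability. All of that is consistent with what the paper does.

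However, there is a genuine gap: you never address the octant geometry itself, and this is where a substantial part of the paper's work lies. When $Z_n(p,q)$ is rewritten as a Feynman--Kac average for the walk conditioned to stay nonnegative (reading the up-right path backwards from $(p,q)$), the boundary weights $\{\omega^n_{i0}\}$ are \emph{not} collected as an initial datum evaluated at a fixed terminal time: since the bottom row $j=0$ is the light-cone line in walk coordinates, the factor $z_0^n(\cdot)=\prod_{i\le \cdot}(1+n^{-1/4}\omega^n_{i0})$ is evaluated at $S_{T_n}$, where $T_n$ is the random time at which the walk meets the anti-diagonal, with $T_n=2n-S_{T_n}\approx 2n-O(\sqrt n)$. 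Your expansion implicitly assumes the clean fixed-horizon (quadrant) structure, so either your ``fixed-$k$ terms'' involve a space-time hitting kernel for which Section 3 provides no estimates, or you must first justify replacing $T_n$ by $2n$. The paper devotes Section 5.1 to exactly this reduction (Theorem \ref{ke} and Proposition \ref{oqr}), and it is not a soft step: it requires concentration to localize $T_n$ in a window $[2n-n^{\alpha},2n]$, Doob/martingale bounds for the partition function over that window (Lemma \ref{omfg}), a microscopic H\"older modulus for $z_0^n$ obtained via Garsia--Rodemich--Rumsey (Proposition \ref{nun}), and a supermartingale domination to control $z_0^n$ uniformly in space --- and the error must be shown to be negligible \emph{after} division by the survival probability, i.e.\ at the $o(n^{-1/2})$ scale where the $X=0$ case lives. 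Without this reduction (or an equivalent treatment of the random hitting time), the limit identification in your scheme does not go through as stated.
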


\begin{rk} It is very important to note that Theorem \ref{thm2} is also valid at $X=0$, but one needs to replace equation \eqref{d} by the limit $\lim_{X \to 0} \frac{Z_{Dir}(T,X)}{2\Phi(X/\sqrt{T})-1}.$ The fact that this limit actually exists will be deduced from Corollary \ref{lim}. Therefore, there are really two different regimes in which one should interpret Theorem \ref{thm2}. One regime is $X>0$, where the result merely says that $Z_n( nT+n^{1/2}X,  nT)$ converges to $Z_{Dir}(T,X)$. The other (more interesting) regime is the case where $X=0$, in which case the theorem says that $(\pi n T/2)^{1/2} Z_n( nT,  nT)$ converges in law to $\lim_{X \to 0} \frac{Z_{Dir}(T,X)}{2\Phi(X/\sqrt{T})-1},$ which is equivalent to $$n^{1/2}Z_n(nT, nT) \to \lim_{X\to 0} \frac{Z_{Dir}(T,X)}{X}.$$ This $X=0$ case is where the true power (and difficulty) of Theorem \ref{thm2} lies. The nice thing about our approach will be that the proof will simultaneously cover both regimes at once, without considering separate cases. In fact, we will see that convergence even takes place in a parabolic Holder space of the appropriate regularity (assuming there are more than eight moments).
\end{rk}

We now combine this result with the Robin-boundary result of \cite{Wu18} and the log-gamma identities of \cite{BBC18} in order to obtain the following corollary, which clearly implies Theorem \ref{mr}. In what follows, we denote by $ \Gamma^{-1}(\theta,c)$ the inverse-gamma distribution of shape parameter $\theta$ and scale parameter $c$, i.e., the law of the random variable $cX$, where $X$ has pdf given by $$f(x) = \frac{x^{-\theta-1}}{\Gamma(\theta)} e^{-1/x},\;\;\;\;\;\;\; x>0. $$ We will also write $\Bbb E[\Gamma^{-1}(\theta,c)]$ to denote the expectation of such a random variable.

\begin{cor}\label{cor} For $n \in \Bbb N$, let $\zeta_1^n = \{\zeta_1^n(i,j)\}_{i \geq j \geq 0}$ and $\zeta_2^n = \{\zeta_2^n(i,j)\}_{i \geq j \geq 0}$ be fields of independent random variables with the following distributions $$\zeta^1_n(i,j) \sim \begin{cases} \Gamma^{-1}(2\sqrt{n}, \;\frac12 \Bbb E[\Gamma^{-1}(2\sqrt{n})]^{-1}),& i \neq j \\ \\ \Gamma^{-1}(\sqrt{n}+A+\frac12,\;\frac12 \Bbb E[\Gamma^{-1}(2\sqrt{n})]^{-1}),& i=j \end{cases}$$
$$\zeta^2_n(i,j) \sim \begin{cases} \Gamma^{-1}(2\sqrt{n},\;\frac12 \Bbb E[\Gamma^{-1}(2\sqrt{n})]^{-1}),& j \neq 0 \\ \\ \Gamma^{-1}(\sqrt{n}+A+\frac12,\; \frac12 \Bbb E[\Gamma^{-1}(2\sqrt{n})]^{-1}),& j=0.  \end{cases}$$
Let $Z_n^1$ and $Z_n^2$ denote the associated partition functions, i.e., $$Z^{\alpha}_n:= \sum_{\pi:(0,0) \to (\lfloor nT \rfloor,\lfloor nT\rfloor)} \prod_{i=0}^{2\lfloor nT\rfloor } \zeta^{\alpha}_n(i,\pi(i)), \;\;\;\;\;\;\;\;\text{ for } \alpha \in \{1,2\}.$$ Here the sum is taken over all up-right paths from $(0,0)$ to $(n,n)$ which stay in the octant $\{(i,j): i \geq j \geq 0\}$. Then the following are true:

\begin{enumerate}
    \item $\sqrt{n}Z_n^1$ converges in distribution as $n \to \infty$ to the left-hand side of \eqref{c}.
    
    \item $\sqrt{n}Z_n^2$ converges in distribution as $n \to \infty$ to the right-hand side of \eqref{c}.
    
    \item For every $n$, one has $Z^1_n \stackrel{d}{=} Z^2_n.$
\end{enumerate}

\end{cor}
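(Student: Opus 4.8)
The three assertions rest on three independent inputs which are then combined trivially, so I take them in turn. \textbf{Assertion (3)} is a direct invocation of the half-space log-gamma identity of \cite{BBC18} (their Theorem 8.1): that result says that the law of the partition function of the log-gamma polymer in the octant $\{i\ge j\ge 0\}$ is unchanged when the distinguished boundary parameter is transferred from the diagonal edge $\{i=j\}$ to the horizontal edge $\{j=0\}$, the bulk parameter being left alone. The fields $\zeta^1_n$ and $\zeta^2_n$ are exactly the two sides of this symmetry, with bulk shape $2\sqrt n$, distinguished shape $\sqrt n+A+\tfrac12$, and the common scale $\tfrac12\Bbb E[\Gamma^{-1}(2\sqrt n)]^{-1}$ (which is irrelevant, being identical in both fields). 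One has only to check that these parameters are admissible for \cite{BBC18} once $n$ is large (the shape parameters must be positive, which holds as soon as $\sqrt n>-A-\tfrac12$), and that is enough since we ultimately let $n\to\infty$. Hence $Z^1_n\stackrel d= Z^2_n$, and a fortiori $\sqrt n\,Z^1_n\stackrel d=\sqrt n\,Z^2_n$, for all large $n$.

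\textbf{Assertion (2)} follows by feeding $\zeta^2_n$ into Theorem \ref{thm2}. First I renormalize each weight by its mean: writing $\zeta^2_n(i,j)=\Bbb E[\zeta^2_n(i,j)]\,\big(1+n^{-1/4}\omega^n_{ij}\big)$ and using the exact moment formula $\Bbb E[\Gamma^{-1}(\theta,1)^k]=\Gamma(\theta-k)/\Gamma(\theta)$, one finds $\Bbb E[\zeta^2_n(i,j)]=\tfrac12$ for $j\ge 1$ and $\Bbb E[\zeta^2_n(i,0)]=1-An^{-1/2}+O(n^{-1})$. Consequently $\prod_{i}\zeta^2_n(i,\pi(i))$ equals $2^{-\#\{i\,:\,\pi(i)\ne 0\}}\prod_i\big(1+n^{-1/4}\omega^n_{i\pi(i)}\big)$ identically in $\pi$ — the factors $\tfrac12$ from the non-boundary sites produce exactly the prefactor, while the boundary sites carry mean $\to 1$ — so $Z^2_n(\lfloor nT\rfloor,\lfloor nT\rfloor)$ is literally the partition function of Theorem \ref{thm2} for the array $\omega^n$. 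It then remains to verify the hypotheses preceding Theorem \ref{thm2}: for $j\ge 1$ the $\omega^n_{ij}$ are i.i.d., mean zero, with $p$-th moment bounded uniformly in $n$ for each fixed $p$ (again by $\Bbb E[\Gamma^{-1}(\theta,1)^k]=\Gamma(\theta-k)/\Gamma(\theta)$, so in particular more than eight moments) and with the variance condition checked by the same computation; for $j=0$, expanding the above gives $\Bbb E[\omega^n_{i0}]=\mu n^{-1/4}+o(n^{-1/4})$ and $\mathrm{var}(\omega^n_{i0})=\sigma^2+o(1)$ with $\mu=-A$ and $\sigma^2=1$, so that $\mu-\tfrac12\sigma^2=-(A+\tfrac12)$ — precisely the drift in the initial data $e^{B_X-(A+\frac12)X}$ that $Z_{Dir}$ carries in Theorem \ref{mr}. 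Theorem \ref{thm2} then gives convergence of $\mathscr Z_n$; specializing to $X=0$, using the Remark following Theorem \ref{thm2} together with Corollary \ref{lim} (which is what guarantees that $\lim_{X\to 0}Z_{Dir}(T,X)/X$ exists), and using $2\Phi(y)-1\sim\sqrt{2/\pi}\,y$ as $y\to 0$ to see that the factor $\big[2\Phi(\tfrac{X+n^{-1/2}}{\sqrt T})-1\big]^{-1}$ at $X=0$ is asymptotically $\sqrt{\tfrac{\pi T}{2}}\,\sqrt n$, one concludes $\sqrt n\,Z^2_n(\lfloor nT\rfloor,\lfloor nT\rfloor)\to\lim_{X\to 0}Z_{Dir}(T,X)/X$, the right-hand side of \eqref{c}.

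\textbf{Assertion (1)} is entirely parallel, with the Brownian-meander input replaced by the Robin-boundary intermediate-disorder theorem of \cite{Wu18}. Now the distinguished weights lie on the diagonal $\{i=j\}$, which under the change of coordinates $t=i+j$, $x=i-j$ becomes the spatial boundary $x=0$; the polymer starts at the single point $(0,0)$, producing the $\delta_0$ initial data of Theorem \ref{mr}, while the other edge $\{j=0\}=\{x=t\}$ recedes to infinity under diffusive scaling and plays no role. After the same mean-renormalization of the inverse-gamma weights one checks that the hypotheses of \cite{Wu18} hold, the offset $+A+\tfrac12$ in the distinguished shape parameter being tuned so that the limiting Robin parameter is exactly $A$ — a digamma computation matching, after rescaling, $\psi(2\sqrt n)-\psi(\sqrt n+A+\tfrac12)$ to the boundary drift $A$. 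This yields $\sqrt n\,Z^1_n(\lfloor nT\rfloor,\lfloor nT\rfloor)\to Z_A(T,0)$, the left-hand side of \eqref{c}. Combining: by (3), $\sqrt n\,Z^1_n\stackrel d=\sqrt n\,Z^2_n$ for all large $n$, so letting $n\to\infty$ and applying (1) and (2) gives $Z_A(T,0)\stackrel d=\lim_{X\to 0}Z_{Dir}(T,X)/X$, i.e.\ \eqref{c}; in particular Theorem \ref{mr} follows.

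\textbf{Where the difficulty lies.} All of the genuine analytic content is already packaged in Theorem \ref{thm2}, in Corollary \ref{lim} (existence of $\partial_X Z_{Dir}(T,0)$), and in \cite{Wu18}; what remains is bookkeeping, but the delicate step is the \emph{joint} parameter-matching — checking that the single offset $A$ in the inverse-gamma shapes simultaneously produces the Robin parameter $A$ on the $\zeta^1_n$ side and, through the $(\mu,\sigma^2)$ of Theorem \ref{thm2}, the initial-data drift $-(A+\tfrac12)$ on the $\zeta^2_n$ side, so that the two continuum limits are really the two sides of the one identity \eqref{c} rather than two unrelated objects. The secondary subtlety is the passage to the boundary $X=0$: one must appeal to Corollary \ref{lim} for the existence of $\lim_{X\to 0}Z_{Dir}(T,X)/X$, track that the discrete normalization $\sqrt n$ and the continuum normalization $2\Phi(X/\sqrt T)-1$ differ only by the deterministic factor $\sqrt{\pi T/2}+o(1)$, and — a minor but real point for \cite{BBC18} — note that the admissibility constraints of the log-gamma identity need only hold for all sufficiently large $n$.
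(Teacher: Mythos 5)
Your proposal follows the paper's proof essentially verbatim in structure: item (3) is quoted from Theorem 8.1 of \cite{BBC18}, item (1) from Theorem 5.1(B) of \cite{Wu18} (the paper does not redo the coordinate-change/digamma matching you sketch; it simply cites that result), and item (2) is obtained, exactly as in the paper, by mean-renormalizing the inverse-gamma weights, feeding the resulting array into Theorem \ref{thm2}, finding $\mu=-A$ and $\sigma^2=1$ so that the initial-data drift is $-(A+\tfrac12)$, and then specializing to $X=0$ via the remark following Theorem \ref{thm2} and Corollary \ref{lim}.

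The one place where your bookkeeping deviates from the paper's, and where your verification as written does not go through, is the bulk renormalization. You set $\zeta^2_n(i,j)=\tfrac12\bigl(1+n^{-1/4}\omega^n_{ij}\bigr)$ for $j\ge 1$ and assert that the variance hypothesis of Theorem \ref{thm2} ``is checked by the same computation.'' But $\mathrm{var}\bigl(2\zeta^2_n(i,j)\bigr)=\frac{1}{2\sqrt n-2}$, so under your normalization $\mathrm{var}(\omega^n_{ij})=\sqrt n/(2\sqrt n-2)\to\tfrac12$, not $1+o(1)$ as the second bullet point preceding Theorem \ref{thm2} requires. The paper avoids this by defining the bulk weights through $2\zeta^2_n(i,j)=1+(4n)^{-1/4}\omega^n_{ij}$ for $j>0$ (keeping $\zeta^2_n(i,0)=1+n^{-1/4}\omega^n_{i0}$ on the boundary), which makes the bulk variance $1+o(1)$; your boundary computation ($\mu=-A$, $\sigma^2=1$) agrees with the paper's. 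So this is a factor-of-$\sqrt2$ slip in checking a hypothesis — the kind of constant-tracking that is genuinely delicate in this corollary — rather than a missing idea, but as stated your weight array does not satisfy the assumptions under which you invoke Theorem \ref{thm2}.
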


\begin{figure}[h]
	\centering
	\includegraphics[scale=0.85]{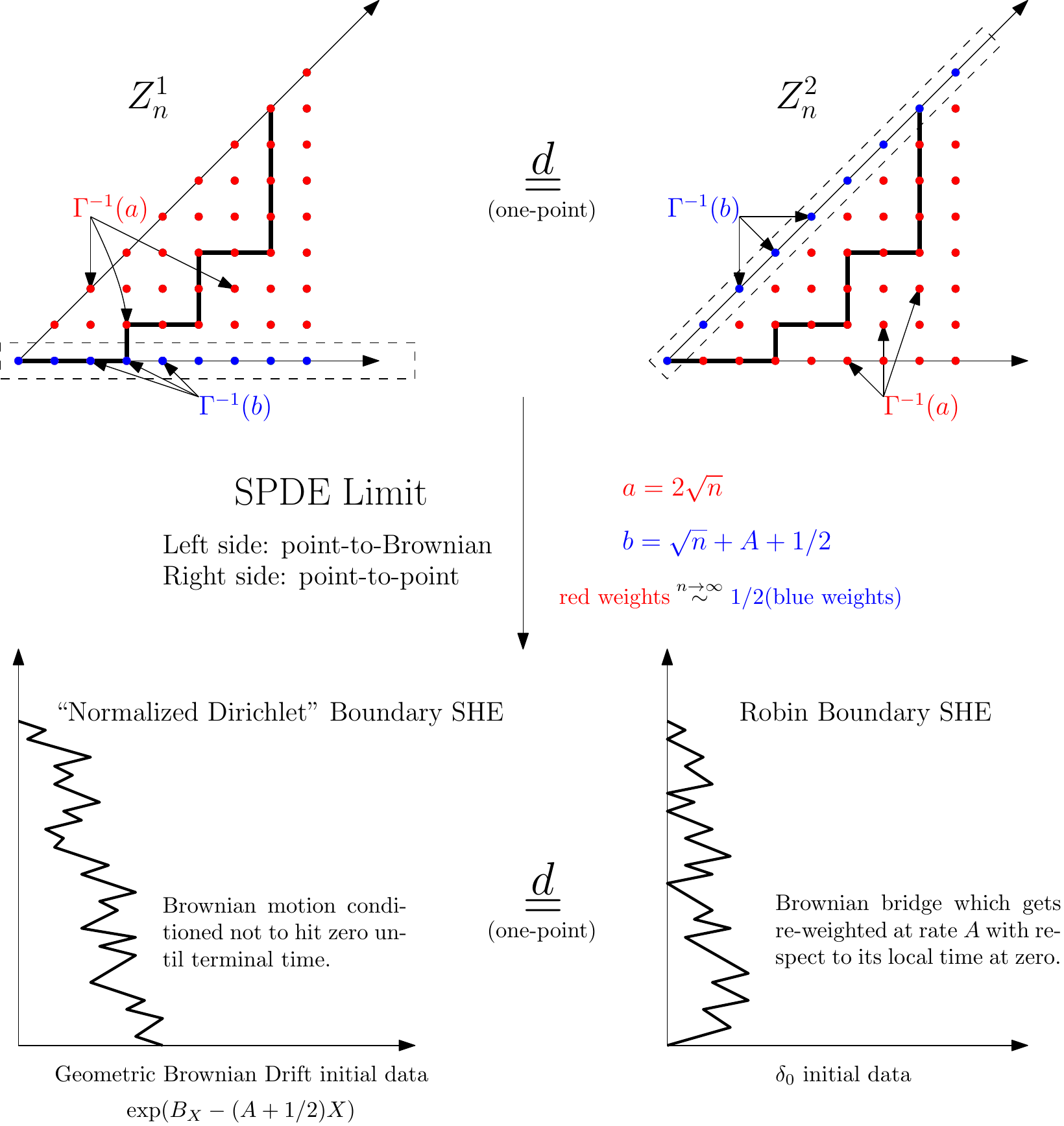}
	\caption{Depiction of the polymer approximation in Corollary \ref{cor}. The weight of a given path is the product of the weights along it. The partition function $Z^{\alpha}_n$ is given by summing the weights of all up-right paths from $(0,0)$ to $(n,n)$ which stay in the octant. We have given the respective Feynman-Kac representations for the limiting SPDEs.}
\end{figure}

\begin{proof} Item \textit{(1)} is proved as Theorem 5.1(B) of \cite{Wu18} using techniques from \cite{AKQ14a}. Item \textit{(3)} is proved in Theorem 8.1 of \cite{BBC18} by developing the theory of half-space Macdonald processes. Thus we only need to prove Item \textit{(2)} and this will be done using Theorem \ref{thm2}, in the special case where $X=0$. As in Theorem 4.5 of \cite{AKQ14a}, we define a family of independent weights $\omega^n = \{ \omega^n_{i,j} \}_{i \geq j \geq 0}$ according to the rule: $$2\zeta^2_n(i,j) = 1+ (4n)^{-1/4} \omega^n_{i,j},\;\;\;\;\;\;\;\;\;j>0,$$ $$\zeta^2_n(i,0) = 1+ n^{-1/4} \omega^n_{i0}.\;\;\;\;\;\;\;\;\;\;\;\;\;\;\;\;\;\;$$ There are now three things to verify, corresponding to the three bullet points preceding Theorem \ref{thm2}. Using the fact that $$\Bbb E[\Gamma^{-1}(\theta) ] = \frac{1}{\theta-1},\;\;\;\;\;\;\;\;\; var(\Gamma^{-1}(\theta)) = \frac{1}{(\theta-1)^2(\theta-2)},$$ one gets the desired asymptotics on $\Bbb E[\omega_{i,j}]$ and on $\Bbb E[(\omega_{i,j})^2]$, with $\mu = -A$ and $\sigma^2=1$. This proves the corollary (and thus also Theorem \ref{mr}).
\end{proof}

Thus by using Corollary \ref{cor} and the results quoted therein, we have reduced the proof of Theorem \ref{mr} to that of Theorem \ref{thm2} and this is what we will focus on now.
\\
\\
Since the sum defining the partition function in the preceding results is over all upright paths which stay in the principal octant of $\Bbb Z^2$, it is natural to try to relate those quantities to reflecting random walk measures. However, if one does asymptotics in Corollary \ref{cor}, one may verify that $\zeta_n^2(j,j) \sim 1/2$ as $n \to \infty$. What this means is that instead of pure reflection, our random walk path loses mass by a factor of $1/2$ each time it hits zero. Hence, it is clear that the analysis in proving Theorem \ref{thm2} will involve taking a close look at such random walk measures, as well as directed polymers weighted by such measures, as suggested in the introduction.
\\
\\
More precisely, fix some $x \in \Bbb Z_{\geq 0}$, and define a sample space of non-negative random walk trajectories by $$\Omega_x^n := \{ (s_0,...,s_{n}) \in \Bbb Z^{n}: |s_{i+1}-s_i|=1, s_i \geq 0, s_0=x\}.$$ Define a sub-probability measure $\mu_x^n$ and a probability measure $\mathbf{P}_x^n$ on $\Omega_x^n$ by $$\mu_x^n(S) := 2^{-n}, \;\;\;\;\;\; \mathbf{P}_x^n(S) := \frac{1}{\# \Omega_x^N}=\frac{\mu_x^n(S)}{\mu_x^n(\Omega_x^n)} ,\;\;\;\;\;\;\;\text{ for all } S \in \Omega_x^n.$$ As an intermediate step in proving Theorem \ref{thm2}, we obtain the following result.

\begin{thm}\label{rw}
With the above notation, the following are true.
\begin{enumerate}
    \item (Markov Property) Fix $n,x \geq 0$. Let $S = (S_k)_{k=0}^{n}$ denote the coordinate process associated to $\mathbf{P}_x^n$, i.e., $S$ is a $\Omega_x^n$-valued random variable with law $\mathbf{P}_x^n$. Then $(S_k)_{k=0}^{n}$ is a time-inhomogeneous Markov process, in fact conditionally on $(S_k)_{k \leq K}$ with $K<n$, the process $(S_{k+K})_{k=0}^{n-K}$ is distributed according to $\mathbf{P}_{S_{K}}^{n-K}.$ One has explicit transition densities for $0\leq i_1 <...<i_k\leq n$: $$\mathbf{P}_x^N( S_{i_1} = s_1 ,..., S_{i_k} = s_k) = \mathfrak{p}_{i_1}^N(x,s_1) \mathfrak{p}_{i_2-i_1}^{N-i_1}(s_1,s_2) \cdots \mathfrak{p}_{i_k-i_{k-1}}^{N-i_{k-1}}(s_{k-1},s_k).$$ where $\mathfrak{p}_n^N$ is given in Definition \ref{fuk} below.
    
    \item (Mass) For every $x \in \Bbb Z_{\geq 0}$, the total mass of $\mu_x^n$ is asymptotically $(x+1)\sqrt{\frac{2}{\pi n}}$: $$\lim_{n \to \infty} n^{1/2} \mu_x^n(\Omega_x^n) = (x+1)\sqrt{2/\pi}.$$
    
    \item (Concentration) There exist $C,c>0$ such that for every $x \ge 0$, every $0 \le m\le k \le n$, and every $u>0$ one has that $$\mathbf P_x^n\big(\sup_{m \le i\le k} |S_i-S_m| >u \big) \le Ce^{-cu^2/(k-m)}.$$ 
    
    \item (Convergence of transition densities) Let $\mathfrak{p}_n^N$ be as in item (1). One has the convergence $$\epsilon^{-1} \mathfrak{p}_{\epsilon^{-2}t}^{\epsilon^{-2}T}(\epsilon^{-1}X, \epsilon^{-1}Y) \longrightarrow \mathscr{P}_t^T(X,Y),$$ where $\mathscr{P}_t^T$ is the transition probability for a certain (inhomogeneous) Markov process defined in Definition \ref{sht} below. Moreover, for fixed $(t,T,X)$ the convergence in the $Y$-variable occurs in $L^p(\Bbb R_+,e^{aY}dY)$ for every $p \in [1,\infty)$.
    
    \end{enumerate}
    
\end{thm}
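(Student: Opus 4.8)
The plan is to prove the four items largely separately; (1) and (2) are short combinatorial calculations, while (3) and (4) carry the weight.

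\emph{Items (1) and (2).} The key observation is that $\mu_x^n$ is nothing but the law of a simple random walk from $x$ (write $\mathsf P_x$ for it, so that each $n$-step path gets mass $2^{-n}$) restricted to the event $E_n=\{S_i\ge 0\text{ for all }i\le n\}$; hence $\mathbf P_x^n=\mathsf P_x(\,\cdot\mid E_n)$. Consequently a length-$n$ path from $x$ lies in $\Omega_x^n$ iff its restrictions to $[0,K]$ and to $[K,n]$ each stay $\ge 0$, and partitioning $\Omega_x^N$ by the values at the sampled times yields a bijection onto a product of path sets; counting gives the Markov property, and iterating gives the multi-time product formula with $\mathfrak p_n^N(x,y)=N_n(x,y)\,M_{N-n}(y)/M_N(x)$, where $N_n(x,y)$ is the number of non-negative $n$-step paths from $x$ to $y$ and $M_m(x)=\#\Omega_x^m=\sum_y N_m(x,y)$. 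The reflection principle across level $-1$ gives the closed form $N_n(x,y)=\binom{n}{(n+y-x)/2}-\binom{n}{(n+x+y+2)/2}$ (with $\binom nj=0$ for $j\notin\{0,\dots,n\}$). Summing this over $y\ge 0$ telescopes: the two families of binomials overlap, leaving $M_n(x)=\sum_{j\in W}\binom nj$ for a block $W$ of exactly $x+1$ consecutive integers centered (up to parity) at $n/2$. Since for fixed $x$ each such term obeys $n^{1/2}2^{-n}\binom nj\to\sqrt{2/\pi}$ by Stirling, this yields $n^{1/2}\mu_x^n(\Omega_x^n)=n^{1/2}2^{-n}M_n(x)\to(x+1)\sqrt{2/\pi}$. (Kept uniform in $x$ and combined with the CLT, the same identity reappears in (4) as $2^{-m}M_m(z)\to 2\Phi(z/\sqrt m)-1$.)

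\emph{Item (3), setup.} By the Markov property, $\mathbf P_x^n(\sup_{m\le i\le k}|S_i-S_m|>u)=\mathbf E_x^n\big[\mathbf P_{S_m}^{n-m}(\sup_{i\le k-m}|S_i-S_0|>u)\big]$, so it suffices to bound $\mathbf P_y^N(\sup_{i\le\ell}|S_i-y|>u)\le Ce^{-cu^2/\ell}$ uniformly in $y\ge 0$ and $\ell\le N$. Downward deviations are immediate: $E_N$ is an increasing event and $\mathsf P_y$ is a product measure (hence FKG), so $\mathbf P_y^N=\mathsf P_y(\,\cdot\mid E_N)$ stochastically dominates $\mathsf P_y$ in the pathwise order; since $\{\inf_{i\le\ell}(S_i-y)\le -u\}$ is a decreasing event, its $\mathbf P_y^N$-probability is at most its $\mathsf P_y$-probability, which is $\le 2e^{-u^2/2\ell}$ by reflection and Hoeffding. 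For upward deviations I reduce to a one-point estimate: letting $\sigma$ be the first time $S_i-y$ reaches $u$, split $\{\sigma\le\ell\}$ according to whether $S_\ell\ge y+u/2$; the first piece is bounded by the one-point bound at time $\ell$, and on the second piece the walk must fall by $u/2$ after time $\sigma$, so by the Markov property at $\sigma$ and the downward bound just proved this contributes at most $e^{-cu^2/8\ell}\,\mathbf P_y^N(\sigma\le\ell)$, which is absorbed (the whole bound being trivial when $u^2/\ell$ is of order one).

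\emph{Item (3), the hard part.} What remains, and where I expect the real difficulty, is (a) a coupling removing the dependence on the far horizon and (b) a uniform one-point upward estimate. For (a) I would prove $\mathbf P_y^N\preceq\mathbf P_y^{N+1}$ in the pathwise stochastic order; since two nearest-neighbor walks from the same point stay automatically ordered unless they currently coincide, the only thing to check is that the up-probability from a state $s$, namely $\theta^{(m)}(s)=M_{m-1}(s+1)/M_m(s)$, is nondecreasing in the remaining horizon $m$ — an elementary but slightly delicate log-concavity-type property of the counts $M_m(\cdot)$ (equivalently of the survival probabilities $z\mapsto\mathsf P_z(E_m)$), proved by induction on $m$. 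Chaining the couplings and letting $N\to\infty$ gives $\mathbf P_y^N\preceq\mathbf P_y^\infty$, the Doob $h$-transform of killed simple random walk by $h(z)=z+1$, whose marginals are the explicit reweighting $\mathbf P_y^\infty(S_k=z)=\tfrac{z+1}{y+1}\,\mathsf P_y(S_k=z,E_k)$. For (b) one then inserts the uniform meander-type local bound $\mathsf P_y(S_k=z,E_k)\le \tfrac{C}{\sqrt k}\,e^{-c(z-y)^2/k}\min\!\big(1,\tfrac{(y+1)(z+1)}{k}\big)$ (obtained from Stirling applied to the reflection formula — exactly the kind of estimate Section~3 is devoted to): the factor $y+1$ cancels against the reweighting, leaving an elementary sum; the one genuine subtlety is that the estimates must be split according to whether $y+1\le\sqrt k$ or $y+1>\sqrt k$, since a careless application produces a spurious factor $\sqrt N/(y+1)$ precisely in the regime $y\ll\sqrt k\le\sqrt N$ where the conditioning is strongest.

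\emph{Item (4).} Write $\mathfrak p_n^N(x,y)=\dfrac{\mathsf P_x(S_n=y,E_n)\,\mathsf P_y(E_{N-n})}{\mathsf P_x(E_N)}$ and substitute $x=\epsilon^{-1}X,\ y=\epsilon^{-1}Y,\ n=\epsilon^{-2}t,\ N=\epsilon^{-2}T$. Stirling applied to the reflection formula for $N_n$ gives $\epsilon^{-1}\mathsf P_{\epsilon^{-1}X}(S_{\epsilon^{-2}t}=\epsilon^{-1}Y,E_{\epsilon^{-2}t})\to P_t^{Dir}(X,Y)$, while the telescoped identity of (2) together with the CLT gives $\mathsf P_{\epsilon^{-1}Y}(E_{\epsilon^{-2}(T-t)})\to 2\Phi(Y/\sqrt{T-t})-1$ and likewise for the denominator; hence
\[
\epsilon^{-1}\,\mathfrak p^{\,\epsilon^{-2}T}_{\epsilon^{-2}t}(\epsilon^{-1}X,\epsilon^{-1}Y)\ \longrightarrow\ \mathscr P_t^T(X,Y)=\frac{P_t^{Dir}(X,Y)\,\big(2\Phi(Y/\sqrt{T-t})-1\big)}{2\Phi(X/\sqrt T)-1},
\]
which one checks is a Markov transition density — the space–time $h$-transform of Dirichlet Brownian motion — and coincides with the process of Definition~\ref{sht}. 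To upgrade pointwise convergence in $Y$ to convergence in $L^p(\Bbb R_+,e^{aY}dY)$, the same uniform local estimates show $\epsilon^{-1}\mathfrak p^{\,\epsilon^{-2}T}_{\epsilon^{-2}t}(\epsilon^{-1}X,\epsilon^{-1}Y)\le g(Y)$ for all small $\epsilon$, with a Gaussian-type majorant $g(Y)\le C(1+Y)^{c}e^{-cY^2}$ lying in $L^p(e^{aY}dY)$, so dominated convergence applies. The only real work here is classical but must be carried out carefully: controlling the Stirling error uniformly down to the diffusive edge $Y\sim\sqrt n$, and producing the uniform-in-$\epsilon$ Gaussian domination.
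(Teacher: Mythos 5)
Your treatment of items (1), (2) and (4) is essentially the paper's: the counting argument for the Markov property is Proposition \ref{pt}, the reflection-formula-plus-local-CLT computation of the mass is Lemma \ref{mass}, and the scheme for (4) (local CLT for $p_n(x-y)-p_n(x+y+2)$, CLT for the survival mass, then dominated convergence with a uniform-in-$n$ majorant) is exactly Proposition \ref{gence}; the only thing you gloss over there is the parity bookkeeping, and note that the paper's majorant (Lemma \ref{wconc}, exponential in $|X-Y|/\sqrt t$) already suffices for $L^p(e^{aY}dY)$, so the Gaussian majorant you ask for is not needed for (4).

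For item (3) you take a genuinely different route from the paper, and this is where the proposal has real gaps. The paper gets the one-point upward bound from the horizon-$N$ measures themselves: the coupling of $\mathbf P_x^n$ with $\mathbf P_{x+1}^n$ (Proposition \ref{coup}, built on the monotonicity Lemma \ref{mono}) makes $M_i=\mathbf E^{n-i}_{S_i}[S_{k-i}]$ a martingale with increments bounded by $2$ (Proposition \ref{mart}), $\mathbf E_x^n[S_k]\le x+C\sqrt k$ (Lemma \ref{sqrtgr}) controls $M_0$, Azuma gives the one-point bound, and the running maximum is handled with the submartingale $e^{\lambda S_i}$ and Doob. You instead propose (a) stochastic monotonicity in the horizon, $\mathbf P_y^N\preceq\mathbf P_y^{N+1}$, chained to domination by the $h$-transform with $h(z)=z+1$, and (b) a ballot-type local bound $\mathsf P_y(S_k=z,E_k)\le Ck^{-1/2}e^{-c(z-y)^2/k}\min\bigl(1,(y+1)(z+1)/k\bigr)$. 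Both claims are, I believe, true, and your downward-deviation argument via FKG and your reduction of the running maximum to a one-point bound are fine; but as written (a) and (b) are the load-bearing steps and neither is proved. For (a), the needed inequality is $M_m(s+1)M_{m-1}(s-1)\ge M_m(s-1)M_{m-1}(s+1)$ for the path counts $M_m(x)=\#\Omega_x^m$; the obvious induction on $m$ does not close term by term (e.g.\ for $M_5=(10,20,25,30,31,32,\dots)$ one has $M_5(3)M_5(0)<M_5(1)M_5(2)$, so the natural grouping of the expanded inequality fails and a more global argument is required), so ``elementary but slightly delicate, by induction'' is not yet a proof. For (b), you cannot borrow it from the paper: the kernel estimates there (Lemma \ref{ohgod}) have only exponential decay $e^{-b|z-y|/\sqrt k}$, which is \emph{not} enough for your purpose, since summing an exponential tail over $z\ge y+u$ yields $e^{-bu/\sqrt k}$ rather than the Gaussian $e^{-cu^2/k}$ demanded by Theorem \ref{conc}; you genuinely need the Gaussian-decay local bound, which requires a separate Stirling/Hoeffding argument carried out uniformly in $y,z,k$. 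So: viable alternative architecture, but with two nontrivial lemmas asserted rather than established, whereas the paper's martingale-plus-coupling proof avoids both.
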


The first part of the theorem is quite elementary, and the last part is a more local version of the results of \cite{Ig74, Bol76}. The third part is new (as far as we know), and the second part will actually just follow from the local central limit theorem. All proofs may be found in the appendix (except (\textit{4}), which is proved in Section 3).

\begin{rk} One can actually formulate an invariance principle for this family of measures, which was done in greater generality in \cite{Ig74,Bol76}. Fix $X,T\ge 0$. For each $x,N \ge 0$, let $(S^{x,N}_n)_{n=0}^{N}$ be distributed according to $\mathbf P_x^N$. Then the processes $(N^{-1/2}S_{Nt}^{N^{1/2}X,NT})_{t \in [0,T]}$ converge in law (with respect to the uniform topology on $C[0,T]$, as $N \to \infty$) to a time-inhomogeneous Markov process $B$ on $[0,T]$ whose transition densities $\mathscr{P}_t^T(X,Y)$ are given by the limit in item (3). This limiting process $B$ may be interpreted as Brownian motion conditioned to stay positive until time T, see Proposition \ref{mp}. We will give an elementary discussion of how to prove this at the very end of the appendix, but it will not be needed for the results above.
\end{rk}

Let us now discuss the basic idea of the proof of Theorem \ref{thm2}. We only consider the special case when $(X,T) = (0,1)$ because this is enough to give the main idea. Denote by $\mathbf{E}_{KRW}$ the expectation with respect to a reflected random walk of length $2n$ that is killed at the origin with probability $1/2$ (i.e., the one whose transition density is equal to $p_n^{(1/2)}$ which is defined in section 3 below). One rewrites the partition function appearing in Theorem \ref{thm2} as a discrete Feynman-Kac formula for this killed random walk:
\begin{align*}Z_n &= \sum_{\pi:(0,0) \to (n,n)} 2^{-\#\{i \leq 2n \;:\; \pi(i)>0 \}} \prod_{i=0}^{2n} (1+n^{-1/4} \omega_{i\pi(i)}^n) \\ &= \mathbf{E}_{KRW} \bigg[ z^n_0(S_{T_n})\prod_{i=0}^{T_n-1} (1+n^{-1/4} \hat{\omega}_{iS_i}) \cdot 1_{\{\text{survival}\}} \bigg],
\end{align*}
where
\begin{itemize}

    \item $\hat{\omega}^n_{ij}$ is defined to be $\omega_{(n-i)(n-j)}^n$ for all $i,j$. 
    
    \item The expectation $\mathbf{E}_{KRW}$ is taken \textit{only} with respect to the random walk $S$, i.e., \textbf{conditional} on the $\omega^n_{i,j}$ (which are always assumed to be independent of $S$).
    
    \item $T_n$ is the first time that $S$ hits the diagonal line $\{(2n-i,i):0\leq i \leq 2n\}.$
    
    \item $z^n_0(x) := \prod_{i=0}^x (1+n^{-1/4}\omega^n_{i0})$ can be thought of as a sort of ``initial data" for the above discrete Feynman-Kac representation.
    
    \item $\{$survival$\}$ is the event that the random walk actually survives up to time $2n$ (or equivalently, up to time $T_n$).
\end{itemize}

Now, using Theorem \ref{rw}(2), one finds that $\mathbf{P}_{KRW}($survival$) \approx \sqrt{2/\pi n}$. Moreover, we can make the approximation $T_n \approx 2n$ for reasons justified later (see Proposition \ref{oqr}). Combining this with the above gives \begin{align*}
    \sqrt{\frac{\pi n}{2}} Z_n &\approx \mathbf{E}_{KRW} \bigg[  z^n_0(S_{2n})\prod_{i=0}^{2n} (1+n^{-1/4} \hat{\omega}_{iS_i})\bigg| \text{ survival }\bigg] \\ &= \mathbf{E}_{KRW} \bigg[ z^n_0(S_{2n}) \sum_{k=0}^{2n} n^{-k/4} \sum_{i_1<...<i_k} \prod_{j=1}^k \hat{\omega}_{i_jS_{i_j}} \;\bigg| \text{ survival }\bigg]
\end{align*} 
In the notation of Theorem \ref{rw}, the killed random walk conditioned to survive has law $\mathbf{P}^n_x$ and the associated Markov process has transition densities $\mathfrak{p}_n^N$. Thus (using theorem \ref{rw}(1)) the expectation in the preceding expression may be expanded as \begin{equation}\label{chaos}\sum_{k=0}^{2n} n^{-k/4} \sum_{0\leq i_1<...<i_k\leq 2n} \sum_{(x_1,...,x_{k+1}) \in \Bbb Z_{\geq 0}^{k+1}} z^n_0(x_{k+1})\prod_{j=1}^{k+1} \mathfrak{p}_{i_j-i_{j-1}}^{2n-i_{j-1}}(x_{j-1},x_j)\prod_{j=1}^k\hat{\omega}_{i_jx_j},\end{equation} with $x_0:=0$, $i_0:=0$, and $i_{k+1}:=2n$. Recall that $\log(1+u) \sim u-\frac12 u^2$, so by writing 
\begin{align*}z^n_0(x) = e^{\sum_0^x \log(1+n^{-1/4} \omega^n_{i0})} &\approx e^{\sum_0^x\big( n^{-1/4}\omega^n_{i0} -\frac12 n^{-1/2}(\omega^n_{i0})^2\big)} \\&= e^{n^{-1/4}\sum_0^x (\omega^n_{i0}-n^{-1/4}\mu) +n^{-1/2}\mu x -\frac{1}{2n^{1/2}} \sum_0^x (\omega^n_{i0})^2},\end{align*} one may convince herself (using Donsker's principle and the law of large numbers together with the third bullet point preceding Theorem \ref{thm2}) that as $n \to \infty$, $$\big(z_0^n(n^{1/2}X)\big)_{X\geq 0} \stackrel{d}{\longrightarrow} \big(e^{B_X+(\mu-\frac12 \sigma^2)X}\big)_{X\geq 0}$$
for a Brownian motion $B$. Then taking the limit of \eqref{chaos} as $n \to \infty$ by using Theorem \ref{rw}(3) (with some uniformity estimates), one obtains the Wiener-Ito chaos series $$\sum_{k=0}^{\infty} \int_{0\leq t_1 <...<t_k\leq 1} \int_{\Bbb R_+^{k+1}} e^{B_{x_{k+1}}+(\mu-\frac12 \sigma^2)x_{k+1}} \prod_{j=1}^{k+1} \mathscr{P}_{t_j-t_{j-1}}^{1-t_{j-1}} (x_{j-1},x_j) dx_{k+1}\xi(dx_k,dt_k) \cdots \xi(dx_1,dt_1),$$ with the convention $x_0=0$, $t_0=0$, $t_{k+1}=1$, and where the $\mathscr{P}_t^T$ are the conditional heat kernels from the limit in Theorem \ref{rw}(3), and $\xi$ is space-time white noise. But (as we will see in Proposition \ref{eq} below) this chaos series is precisely equal to $$\lim_{X \to 0} \frac{Z_{Dir}(1,X)}{2\Phi(X)-1}=\sqrt{\pi/2}\lim_{X \to 0} \frac{Z_{Dir}(1,X)}{X},$$ where the initial data is $e^{B_X+(\mu-\frac12\sigma^2)X}$ (and $\Phi$ is the cdf of a standard normal). This will complete the argument.

\section{Uniform measures on collections of positive paths}

In this section we will introduce the inhomogeneous heat kernels $\mathfrak{p}_n^N$ associated to random walks conditioned to stay positive. We begin with an elementary discussion of the basic properties of these measures, and later (in subsection 3.1) we will prove technical estimates about these measures which will be very useful in later sections.
\\
\\
For the ensuing discussion, we fix a number $\alpha \in [0,1]$. We define $p_n^{(\alpha)}(x,y)$ to be the probability of a random walk (started from $x \geq 0)$ being at position $y$ at time $n$, with reflection at the origin and probability $\alpha$ of death each time it hits $0$. Specifically, $$p_n^{(\alpha)}(x,y) = Q_x(|S_n| = y, T_{death} > n),$$ where $Q_x$ is the law of a simple symmetric random walk on $\Bbb Z$ started from $x \geq 0$, $S_n$ is the canonical process associated to $Q_x$, and $T_{death}$ is the time of death, given that death occurs with probability $\alpha$ (independently) upon each return to site zero.
\\
\\
Taking a more analytical perspective, we see that for each $x \geq 0$, the function from $\Bbb Z_{\geq 0} \times \Bbb Z_{\geq 0} \to \Bbb R$ given by $(n,y) \mapsto p_n^{(\alpha)}(x,y)$ is the unique solution to the discrete-time, discrete-space heat equation given by \begin{equation}\label{1} \phi(n+1,y)-\phi(n,y) = \frac12 \big( \phi(n,y-1) + \phi(n,y+1) \big), \;\;\;\;\; y \geq 1, n\geq 0,\end{equation} with initial condition which is Dirac mass at $x$: \begin{equation}\label{2} \phi(0,y) = \delta_x(y), \;\;\;\;\; y \geq 0,\end{equation} and with the Dirichlet-type boundary condition \begin{equation}\label{3} \phi(n+1,0) = \alpha \;\phi(n,1), \;\;\;\;\; n\geq 0.\end{equation}
Our first proposition gives an explicit expression for this $\alpha$-boundary heat kernel when $\alpha = 1/2$, in terms of the standard heat kernel on the whole line.

\begin{prop}
For $n \in \Bbb Z_{\geq 0}$ and $x \in \Bbb Z$, let $p_n(x)$ denote the standard heat kernel on $\Bbb Z$ (i.e., the transition function for a discrete-time simple symmetric random walk started from zero). Then one has $$p_n^{(1/2)}(x,y) = p_n(x-y) - p_n(x+y+2).$$

\end{prop}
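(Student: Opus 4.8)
The plan is to recognize that the reflected random walk killed with probability $1/2$ at the origin is nothing but a \emph{free} simple random walk on $\Bbb Z$ stopped at the extra site $-1$, and then to apply the classical reflection principle. Indeed, the reflected walk on $\Bbb Z_{\geq 0}$ has the transition mechanism of $|S|$ for a free walk $S$ on $\Bbb Z$: from each $k\geq1$ it moves to $k\pm1$ with probability $1/2$, and from $0$ it moves to $1$. Imposing independent killing with probability $1/2$ at each visit to $0$ turns this into the sub-Markov chain on $\Bbb Z_{\geq 0}$ that, from $k\geq1$, moves to $k\pm1$ with probability $1/2$ each and, from $0$, moves to $1$ with probability $1/2$ and dies with probability $1/2$. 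But that is precisely a free simple random walk on $\Bbb Z$ stopped upon hitting $-1$, a visit to $-1$ playing the role of death. (This is the only place where the value $\alpha=1/2$ intervenes: for other $\alpha$ the boundary behaviour is not that of a free walk, which is why no equally clean formula is available.) Hence, writing $Q_x$ for the law of the free walk from $x$ and $\tau_{-1}$ for its hitting time of $-1$, one has
\[
p_n^{(1/2)}(x,y)\;=\;Q_x\big(S_n=y,\ \tau_{-1}>n\big)\qquad\text{for all } x,y\geq0 .
\]

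I would then reflect about the level $-1$. For $m\leq n$, reflecting the path after time $\tau_{-1}=m$ through $-1$ is a probability-preserving bijection sending the configuration $\{S_n=y\}$ to $\{S_n=-y-2\}$, so $Q_x(S_n=y,\ \tau_{-1}\leq n)=Q_x(S_n=-y-2,\ \tau_{-1}\leq n)$; and since $x\geq0$ while $-y-2\leq-2$, every $\pm1$ path from $x$ to $-y-2$ must cross $-1$, so the last event is simply $\{S_n=-y-2\}$. Subtracting from $Q_x(S_n=y)$ and using translation invariance together with the symmetry $p_n(z)=p_n(-z)$ of the free kernel yields
\[
p_n^{(1/2)}(x,y)\;=\;p_n(x-y)-p_n\big(x-(-y-2)\big)\;=\;p_n(x-y)-p_n(x+y+2),
\]
which is the claim.

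The computation is routine once this picture is in place, so the only real obstacle is the first step — noticing that killing with probability exactly $1/2$ at a reflecting barrier is the same as adjoining a single absorbing site at $-1$. A more hands-on alternative avoids the observation entirely: one checks directly that $\psi_n(y):=p_n(x-y)-p_n(x+y+2)$ solves the defining relations \eqref{1}--\eqref{3}. Being a linear combination of translates of the free kernel, $\psi$ satisfies the interior recursion for $y\geq1$; since $x\geq0$ forces $x+y+2\geq2>0$, we get $\psi_0(y)=\delta_x(y)$ for $y\geq0$; and a one-line computation using $p_{n+1}(z)=\tfrac12(p_n(z-1)+p_n(z+1))$ gives $\psi_{n+1}(0)=\tfrac12\big(p_n(x-1)-p_n(x+3)\big)=\tfrac12\,\psi_n(1)$, which is \eqref{3} with $\alpha=1/2$. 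The stated uniqueness of the solution to \eqref{1}--\eqref{3} then finishes the proof. I would present the reflection argument as the main proof and relegate this verification to a remark.
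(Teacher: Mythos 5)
Your proposal is correct, but your primary argument takes a genuinely different route from the paper. The paper's own proof is precisely the verification you relegate to a closing remark: it checks that $(n,y)\mapsto p_n(x-y)-p_n(x+y+2)$ satisfies the defining relations \eqref{1}--\eqref{3}, observing that the only nontrivial point is the boundary relation \eqref{3} at $\alpha=1/2$, which follows from $p_{n+1}(x)=\tfrac12\big(p_n(x-1)+p_n(x+1)\big)$, and then invokes uniqueness. Your main argument instead identifies the $\alpha=1/2$ killed-reflected walk with a free simple random walk absorbed at the extra site $-1$ and applies the reflection principle about the level $-1$; this is correct, and it buys a probabilistic explanation of why the image charge sits at $-(y+2)$ rather than $-y$, why $\alpha=1/2$ is the special value admitting such a clean formula, and, as a byproduct, the identity $\psi(x,n)=Q_x(\tau_{-1}>n)$, which is consistent with how the kernel and its mass are used later (every surviving path of length $n$ carries weight $2^{-n}$). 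One small point to be aware of: the paper's verbal definition of $p_n^{(\alpha)}$ ("probability $\alpha$ of death each time it hits $0$") is ambiguous about whether the killing coin acts upon arrival at $0$ or upon the departure step; your absorbed-at-$-1$ reading corresponds to the latter (mass at $0$ is counted before killing), and this is exactly the convention pinned down by the boundary relation \eqref{3} at $\alpha=1/2$, so your identification matches the kernel the paper actually uses. The paper's direct verification is shorter and sidesteps this modeling question entirely, which is presumably why it was chosen; your reflection argument is the more illuminating of the two, and keeping the verification as a remark, as you propose, covers both.
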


\begin{proof}One may verify directly that \eqref{1}, \eqref{2}, and \eqref{3} hold, with $\phi$ given by $(n,y) \mapsto p_n^{(1/2)}(x,y)$. The only nontrivial thing to check is \eqref{3} with $\alpha =1/2$, for which one uses the identity $p_{n+1}(x) = \frac12 \big(p_n(x-1) + p_n(x+1) \big).$
\end{proof}

\begin{rk}
More generally one may use an image method to derive the explicit expression $$p_n^{(\alpha)}(x,y) = p_n(x-y) + (2\alpha -1) p_n(x+y) - 4\alpha(1-\alpha) \sum_{k=1}^{\infty} (2\alpha-1)^{k-1} p_n(x+y+2k).$$
The three special cases of this for which the series expansion actually terminates are $\alpha \in \{0,1/2,1\}$ $$p_n^{(1)}(x,y) = p_n(x-y) + p_n(x+y),$$  $$p_n^{(0)}(x,y) = p_n(x-y) - p_n(x+y).$$ $$p_n^{(1/2)}(x,y) = p_n(x-y) - p_n(x+y+2).$$ These correspond to Neumann, Dirichlet, and mixed boundary conditions, respectively. This formula for general $\alpha$ will not be needed, but it would be interesting to extend the results of Theorem \ref{rw} and Theorem \ref{thm2} to the case when the diagonal weights are asymptotically $\alpha \neq 1/2$.
\end{rk}

\begin{defn}\label{fuk}
We define the following quantity for integers $0 \leq n \leq N$ $$\mathfrak{p}_{n}^N(x,y):= p_n^{(1/2)}(x,y) \frac{\psi(y;N-n)}{\psi(x;N)},$$ where $$\psi(x;n) := \sum_{y \geq 0} p_n^{(1/2)}(x,y).$$
\end{defn}

The relevance of these $\mathfrak{p}_n^N$ are as follows: as in Theorem \ref{rw}, let $$\Omega_x^N := \{ (s_0,...,s_N) \in \Bbb Z_{\geq 0}^N: |s_{i+1}-s_i|=1, s_0=x\}.$$ Then denote by $\mathbf{P}_x^N$ the uniform measure on $\Omega^N_x$ and let $S$ denote the coordinate process associated to this measure (e.g., $S$ can be the identity map on $\Omega_x^N$). In more practical terms, $S$ is none other than a simple symmetric random walk conditioned to stay positive.

\begin{prop}\label{pt} $S$ is an inhomogeneous Markov process on $\{0,...,N\}$. In fact, for $0\leq i_1 < ...<i_n \leq N$ one has \begin{align*}\mathbf{P}_x^N( S_{i_1} = s_1 ,..., S_{i_n} = s_n) &= \mathfrak{p}_{i_1}^N(x,s_1) \mathfrak{p}_{i_2-i_1}^{N-i_1}(s_1,s_2) \cdots \mathfrak{p}_{i_n-i_{n-1}}^{N-i_{n-1}}(s_{n-1},s_n) \\ &= p_{i_1}^{(1/2)}(x,s_1) p_{i_2-i_1}^{(1/2)} (s_1,s_2) \cdots p_{i_n-i_{n-1}}^{(1/2)}(s_{n-1},s_n)\frac{\psi(s_n, N-i_n)}{\psi(x,N)}.\end{align*} In particular, for $M<N$ the conditional law of $(S_{M+k})_{k=0}^{N-M}$ given $(S_k)_{k=0}^M$ is distributed according to $\mathbf{P}_{S_M}^{N-M}.$

\end{prop}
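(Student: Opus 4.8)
The plan is to reduce everything to elementary path-counting. For integers $n\ge 0$ and $x,y\in\Bbb Z_{\ge 0}$, write $N_n(x,y)$ for the number of nearest-neighbor paths of length $n$ from $x$ to $y$ that never go below $0$, and set $M_n(x):=\sum_{y\ge 0}N_n(x,y)=\#\Omega_x^n$. The first step is to identify $p_n^{(1/2)}(x,y)=2^{-n}N_n(x,y)$, and hence $\psi(x;n)=2^{-n}M_n(x)$. To see this I would check that $\phi(n,y):=2^{-n}N_n(x,y)$ solves \eqref{1} with initial data \eqref{2} and boundary condition \eqref{3} for $\alpha=1/2$: the initial condition is immediate; \eqref{1} holds because a non-negative path of length $n+1$ ending at $y\ge 1$ has its penultimate vertex at $y-1$ or $y+1$, both $\ge 0$; and \eqref{3} holds because a non-negative path of length $n+1$ ending at $0$ must have penultimate vertex $1$, so $N_{n+1}(x,0)=N_n(x,1)$, which after multiplication by $2^{-(n+1)}$ is precisely $\phi(n+1,0)=\tfrac12\phi(n,1)$. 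By the uniqueness of solutions to \eqref{1}--\eqref{3} asserted above, $\phi(n,y)=p_n^{(1/2)}(x,y)$; summing over $y$ gives the statement for $\psi$.

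The second step is the multiplicative decomposition of path counts. Fix $0=i_0<i_1<\dots<i_n\le N$ and $x=s_0,s_1,\dots,s_n\in\Bbb Z_{\ge 0}$. A path $w\in\Omega_x^N$ with $w_{i_j}=s_j$ for all $j$ restricts to a non-negative nearest-neighbor segment on each block $[i_{j-1},i_j]$ together with a final non-negative segment on $[i_n,N]$ with unconstrained endpoint, and conversely any concatenation of such segments is a valid element of $\Omega_x^N$ taking the prescribed values, since both the nearest-neighbor property and non-negativity are preserved under concatenation at matching endpoints. Hence
$$\#\{w\in\Omega_x^N:\,w_{i_j}=s_j,\ j=1,\dots,n\}=\Big(\prod_{j=1}^n N_{i_j-i_{j-1}}(s_{j-1},s_j)\Big)M_{N-i_n}(s_n).$$
Since $\mu_x^N$ assigns mass $2^{-N}$ to each path and $\mathbf{P}_x^N=\mu_x^N(\cdot)/\mu_x^N(\Omega_x^N)$ with $\mu_x^N(\Omega_x^N)=2^{-N}M_N(x)=\psi(x;N)$, dividing and substituting $N_m=2^m p_m^{(1/2)}$ and $M_m=2^m\psi$ makes every power of $2$ cancel, the exponents summing to $\sum_{j}(i_j-i_{j-1})+(N-i_n)=N$, and leaves
$$\mathbf{P}_x^N(S_{i_1}=s_1,\dots,S_{i_n}=s_n)=\frac{1}{\psi(x;N)}\Big(\prod_{j=1}^n p_{i_j-i_{j-1}}^{(1/2)}(s_{j-1},s_j)\Big)\psi(s_n;N-i_n).$$
This is the second displayed identity in the proposition; the first follows by telescoping the $\psi$-ratios in the definition of $\mathfrak{p}_n^N$.

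For the Markov statement I would argue directly from uniformity rather than from this formula: given a prefix $(w_0,\dots,w_M)\in\Omega_x^M$, the set of $v\in\Omega_x^N$ extending it is in bijection with $\Omega_{w_M}^{N-M}$ via $v\mapsto(v_M,\dots,v_N)$, so restricting the uniform law $\mathbf{P}_x^N$ to this fiber and pushing forward yields exactly $\mathbf{P}_{w_M}^{N-M}$; as this depends on the prefix only through $w_M$, the process is Markov with the claimed conditional law, and the transition identity above is in particular consistent with Chapman--Kolmogorov. I do not expect a real obstacle here---the argument is elementary bookkeeping---but the one point needing genuine care is the first step, namely verifying that the boundary relation $N_{n+1}(x,0)=N_n(x,1)$ is exactly what pins down the parameter $\alpha=1/2$, and that the uniqueness hypothesis for \eqref{1}--\eqref{3} applies as stated.
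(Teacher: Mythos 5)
Your argument is correct, and it reaches the transition formula by a genuinely different route than the paper, even though your treatment of the Markov property (counting extensions of a fixed prefix and using uniformity) is essentially the paper's. The paper first establishes the Markov property and conditional independence exactly this way, then reduces the joint formula to the single terminal marginal ($n=1$, $i_1=N$) by induction, and identifies that marginal by viewing $\mathbf{P}_x^N$ as the killed walk ($\alpha=1/2$) conditioned to survive, giving $\mathbf{P}_x^N(S_N=s)=p_N^{(1/2)}(x,s)/\psi(x,N)$. You instead prove the combinatorial identity $p_n^{(1/2)}(x,y)=2^{-n}N_n(x,y)$ (hence $\psi(x;n)=2^{-n}\#\Omega_x^n$) from the characterization \eqref{1}--\eqref{3}, and then get the full $n$-point formula in one stroke by factorizing the path count over the blocks $[i_{j-1},i_j]$ and the free tail $[i_n,N]$, with all powers of $2$ cancelling; the $\mathfrak{p}$-form then follows by telescoping the $\psi$-ratios. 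What your route buys is that it makes explicit the fact on which the paper's one-line identification (``the killed walk conditioned to survive'') silently rests, namely that every surviving path carries mass exactly $2^{-N}$; what the paper's route buys is that it avoids the block factorization entirely, needing only the one-step terminal law once conditional independence is in hand. On the point you flagged: uniqueness for \eqref{1}--\eqref{3} is automatic, since the interior recursion together with the boundary relation determines $\phi(n+1,\cdot)$ from $\phi(n,\cdot)$, so induction on $n$ settles it; note also that \eqref{1} as printed carries a spurious $-\phi(n,y)$ on the left-hand side --- the intended recursion is $\phi(n+1,y)=\tfrac12\big(\phi(n,y-1)+\phi(n,y+1)\big)$, which is what the paper's own verification of the explicit kernel $p_n^{(1/2)}(x,y)=p_n(x-y)-p_n(x+y+2)$ uses and what your check of $2^{-n}N_n(x,y)$ matches, so your identification is against the correct equation.
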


Note that this proves Theorem \ref{rw}(1). It also shows that the $\mathfrak{p}_n^N(x,\cdot)$ are probability measures.

\begin{proof} Write $S_{[0,M]}$ for the restriction of $S$ to $\{0,1,...,M-1\}$, and write $S_{[M,N]}$ for the restriction of $S$ to $\{M,...,N\}$ shifted by $M$ places (so $S_{[M,N]}$ is defined on $\{0,...,M-N\}$). For nearest-neighbor paths $s_1$ and $s_2$ of lengths $M$ and $N-M$, respectively, such that $s_1(M) = s_2(0)$ one computes that $$\mathbf{P}_x^n(S_{[M,N]} = s_2 | S_{[0,M]} = s_1) = \frac{\mathbf{P}_x^N(S = s_1*s_2)}{\mathbf{P}_x^{N}(S_{[0,M]} =s_1)} = \frac{\frac{1}{\# \Omega_x^N}}{\frac{\#\{\pi \in \Omega_x^N:\; \pi|_{[0,M]} =s_1\}}{\# \Omega_x^N}} $$$$= \frac{1}{\#\{\pi \in \Omega_x^N: \pi|_{[0,M]} =s_1\}} = \frac{1}{\#\Omega_{s_1(M)}^{M-N}}= \mathbf{P}_{s_1(M)}^{M-N}(S=s_2),$$ where $s_1*s_2$ denotes the concatenation of paths. This immediately implies that given $(S_k)_{k=0}^M$, the law of $(S_{M+k})_{k=0}^{N-M}$ is distributed according to $\mathbf{P}_{S_M}^{N-M}.$ This also implies that $(S_k)_{k=0}^M$ and $(S_{M+k})_{k=0}^{N-M}$ are conditionally independent given $S_M$. Therefore, in order to prove the given formula for transition densities, it suffices to prove the claim for $n=1$; then the claim for general $n$ follows from the conditional independence and induction (recall that $n$ is the number of indices $0\leq i_1 < ...< i_n \leq N$ appearing in the transition formula).
\\
\\
To prove the formula for $n=1$ it suffices by conditional independence to assume that $i_n=N$, just note that $\mathbf{P}_x^N$ is just the probability associated to the killed random walk conditioned to survive, so that $$\mathbf{P}_x^N(S_N = s) = \frac{p_N^{(1/2)}(x,s)}{\sum_{y \geq 0} p_N^{(1/2)}(x,y) } = p_N^{(1/2)}(x,s) \frac{1}{\psi(x,N)},$$ which proves the claim.
\end{proof}

Next we introduce the continuum analogues of the previously introduced measures. We will generally use capital letters to distinguish macroscopic variables from (lowercase) microscopic ones.

\begin{defn}\label{sht} Let $P_t(X):= e^{-X^2/2t}/\sqrt{2\pi t}$ denote the standard heat kernel on the whole line $\Bbb R$. Recall from Section 2 the Dirichlet boundary heat-kernel $$P_t^{Dir}(X,Y): = P_t(X-Y) - P_t(X+Y).$$ We then define the inhomogeneous kernel for $0 \leq t \leq T$ and $X,Y > 0:$  $$\mathscr{P}_t^T(X,Y):= \begin{cases} P_t^{Dir}(X,Y) \frac{2\Phi(Y/\sqrt{T-t}) -1}{2\Phi(X/\sqrt{T})-1} & t<T \\ P_T^{Dir}(X,Y) \frac{1}{2\Phi(X/\sqrt{T})-1}&  t=T\end{cases},$$ where $\Phi(x) = \frac{1}{\sqrt{2\pi}}\int_{-\infty}^{x} e^{-u^2/2}du$ is the cdf of a standard normal. For $X = 0$, one analogously defines the quantity for $Y>0$ and $T \geq t \geq 0$: $$\mathscr{P}_t^T(0,Y) = \begin{cases} Y(T/t^3)^{1/2}e^{-Y^2/2t} \big( 2\Phi(Y/\sqrt{T-t})-1\big) & t<T \\ (Y/T) e^{-Y^2/2T} & t=T \end{cases},$$ which is the limit of the previously defined $\mathscr{P}_t^T(X,Y)$ as $X \to 0$.
\end{defn}

We now discuss the relevance of these kernels as Markov transition densities. Specifically, for $X>0$ define $\mathbf{W}_X^T$ to be the probability measure on $C([0,T],C(\Bbb R_+))$ obtained by conditioning Brownian motion on $[0,T]$ started from $X$ to stay strictly positive until time $T$. We define $B$ to be the canonical process associated to $\mathbf{W}_X^T$. One can also define $\mathbf{W}_0^T$ as the weak limit of the $\mathbf{W}_X^T$ as $X \to 0$. The fact that this limiting measure actually exists is not entirely trivial. It is actually called the Brownian meander, and has been studied extensively in \cite{DIM77, DI77, CM81, Ig74}, and subsequent papers on the subject.

\begin{prop}\label{mp} Fix some $T,X>0$ and let $\mathbf W_{X}^T$ be as defined above, and let $B$ denote the associated canonical process. Consider the kernels $\mathscr{P}_t^T$ defined before. Then for $0 \leq t_1 <...<t_n \leq T$ and $Y_1,...,Y_n>0$, $$\mathbf{W}_X^T(B_{t_1} \in dY_1,\;..., \;B_{t_n} \in dY_n)\;\;\;\;\;\;\;\;\;\;\;\;\;\;\;\;\;\;\;\;\;\;\;\;\;\;\;\;\;\;\;\;\;\;\;\;\;\;\;\;\;\;\;\;\;\;\;\;\;\;\;\;\;\;\;\;\;\;\;\;\;\;\;\;\;\;\;\;\;\;\;\;\;\;\;\;\;\;\;\;\;\;\;\;\;\;\; $$$$= \mathscr{P}_{t_1}^T(X,Y_1) \mathscr{P}_{t_2-t_1}^{T-t_1}(Y_1,Y_2) \cdots \mathscr{P}_{t_n-t_{n-1}}^{T-t_{n-1}}(Y_{n-1},Y_n) \; dY_1 \dots dY_n$$ In particular, if $T<S$ then the conditional law of $(B_{t+S})_{t \in [0,T-S]}$ given $(B_t)_{t \in [0,S]}$ is equal to $\mathbf{W}_{B_S}^{T-S}$. The same statements hold true for $X=0$.
\end{prop}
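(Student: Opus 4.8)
I would prove the proposition by recognizing it as a space-time Doob $h$-transform of Brownian motion killed at the origin, running the argument in parallel with the discrete computation of Proposition \ref{pt}. The two classical inputs I would invoke are: (i) Brownian motion started from $X>0$ and killed at the first hitting time $\tau$ of $0$ is a Markov process with sub-transition density $P_t^{Dir}(X,Y)$ (reflection principle); and (ii) $\Bbb P_X(\tau>s)=\int_{\Bbb R_+}P_s^{Dir}(X,Y)\,dY=2\Phi(X/\sqrt s)-1=:h_s(X)$, a one-line computation from $\int_{\Bbb R_+}P_s(X-Y)\,dY=\Phi(X/\sqrt s)$ and $\int_{\Bbb R_+}P_s(X+Y)\,dY=1-\Phi(X/\sqrt s)$. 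I would emphasize that $h$ is the relevant space-time harmonic function — $h_s(0)=0$, $h_s(Y)\to1$ as $s\downarrow0$ for $Y>0$, and $(r,Y)\mapsto h_{T-r}(Y)$ solves the backward heat equation on $\{Y>0\}$ with zero boundary value — which is exactly what makes $\mathscr P_t^T(X,Y)=P_t^{Dir}(X,Y)\,h_{T-t}(Y)/h_T(X)$ a genuine inhomogeneous Markov transition density.

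\textbf{Finite-dimensional laws, $X>0$.} Fixing $0\le t_1<\dots<t_n\le T$, I would first note that $\Bbb P_X(\inf_{[0,T]}B>0)=h_T(X)>0$, so $\mathbf W_X^T=\Bbb P_X(\,\cdot\mid\tau>T)$ is an honest conditioning. Assuming $t_n<T$, I would split $\{\tau>T\}$ into $\{\tau>t_n\}$ intersected with the event that $B$ stays positive on $[t_n,T]$, apply the Markov property at $t_n$, and thereby obtain
\[
\Bbb P_X\big(B_{t_1}\in dY_1,\dots,B_{t_n}\in dY_n,\,\tau>T\big)=\Bbb P_X\big(B_{t_1}\in dY_1,\dots,B_{t_n}\in dY_n,\,\tau>t_n\big)\,h_{T-t_n}(Y_n),
\]
the first factor on the right being $\prod_{j=1}^n P_{t_j-t_{j-1}}^{Dir}(Y_{j-1},Y_j)\,dY_j$ by input (i), with $t_0=0$ and $Y_0=X$. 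Dividing by $h_T(X)$ and inserting the telescoping identity $h_{T-t_n}(Y_n)/h_T(X)=\prod_{j=1}^n h_{T-t_j}(Y_j)/h_{T-t_{j-1}}(Y_{j-1})$ would identify the result term-by-term with $\prod_{j=1}^n\mathscr P_{t_j-t_{j-1}}^{T-t_{j-1}}(Y_{j-1},Y_j)\,dY_j$ of Definition \ref{sht}; the case $t_n=T$ is the same with $h_{T-t_n}(Y_n)$ replaced by $1$, the final telescoped factor then being the $t=T$ branch $\mathscr P_{T-t_{n-1}}^{T-t_{n-1}}(Y_{n-1},Y_n)$. From this product structure I would deduce, via the usual monotone-class argument, that $B$ is an inhomogeneous Markov process under $\mathbf W_X^T$ with transition density $\mathscr P_{t-s}^{T-s}$ from time $s$ to time $t$ — the residual horizon $T-s$ accounting for the inhomogeneity — and, reading off post-$S$ marginals, that conditionally on $(B_u)_{u\le S}$ the path $(B_{S+t})_{t\le T-S}$ has law $\mathbf W_{B_S}^{T-S}$.

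\textbf{The boundary case $X=0$.} Since $\mathbf W_0^T$ is defined as the weak limit of $\mathbf W_X^T$ as $X\downarrow0$, I would invoke the known existence of this limit — the Brownian meander, from \cite{DIM77,DI77,Ig74} — as the one genuinely nontrivial ingredient; weak convergence on path space then forces convergence of finite-dimensional laws, so it remains only to identify the limiting densities. For this I would use the small-$X$ asymptotics $P_t^{Dir}(X,Y)=\frac{1}{\sqrt{2\pi t}}e^{-(X^2+Y^2)/2t}\,2\sinh(XY/t)\sim X\cdot\frac{2Y}{t}P_t(Y)$ and $2\Phi(X/\sqrt T)-1\sim 2X/\sqrt{2\pi T}$ to see that each factor $\mathscr P_{t_j-t_{j-1}}^{T-t_{j-1}}(Y_{j-1},Y_j)$ converges pointwise, as $Y_0=X\downarrow0$, to the corresponding expression of Definition \ref{sht} with first argument $0$. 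Then, noting that the limits $\mathscr P_t^T(0,\cdot)$ are probability densities — $\int_{\Bbb R_+}\mathscr P_T^T(0,Y)\,dY=\int_{\Bbb R_+}(Y/T)e^{-Y^2/2T}\,dY=1$ directly, and $\int_{\Bbb R_+}\mathscr P_t^T(0,Y)\,dY=1$ for $t<T$ by consistency with the $X>0$ normalization $\int_{\Bbb R_+}\mathscr P_t^T(X,Y)\,dY=1$ — I would apply Scheffé's lemma to upgrade pointwise convergence of densities to total-variation convergence of the finite-dimensional laws, which then have the claimed densities, the Markov and conditioning statements passing to the limit. I expect this passage to $X=0$ to be the main obstacle: it leans on the nontrivial (though established) existence of the meander, and requires checking that the finite-dimensional densities extend continuously to the boundary — which the asymptotics above, together with Scheffé's lemma, take care of.
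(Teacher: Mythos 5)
Your proposal is correct, and for $X>0$ it is in substance the same argument as the paper's: both rest on viewing $\mathbf W_X^T$ as Brownian motion killed at $0$ conditioned to survive, the Markov property of Brownian motion, and the survival probability $h_s(X)=2\Phi(X/\sqrt s)-1$. The arrangement is reversed, however: the paper first proves the conditional-law/conditional-independence statement (the post-$S$ path given the pre-$S$ path has law $\mathbf W_{B_S}^{T-S}$), which reduces the density formula to the single case $n=1$, $t_n=T$, whereas you compute all finite-dimensional densities in one stroke (Markov property at $t_n$ plus the telescoping of the factors $h_{T-t_j}(Y_j)/h_{T-t_{j-1}}(Y_{j-1})$, exactly as in the discrete Proposition \ref{pt}) and then read off the Markov structure; this gives a self-contained multi-time formula at the cost of the monotone-class bookkeeping needed to pass from finite-dimensional laws to the conditional law of the entire post-$S$ path. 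Where you genuinely go beyond the paper is the boundary case $X=0$: the paper's proof treats only $X>0$ and leaves the meander case to the definition of $\mathbf W_0^T$ as a weak limit, while you spell out the limiting argument — weak convergence on $C[0,T]$ gives convergence of finite-dimensional laws, the small-$X$ asymptotics $P^{Dir}_t(X,Y)\sim 2XY\,P_t(Y)/t$ and $h_T(X)\sim 2X/\sqrt{2\pi T}$ identify the limiting kernels of Definition \ref{sht}, and Scheff\'e identifies the limit laws. One point to tighten there: Scheff\'e requires knowing that the limiting density, e.g. $Y\mapsto Y\sqrt{T/t^3}\,e^{-Y^2/2t}\big(2\Phi(Y/\sqrt{T-t})-1\big)$, integrates to $1$; ``consistency with the $X>0$ normalization'' is not by itself a proof (Fatou only yields total mass $\le 1$), but this is closed either by a one-line integration by parts or by dominated convergence using $P^{Dir}_t(X,Y)\le \frac{2XY}{t}P_t(Y-X)$ together with $h_T(X)\ge cX/\sqrt T$ for $X\le \sqrt T$, so the gap is routine.
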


Before moving onto the proof, we remark that when $X \neq 0$ and $t_n \neq T$, the above formula for transition densities reduces to $$P_{t_1}^{Dir}(X,Y_1) P_{t_2-t_1}^{Dir}(Y_1,Y_2) \cdots P_{t_n-t_{n-1}}^{Dir}(Y_{n-1},Y_n) \frac{2 \Phi(Y_n/\sqrt{T-t_n})-1}{2\Phi(X/\sqrt{T})-1}\; dY_1 \dots dY_n.$$ When $t_n=T$ the numerator $2 \Phi(Y_n/\sqrt{T-t_n})-1$ should be interpreted as just $1$. When $X=0$ this expression becomes $0/0$, and one needs to take the limit, which gives the formula stated in the theorem.

\begin{proof}
Assuming $X>0$ the proof is analogous to that of Proposition \ref{pt}. Basically one first shows that if $S<T$ then the conditional law of $(B_{t+S})_{t \in [0,T-S]}$ given $(B_t)_{t \in [0,S]}$ is equal to $\mathbf{W}_{B_S}^{T-S}$, and furthermore that $(B_{t+S})_{t \in [0,T-S]}$ and $(B_t)_{t \in [0,S]}$ are conditionally independent given $B_S$. This may be proven by a single computation using the basic properties of standard Brownian motion.
\\
\\
As in the proof of Proposition \ref{pt}, this then reduces the claim to proving the formula for $n=1$ and $t_n = T$. In turn, this follows by noticing that $\mathbf{W}_X^T$ is the same as Brownian motion killed at zero, but conditioned to survive. Hence one finds that $$\mathbf{W}_X^T(B_T \in dY) = \frac{P_T^{Dir}(X,Y)dY}{\int_0^{\infty}P_T^{Dir}(X,Z)dZ } = \frac{P_T^{Dir}(X,Y)dY}{2\Phi(X/\sqrt{T})-1},$$which proves the claim.
\end{proof}

This concludes the introductory material on the subject, and we now move on to more technical aspects of the exposition of these random walk measures.

\subsection{Heat kernel estimates}

We now move onto proving various useful estimates for the heat kernels $\mathfrak{p}_n^N$ and $\mathscr{P}_t^T$ defined earlier in this section. Not much motivation will be given here, but the utility of these estimates will become clear later. The methods used in proving these bounds will be brute-force analysis. We will freely use results from the appendix in this section, so the reader may be interested in taking a brief look at so of the main results from there (Propositions \ref{ohgod} and \ref{coup}, and Theorem \ref{conc}).

\begin{prop}\label{macky} There exists constants $C,K>0$ such that for all $x\ge 0$, all $N\ge n\ge 0$, and all $a\ge 0$ one has that $$\sum_{y \geq 0} \mathfrak{p}_n^N(x,y)e^{ay} \leq C e^{ax+Ka^2n}.$$
\end{prop}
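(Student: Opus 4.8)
The plan is to bound $\mathfrak{p}_n^N(x,y)$ by the corresponding reflecting-type kernel up to a harmless multiplicative factor, and then to use a classical Gaussian exponential-moment estimate for the simple random walk. Recall from Definition \ref{fuk} that $\mathfrak{p}_n^N(x,y) = p_n^{(1/2)}(x,y)\,\psi(y;N-n)/\psi(x;N)$, where $\psi(x;m)=\sum_{z\ge 0} p_m^{(1/2)}(x,z)$ is the survival probability of the killed walk. So the first step is to control the ratio $\psi(y;N-n)/\psi(x;N)$. Using Theorem \ref{rw}(2) (or rather its non-asymptotic analogue, which should be available from the appendix), one expects $\psi(x;m) \asymp (x+1)m^{-1/2}$ uniformly, so that $\psi(y;N-n)/\psi(x;N) \le C\,(y+1)/(x+1)\cdot (N/(N-n))^{1/2}$. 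This is \emph{not} uniformly bounded — it grows linearly in $y$ — but a factor of $(y+1)$ is easily absorbed into the exponential $e^{ay}$ at the cost of enlarging the constant $K$ in front of $a^2n$; more precisely $(y+1)e^{ay} \le C e^{(a+1)y}$ or one simply notes $y+1 \le e^{y}$ and reduces to proving the bound with $a$ replaced by $a+1$ and then absorbing the resulting $e^{n}$-type error — actually the cleanest route is $(y+1) \le 2e^{\epsilon y}/\epsilon$ type bounds, but since we are allowed to lose constants, I would just write $(y+1) \le C_\delta e^{\delta y}$ for any fixed $\delta>0$, and note that this shifts $a \mapsto a+\delta$, which is fine once we prove the estimate for all $a$. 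One must be slightly careful that the ratio also needs a lower bound on $\psi(x;N)$, i.e. $\psi(x;N) \ge c(x+1)N^{-1/2}$; this is exactly the kind of estimate the appendix (Proposition \ref{ohgod}, say) provides.

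The second and main step is the exponential moment bound for the killed/reflected kernel: there exist $C,K>0$ with $\sum_{y\ge 0} p_n^{(1/2)}(x,y)e^{ay} \le Ce^{ax+Ka^2n}$ for all $x\ge 0$, $n\ge 0$, $a\ge 0$. This I would get directly from the image-formula $p_n^{(1/2)}(x,y) = p_n(x-y)-p_n(x+y+2) \le p_n(x-y)$ (valid since $p_n(x+y+2)\ge 0$), where $p_n$ is the whole-line simple-random-walk kernel. Hence $\sum_{y\ge 0} p_n^{(1/2)}(x,y)e^{ay} \le \sum_{y\in\Bbb Z} p_n(x-y)e^{ay} = e^{ax}\sum_{z\in\Bbb Z}p_n(z)e^{az} = e^{ax}(\cosh a)^n \le e^{ax}e^{na^2/2}$, using the standard identity $\sum_{z}p_n(z)e^{az} = (\cosh a)^n$ for the lazy-free simple walk (if one is using the period-2 walk this is exactly $(\cosh a)^n$; interpolation for non-integer $n$ or parity issues change nothing) and the elementary inequality $\cosh a \le e^{a^2/2}$. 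So this step is genuinely routine once the image formula is invoked, and it already gives $K=1/2$.

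Combining: $\sum_y \mathfrak{p}_n^N(x,y)e^{ay} \le \frac{C}{x+1}\big(\tfrac{N}{N-n}\big)^{1/2}\sum_y (y+1)p_n^{(1/2)}(x,y)e^{ay}$, and $(y+1)p_n^{(1/2)}(x,y) \le (y+1)p_n(x-y)$. Writing $(y+1) \le (|x-y|+x+1) \le e^{|x-y|}+x+1$ (or using that $\sum_z (|z|+1)p_n(z)e^{az}$ is itself bounded by $C(n+1)e^{na^2/2}(\cosh a)$-type quantities via differentiating the moment generating function in $a$) lets one reduce to the previous sum at a slightly larger value of $a$ and an extra polynomial-in-$n$ prefactor; since $\mathrm{poly}(n)\le C_\epsilon e^{\epsilon n}$ and we may freely enlarge $K$ to swallow $\epsilon n \le \epsilon a^2 n$ only when $a$ is bounded below — so instead I would handle small $a$ (say $a\le 1$) and large $a$ separately, with the polynomial factor absorbed into $C$ for bounded $a$ via $\mathrm{poly}(n) e^{na^2/2} \le C' e^{na^2}$ is false for $a$ near $0$... so in the bounded-$a$ regime one keeps the polynomial explicitly: $\mathrm{poly}(n)$ there is fine because the statement allows $C$ to depend only on nothing — hmm, it does not allow $n$-dependence. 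The honest fix, and the one I would write, is: $(y+1)e^{ay} \le 2 e^{(a+1)y}$, so $\sum_y \mathfrak p_n^N(x,y) e^{ay} \le \frac{2C}{x+1}(\tfrac{N}{N-n})^{1/2} e^{(a+1)x} e^{n(a+1)^2/2}$, and then $(a+1)^2 \le 2a^2+2$, giving $e^{n(a+1)^2/2}\le e^{na^2}e^n$; the stray $e^n$ together with $e^{(a+1)x}=e^{ax}e^x$ is the remaining nuisance.

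The remaining obstacle — and the step I'd expect to cost the most care — is precisely disposing of the spurious factors $e^x$, $e^n$, and $(N/(N-n))^{1/2}$ that the crude bounds produce, since the target has a \emph{clean} $e^{ax+Ka^2n}$ with no such terms. The factor $(N/(N-n))^{1/2}$ is dangerous when $n$ is close to $N$; but in that regime one can instead bound $\psi(y;N-n)\le 1$ trivially and $\psi(x;N)\ge c(x+1)N^{-1/2}$, and note that near $n=N$ the kernel $p_n^{(1/2)}(x,y)$ is supported on $|y-x|\lesssim \sqrt n \lesssim \sqrt N$, so the sum is controlled directly. For the $e^x$ factor: one does \emph{not} get to lose $e^x$, so the ratio bound must be done more carefully — the point is that $\psi(x;N) \asymp (x+1)/\sqrt N$ only when $x\lesssim \sqrt N$, and when $x\gtrsim\sqrt N$ one has $\psi(x;N)\to 1$, so $\psi(y;N-n)/\psi(x;N)\le C(y+1)/(x+1)$ actually holds uniformly (both numerator and denominator saturate at $1$). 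Then $(y+1)/(x+1)\cdot e^{ay}$: write $y\le x+|y-x|$, so $(y+1)/(x+1)\le 1+|y-x|$, and $(1+|y-x|)e^{a|y-x|}\le 2e^{(a+1)|y-x|}$, whence $\sum_y \mathfrak p_n^N(x,y)e^{ay}\le 2e^{ax}\sum_z (1+|z|)e^{a|z|}p_n(z) \le 2e^{ax}\sum_{z\in\Bbb Z}2e^{(a+1)z}p_n(z)$... but $(a+1)$ again. The genuinely clean statement one should prove is with $p_n(z)e^{a|z|}$ and note $\sum_z (1+|z|)p_n(z)e^{a|z|}\le 2\sum_z p_n(z)e^{(a+1/2)\cdot 2|z|}$ is again lossy. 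I'll accept that the final constant $K$ produced this way is, say, $4$ rather than $1/2$, and that a genuinely uniform argument requires splitting $|z|\le n$ (where $(1+|z|)\le (1+n)\le e^{\sqrt z}\cdot$... ) — in practice the paper will just carry a polynomial factor through or use $(1+|z|)\le e^{|z|}$ and enlarge $K$ to $K+1$, which is legitimate since the estimate is then claimed for all $a\ge 0$ and $e^n \le e^{a^2 n}$ fails only for $a<1$, where $p_n^{(1/2)}(x,\cdot)$-moments are anyway $O(e^{Cn})$ uniformly — so the honest conclusion is that one proves it with an extra $e^{Cn}$, observes this is $\le e^{C a^2 n}$ for $a\ge 1$, and for $a\in[0,1]$ proves it separately by noting $e^{ay}\le e^y$ and bounding the $a=1$ moment. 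That case split is the one real piece of work.
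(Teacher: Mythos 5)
Your argument does not close, and the place where it fails is exactly the place you flag as ``the one real piece of work.'' Every route you sketch produces a spurious prefactor --- $(y+1)$ absorbed as a shift $a\mapsto a+\delta$, a factor $\sqrt{n}$ or $e^{Cn}$, or $(N/(N-n))^{1/2}$ --- that comes \emph{without} an accompanying power of $a$, and such factors cannot be absorbed into $e^{Ka^2n}$ uniformly down to $a=0$ (at $a=0$ the left-hand side is exactly $1$, since $\mathfrak p_n^N(x,\cdot)$ is a probability measure, so no bound of the form $\mathrm{poly}(n)\,e^{Ka^2n}$ or $e^{Cn}e^{Ka^2n}$ with $C>0$ is acceptable). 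Your proposed repair --- prove the bound with an extra $e^{Cn}$, absorb it for $a\ge 1$, and for $a\in[0,1]$ use $e^{ay}\le e^{y}$ and the $a=1$ moment --- is wrong: it yields $Ce^{x+Kn}$, which is not $\le Ce^{ax+Ka^2n}$ for small $a$. A secondary problem is the claimed uniform ratio bound $\psi(y;N-n)/\psi(x;N)\le C(y+1)/(x+1)$: by Lemma \ref{mass} this fails when $n$ is close to $N$ (take $x=y=0$, $n=N$; then the ratio is $1/\psi(0,N)\asymp\sqrt N$ while $(y+1)/(x+1)=1$), so the very first reduction is already lossy by a polynomial factor in the dangerous regime.

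The paper avoids all of this by not decomposing the kernel at all. It writes $\sum_{y\ge 0}\mathfrak p_n^N(x,y)e^{ay}=\mathbf E_x^N[e^{aS_n}]=1+\int_0^\infty ae^{au}\,\mathbf P_x^N(S_n>u)\,du$, bounds the probability by $1$ on $[0,x]$, and on $[x,\infty)$ invokes the concentration estimate of Theorem \ref{conc}, $\mathbf P_x^N(S_n>u)\le Ce^{-c(u-x)^2/n}$, which is uniform in $x$ and $N$ and already encodes the normalization you are trying to recover by hand from the $\psi$-ratio. This gives $\mathbf E_x^N[e^{aS_n}]\le e^{ax}+Can^{1/2}e^{ax+a^2n/(4c)}$, and the key point is that the error term carries a factor $a\,n^{1/2}\le e^{a^2n}$: the power of $a$ in front is precisely what lets the prefactor be absorbed for \emph{all} $a\ge 0$, including $a\to 0$, with no case split. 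If you want to salvage your kernel-level approach you would need an estimate of this quality (Gaussian, uniform, and with the normalization built in) for $\mathfrak p_n^N$ itself, e.g.\ Lemma \ref{wconc}; the crude two-sided $\psi$ bounds plus the whole-line image formula are not sharp enough.
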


\begin{proof} Let us write $$\sum_{y \geq 0} \mathfrak{p}_n^N(x,y)e^{ay} = \mathbf E_x^N[e^{aS_n}] = 1+\int_0^{\infty} ae^{au} \mathbf P_x^N(S_n>u)dy.$$ Now we split the integral as $ \int_0^x + \int_x^{\infty}$. For the integral over $[0,x]$ we use the crude bound $\mathbf P_x^N(S_n>u) \leq 1$. For the integral over $[x,\infty)$, we use the result of Theorem \ref{conc}. This will give $$\mathbf E_x^N[e^{aS_n}] \leq e^{ax} + C\int_x^{\infty} ae^{au} e^{-c(u-x)^2/n}du \leq e^{ax} + C\cdot an^{1/2}e^{ax+\frac{a^2n}{4c}}.$$ Since $an^{1/2} \leq e^{a^2n}$, this gives the result, with $K:=1+\frac{1}{4c}.$
\end{proof}

We remark that $c = 1/32$ from the proof of Theorem \ref{conc}, so we actually obtain $K=9$ in the preceding proposition. Conjecturally, the optimal value of $K$ should be $1/2$, as is the case for simple random walk (as seen from $\cosh(a) \leq e^{\frac12 a^2})$.

\begin{lem}\label{wconc}
Fix $b>0$. There exists $C=C(b)>0$ such that for all $x\ge 0$ and all $N\ge n\ge 0$, one has that $$\mathfrak p_n^N(x,y) \leq  \frac{C}{\sqrt{n+1}}e^{-b|x-y|/\sqrt{n}}.$$
\end{lem}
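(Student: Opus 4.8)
The plan is to write $\mathfrak p_n^N(x,y) = p_n^{(1/2)}(x,y)\,\psi(y;N-n)/\psi(x;N)$ and control each factor separately. For the ratio of $\psi$'s I would first establish a two-sided bound $\psi(x;m)\asymp (x+1)/\sqrt{m+1}$ uniformly in $x\ge 0$ and $m\ge 0$; this should follow from the explicit formula $p_m^{(1/2)}(x,y)=p_m(x-y)-p_m(x+y+2)$ together with the local central limit theorem (this is essentially the content of Theorem \ref{rw}(2) before taking the limit, so I would quote or lightly adapt it). The subtle point is that $\psi(y;N-n)/\psi(x;N)$ is not bounded above by a constant times anything decaying in $|x-y|$ when $N-n$ is small; but in that regime $p_n^{(1/2)}(x,y)$ itself is supported near $y\le x+n$ and one has to be a little careful. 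So I would split into the two cases $N-n \ge n$ and $N-n < n$ and handle the ratio accordingly, using in the second case that $\psi(y;N-n)\le 1$ trivially and that $\psi(x;N)\ge c(x+1)/\sqrt{N+1}\ge c'(x+1)/\sqrt{n+1}$ since $N\le$ (something comparable to $n$) is \emph{not} available — rather $N\ge n$, so $\psi(x;N)$ could be small. Let me reconsider: since $N\ge n$, $\psi(x;N)\asymp (x+1)/\sqrt{N+1}$ which can be as small as order $(x+1)/\sqrt{N}$; meanwhile $\psi(y;N-n)\le C(y+1)/\sqrt{N-n+1}$, so the ratio is at most $C\frac{y+1}{x+1}\sqrt{\tfrac{N+1}{N-n+1}}$. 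This is dangerous when $N-n$ is tiny, but then $\psi(y;N-n)\le 1$ and also $p_n^{(1/2)}(x,y)\le p_{n-(N-n)}^{(1/2)}*\!(\text{short step})$ — cleaner is to just bound $\psi(y;N-n)\le \min(1, C(y+1)/\sqrt{N-n+1})$ and split on whether $N-n+1 \ge (y+1)^2$.

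For the factor $p_n^{(1/2)}(x,y)$, the key estimate is a Gaussian-type bound $p_n^{(1/2)}(x,y)\le \frac{C}{\sqrt{n+1}} e^{-c|x-y|^2/n}$, which follows from the explicit image formula and the standard local CLT / Gaussian bound for $p_n$ on $\Bbb Z$, namely $p_n(z)\le \frac{C}{\sqrt{n+1}}e^{-cz^2/n}$ (with the usual parity caveat, harmless here). Since $e^{-c|x-y|^2/n}\le C_b e^{-b|x-y|/\sqrt n}$ for any fixed $b$ (maximizing $-ct^2+bt$ over $t\ge 0$ gives a constant depending on $b,c$), this already yields the desired decay for $p_n^{(1/2)}$ alone. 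The remaining task is to absorb the polynomial factor $(y+1)/(x+1)$ coming from the $\psi$-ratio into a fraction of the exponential: one writes $e^{-c|x-y|^2/n} = e^{-\frac c2 |x-y|^2/n}\cdot e^{-\frac c2 |x-y|^2/n}$, uses the first half to kill $(y+1)/(x+1)\le 1+|x-y|$ (since $y+1\le (x+1)(1+|x-y|)$ when... actually $y+1\le (x+1)+|x-y|\le (x+1)(1+|x-y|)$ always, and $(x+1)$ cancels), and uses the second half, together with $1+|x-y|\le C_b' e^{\frac c4 |x-y|^2/n}\cdot(\dots)$ — more simply $1+|x-y|$ is swallowed by any positive fraction of the Gaussian up to a constant, and what's left of the Gaussian still dominates $e^{-b|x-y|/\sqrt n}$.

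The main obstacle I anticipate is \textbf{not} any single estimate but rather the bookkeeping to get a bound that is genuinely \emph{uniform} in all three of $x\ge 0$, $N\ge n\ge 0$ simultaneously, particularly in the degenerate regimes $n=0$ (where $\mathfrak p_0^N(x,y)=\delta_{xy}$ and the claimed bound reads $\mathfrak p_0^N(x,y)\le C$, trivially true) and $N-n$ small or $N$ large. I would organize the proof as: (i) the LCLT-based two-sided bound on $\psi$; (ii) the Gaussian upper bound on $p_n^{(1/2)}$; (iii) combine, splitting on $(y+1)^2 \lessgtr N-n+1$, and in each case convert the Gaussian-squared-exponent into the linear-exponent bound $\frac{C}{\sqrt{n+1}}e^{-b|x-y|/\sqrt n}$ by the elementary inequality $\sup_{t\ge0}(bt - \tfrac{c}{2}t^2)<\infty$. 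None of these steps is deep; the value of the lemma is purely in having the clean uniform statement available for the chaos-expansion estimates in Section 5, and I expect the write-up to be a couple of pages of careful but elementary analysis.
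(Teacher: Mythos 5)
Your overall plan (factorize $\mathfrak p_n^N(x,y)=p_n^{(1/2)}(x,y)\,\psi(y;N-n)/\psi(x;N)$, envelope bounds on $\psi$ from the local CLT, a decay bound on $p_n^{(1/2)}$, then absorb the polynomial ratio into the exponential) is the same skeleton as the paper's, but two of your steps fail. First, the claimed two-sided bound $\psi(x;m)\asymp (x+1)/\sqrt{m+1}$ is false once $x\gtrsim\sqrt m$: there $\psi\asymp 1$ while $(x+1)/\sqrt{m+1}$ can be arbitrarily large, so in particular the lower bound $\psi(x;N)\ge c(x+1)/\sqrt{N+1}$ that you use for the denominator is wrong; the correct envelope (Lemma \ref{mass}) is $\psi(x,m)\asymp\frac{x+1}{x+1+\sqrt m}$. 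As a result your intermediate ratio bound $\psi(y;N-n)/\psi(x;N)\le C\frac{y+1}{x+1}\sqrt{\tfrac{N+1}{N-n+1}}$ is false in the regime $x\gtrsim\sqrt N$ (take $x=n=N$, $y=0$: the true ratio is of order $1$, your bound is $O(N^{-1/2})$). This part is easily repaired by treating $x\ge\sqrt N$ as a separate case, where $\psi(x;N)\ge c$ and the kernel bound alone gives the lemma; that is exactly Case 1 of the paper's proof, and your proposed case splits (on $N-n\gtrless n$ and on $(y+1)^2\gtrless N-n+1$) do not isolate it.

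The second gap is more serious and is not repaired by re-splitting cases: your absorption step loses a factor of order $\sqrt n$. After bounding $(y+1)/(x+1)\le 1+|x-y|$, you claim that $1+|x-y|$ "is swallowed by any positive fraction of the Gaussian up to a constant", but $\sup_{z\ge 0}(1+z)e^{-\epsilon z^2/n}\asymp\sqrt{n/\epsilon}$; only the dimensionless quantity $1+|x-y|/\sqrt n$ is absorbed at unit cost. Concretely, at $x=0$, $y\asymp\sqrt n$, $N=2n$, the product of your two ingredients (crude kernel bound $Cn^{-1/2}e^{-c|x-y|^2/n}$ times the correct ratio bound, which there is of order $\sqrt n$) is of order $1$, whereas the lemma requires order $n^{-1/2}$; so the plan as stated cannot produce the prefactor $C/\sqrt{n+1}$. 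The missing ingredient is the refined boundary-decay estimate $p_n^{(1/2)}(x,y)\le C\,\frac{x+1}{n+1}\,e^{-b|x-y|/\sqrt n}$, which is precisely the second alternative in the paper's Lemma \ref{ohgod} (proved there by a contour-shift/characteristic-function argument, not just the plain LCLT Gaussian bound). Its factor $x+1$ cancels the $1/(x+1)$ coming from the $\psi$-ratio, leaving a term like $\frac{|x-y|}{n+1}e^{-(b+1)|x-y|/\sqrt n}\le\frac{C}{\sqrt{n+1}}e^{-b|x-y|/\sqrt n}$, which is how the paper closes Cases 2--4. You would need this (or an equivalent estimate quantifying the smallness of the killed kernel for starting points near the boundary) for your write-up to go through.
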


We remark that this bound is quite strong and quite natural. Many of our estimates could have been derived from this result rather than from the concentration theorem, but only in a much weaker form (because the decay is merely exponential rather than Gaussian).

\begin{proof}
We consider four different cases.
\\
\\
\textit{Case 1.} $x \ge \sqrt{N}$. Then, one has $\frac{\psi(y,N-n)}{\psi(x,N)} \leq \frac1{\psi(x,N)} \le C,$ by Lemma \ref{mass}. Thus it holds that $\mathfrak{p}_n^N(x,y) \leq Cp_n^{(1/2)}(x-y) \leq C(n+1)^{-1/2}e^{-b|x-y|/\sqrt{n}}$. The final inequality comes from the second bound of Lemma \ref{ohgod}.
\\
\\
\textit{Case 2.} $n<N/2$ and $y \leq x$. Then one has \begin{align*}
    \mathfrak{p}_n^N(x,y) & \leq p_n^{(1/2)}(x,y) \bigg[\frac{x+1+\sqrt{N}}{x+1} \bigg] \bigg[ \frac{y+1}{y+1+\sqrt{N-n}}\bigg] \\ &\leq C(n+1)^{-1/2}e^{-b|x-y|/\sqrt{n}} \bigg[\frac{x+1+\sqrt{N}}{x+1} \bigg] \bigg[ \frac{x+1}{x+1+\sqrt{N-n}}\bigg] \\ &= C(n+1)^{-1/2}e^{-b|x-y|/\sqrt{n}} \bigg[\frac{x+1+\sqrt{N}}{x+1+\sqrt{N-n}}\bigg] \\ &\leq C(n+1)^{-1/2}e^{-b|x-y|/\sqrt{n}} \bigg[\frac{N}{N-n}\bigg]^{1/2}
\end{align*} 
We used Lemma \ref{mass} in the first line, we used Lemma \ref{ohgod} and that $y \mapsto \frac{y+1}{y+1+\sqrt{N-n}}$ is monotone increasing in the second line, and we used the fact that $x \mapsto \frac{x+1+\sqrt{N}}{x+1+\sqrt{N-n}}$ is monotone decreasing in the last line. Since $n<N/2$ it follows that $\big[\frac{N}{N-n}\big]^{1/2} \leq 2^{1/2}$ so that term may be absorbed into $C$. Since $n<N/2$ one has $\frac{N}{N-n} \leq 2$, so we are done.
\\
\\
\textit{Case 3.} $n<N/2$ and $y \ge x$. then \begin{align} \notag
    \mathfrak{p}_n^N(x,y) &\leq C p_n^{(1/2)}(x,y) \bigg[\frac{x+1+\sqrt{N}}{x+1} \bigg] \bigg[ \frac{y+1}{y+1+\sqrt{N-n}}\bigg] \\ \notag &\leq  C p_n^{(1/2)}(x,y) \bigg[\frac{x+1+\sqrt{N}}{x+1} \bigg] \bigg[ \frac{y+1}{x+1+\sqrt{N-n}}\bigg] \\ \notag &\leq C\bigg[\frac{N}{N-n}\bigg]^{1/2} p_n^{(1/2)}(x,y) \frac{y+1}{x+1} \\ \notag &=  C\bigg[\frac{N}{N-n}\bigg]^{1/2}p_n^{(1/2)}(x,y) \bigg[\frac{y-x}{x+1} \;\; + \;\; 1\bigg]\\ \label{maff} &\leq C\bigg[\frac{y-x}{n+1} + C(n+1)^{-1/2}\bigg]e^{-b|x-y|/\sqrt{n}}.
\end{align} 
Here we noted $y \ge x$ in the second line, and we used the fact that $x \mapsto \frac{x+1+\sqrt{N}}{x+1+\sqrt{N-n}}$ is monotone decreasing in the third line. In the final line, we used $\big[\frac{N}{N-n}\big]^{1/2}\le 2^{1/2}$ (since $n<N/2$) and we also used both bounds of Lemma \ref{ohgod}. Now, we know that the bound \eqref{maff} is true for all $b$, in particular it is true with $b$ replaced by $b+1$ (after perhaps making the constant bigger). Thus we see that $$\frac{|x-y|}{n+1} e^{-(b+1)|x-y|/\sqrt{n}} \le \frac{1}{\sqrt{n+1}} e^{-b|x-y|/\sqrt{n}}\bigg[\frac{|x-y|}{\sqrt{n}}e^{-|x-y|/\sqrt{n}}\bigg] . $$ $$\leq \frac{1}{\sqrt{n+1}}e^{-b|x-y|/\sqrt{n}} \sup_{u>0} ue^{-u} = \frac{C}{\sqrt{n+1}}e^{-b|x-y|/\sqrt{n}}.$$
\\
\\
\textit{Case 4.} $x \leq \sqrt{N}$ and $n>N/2$. Since $x \leq \sqrt{N} \leq \sqrt{2n}$, we can apply Lemmas \ref{mass} and \ref{ohgod} to see that $$\mathfrak{p}_n^N(x,y) \leq Cp_n^{(1/2)}(x,y) \frac{x+1+\sqrt{N}}{x+1} $$$$\leq C\frac{x+1}{n+1}e^{-b|x-y|/\sqrt{n}}\cdot \frac{2\sqrt{2n}+1}{x+1} \leq C(n+1)^{-1/2}e^{-b|x-y|/\sqrt{n}}.$$This completes the proof of all cases.
\end{proof}

\begin{prop}\label{fetiz}
There exists constants $C,K>0$ such that for all $x\ge 0$, all $N\ge n\ge 0$, all $a\ge 0$, and all $p \ge 1$ one has that $$\sum_{y \geq 0} \mathfrak{p}_n^N(x,y)^p e^{ay} \leq C^p(n+1)^{-(p-1)/2}e^{ax+Ka^2n}.$$
\end{prop}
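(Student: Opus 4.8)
The plan is to interpolate between the $L^1$-type bound of Proposition \ref{macky} and the pointwise bound of Lemma \ref{wconc}. First I would write
$$\sum_{y \geq 0} \mathfrak{p}_n^N(x,y)^p e^{ay} = \sum_{y \geq 0} \mathfrak{p}_n^N(x,y)^{p-1} \cdot \mathfrak{p}_n^N(x,y)e^{ay} \leq \Big(\sup_{y \geq 0} \mathfrak{p}_n^N(x,y)^{p-1}\Big) \sum_{y\ge 0}\mathfrak{p}_n^N(x,y)e^{ay}.$$
The second factor is at most $Ce^{ax + Ka^2 n}$ by Proposition \ref{macky}. For the first factor, apply Lemma \ref{wconc} (say with $b=1$, so the exponential factor is $\le 1$) to get $\sup_y \mathfrak{p}_n^N(x,y) \leq C(n+1)^{-1/2}$, hence $\sup_y \mathfrak{p}_n^N(x,y)^{p-1} \leq C^{p-1}(n+1)^{-(p-1)/2}$. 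Multiplying the two estimates and absorbing constants (replacing $C$ by a possibly larger constant, so that $C \cdot C^{p-1} \le C^p$) yields exactly the claimed bound $C^p(n+1)^{-(p-1)/2}e^{ax+Ka^2n}$, with the same $K$ as in Proposition \ref{macky}.

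I do not expect any serious obstacle here — the proposition is essentially an immediate corollary of the two preceding results, and the only mild care needed is bookkeeping of constants so that the final bound has the clean form $C^p$ rather than, say, $C^{p-1}$ or $pC$; this is handled simply by enlarging $C$ at the end. One small point worth noting is that Lemma \ref{wconc}'s constant depends on the chosen $b>0$, but since we only need $b$ large enough that $e^{-b|x-y|/\sqrt n}\le 1$, fixing $b=1$ once and for all makes the constant in Lemma \ref{wconc} an absolute constant, and correspondingly the $C$ in Proposition \ref{fetiz} is absolute (independent of $x,N,n,a,p$), as required.

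An alternative route, should one want to avoid invoking Lemma \ref{wconc}, would be to prove the bound directly via a Hölder-type or Jensen-type argument on the chaos expansion, but this is strictly more work and the interpolation argument above is the natural and shortest path.
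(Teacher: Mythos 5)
Your proposal is correct and is essentially identical to the paper's proof: both factor $\mathfrak p_n^N(x,y)^p = \mathfrak p_n^N(x,y)^{p-1}\mathfrak p_n^N(x,y)$, bound the first factor by $C^{p-1}(n+1)^{-(p-1)/2}$ via Lemma \ref{wconc}, and then apply Proposition \ref{macky} to the remaining sum. Your choice $b=1$ (dropping the exponential factor, which is at most $1$) is if anything slightly cleaner than the paper's invocation of the lemma at $b=0$.
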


\begin{proof}Using Proposition \ref{wconc} with $b=0$, one finds that $$\mathfrak p_n^N(x,y)^p =\mathfrak p_n^N(x,y)^{p-1}\mathfrak p_n^N(x,y) \leq \frac{C^{p-1}}{(n+1)^{(p-1)/2}} \mathfrak p_n^N(x,y).$$ Then the claim follows immediately from Proposition \ref{macky}.
\end{proof}

We now bound space-time differences of the heat kernels $\mathfrak p_n^N$.

\begin{lem}\label{dack}
There exists a constant $C>0$ such that for all $x,y,z \geq 0$ one has that $$\big| \mathfrak p_n^N(x,y) - \mathfrak p_n^N(x,z) \big| \leq \frac{C}{n+1} \bigg[ \frac{N+1}{N-n+1}\bigg]^{1/2} |y-z|.$$
\end{lem}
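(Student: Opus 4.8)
The plan is to recall the definition $\mathfrak p_n^N(x,y) = p_n^{(1/2)}(x,y)\,\frac{\psi(y;N-n)}{\psi(x;N)}$ and to split the difference via the telescoping identity
\[
\mathfrak p_n^N(x,y) - \mathfrak p_n^N(x,z) = \frac{1}{\psi(x;N)}\Big[ \big(p_n^{(1/2)}(x,y) - p_n^{(1/2)}(x,z)\big)\psi(y;N-n) + p_n^{(1/2)}(x,z)\big(\psi(y;N-n)-\psi(z;N-n)\big)\Big].
\]
The factor $1/\psi(x;N)$ is controlled by Lemma \ref{mass} (the Mass lemma), which gives $\psi(x;N)^{-1} \le C(x+1+\sqrt N)/(x+1)$; this is where the $[(N+1)/(N-n+1)]^{1/2}$ term will ultimately be generated after combining with the growth of $\psi(y;N-n) \le C(y+1)/\sqrt{N-n}$ from the same lemma. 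So the two remaining things to estimate are a spatial Lipschitz bound for $y\mapsto p_n^{(1/2)}(x,y)$ and one for $y\mapsto \psi(y;N-n)$.

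For the first, I would use the explicit formula $p_n^{(1/2)}(x,y) = p_n(x-y)-p_n(x+y+2)$ together with a discrete gradient estimate for the whole-line heat kernel: $|p_n(w)-p_n(w')| \le \frac{C}{n+1}|w-w'|$, which follows from the standard local CLT / Gaussian-type bounds already invoked in Lemma \ref{ohgod} (one writes $p_n(w)-p_n(w')$ as a sum of consecutive differences $p_n(j)-p_n(j+1)$, each of which is $O((n+1)^{-1})$ by the explicit binomial-coefficient expression, and there are $|w-w'|$ of them). This immediately gives $|p_n^{(1/2)}(x,y)-p_n^{(1/2)}(x,z)| \le \frac{C}{n+1}|y-z|$. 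For the second, note $\psi(y;N-n) = \sum_{w\ge 0} p_{N-n}^{(1/2)}(y,w)$, and by the reversibility/symmetry of $p^{(1/2)}$ (evident from the image formula) one has $\psi(y;N-n) = \sum_{w\ge 0} p_{N-n}^{(1/2)}(w,y)$-type expressions that reduce the Lipschitz-in-$y$ bound for $\psi$ to the same discrete-gradient estimate, summed against a mass factor; alternatively one differences the explicit formula $\psi(x;n) = \sum_{w}\big(p_n(x-w)-p_n(x+w+2)\big)$ directly. Either way one gets $|\psi(y;N-n)-\psi(z;N-n)| \le \frac{C}{\sqrt{N-n+1}}|y-z|$ (a factor $\sqrt{N-n}$ better than the pointwise bound, as one expects for a gradient).

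Assembling: the first term is bounded by $\frac{C}{\psi(x;N)}\cdot\frac{|y-z|}{n+1}\cdot\psi(y;N-n)$, and using $\psi(y;N-n)\le C(y+1)/\sqrt{N-n+1}$ and $\psi(x;N)^{-1}\le C(x+1+\sqrt N)/(x+1)$ together with the Gaussian decay of $p_n^{(1/2)}(x,y)$ in $|x-y|$ (Lemma \ref{wconc}) to cancel the $y$-growth against the $x$-denominator — exactly as in Cases 2--4 of the proof of Lemma \ref{wconc} — yields $\frac{C}{n+1}[(N+1)/(N-n+1)]^{1/2}|y-z|$. The second term is bounded by $\frac{C}{\psi(x;N)}\,p_n^{(1/2)}(x,z)\cdot\frac{|y-z|}{\sqrt{N-n+1}}$; since $p_n^{(1/2)}(x,z)\le C(n+1)^{-1/2}$ pointwise and $\psi(x;N)^{-1}\le C\sqrt{N/(x+1)^2}\le C\sqrt N$ in the worst case, this is $\le \frac{C}{n+1}[(N+1)/(N-n+1)]^{1/2}|y-z|$ as well (here one uses $n\le N$ so $\sqrt N/\sqrt{n+1}\le \sqrt{N/(n+1)} \le C\sqrt{N/(N-n+1)}$ is not quite automatic — one splits into $n\le N/2$, handled directly, and $n>N/2$, where $\sqrt N \le \sqrt{2n}\le C\sqrt{n+1}$ absorbs the bad factor, mirroring Case 4 of Lemma \ref{wconc}).

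The main obstacle I anticipate is bookkeeping rather than conceptual: correctly tracking how the three $\psi$-ratios and the Gaussian tail of $p_n^{(1/2)}$ conspire to produce precisely the factor $[(N+1)/(N-n+1)]^{1/2}$ and no worse, uniformly over the whole range $0\le n\le N$ and all $x,y,z\ge 0$ — in particular handling the regime $n$ close to $N$ (where $\psi(y;N-n)$ blows up and one must use that $t\mapsto \mathscr P$-type normalizations remain integrable) and the regime $y,z$ far from $x$ (where the pointwise heat-kernel bound is weak but the Gaussian factor saves the day). This is the same case analysis already executed in Lemma \ref{wconc}, so I would organize the proof to parallel it rather than reinvent it.
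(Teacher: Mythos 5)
Your decomposition is exactly the paper's: split $\mathfrak p_n^N(x,y)-\mathfrak p_n^N(x,z)$ into a term with the increment of $p_n^{(1/2)}$ times $\psi(\cdot,N-n)/\psi(x,N)$ and a term with $p_n^{(1/2)}(x,z)$ times the increment of $\psi(\cdot,N-n)$, reduce to $|y-z|=1$, use the explicit identity $\psi(z+1,m)-\psi(z,m)=p_m(z+1)+p_m(z+2)$, and run a case analysis in $x$ versus $\sqrt N$ using Lemmas \ref{ohgod} and \ref{mass}, paralleling Lemma \ref{wconc}. So the strategy is right, but the way you assemble the bounds does not close, and the discrepancy is genuinely quantitative rather than mere bookkeeping.

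Concretely, for the second term you propose $p_n^{(1/2)}(x,z)\le C(n+1)^{-1/2}$ together with $\psi(x;N)^{-1}\le C\sqrt N$, giving $I_2\le C\sqrt N\,(n+1)^{-1/2}(N-n+1)^{-1/2}|y-z|$. The target is $C\sqrt{N+1}\,(n+1)^{-1}(N-n+1)^{-1/2}|y-z|$, so you are short by a factor of order $\sqrt{n+1}$; your proposed split ($n\le N/2$ handled ``directly'', $n>N/2$ via $\sqrt N\le\sqrt{2n}$) does not recover it --- e.g.\ for $n\le N/2$ you get $(n+1)^{-1/2}$ where $(n+1)^{-1}$ is needed, and for $n\approx N$ you get $O((N-n+1)^{-1/2})$ where $O(N^{-1/2}(N-n+1)^{-1/2})$ is needed. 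The point you are missing is the cancellation the paper exploits: when $x\le\sqrt N$ one must take the \emph{matched} pair $p_n^{(1/2)}(x,z)\le C\frac{x+1}{n+1}$ and $\psi(x;N)^{-1}\le C\frac{x+1+\sqrt N}{x+1}$, so the $(x+1)$ factors cancel and the full $1/(n+1)$ survives; when $x\ge\sqrt N$ one instead uses $\psi(x;N)^{-1}\le C$, $p_n^{(1/2)}(x,z)\le C\frac{z+1}{n+1}$, and the exponential factor $e^{-(z+1)/\sqrt{N-n}}$ in the $\psi$-increment to kill the $(z+1)$ via $\sup_u ue^{-u}$. A similar issue afflicts your first term: after replacing the increment of $p_n^{(1/2)}$ by the crude Lipschitz bound $C|y-z|/(n+1)$, the Gaussian decay of the kernel (Lemma \ref{wconc}) is no longer available to absorb the ratio $\frac{\psi(y;N-n)}{\psi(x;N)}\sim\frac{y+1}{x+1}\sqrt{\frac{N}{N-n}}$ when $x$ is small and $y$ is far from $x$; the decay has to be carried by the \emph{difference} bound itself, i.e.\ the second estimate of Lemma \ref{ohgod}, $|p_n^{(1/2)}(x,y)-p_n^{(1/2)}(x,z)|\le C\big[\frac1{n+1}\wedge\frac{x+1}{(n+1)^{3/2}}\big]|y-z|e^{-b n^{-1/2}(|x-y|\wedge|x-z|)}$, whose $(x+1)$-alternative and exponential factor are precisely what the paper uses (its equation \eqref{babb}) to absorb the $\frac{y+1}{x+1}$ growth. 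With those two substitutions your argument becomes the paper's proof.
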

We are not sure if this estimate is sharp. It may or may not be possible to get rid of the term $\big[\frac{N+1}{N-n+1}\big]^{1/2}$ using more clever arguments. However, it will be inconsequential for us because this crude estimate will suffice to prove tightness of the rescaled partition function.

\begin{proof} Without loss of generality, assume $y \ge z$. It suffices to prove the bound in the case $y=z+1$. In the general case, one simply adds the bound one simply adds the bound $y-z$ times. Let us write $$\big| \mathfrak p_n^N(x,z+1) - \mathfrak p_n^N(x,z) \big| = \bigg|\frac{p_n^{(1/2)}(x,z+1) \psi(z+1,N-n) - p_n^{(1/2)}(x,z)\psi(z,N-n)}{\psi(z,N)}\bigg| $$$$\leq |p_n^{(1/2)}(x,z+1) - p_n^{(1/2)}(x,z)|\frac{\psi(z+1,N-n)}{\psi(x,N)}+p_n^{(1/2)}(x,z) \frac{|\psi(z+1,N-n)-\psi(z,N-n)|}{\psi(x,N)}.$$
Let us call the two terms of the last expression as $I_1,I_2$ respectively. As in the proof of Lemma \ref{wconc} we now need to consider several cases.
\\
\\
\textit{Case 1.} $x \geq \sqrt{N}.$ First we bound $I_1$. Since $x \geq \sqrt{N}$ it holds from Lemma \ref{mass} that $$ \frac{\psi(z+1,N-n)}{\psi(x,N)} \leq \frac{1}{C},$$ and furthermore the second bound from Lemma \ref{ohgod} gives $$|p_n^{(1/2)}(x,z+1) - p_n^{(1/2)}(x,z)| \leq \frac{C}{n+1}.$$ The preceding two expressions already prove the desired bound on $I_1$. Next we need to bound $I_2$. As before we know that $ \frac{\psi(z+1,N-n)}{\psi(x,N)} \leq \frac{1}{C}.$ We know from lemma \ref{ohgod} that $p_n^{(1/2)}(x,z) \leq C\frac{z+1}{n+1}.$ Furthermore, we know (see the proof of Lemma \ref{mass}) that $$\psi(z+1,N-n)-\psi(z,N-n)= p_{N-n}(z+1) + p_{N-n}(z+2),$$ where $p_n$ is the standard kernel on the whole line. This can be bounded above by $\frac{C}{\sqrt{N-n}}e^{-(z+1)/\sqrt{N-n}}$ (see, for instance, the proof of Lemma \ref{ohgod}). Thus, we find that $$I_2 \leq \frac{C}{n+1} \frac{z+1}{\sqrt{N-n}}e^{-(z+1)/\sqrt{N-n}} \leq \frac{C}{n+1} \sup_{u \ge 0}ue^{-u}, $$ This proves the desired bound on $I_2$, since the supremum may be absorbed into the constant.
\\
\\
\textit{Case 2.} $x \leq \sqrt{N}$. First let us bound $I_1$. We know from Lemma \ref{ohgod} that \begin{equation}\label{babb}|p_n^{(1/2)}(x,z+1) - p_n^{(1/2)}(x,z)| \leq C\bigg[\frac{1}{n+1}\wedge \frac{x+1}{(n+1)^{3/2}}\bigg] e^{-|z-x|/\sqrt{n}}.\end{equation} Moreover, we know from lemma \ref{mass} that $$\frac{\psi(z,N-n)}{\psi(x,N)} \leq C \frac{z+1}{z+1+\sqrt{N-n}}\frac{x+1+\sqrt{N}}{x+1}.$$ Now we consider two sub-cases, $z \ge x$ and $z \le x$. If $z \le x$, then $$\frac{z+1}{z+1+\sqrt{N-n}}\frac{x+1+\sqrt{N}}{x+1} \le \frac{z+1}{z+1+\sqrt{N-n}}\frac{z+1+\sqrt{N}}{z+1}$$ $$=\frac{z+1+\sqrt{N}}{z+1+\sqrt{N-n}} \leq C\sqrt{\frac{N+1}{N-n+1}}.$$ In the first bound, we used that $x \mapsto \frac{x+1+\sqrt{N}}{x+1}$ is decreasing. In the last bound, we used that $z \mapsto \frac{z+1+\sqrt{N}}{z+1+\sqrt{N-n}}$ is decreasing, and that $\frac{1+\sqrt{N}}{1+\sqrt{N-n}}\leq C\sqrt{\frac{N+1}{N-n+1}}$. With \eqref{babb}, this already gives the required bound on $I_1$ (when $z \ge x$). If $z \ge x$, then one sees that $$\frac{z+1}{z+1+\sqrt{N-n}}\frac{x+1+\sqrt{N}}{x+1} \le \frac{z+1}{z+1+\sqrt{N-n}}\frac{z+1+\sqrt{N}}{x+1} $$$$ = \frac{z+1+\sqrt{N}}{z+1+\sqrt{N-n}} \frac{z+1}{x+1}\leq C\sqrt{\frac{N+1}{N-n+1}} \frac{z+1}{x+1},$$  where the last bound again holds because of the same reasons as when considering the previous sub-case ($z \le x$). Next, we write $\frac{z+1}{x+1} = \frac{z-x}{x+1}+1$, and then we combine the previous bound with \eqref{babb} to see that $$I_1 \leq C \bigg[\frac{N+1}{N-n+1}\bigg]^{1/2} \bigg[ \frac{x+1}{(n+1)^{3/2}} e^{-|z-x|/\sqrt{n}} \frac{|z-x|}{x+1} + \frac{1}{n+1} \bigg]. $$ now we just notice that $$\frac{x+1}{(n+1)^{3/2}} e^{-|z-x|/\sqrt{n}} \frac{|z-x|}{x+1} = \frac{1}{n+1}\bigg[ \frac{|z-x|}{(n+1)^{1/2}} e^{-|z-x|/n^{1/2}}\bigg] \leq \frac{1}{n+1} \sup_{u \ge 0} ue^{-u}.$$ This completes the proof of the required bound on $I_1$, since the supremum is absorbed into the constant.
\\
\\
Now we just need to obtain the required bound on $I_2$ in the case that $x \le \sqrt{N}$. For this, one first notes from Lemma \ref{ohgod} that $p_n^{(1/2)}(x,z) \le \frac{x+1}{n+1}.$ Then one notes from Lemma \ref{mass} that $\frac{1}{\psi(x,N)} \leq \frac{x+1+\sqrt{N}}{x+1} \leq 2\frac{1+\sqrt{N}}{x+1}$ (since $x \le \sqrt{N}).$ Finally, one notes that $$\psi(z+1,N-n)-\psi(z,N-n)= p_{N-n}(z+1) + p_{N-n}(z+2)\leq \frac{C}{\sqrt{N-n+1}}.$$ Hence we find that $$I_2 \leq C \frac{x+1}{n+1}\frac{1}{\sqrt{N-n+1}} \frac{1+\sqrt{N}}{x+1},$$ which is enough since the $x+1$ factors cancel. This completes the proof.
\end{proof}

\begin{prop}\label{spatem} Fix $p \ge 1$.
There exists a constant $C=C(p)>0$ such that for all $x,y \ge 0$, all $N \ge n\ge m \ge 0$, and all $a\ge 0$ one has that
\begin{align}\sum_{z \ge 0} \big( \mathfrak p_n^N(x,z) - \mathfrak p_n^N(y,z) \big)^{2p} e^{az} &\leq Ce^{a(x+y)+Ka^2n} \big(n^{\frac{1}{2}-\frac32p}+a^pn^{\frac12-p}\big) |x-y|^p, \label{spat1}\\ 
\sum_{z \ge 0} \big(\mathfrak p_n^{N-n+m}(x,z) - \mathfrak p_m^N(x,z) \big)^{2p}e^{az} &\le Ce^{a(x+y)+Ka^2n} \big(m^{\frac{1}{2}-\frac32p}+a^pm^{\frac12-p}\big)|n-m|^{p/2}.\label{tem1}
\end{align}
In the spatial bound \eqref{spat1}, the constant $C$ grows at worst exponentially in $p$.
\end{prop}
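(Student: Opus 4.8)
The plan is to reduce both \eqref{spat1} and \eqref{tem1} to pointwise increment estimates for $\mathfrak p_n^N$ combined with the weighted moment bounds of Propositions \ref{macky} and \ref{fetiz}. The reduction rests on the elementary inequality
$$\bigl(\mathfrak p_n^N(x,z)-\mathfrak p_n^N(y,z)\bigr)^{2p}\le\bigl|\mathfrak p_n^N(x,z)-\mathfrak p_n^N(y,z)\bigr|^{p}\,\bigl(\mathfrak p_n^N(x,z)+\mathfrak p_n^N(y,z)\bigr)^{p},$$
which holds because $\mathfrak p_n^N\ge0$, together with the analogous inequality for the temporal difference $\mathfrak p_n^{N-n+m}(x,z)-\mathfrak p_m^N(x,z)$. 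In each case I would bound the first factor by a pointwise increment estimate, pull it out of the sum over $z\ge0$, and control the remaining sum $\sum_z\bigl(\mathfrak p_n^N(x,z)+\mathfrak p_n^N(y,z)\bigr)^p e^{az}$ by Proposition \ref{fetiz} (or, equivalently, by the crude bound $\mathfrak p_n^N(x,z)\le C(n+1)^{-1/2}$ from Lemma \ref{wconc} with $b=0$ together with a Gaussian tail). The factor $e^{a(x+y)}$ on the right-hand side is then produced by the trivial bound $e^{ax}\le e^{a(x+y)}$, valid since $x,y\ge0$.

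The two-term shape of the right-hand side mirrors the two ``orders'' appearing in the pointwise increment bound. The target is a Lipschitz-in-the-starting-variable estimate of the form
$$\bigl|\mathfrak p_n^N(x,z)-\mathfrak p_n^N(y,z)\bigr|\le\frac{C|x-y|}{n+1}\Bigl(1+\frac{\rho}{\sqrt{n+1}}\Bigr)e^{-c\rho^2/(n+1)},$$
with $\rho:=|z-x|\wedge|z-y|$, which is the natural bound for a parabolic kernel. The zeroth-order piece $\tfrac{C|x-y|}{n+1}e^{-c\rho^2/n}$, raised to the power $p$ and summed against $(\mathfrak p+\mathfrak p)^pe^{az}$, yields the term $n^{\frac12-\frac32p}|x-y|^p$: the factor $n^{-p}$ from the $p$-th power, $n^{-(p-1)/2}$ from the $\mathfrak p$-sum via Proposition \ref{fetiz}, and $e^{Ka^2n}$ absorbing the tilted Gaussian. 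The first-order piece $\tfrac{C|x-y|\rho}{(n+1)^{3/2}}e^{-c\rho^2/n}$ produces the term $a^pn^{\frac12-p}|x-y|^p$: after taking the $p$-th power one must estimate $\sum_z\rho^pe^{-cp\rho^2/n}e^{az}$, and completing the square shifts the Gaussian so that $\rho$ is effectively of size $an$, giving an extra factor $(an/\sqrt n)^p=(a\sqrt n)^p$ over the zeroth-order case; since $n^{\frac12-\frac32p}(a\sqrt n)^p=a^pn^{\frac12-p}$, this is exactly the second term. For \eqref{tem1} I would argue identically from a $\tfrac12$-Hölder-in-time pointwise estimate $|\mathfrak p_n^{N-n+m}(x,z)-\mathfrak p_m^N(x,z)|\le C m^{-1}|n-m|^{1/2}\bigl(1+|z-x|/\sqrt m\bigr)e^{-c|z-x|^2/m}$, keeping the worse exponent $m^{-(p-1)/2}$ in Proposition \ref{fetiz} (legitimate since $m\le n$); the exponent $|n-m|^{p/2}$, rather than $|n-m|^p$, is simply parabolic scaling, a time increment of size $|n-m|$ counting like a space increment of size $|n-m|^{1/2}$.

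The genuine work, and the main obstacle, lies in establishing these pointwise increment bounds. Starting from the explicit formula $\mathfrak p_n^N(x,z)=p_n^{(1/2)}(x,z)\,\psi(z;N-n)/\psi(x;N)$, I would split the spatial difference into a term carrying the increment of $p_n^{(1/2)}$ in its first argument---estimated through the gradient bounds of Lemma \ref{ohgod}, using in particular the near-boundary sharpening $p_n^{(1/2)}(x,z)\lesssim(x+1)(n+1)^{-1}$ and the fact that near $x=0$ this increment is \emph{second} order, since in $p_n^{(1/2)}(x,z)=p_n(x-z)-p_n(x+z+2)$ the two image terms nearly cancel---and a term carrying the increment of $1/\psi(\,\cdot\,;N)$, estimated through Lemma \ref{mass} and the elementary bound $|\psi(x;N)-\psi(y;N)|\le C|x-y|/\sqrt N$; for \eqref{tem1} one instead peels off the time increments of $p_\ell^{(1/2)}$ and of the two $\psi$-factors, using $|\partial_t p_t(w)|\lesssim t^{-3/2}$. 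As in the proof of Lemma \ref{wconc}, this forces a case analysis according to whether $x\ge\sqrt N$ or $x\le\sqrt N$ and whether $n<N/2$ or $n\ge N/2$: when $n\ge N/2$ the kernel is only non-negligible for $x,y,z\lesssim\sqrt N\lesssim\sqrt n$, which both activates the near-boundary estimates and allows the factor $\sqrt N$ coming from $1/\psi(x;N)$ to be absorbed against $\sqrt n$, so that---unlike the crude endpoint bound of Lemma \ref{dack}---no factor $\sqrt{(N+1)/(N-n+1)}$ remains in the final estimate. Finally one must check that every constant depends on $p$ only through a factor $C^p$: this is built into the interpolation inequality and Proposition \ref{fetiz}, and the only new $p$-dependence, coming from $\sum_z\rho^pe^{-cp\rho^2/n}e^{az}$ (factors of $\Gamma(\tfrac p2+\tfrac12)$ and $c^{-p}$), is itself $O(C^p)$.
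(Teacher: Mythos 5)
Your reduction step is sound as far as it goes: if the pointwise estimate you postulate were really available, then pulling its supremum over $z$ (which is of order $|x-y|/(n+1)$) out of the sum and applying Proposition \ref{fetiz} to $\sum_z(\mathfrak p_n^N(x,z)+\mathfrak p_n^N(y,z))^pe^{az}$ would indeed give \eqref{spat1} (in fact with only the first term on the right), and similarly for \eqref{tem1}. The genuine gap is exactly the step you defer: the pointwise increment bounds are never proved, and the route you sketch for them cannot produce them. Writing
\[
\mathfrak p_n^N(x,z)-\mathfrak p_n^N(y,z)=\tfrac{\psi(z,N-n)}{\psi(x,N)}\bigl(p_n^{(1/2)}(x,z)-p_n^{(1/2)}(y,z)\bigr)+\psi(z,N-n)\,p_n^{(1/2)}(y,z)\Bigl(\tfrac{1}{\psi(x,N)}-\tfrac{1}{\psi(y,N)}\Bigr)
\]
and estimating the two pieces separately via Lemmas \ref{ohgod} and \ref{mass} loses an unbounded factor near the boundary. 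Indeed take $N=n$, $x+1\ll\sqrt n$, $y=x+2$, $z\asymp\sqrt n$: then $p_n^{(1/2)}(x,z)\asymp\frac{(x+1)(z+1)}{n}\,n^{-1/2}$ and $\psi(x,n)\asymp\frac{x+1}{\sqrt n}$, so each of the two pieces is of order $\frac{|x-y|}{(x+1)\sqrt n}$, while the true increment (and your claimed bound) is of order $\frac{|x-y|}{n}$: the $(x+1)$-factors cancel between $p_n^{(1/2)}(x,z)$ and $\psi(x,N)$, so the two pieces cancel each other to leading order, and no combination of Lemma \ref{ohgod} (even its sharpened $(z+1)(n+1)^{-3/2}$ form) with $|\psi(x,N)-\psi(y,N)|\le C|x-y|/\sqrt N$ detects this. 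Capturing it would require new second-difference estimates for the ratio $p_n^{(1/2)}(\cdot,z)/\psi(\cdot,N)$, i.e.\ essentially the hard part of the proposition; note the paper itself states after Lemma \ref{dack} that it cannot even remove the factor $[(N+1)/(N-n+1)]^{1/2}$ from the much cruder target-variable Lipschitz bound, so your program needs a strictly stronger, nowhere-established input. (A further obstruction to any literal pointwise Lipschitz bound: by parity $\mathfrak p_n^N(x,\cdot)$ and $\mathfrak p_n^N(y,\cdot)$ have disjoint supports when $|x-y|$ is odd, and the difference is then of size $n^{-1/2}$, not $|x-y|/n$.) The temporal pointwise bound you assert is likewise unproven and faces the same boundary cancellation.

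The paper avoids pointwise gradient bounds entirely. It first proves the $p=1$ case of \eqref{spat1} (allowing the extra factor $[(N+1)/(N-n+1)]^{1/2}$) probabilistically: the sum $\sum_z(\mathfrak p_n^N(x,z)-\mathfrak p_n^N(y,z))^2e^{az}$ is rewritten as $\mathbf E_x^N[\cdots]-\mathbf E_y^N[\cdots]$, and the coupling of Proposition \ref{coup} (keeping $|S_n^x-S_n^y|\le|x-y|$) is used so that only increments of $\mathfrak p_n^N$ in the \emph{target} variable appear, which the crude Lemma \ref{dack} controls, with the exponential weight handled by Proposition \ref{macky} and Lemma \ref{wconc}. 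This $L^2$ bound is then boosted to the $2p$-th power not by H\"older against the kernel but via the semigroup identity at the midpoint $m=n/2$, Jensen, Minkowski and Proposition \ref{fetiz}; the choice $m=n/2$ is precisely what kills the $[(N+1)/(N-n+1)]^{1/2}$ factor. Finally \eqref{tem1} is deduced from \eqref{spat1} by the semigroup property and Jensen, together with $\mathbf E_x^N[|S_{n-m}-x|^pe^{aS_{n-m}}]\le Ce^{ax+Ka^2n}|n-m|^{p/2}$ from Corollary \ref{pmoments} and Proposition \ref{macky}, so no time-H\"older kernel estimate is ever needed. To repair your argument you would have to either prove the first-variable gradient bound with its boundary cancellation, or switch to this coupling/semigroup mechanism.
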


We remark that in the special case that $p=2$ and $a \leq Cn^{-1/2}$, one has that $n^{\frac{1}{2}-\frac32p}+a^pn^{\frac12-p} \le Cn^{-1}$ and similarly for $m$. This is the case in which this bound will be most useful.

\begin{proof} We first start out by proving an auxiliary bound which will be very useful: \begin{equation}\label{use}\;\;\;\sum_{z \ge 0} \big( \mathfrak p_n^N(x,z) - \mathfrak p_n^N(y,z) \big)^2 e^{az} \leq Ce^{a(x+y)+Ka^2n} \big(n^{-1}+an^{-1/2}\big) \bigg[\frac{N+1}{N-n+1}\bigg]^{1/2}|x-y|,
\end{equation}
Let us prove this. The Coupling Lemma (\ref{coup}) and the preceding Lemma will be key here. First, by the Coupling Lemma, we know that $\mathbf P_x^N$ and $\mathbf P_y^N$ may be coupled in such a way so that the respective coordinate processes (call them $(S^x_n)_{n=0}^N$ and $(S^y_n)_{n=0}^N$) are never distance more than distance $|y-x|$ apart (i.e., $\sup_{n\le N} |S^x_n-S^y_n| \le |x-y|$ a.s.). Now, we may write $$\sum_{z \ge 0} \big( \mathfrak p_n^N(x,z) - \mathfrak p_n^N(y,z) \big)^2 e^{az} $$ $$\;\;\;\;\;\;\;\;\;\;\;\;= \mathbf E_x^N[(\mathfrak p_n^N(x,S_n)-\mathfrak p_n^N(y,S_n))e^{aS_n}]-\mathbf E_y^N[(\mathfrak p_n^N(x,S_n)-\mathfrak p_n^N(y,S_n))e^{aS_n}] $$ $$\;\;\;\;\;\;\;\;=E[(\mathfrak p_n^N(x,S_n^x)-\mathfrak p_n^N(y,S_n^x))e^{aS_n^x}]-E[(\mathfrak p_n^N(x,S_n^y)-\mathfrak p_n^N(y,S_n^y))e^{aS_n^y}]$$ 
\begin{align*}=E[&(\mathfrak p_n^N(x,S_n^x) -\mathfrak p_n^N(x,S_n^y)) e^{aS_n^x}] + E[ \mathfrak p_n^N(x,S_n^y)(e^{aS_n^x}-e^{aS_n^y})] \\ &+ E[(\mathfrak p_n^N(y,S_n^y)-\mathfrak p_n^N(y,S_n^x))e^{aS_n^y} ] + E[ \mathfrak p_n^N(y,S_n^x) (e^{aS_n^y}-e^{aS_n^x})].
\end{align*}
Let us call the terms in the last expression as $J_1,J_2,J_3,J_4$, respectively. Since $J_1$ and $J_3$ occupy symmetric roles, it suffices to bound $J_1$ and then the analogous bound for $J_3$ automatically follows. The same thing happens for $J_2$ and $J_4$. With this understanding, we will only prove the desired bound for $J_1$ and $J_2$.
\\
\\
Let us start by bounding $J_1$. By Lemma \ref{dack}, we see that \begin{align*}|\mathfrak p_n^N(x,S_n^x) -\mathfrak p_n^N(x,S_n^y)| &\leq \frac{C}{n+1} \bigg[ \frac{N+1}{N-n+1} \bigg]^{1/2} |S_n^x-S_n^y| \\ &\leq \frac{C}{n+1} \bigg[ \frac{N+1}{N-n+1} \bigg]^{1/2} |x-y|.\end{align*}
Applying the definition of $J_1$ and then Proposition \ref{macky}, we therefore obtain that $$J_1 \leq \frac{C}{n+1} \bigg[ \frac{N+1}{N-n+1} \bigg]^{1/2} |x-y| E[e^{aS_n^x}] \leq \frac{C}{n+1} \bigg[ \frac{N+1}{N-n+1} \bigg]^{1/2} |x-y|e^{ax+Ka^2n}.$$ 
This already gives the desired bound on $J_1$. As discussed, the analogous bound on $J_3$ is obtained in an identical fashion, but one will get $e^{ay}$ instead of $e^{ax}.$ The final bound on $J_1+J_3$ is then obtained by noting that $e^{ax}+e^{ay} \leq 2e^{a(x+y)}.$
\\
\\
Now we bound $J_2$. First note that $|e^u-e^v| \leq |u-v|e^{u\vee v}$ for all $u,v \in \Bbb R.$ Thus $|e^{aS_n^y} - e^{aS_n^x}| \leq a|S_n^y-S_n^x| e^{a(S_n^y\vee S_n^x)} \leq a|y-x|e^{a (S_n^y+ S_n^x)}.$ By Cauchy-Schwarz, we in turn bound $E[e^{a(S_n^y+S_n^x)}] \leq \mathbf E_x^N[e^{2aS_n}]^{1/2}\mathbf E_y^N[e^{2aS_n}]^{1/2} \leq Ce^{a(x+y)+Ka^2n},$ by Proposition \ref{fetiz}. Now, we also know from Lemma \ref{wconc} that $\mathfrak p_n^N(x,S_n^y) \leq C(n+1)^{-1/2}$. Using these facts, we find that $$J_2 \leq C a|y-x|E[\mathfrak p_n^N(x,S_n^y)e^{a(S_n^y+S_n^x)}] \leq Can^{-1/2}|x-y|e^{a(y+x)+Ka^2n}. $$ 
Already this proves the required bound on $J_2$. The analogous bound on $J_4$ follows immediately. This completes the proof of \eqref{use}.
\\
\\
Now let us prove the spatial estimate \eqref{spat1}. For $m \leq n$, we use the semigroup property to write $\mathfrak p_n^N(x,z)=\sum_{y \ge 0} \mathfrak p_m^N(x,y)\mathfrak p_{n-m}^{N-m}(y,z)$ and then using Jensen's inequality, we find that
\begin{align*}
    \big( \mathfrak p_n^N(x,z) - \mathfrak p_n^N( y,z)\big)^{2p} &= \bigg( \sum_{w \geq 0} \big( \mathfrak p_m^N(x,w) - \mathfrak p_m^N(y,w)\big) \mathfrak p_{n-m}^{N-m}(w,z) \bigg)^{2p} \\ &\leq \bigg(\sum_{w \geq 0} \big(\mathfrak p_m^N(x,w) - \mathfrak p_m^N(y,w) \big)^2 \mathfrak p_{n-m}^{N-m}(w,z) \bigg)^p
\end{align*}
Denoting by $I$ the left-hand side of \eqref{spat1}, we then find by Minkowski's inequality that 
\begin{align*}
    I^{1/p} &\leq \Bigg( \sum_{z \ge 0} \bigg[ \sum_{w \ge 0} \big( \mathfrak p_m^N(x,w) -\mathfrak p_m^N(y,w)\big)^2 \mathfrak p_{n-m}^{N-m}(w,z)e^{az/p}\bigg]^p \Bigg)^{1/p} \\ &\stackrel{\text{Minkowski}}{\leq} \sum_{w \ge 0} \bigg[ \sum_{z \ge 0} \big( \mathfrak p_m^N(x,w) - \mathfrak p_m^N(y,w) \big)^{2p} \mathfrak p_{n-m}^{N-m}(w,z)^p e^{az} \bigg]^{1/p} \\ &= \sum_{w \ge 0} \big(\mathfrak p_m^N(x,w) - \mathfrak p_m^N(y,w) \big)^2 \bigg[ \sum_{ z \geq 0} \mathfrak p_{n-m}^{N-m}(w,z)^pe^{az} \bigg]^{1/p} \\& \stackrel{Prop. \ref{fetiz}}{\le} C\sum_{w \ge 0} \big( \mathfrak p_m^N(x,w) - \mathfrak p_m^N(y,w)\big)^2 (n-m)^{-\frac{1-p}{2p}} e^{(aw+Ka^2(n-m))/p} \\ &\stackrel{\eqref{use}}{\le} C(n-m)^{-\frac{1-p}{2p}} \big(m^{-1}+am^{-1/2}\big) \bigg[\frac{N+1}{N-m+1}\bigg]^{1/2}|x-y| e^{\big(a(x+y)+Ka^2n\big)/p}.
\end{align*}
Setting $m:=n/2$ then gives \eqref{spat1}, because $\big[\frac{N+1}{N-\frac12n+1}\big]^{1/2} \leq \big[ \frac{N+1}{\frac12N+1}\big]^{1/2} \leq 2^{1/2}.$ Note that the constant $C$ does not depend on $p$, which also proves the final sentence given in the theorem statement (after perhaps noting that $\big(n^{-1}+an^{-1/2}\big)^p \leq 2^p(n^{-p} + a^pn^{-p/2}$).
\\
\\
We now move onto the temporal estimate \eqref{tem1}. The main idea is to use Jensen's inequality together with the spatial estimate. Specifically, we start off by writing 
\begin{align*}
    \big( \mathfrak p_m^{N-n+m}(x,z) - \mathfrak p_n^N(x,z) \big)^{2p} &= \bigg(\mathfrak p_n^{N-n+m}(x,z) - \sum_{y \ge 0} \mathfrak p_{n-m}^N(x,y) \mathfrak p_m^{N-n+m}(y,z) \bigg)^{2p} \\ &= \bigg( \sum_{y \ge 0} \mathfrak p_{n-m}^N(x,y) \big( \mathfrak p_m^{N-n+m}(x,z) - \mathfrak p_m^{N-n+m}(y,z) \big) \bigg)^{2p} \\ & \stackrel{\text{Jensen}}{\leq} \sum_{y \ge 0} \mathfrak p_{n-m}^N(x,y) \big( \mathfrak p_m^{N-n+m}(x,z) - \mathfrak p_m^{N-n+M}(y,z) \big)^{2p}
\end{align*}
Next, we multiply by $e^{az}$, then sum over $z$, and interchange the sum over $z$ with the sum over $y$. Letting $J$ denote the left-hand side of \eqref{tem1}, this gives
\begin{align*}
    J &\le \sum_{y \ge 0} \mathfrak p_{n-m}^N(x,y) \sum_{z \ge 0} \big( \mathfrak p_m^{N-n+m}(x,z) - \mathfrak p_m^{N-n+m}(y,z) \big)^{2p} e^{az} \\ &\le C^p \sum_{y \ge 0} \mathfrak p_{n-m}^N(x,y) e^{a(x+y)+Ka^2m} \big(m^{\frac{1}{2}-\frac32p}+a^pm^{\frac12-p}\big) |y-x|^p \\ &= C^pe^{ax+Ka^2m}\big(m^{\frac{1}{2}-\frac32p}+a^pm^{\frac12-p}\big) \mathbf E_x^N[|S_{n-m}-x|^pe^{aS_{n-m}}]. 
\end{align*}
All that is left to do is to show that one has $\mathbf E_x^N[|S_{n-m}-x|^pe^{aS_{n-m}}] \leq Ce^{ax} |n-m|^{p/2}.$ This is an easy consequence of the concentration theorem. Indeed, for any $k \leq N$ one may write 
$$\mathbf E_x^N[|S_k-x|^pe^{aS_k}] \leq \mathbf E_x^N[|S_k-x|^{2p}]^{1/2} \mathbf E_x^N[e^{2aS_k}]^{1/2},$$ and then the claim follows immediately from Propositions \ref{macky} and Corollary \ref{pmoments}.
\end{proof}

\begin{cor}[Spatial/Temporal Estimates]
There exists $C>0$ such that for all $x,y,z \ge 0$ and all $N \ge n \ge m \ge 0$ one has that \begin{align}\label{mam}\big|\mathfrak p_n^N(x,z) - \mathfrak p_n^N(y,z) \big| &\leq C(n+1)^{-3/4} |x-y|^{1/2},\\
\label{mom}\big|\mathfrak p_m^{N-n+m}(x,z) - \mathfrak p_n^N(x,z)\big| &\leq C(m+1)^{-3/4}|n-m|^{1/4}.
\end{align}
\end{cor}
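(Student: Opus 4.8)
The plan is to read off both estimates from the $L^{2p}$ bounds already established in Proposition~\ref{spatem}, the only genuinely new input being a choice of the exponent $p$ that grows with the time variable.

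For \eqref{mam} I would argue as follows. The case $n=0$ is trivial, since then $\mathfrak p_0^N(x,\cdot)$ is the indicator of $\{x\}$, so the left side vanishes when $x=y$ and is at most $1\le|x-y|^{1/2}$ otherwise. For $n\ge 1$ and a fixed $z\ge 0$, I would apply the spatial estimate \eqref{spat1} with $a=0$ and with $p=p_n:=\lceil\log(n+1)\rceil$ (note $p_n\ge 1$); since every term of the sum on its left side is nonnegative, discarding all but the single term indexed by $z$ gives
$$\big(\mathfrak p_n^N(x,z)-\mathfrak p_n^N(y,z)\big)^{2p_n}\le C_{p_n}\,n^{\frac12-\frac32 p_n}\,|x-y|^{p_n},$$
where, by the final sentence of Proposition~\ref{spatem}, $C_{p_n}\le c_0^{p_n}$ for a universal $c_0$. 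Taking $2p_n$-th roots turns this into
$$\big|\mathfrak p_n^N(x,z)-\mathfrak p_n^N(y,z)\big|\le c_0^{1/2}\,n^{\frac{1}{4p_n}-\frac34}\,|x-y|^{1/2},$$
and the choice of $p_n$ yields $n^{1/(4p_n)}=\exp\!\big(\tfrac{\log n}{4\lceil\log(n+1)\rceil}\big)\le e^{1/4}$, while $n^{-3/4}\le 2^{3/4}(n+1)^{-3/4}$. Hence \eqref{mam} holds with the universal constant $C=2^{3/4}e^{1/4}c_0^{1/2}$.

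For \eqref{mom} I would not pass through the temporal estimate \eqref{tem1} directly, but instead bootstrap from \eqref{mam} using the Markov property. By Proposition~\ref{pt}, for $N\ge n\ge m\ge 0$ one has the Chapman--Kolmogorov identity $\mathfrak p_n^N(x,z)=\sum_{y\ge 0}\mathfrak p_{n-m}^N(x,y)\,\mathfrak p_m^{N-n+m}(y,z)$, and since $\mathfrak p_{n-m}^N(x,\cdot)$ is a probability measure one also has $\mathfrak p_m^{N-n+m}(x,z)=\sum_{y\ge 0}\mathfrak p_{n-m}^N(x,y)\,\mathfrak p_m^{N-n+m}(x,z)$. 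Subtracting, then applying the triangle inequality followed by \eqref{mam} to each summand, gives
$$\big|\mathfrak p_m^{N-n+m}(x,z)-\mathfrak p_n^N(x,z)\big|\le\sum_{y\ge 0}\mathfrak p_{n-m}^N(x,y)\,\big|\mathfrak p_m^{N-n+m}(x,z)-\mathfrak p_m^{N-n+m}(y,z)\big|\le C(m+1)^{-3/4}\,\mathbf E_x^N\big[|S_{n-m}-x|^{1/2}\big].$$
It then remains to note, via Jensen's inequality and the concentration estimate (Theorem~\ref{conc}), that $\mathbf E_x^N[|S_{n-m}-x|^{1/2}]\le\big(\mathbf E_x^N[|S_{n-m}-x|^2]\big)^{1/4}\le C|n-m|^{1/4}$, since $\mathbf E_x^N[|S_{n-m}-x|^2]=\int_0^\infty 2u\,\mathbf P_x^N(|S_{n-m}-x|>u)\,du\le C\int_0^\infty u\,e^{-cu^2/(n-m)}\,du\le C|n-m|$. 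Combining the last two displays yields \eqref{mom}.

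The only delicate point is the very first step: \eqref{spat1} with any fixed $p$ delivers only the exponent $-\tfrac34+\tfrac1{4p}$ on $n$, so one is forced to take $p=p_n\to\infty$ with $n$, and the argument survives precisely because —as recorded in Proposition~\ref{spatem}— the constant in \eqref{spat1} grows no faster than exponentially in $p$, so that its $2p_n$-th root remains bounded. Trying to prove \eqref{mom} the same way from \eqref{tem1} would be awkward, because the constant there also carries the $p$-th moment bound for the conditioned walk, which grows super-exponentially in $p$; routing \eqref{mom} through \eqref{mam} as above sidesteps this.
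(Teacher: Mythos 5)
Your proposal is correct and follows essentially the same route as the paper: \eqref{mam} is extracted from \eqref{spat1} by exploiting the at-worst-exponential growth of its constant in $p$ (your choice $p\sim\log(n+1)$ with a single retained term is just a finite-$p$ version of the paper's step of letting $p\to\infty$ and using that $\ell^{2p}$ norms converge to the $\ell^\infty$ norm), and \eqref{mom} is obtained exactly as in the paper via the semigroup identity, the triangle inequality, \eqref{mam}, and a bound on $\mathbf E_x^N[|S_{n-m}-x|^{1/2}]$. Your Jensen-plus-concentration estimate for that last expectation is precisely the content (and proof) of Corollary \ref{pmoments} with $p=1/2$, so nothing substantive differs.
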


These pointwise bounds are quite useful, in the sense that that the exponents (despite not being sharp) are ones which actually give meaningful information. However, we will not actually need this estimate, but it could potentially be useful if one wanted to develop the results of Section 4 with Dirac initial data (for instance).

\begin{proof}
Let $I(2p)$ denote the left-hand side of \eqref{spat1} with $a=0$. Then $I(2p)^{\frac1{2p}}$ is bounded above by $C (n+1)^{\frac1{4p}-\frac34} |x-y|^{1/2}$, where the constant $C$ is independent of $p$ (by the final sentence in the statement of Proposition \ref{spatem}). Letting $p\to \infty$ already proves the first bound (since $\ell^p$ norms converge to the $\ell^{\infty}$ norm).
\\
\\
For the second bound, we cannot do the same thing, since the constant in \eqref{tem1} could (in principle) have worse-than-exponential dependence on $p$. However, we can use the semigroup property to write $$\big|\mathfrak p_m^{N-n+m}(x,z) - \mathfrak p_n^N(x,z)\big| \leq \sum_{y \ge 0} \mathfrak p_{n-m}^N(x,y) \big| \mathfrak p_m^{N-n+m}(x,z) - \mathfrak p_m^{N-n+m}(y,z)\big|,$$ and then one may use the spatial bound \eqref{mam} with Corollary \ref{pmoments} to obtain the result.
\end{proof}

Next we prove a strong convergence result for the discrete kernels $\mathfrak{p}_n^N$ to the continuous ones $\mathscr{P}_t^T$, which will be quite useful for the polymer convergence result in Section 5. In the case of Brownian meander at terminal time ($X=0$ and $t=T)$, it is weaker than the local convergence result of \cite{Car05}, but we actually need it for all time so we give an original and detailed proof.

\begin{prop}\label{gence}
Fix $\tau \geq 0$. Then for $n\geq 0$, define $$\mathscr{P}_n(t,T;X,Y) := (n/2)^{1/2} \mathfrak{p}_{2\lfloor tn \rfloor}^{2\lfloor Tn\rfloor}(2\lfloor n^{1/2}X/\sqrt 2 \rfloor,2\lfloor n^{1/2}Y/\sqrt 2\rfloor).$$ Then for each fixed $X,T,t \geq 0$, the map $Y \mapsto \mathscr{P}_n(t,T;X,Y)$ converges pointwise and in $L^p(\Bbb R_+,e^{aY}dY)$ to $\mathscr{P}_t^T(X,Y)$ for all $p \ge 1$ and $a \ge 0$ (as $n \to \infty$).
\\
\\
Furthermore, for all $X,T \ge 0$, the map $(t,Y) \mapsto \mathscr{P}_n(t,T;X,Y)$ converges pointwise and in $L^p(dt \otimes e^{aY}dY)$ to $\mathscr{P}_t^T(X,Y)$ for all $p \in [1,3)$ and $a \ge 0$ (as $n \to \infty$).
\end{prop}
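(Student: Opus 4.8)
The plan is three-step: first prove pointwise convergence $\mathscr P_n(t,T;X,Y)\to\mathscr P_t^T(X,Y)$ by a local central limit theorem applied to the explicit image formula for $p_n^{(1/2)}$; then, for fixed $t>0$, upgrade this to $L^p(\Bbb R_+,e^{aY}dY)$ by dominated convergence, the domination coming from the rescaled pointwise bound of Lemma \ref{wconc}; finally integrate the second step against $t$, the $t$-integrability being furnished by the rescaled moment bound of Proposition \ref{fetiz}, which is exactly where the restriction $p<3$ appears. Write $\tilde x,\tilde y$ for the rescaled arguments $2\lfloor n^{1/2}X/\sqrt2\rfloor$, $2\lfloor n^{1/2}Y/\sqrt2\rfloor$ and $\tilde n=2\lfloor tn\rfloor$, $\tilde N=2\lfloor Tn\rfloor$, and recall $\mathfrak p_n^N(x,y)=p_n^{(1/2)}(x,y)\,\psi(y;N-n)/\psi(x;N)$ with $p_n^{(1/2)}(x,y)=p_n(x-y)-p_n(x+y+2)$ and $\psi(x;m)=\sum_{y\ge0}p_m^{(1/2)}(x,y)$. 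The classical local limit theorem gives $\sqrt m\,p_m(k)\to\sqrt{2/\pi}\,e^{-Z^2/2}$ whenever $k/\sqrt m\to Z$ along the parity of $m$, and the $2\lfloor\cdot\rfloor$ roundings in the statement are chosen precisely to respect this parity. For $X>0$ this yields $(n/2)^{1/2}p^{(1/2)}_{\tilde n}(\tilde x,\tilde y)\to P_t^{Dir}(X,Y)$ directly; for $X=0$ one first Taylor-expands $p_m(k)-p_m(k+2)=\tfrac{2k+2}{m}p_m(k)(1+o(1))$ to recover the linear vanishing at the wall. Meanwhile $\psi(x;m)=P_0(-x\le S_m\le x+1)$ converges by the ordinary CLT to $2\Phi(\,\cdot\,)-1$ when $x/\sqrt m$ stays macroscopic, and $\psi(0;m)=p_m(0)$ by telescoping, so $\psi(0;m)\sim\sqrt{2/\pi m}$. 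Assembling these ratios recovers $\mathscr P_t^T(X,Y)$ in all four regimes ($X>0$ or $X=0$; $t<T$ or the degenerate $t=T$, where $N-n=0$ and $\psi(\tilde y;0)=1$); in particular pointwise convergence holds on all of $(0,T)\times(0,\infty)$.

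For fixed $t>0$, $X\ge0$, $T\ge t$, rescaling the pointwise bound of Lemma \ref{wconc} gives $\mathscr P_n(t,T;X,Y)\le \tfrac{C(b)}{2\sqrt t}\,e^{-b|X-Y|/\sqrt t}$ for every $n$ and every fixed $b>0$ (one could equally use the Gaussian bound from the concentration theorem), and the same bound holds for $\mathscr P_t^T$. Choosing $b$ so that $pb/\sqrt t>a$, the function $|\mathscr P_n(t,T;X,Y)-\mathscr P_t^T(X,Y)|^p e^{aY}$ is dominated, uniformly in $n$, by an $L^1(dY)$ function of the form $C\,e^{-pb|X-Y|/\sqrt t+aY}$; since it tends to $0$ for a.e.\ $Y$ by the previous paragraph, dominated convergence gives the first assertion. (For $t=0$ the map $Y\mapsto\mathscr P_n(0,T;X,Y)$ is an approximate identity concentrating at $X$, so the $L^p$ statement must be read for $t>0$; this is harmless for the second assertion, where $\{t=0\}$ is $dt$-null.)

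For the joint statement, set $\delta_n(t):=\int_0^\infty|\mathscr P_n(t,T;X,Y)-\mathscr P_t^T(X,Y)|^p e^{aY}dY$, so the step just completed gives $\delta_n(t)\to0$ for each fixed $t\in(0,T)$. Rescaling Proposition \ref{fetiz} and observing that all powers of $n$ cancel exactly, one obtains $\int_0^\infty\mathscr P_n(t,T;X,Y)^p e^{aY}dY\le C(p,a,X,T)\,t^{-(p-1)/2}$ uniformly in $n$, and the same for the limiting kernel; hence $\delta_n(t)\le 2^pC\,t^{-(p-1)/2}=:h(t)$, and $h\in L^1([0,T],dt)$ exactly when $(p-1)/2<1$, i.e.\ $p<3$. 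Dominated convergence in the $t$-variable then gives $\int_0^T\delta_n(t)\,dt\to0$, which is the asserted convergence in $L^p(dt\otimes e^{aY}dY)$, and the pointwise convergence on the full-measure set $(0,T)\times(0,\infty)$ was recorded above.

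\textbf{The main obstacle} is the pointwise local limit theorem at the degenerate corners $X=0$ and $t=T$, where $\mathfrak p_n^N$ itself vanishes: one must isolate the correct $O(n^{-1/2})$ term from $p_m(k)-p_m(k+2)$ and from the ratio $\psi(\tilde y;N-n)/\psi(0;N)$ and match it against the $0/0$ limit defining $\mathscr P_t^T(0,Y)$, all while controlling the errors produced by the $2\lfloor\cdot\rfloor$ roundings; this is precisely why a soft invariance-principle argument is insufficient and the explicit image-method representation of $p_n^{(1/2)}$ is essential. The only other delicate point is verifying that $\int_Y\mathscr P_n^p e^{aY}dY$ blows up as $t\to0$ at exactly the rate $t^{-(p-1)/2}$ — the $L^p$-rate of the free heat kernel, no worse even at $X=0$ once the factor $2\Phi(Y/\sqrt{T-t})-1\sim Y/\sqrt T$ is accounted for — which pins down the threshold $p<3$; this is immediate from Proposition \ref{fetiz} after the rescaling.
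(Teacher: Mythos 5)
Your proposal is correct and takes essentially the same route as the paper: pointwise convergence via the local CLT applied to the explicit image formula (with the separate $0/0$ analysis at $X=0$ and $t=T$), then dominated convergence using the rescaled kernel bound of Lemma \ref{wconc}, the $p<3$ threshold arising from the non-integrable $t^{-(p-1)/2}$ singularity at $t=0$. Your only departure is organizational — you integrate the fixed-$t$ $L^p$ distances in the $t$-variable using the rescaled Proposition \ref{fetiz} bound (and choose $b$ large relative to $a$, $p$), whereas the paper dominates jointly in $(t,Y)$ with the $b=1$ exponential bound; if anything your version handles large $a$ slightly more carefully.
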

From now on, we will abbreviate quantities such as $\mathfrak{p}_{2\lfloor tn \rfloor}^{2\lfloor Tn\rfloor}(2\lfloor n^{1/2}X/\sqrt 2 \rfloor,2\lfloor n^{1/2}Y/\sqrt 2\rfloor)$ by just writing $\mathfrak p_{2nt}^{2nT}((2n)^{1/2}X,(2n)^{1/2}Y)$ instead. This abuse of notation will hopefully not cause any confusion, but in reality one should keep in mind that all quantities are only defined with \textit{even} integers. The reason for this is the periodicity of the simple random walk: $ \mathfrak p_n^N(x,y)$ vanishes if $n$ and $x-y$ have different parity. If it were not for this parity consideration, we could actually take a limit of the simpler quantity $n^{1/2} \mathfrak p_{\lfloor nt \rfloor}^{\lfloor nT \rfloor}(\lfloor n^{1/2}X \rfloor, \lfloor n^{1/2} Y \rfloor).$

\begin{proof}
First, let us prove pointwise convergence. Letting $p_n$ denote the standard heat kernel on all $\Bbb Z$, we recall that $$p_n^{(1/2)}(x,y) = p_n(x-y) - p_n(x+y+2).$$ $$\psi(x,n) = p_n(0)+p_n(x+1) + 2\sum_{1\le y \le x} p_n(y) = \sum_{-x\leq y \leq x+1}p_n(y).$$ Let $F_n$ denote the cdf associated to $p_n$, so that $\psi(x,n) = F_n(x+1)-F_n(-x) = F_n(x)+F_n(x+1)-1.$ By uniformity of convergence of cdf's in the central limit theorem we know that $F_n(n^{1/2}x)$ converges uniformly (on $\Bbb R$) to $\Phi(x)$, where $\Phi$ is the cdf of a standard normal. From this is is clear that $\psi(n,n^{1/2}x) = F_n(n^{1/2}x)+F_n(n^{1/2}x+1)-1$ converges uniformly to $2 \Phi(x) - 1$ (because $\Phi$ has no atoms). From this, one deduces that $\psi(2nT,(2n)^{1/2}X)=\psi(2nT,(2nT)^{1/2}X/\sqrt{T})$ converges to $2\Phi(X/\sqrt{T})-1$.
\\
\\
Now, from the local central limit theorem, it is immediate that $p_{2n}(2x) = \frac1{\sqrt{\pi n}} e^{-x^2/n}+o(n^{-1/2})$ where the error is uniform in $x$. Next, we notice that $|p_n(x+y+2) - p_n(x+y)| \leq Cn^{-1}$, for a constant $C$ independent of $x,y$ (by Lemma \ref{ohgod} with $b=0$). Consequently, $(n/2)^{1/2}|p_{2nt}((2n)^{1/2}(X+Y)) - p_{2nt}((2n)^{1/2}(X+Y)+2)| \to 0$ as $n \to \infty$. Then it follows immediately that $$(n/2)^{1/2}p_{2nt}^{(1/2)}((2n)^{1/2}X,(2n)^{1/2}Y) \to \frac{1}{\sqrt{2\pi t}}(e^{-(X-Y)^2/2t}-e^{-(X+Y)^2/2t}) = P_t^{Dir}(X,Y).$$
Combining the results of the last two paragraphs, we find that if $X \neq 0$, then $$(n/2)^{1/2}\mathfrak p_{2nt}^{2nT}((2n)^{1/2}X,(2n)^{1/2}Y) \to P_t^{Dir(X,Y)} \frac{2\Phi(Y/\sqrt{T-t})-1}{2\Phi(X/\sqrt{T})-1} = \mathscr P_t^T(X,Y).$$ 
This proves pointwise convergence for $X \ne 0$. When $X=0$, we need to give a separate proof. For this we again invoke the local central limit theorem. Specifically, we have
\begin{align*}(n/2)^{1/2}p_{2nt}^{(1/2)}(0,(2n)^{1/2}Y) &= (n/2)^{1/2}(p_{2nt}((2n)^{1/2}Y)-p_{2nt}((2n)^{1/2}Y+2)) \\ &= (n/2)^{1/2} \frac{1}{\sqrt{\pi n t}}( e^{-Y^2/2t}- e^{-(Y+(n/2)^{-1/2})^2/2t}+o(n^{-1/2})) \\ &= \frac{1}{\sqrt{\pi n t}} \big( -\partial_Y e^{-Y^2/2t} + o(1)) \\ &= \frac{1}{\sqrt{\pi n t}}(\frac{Y}{t}e^{-Y^2/2t}+o(1)). 
\end{align*} 
On the other hand, we also have that $$\frac{1}{\psi(0,2nT)} = \frac{1}{p_{2nT}(0)+p_{2nT}(1)} = \sqrt{\pi n T}(1 +o(1)),$$ because $p_{2nT}(1)=0$. In the end we find that $$ \frac{(n/2)^{1/2}p_{2nt}^{(1/2)}(0,(2n)^{1/2}Y)}{\psi(0,2nT)}\to \sqrt{\pi T} \frac{1}{\sqrt{\pi t}} \cdot \frac{Y}{t}e^{-Y^2/2t} = Y(T/t^3)^{1/2} e^{-Y^2/2t}$$ 
Multiplying by $\psi(n^{1/2}Y,n(T-t)),$ which (as noted earlier in the proof) converges to $2\Phi(Y/\sqrt{T-t})-1$ (interpreted as $1$ if $T=t$), we get that $$(n/2)^{1/2} \mathfrak p_{2nt}^{2nT}(0,(2n)^{1/2}Y) \to Y (T/t^3)^{1/2} e^{-Y^2/2t} (2\Phi(Y/\sqrt{T-t})-1) = \mathscr P_t^T(0,Y). $$ 
This completes the proof of pointwise convergence. Now we will fix $t,T,X$, and we will address convergence in $L^p(\Bbb R_+, e^{aY}dY).$ The main idea is simply to use dominated convergence in conjunction with Lemma \ref{wconc}. Specifically, that lemma (applied with $b=1$) tells us that \begin{equation}\label{butk}\mathscr P_n(t,T;X,Y) \leq C t^{-1/2} e^{-t^{-1/2}|X-Y|}.\end{equation} 
Here $C$ is a constant independent of $X,Y,T,t$. Letting $p \ge 1$, it is then clear that for fixed $X,T,t$, the sequence of maps $$Y \mapsto \mathscr P_n(t,T;X,Y)^p e^{aY}$$ is dominated (uniformly in $n$) by a function which is integrable on $\Bbb R_+$. This is enough to imply uniform integrability of this sequence of functions of $Y$, which is in turn enough to guarantee [Zit, Theorem 11.5] that $$ \int_{\Bbb R_+} |\mathscr P(t,T;X,Y) - \mathscr P_t^T(X,Y)|^pe^{aY}dY \to 0.$$ Similarly, one uses $\eqref{butk}$ in conjunction with the dominated convergence theorem to obtain convergence in $L^p(\Bbb R_+\times \Bbb R_+, dt \otimes e^{aY}dY)$ of $(Y,t) \mapsto \mathscr P_{n}(t,T;X,Y)$. This argument only works for $p\in [1,3)$, since the singularity of $\int_{\Bbb R_+} t^{-p/2} e^{-pt^{-1/2}|X-Y|}dY \sim t^{-(p-1)/2}$ fails to be absolutely integrable near $t=0$, if $p \ge 3$.
\end{proof}

\begin{prop}Let $a,\tau >0$ and let $\mathscr{P}_t^T$ be the kernels from Definition \ref{sht}. Then there exists a constant $C=C(\tau, a)$ such that for all $X,Y \geq 0$ and $s\le t \le T \leq \tau$ one has the following \begin{align} \label{easy}\int_{\Bbb R_+} \mathscr{P}_t^T(X,Z) e^{aZ} dZ &\leq C e^{aX}, \\ \label{dik}\int_{\Bbb R_+} \mathscr{P}_t^T(X,Z)^2 e^{aZ} dZ &\leq Ct^{-1/2} e^{aX}, \\ \label{spat} \int_{\Bbb R_+} \big( \mathscr P_t^T(X,Z) - \mathscr P_t^T(Y,Z) \big)^2 e^{aZ} dZ &\leq Ct^{-1/2} e^{a(X+Y)} |X-Y|, \\ \label{tem}
\int_{\Bbb R_+} \big( \mathscr P_s^{T-t+s}(X,Z) - \mathscr P_t^T(X,Z)\big)^2e^{aZ} dZ &\leq Cs^{-1/2} e^{2aX} |t-s|^{1/2}\end{align}
\end{prop}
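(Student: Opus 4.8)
The plan is to obtain all four inequalities as $n\to\infty$ limits of the corresponding discrete estimates from Section 3, using the pointwise convergence $\mathscr P_n(t,T;X,\cdot)\to\mathscr P_t^T(X,\cdot)$ established in Proposition \ref{gence} together with Fatou's lemma. Recall $\mathscr P_n(t,T;X,Y)=(n/2)^{1/2}\mathfrak p_{2nt}^{2nT}((2n)^{1/2}X,(2n)^{1/2}Y)$, that the spatial lattice underlying the variable $Y$ has mesh $(2/n)^{1/2}$ (so that $(2/n)^{1/2}\sum_y$ approximates $\int_{\Bbb R_+}dY$ and $(2/n)^{1/2}(n/2)^{1/2}=1$), and that a discrete exponential rate $(2n)^{-1/2}a$ corresponds to the continuum weight $e^{aY}$. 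Under the substitutions $x=(2n)^{1/2}X$, $N=2nT$, (discrete time)$=2nt$, (rate)$=(2n)^{-1/2}a$, each discrete bound becomes an $n$-uniform bound for $\mathscr P_n$: the ``bad'' exponential factor $e^{K(\mathrm{rate})^2(\mathrm{time})}$ becomes $e^{Ka^2t}\le e^{Ka^2\tau}$, which is absorbed into $C=C(a,\tau)$, and the various powers of $n$ (from the prefactor $(n/2)^{1/2}$, possibly squared; from the mesh; and from inside the discrete estimate) telescope to the stated continuum exponents. Fatou then promotes each $n$-uniform bound to the claimed one for $\mathscr P_t^T$ — the $L^p$ convergence in Proposition \ref{gence} would even give equality in the limit, but this is not needed.

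In detail: \eqref{easy} is the limit of Proposition \ref{macky}; \eqref{dik} is the limit of Proposition \ref{fetiz} with $p=2$, where the discrete factor $(2nt+1)^{-1/2}$ combines with the square $(\mathscr P_n)^2=(n/2)\mathfrak p^2$ and the mesh to produce exactly the $t^{-1/2}$ prefactor; \eqref{spat} is the limit of the spatial estimate \eqref{spat1} with $p=1$ (whose proof has already removed the dangerous factor $[\tfrac{N+1}{N-n+1}]^{1/2}$ by taking the intermediate time $m=n/2$), with the integrand converging pointwise by applying Proposition \ref{gence} at $X$ and at $Y$; and \eqref{tem} is the limit of the temporal estimate \eqref{tem1} with $p=1$, where one uses that the pair $\big(\mathscr P_n(t,T;X,\cdot),\mathscr P_n(s,T-t+s;X,\cdot)\big)$ corresponds under the scaling to exactly the pair $\big(\mathfrak p_n^N,\mathfrak p_m^{N-n+m}\big)$ appearing in \eqref{tem1}, with $m=2ns$, so that $|n-m|^{1/2}=(2n)^{1/2}|t-s|^{1/2}$ and $m^{-1/2}$ scales to $(2ns)^{-1/2}$, giving the $s^{-1/2}|t-s|^{1/2}$ behaviour with weight $e^{2aX}$.

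The main obstacle is careful bookkeeping — tracking the three independent sources of powers of $n$ so that they combine to the right exponents — rather than anything conceptual; the interchange of limit and integral is routine via Fatou since the discrete estimates are upper bounds and Proposition \ref{gence} supplies pointwise convergence, and the parity/floor caveat noted after that proposition does not interfere. One subtlety worth flagging: the $p=1$ spatial and temporal estimates \eqref{spat1}, \eqref{tem1} naturally rescale to bounds carrying a $t^{-1}$ (resp.\ $s^{-1}$) term in addition to a $t^{-1/2}$ term, so one combines with \eqref{dik} to land in the usable form quoted. As a shortcut, \eqref{easy} and \eqref{dik} also follow directly from the Gaussian pointwise bound $\mathscr P_t^T(X,Y)\le C(b)t^{-1/2}e^{-bt^{-1/2}|X-Y|}$ (valid for all $b>0$, being the continuum analogue of Lemma \ref{wconc} and already the content of \eqref{butk}) via the elementary estimate $\int_{\Bbb R_+}e^{-bt^{-1/2}|X-Z|}e^{aZ}dZ\le Ct^{1/2}e^{aX}$ once $b$ is chosen large relative to $a$ and $\tau$; for \eqref{spat} and \eqref{tem} a direct proof via Lemma \ref{dack} is possible but must handle the extra $[\tfrac{N+1}{N-n+1}]^{1/2}$-type factor, so quoting the discrete estimates is cleaner.
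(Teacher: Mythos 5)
Your proposal is essentially the paper's own proof: the paper obtains all four bounds in exactly this way, by passing to the limit in Propositions \ref{macky}, \ref{fetiz} and \ref{spatem} via the $L^1(e^{aY}dY)$ and $L^2(e^{aY}dY)$ convergence of Proposition \ref{gence}, with the factor $e^{Ka^2 n}$ becoming $e^{Ka^2 t}\le e^{Ka^2\tau}$ and being absorbed into $C(a,\tau)$ under the same scaling bookkeeping you carry out. The $t^{-1}$ (resp.\ $s^{-1}$) term you flag when rescaling \eqref{spat1} and \eqref{tem1} with $p=1$ is a genuine feature that the paper's terse proof silently drops — its own later application in Theorem \ref{exist} even invokes a form of \eqref{spat} with an extra $(T/S)^{1/2}$ factor — so your explicit handling of it (combining with \eqref{dik}, which yields a bound of the type $\min\big(t^{-1}|X-Y|,\,t^{-1/2}\big)e^{a(X+Y)}$, sufficient for the Hölder estimates downstream) is, if anything, more careful than the original argument.
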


We remark that these bounds will be the key behind the proofs of Section 4 below.

\begin{proof} The claims follow from the $L^1$ and $L^2$ convergence in Theorem \ref{gence}. More specifically, \eqref{easy} follows Proposition \ref{macky} and convergence in $L^1(\Bbb R_+,e^{aY}dY)$. Next, \eqref{dik} follows from Proposition \ref{fetiz} and convergence in $L^2(e^{aY}dY).$ Similarly, \eqref{spat} and \eqref{tem} follow immediately from Proposition \ref{spatem} and convergence in $L^2(e^{aY}dY).$
\end{proof}

\section{Existence of the right-derivative of Dirichlet-SHE}

In this section we prove existence of the mild solution $Z_{Dir}$ of Dirichlet-boundary \eqref{SHE}, and we also prove existence of the limit $\lim_{X \to 0} \frac{Z_{Dir}(0,X)}{X}$ (started from any reasonable initial data). These will both be done in one single step, by showing that for $X,T\geq 0$ the chaos series $$\sum_{k=0}^{\infty} \int_{0\leq t_1 <...<t_k\leq T} \int_{\Bbb R_+^{k+1}} f(X_{k+1}) \prod_{j=1}^{k+1} \mathscr{P}_{t_j-t_{j-1}}^{T-t_{j-1}} (X_{j-1},X_j) dX_{k+1}\xi(dX_{k},dt_k) \cdots \xi(dX_1,dt_1),$$ converges (uniformly over compact subsets in $\Bbb R_+ \times \Bbb R_+$) , with $t_0:=0$, and $f$ is some random initial data with subexponential growth at infinity. Then we will show that when $X,T>0$ this chaos series is nothing but $$\frac{Z_{Dir}(T,X)}{2\Phi(X/\sqrt{T})-1},$$ where $\Phi$ is the cdf of a standard normal. This would simultaneously prove existence of $Z_{Dir}$ and the desired limit. This is because we know the above chaos series extends continuously to $X=0$, which means $\lim_{X\to 0} \frac{Z_{Dir}(X,T)}{2\Phi(X/\sqrt{T})-1}$ exists, which is equivalent to showing that $\lim_{X \to 0} \frac{Z_{Dir}(T,X)}{X}$ exists.
\\
\\
With this motivation, we move onto the main results of this section. Given some (possibly random) initial data $f:\Bbb R_+ \to \Bbb R_+$, consider the following Duhamel-form SPDE: \begin{equation}\label{spde}\mathscr{Z}(T,X) = \int_{\Bbb R^+} \mathscr{P}_T^T(X,Y)f(Y)dY + \int_0^T \int_{\Bbb R^+} \mathscr{P}^T_{T-S}(X,Y) \mathscr{Z}(S,Y) \xi(dYdS), \end{equation} where $\xi$ is space-time white noise (so the above should be interpreted as an Itô integral), and $\mathscr P$ was defined in Section 3. Since $\mathscr{Z}$ appears on both sides of this relation, it is not clear that a solution would even exist. Thus we have the following.

\begin{thm}\label{exist} Fix $a, \tau >0$ and suppose that we have some (random) function-valued initial data $f$ satisfying $$\sup_{X\ge 0} e^{-aX}\Bbb E[ f(X)^2] <\infty.$$
Then, a unique solution to the SPDE \eqref{spde} with initial data $f$ exists in the class of space-time functions $\mathscr{Z}(T,X)$ which satisfy $$\sup_{\substack{X\ge 0 \\ T \in [0,\tau]} } e^{-aX} \Bbb E[ \mathscr{Z}(T,X)^2] < \infty.$$ Furthermore, the solution $\mathscr Z$ may be constructed in such a way so that its law is supported on the space of functions which are Holder-continuous of exponent $1/2-\epsilon$ in the $X$ variable and $1/4-\epsilon$ in the time variable, on any compact subset of $\Bbb R_+\times \Bbb R_+$ for any $\epsilon>0$.
\end{thm}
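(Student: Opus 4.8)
The strategy is a Picard iteration in the weighted norm $\|\mathscr Z\|_\tau^2:=\sup_{X\ge0,\,T\in[0,\tau]}e^{-aX}\,\Bbb E[\mathscr Z(T,X)^2]$, using the kernel bounds \eqref{easy}--\eqref{tem} in place of the usual heat-kernel estimates. Set $\mathscr Z_0(T,X):=\int_{\Bbb R_+}\mathscr P_T^T(X,Y)f(Y)\,dY$; since $\mathscr P_T^T(X,\cdot)$ is a probability density on $\Bbb R_+$ (Proposition \ref{mp}), Jensen's inequality together with \eqref{easy} gives $\Bbb E[\mathscr Z_0(T,X)^2]\le\int\mathscr P_T^T(X,Y)\,\Bbb E[f(Y)^2]\,dY\le Ce^{aX}$, so $\|\mathscr Z_0\|_\tau<\infty$. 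Define $\mathscr Z_{m+1}(T,X):=\mathscr Z_0(T,X)+\int_0^T\!\!\int_{\Bbb R_+}\mathscr P_{T-S}^T(X,Y)\,\mathscr Z_m(S,Y)\,\xi(dY\,dS)$. By the It\^o isometry and \eqref{dik}, $\Bbb E[(\mathscr Z_{m+1}-\mathscr Z_m)(T,X)^2]=\int_0^T\!\!\int_{\Bbb R_+}\mathscr P_{T-S}^T(X,Y)^2\,\Bbb E[(\mathscr Z_m-\mathscr Z_{m-1})(S,Y)^2]\,dY\,dS\le Ce^{aX}\int_0^T(T-S)^{-1/2}D_m(S)\,dS$, with $D_m(S):=\sup_{Y\ge0}e^{-aY}\Bbb E[(\mathscr Z_m-\mathscr Z_{m-1})(S,Y)^2]$. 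Because $t\mapsto t^{-1/2}$ is locally integrable, iterating this Volterra inequality bounds $D_{m+1}$ by $C$ times the $m$-fold convolution of $t^{-1/2}$ against $D_1$, which decays like $\Gamma(m/2)^{-1}$; hence $\sum_m\|\mathscr Z_{m+1}-\mathscr Z_m\|_\tau<\infty$, the sequence is Cauchy, and its limit $\mathscr Z$ lies in the stated class and solves \eqref{spde}. Uniqueness follows by applying the identical Volterra estimate to the difference of two solutions in the class. Finally, tracking Wiener-chaos degree through the recursion (each stochastic-integral step raises it by one) shows the iterates stabilize chaos-by-chaos, so $\mathscr Z=\sum_{k\ge0}I_k$, where $I_k(T,X)$ is the iterated It\^o integral of $f(X_{k+1})\prod_{j=1}^{k+1}\mathscr P_{t_j-t_{j-1}}^{T-t_{j-1}}(X_{j-1},X_j)$ over $\{0\le t_1<\cdots<t_k\le T\}$ (with $t_0=0$, $t_{k+1}=T$, $X_0=X$), and the estimate above yields $\sup_{T\le\tau}e^{-aX}\Bbb E[I_k(T,X)^2]\le C^k\Gamma(k/2)^{-1}$.

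For the H\"older regularity I would work with this chaos expansion. Using the recursive identity $I_k(T,X)=\int_0^T\!\!\int_{\Bbb R_+}\mathscr P_{T-S}^T(X,Y)I_{k-1}(S,Y)\,\xi(dY\,dS)$, the spatial increment is $I_k(T,X)-I_k(T,X')=\int_0^T\!\!\int_{\Bbb R_+}\big(\mathscr P_{T-S}^T(X,Y)-\mathscr P_{T-S}^T(X',Y)\big)I_{k-1}(S,Y)\,\xi(dY\,dS)$; the It\^o isometry, the spatial kernel bound \eqref{spat}, and the a priori bound on $I_{k-1}$ give $\Bbb E[|I_k(T,X)-I_k(T,X')|^2]\le C^k\Gamma\big(\tfrac{k-1}2\big)^{-1}e^{a(X+X')}|X-X'|$. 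For the temporal increment with $T'<T$, split $I_k(T,X)-I_k(T',X)$ into $\int_0^{T'}\!\!\int\big(\mathscr P_{T-S}^T(X,Y)-\mathscr P_{T'-S}^{T'}(X,Y)\big)I_{k-1}(S,Y)\,\xi$ plus $\int_{T'}^T\!\!\int\mathscr P_{T-S}^T(X,Y)I_{k-1}(S,Y)\,\xi$; the first term has exactly the form handled by the temporal kernel bound \eqref{tem} (both kernels have time-to-go $S$, current times differing by $T-T'$), the second is handled by \eqref{dik}, and together they give $\Bbb E[|I_k(T,X)-I_k(T',X)|^2]\le C^k\Gamma\big(\tfrac{k-1}2\big)^{-1}e^{2aX}|T-T'|^{1/2}$. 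Gaussian hypercontractivity (Nelson's inequality) upgrades these $L^2$ bounds to $\|I_k(T,X)-I_k(T,X')\|_{L^{2p}}\le(2p-1)^{k/2}\|I_k(T,X)-I_k(T,X')\|_{L^2}$ and likewise in time, and the super-exponential decay $\Gamma(k/2)^{-1/2}$ makes the sums over $k$ converge; summing yields $\Bbb E[|\mathscr Z(T,X)-\mathscr Z(T,X')|^{2p}]\le C_p|X-X'|^p$ and $\Bbb E[|\mathscr Z(T,X)-\mathscr Z(T',X)|^{2p}]\le C_p|T-T'|^{p/2}$ uniformly on compacts. Kolmogorov's continuity criterion, with $p$ taken arbitrarily large, then produces a modification that is locally H\"older of any exponent $<1/2$ in $X$ and $<1/4$ in $T$; combining the one-parameter estimates gives joint local H\"older continuity. (When $f$ is random and independent of $\xi$ --- as in the applications, where $f$ is a Brownian exponential --- one applies hypercontractivity conditionally on $f$; this requires $f$ to have all moments locally, which holds in those cases. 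The $k=0$ term $\mathscr Z_0$ is smooth in $(T,X)$ for $T>0$, being a ratio-normalized solution of the heat equation, and tends to $f$ as $T\to0$, so it inherits whatever regularity $f$ has near the initial time.)

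The scheme is a fairly routine adaptation of the Walsh/Da Prato--Zabczyk fixed-point method; the step I expect to be the main obstacle --- and the place where the real work of the paper lies --- is not in this section at all, but in the kernel estimates \eqref{easy}--\eqref{tem} of Section 3, whose proofs require a delicate analysis of the inhomogeneous (Brownian-meander) transition densities. The one genuinely delicate bookkeeping point internal to this proof is that $\mathscr P_t^T$ depends on \emph{two} time parameters and the terminal one changes under the Duhamel recursion; the temporal H\"older bound therefore has to absorb the increment of the normalization factor $2\Phi(\,\cdot/\sqrt{\,\cdot\,}\,)-1$ near the corner where $X$ and $T$ are both small, which is precisely the information packaged into \eqref{tem}.
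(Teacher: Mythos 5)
Your proposal follows essentially the same route as the paper: a Picard/chaos expansion in the weighted norm $\sup e^{-aX}\Bbb E[\cdot^2]$, the Volterra iteration driven by \eqref{easy} and \eqref{dik} giving the $1/(n/2)!$ decay, uniqueness by the same estimate, and H\"older regularity from chaos-level increment bounds via \eqref{spat}, \eqref{tem}, hypercontractivity, and Kolmogorov's criterion, with the identical splitting of the temporal increment and the same treatment of the smooth zeroth term. Your iterates' successive differences coincide with the paper's chaos terms $u_n$, so the two arguments are the same in substance; your parenthetical about applying hypercontractivity conditionally on the random initial data $f$ is a reasonable extra precision rather than a departure.
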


\begin{proof}This is adapted from the proofs given in [Par18, Section 4]. Informally, one argues as follows: define the following sequence of iterates: $$u_0(T,X) = \int_{\Bbb R_+} \mathscr{P}_T^T(X,Y)f(Y)dY,$$ $$u_{n+1}(T,X) = \int_0^T \int_{\Bbb R^+} \mathscr{P}_{T-S}^T(X,Y) u_n(S,Y) \xi(dYdS).$$ In other words, $u_n$ is just the $n^{th}$ term of a chaos series given by the expansion of \eqref{spde}. Thus it is clear that the desired solution to \eqref{spde} should be given by $\sum_{n \geq 0} u_n$. Hence, in order to formalize these ideas, we will show that the series $\sum u_n$ converges in the appropriate Banach space of random space-time functions.
\\
\\
To this end, let us define a Banach space $\mathcal B$ of $C(\Bbb R_+)$-valued processes $u = (u(T,\cdot))_{T \in [0,\tau]}$ which are adapted to the natural filtration of $\xi$, with norm given by $$\|u \|_{\mathcal B}^2 := \sup_{\substack{X\ge 0 \\ T \in [0,\tau]}} e^{-aX} \Bbb E[u(T,X)^2].$$ 
Then define a sequence of functions $F_n:[0,\tau] \to \Bbb R$ for $n \geq 0$ by $$F_n(T) := \sup_{\substack{X\ge 0\\ S \in [0,T]}} e^{-aX}\Bbb E[ u_n(S,X)^2],$$ where $u_n$ are the iterates defined above. By Ito isometry, it is clear that \begin{align}\label{muck}
\Bbb E[ u_{n+1}(T,X)^2] &= \int_0^T \int_{\Bbb R_+} \mathscr{P}_{T-S}^T(X,Y)^2 \Bbb E[ u_n(S,Y)^2] dYdS \notag \\ &\leq \int_0^T\bigg[ \int_{\Bbb R_+} \mathscr{P}^T_{T-S}(X,Y)^2 e^{aY} dY\bigg] F_n(S)dS.\end{align} Now by \eqref{dik} we have that \begin{equation}\label{turd}\int_{\Bbb R_+} \mathscr{P}_{T-S}^T(X,Y)^2 e^{aY}dY \leq C(T-S)^{-1/2}  e^{aX},\;\;\;\;\;\;\;\forall T\in[0,\tau],X\ge 0,\end{equation} where $C$ may depend on $a$ and $\tau$. Furthermore, one notes that the $F_n$ are increasing functions of $T$, and therefore $T \mapsto \int_0^T (T-S)^{-1/2}F_n(S)dS$ is also increasing (which may be verified by making the substitution $S=TU$). Combining this fact with \eqref{muck} and \eqref{turd}, one obtains 
\begin{equation}\label{btch}F_{n+1}(T) \leq C \int_0^T (T-S)^{-1/2} F_n(S) dS,\end{equation} where $C$ does not depend on $n$. Now, we claim that $F_0(T) \leq C$ (with $C=C(a,\tau)$). Indeed, by Jensen's inequality and Fubini's theorem, one has
$$\Bbb E[u_0(T,X)^2] = \Bbb E \bigg[ \bigg( \int_{\Bbb R_+} \mathscr{P}_T^T(X,Y) f(Y) dY \bigg)^2 \bigg] \leq \int_{\Bbb R_+} \mathscr{P}_T^T(X,Y) \Bbb E[f(Y)^2] dY \leq Ce^{aX},$$
where in the last inequality we used \eqref{easy} together with the assumption that $\Bbb E[f(X)^2] \leq Ce^{aX}$. This proves that $F_0 \leq C$, which means that one may iterate \eqref{btch} to obtain $$F_n(S) \lesssim C^n T^{n/2} /(n/2)!,$$ which implies that $\sum_n \|u_n\|_{\mathcal{B}}<\infty.$ This completes the proof of existence.
\\
\\
The proof of uniqueness is essentially the same. Indeed, if $\mathscr{Z}$ and $\mathscr{Z'}$ were two solutions in $\mathcal{B}$ which are started from the same initial data $f$, then an application of Ito's isometry reveals that $$\Bbb E\big[ (\mathscr{Z}(T,X) - \mathscr{Z'}(T,X))^2 ] = \int_0^T \int_{\Bbb R_+} \mathscr{P}_{T-S}^T(X,Y)^2 \Bbb E[(\mathscr{Z}(S,Y) - \mathscr{Z'}(S,Y))^2 ] dSdY. $$ Then one iterates as above and one may obtain that the left-hand side is bounded above (uniformly in $T,X$) by $C^nT^{n/2}/(n/2)!$, and by letting $n \to \infty$ this tends to zero.
\\
\\
Now we address the Holder regularity. Let $u_n$ be the iterates defined above. We know that $u_0$ is a smooth function of $(X,T) \in [0,\infty) \times (0,\infty)$ because it is the solution to the deterministic (i.e., noiseless) version of SPDE \eqref{spde} which is just an inhomogeneous heat equation (e.g., one may simply differentiate $u_0$ under the integral sign). Thus, it suffices to prove that the function $\mathscr Z_0:= \mathscr Z-u_0=\sum_{n \geq 1} u_n$ has the required Holder regularity, so this is what we will do.
\\
\\
For the spatial regularity, one computes that \begin{align*}\mathbb E[ (u_{n+1}(T,X)-&u_{n+1}(T,Y))^2 ] = \int_0^T \int_{\Bbb R_+} \big( \mathscr P_{T-S}^T(X,Z) - \mathscr P_{T}^{T-S}(Y,Z) \big)^2 \mathbb E[u_n(S,Z)^2]dZ dS \\ & \leq \int_0^T\bigg[ \int_{\Bbb R_+} \big(\mathscr P_{T-S}^T(X,Z) - \mathscr P_{T-S}^{T}(Y,Z) \big)^2 e^{aZ} dZ\bigg] F_n(S)dS \\ & \leq C\int_0^T (T-S)^{-1/2} (T/S)^{1/2} |X-Y|e^{a(X+Y)} F_n(S)dS \\ &\leq Ce^{a(X+Y)}|X-Y|\int_0^T (T-S)^{-1/2} \frac{C^nS^{n/2}}{(n/2)!} dS \\ &\lesssim C^{n+1}e^{a(X+Y)} |X-Y| T^{(n+1)/2}/(n/2)!,
\end{align*}
where we made a substitution $S=TU$ in the final inequality, and we applied estimate \eqref{spat} in the third line. Using hypercontractivity of the Ornstein-Uhlenbeck semigroup associated to the Gaussian noise $\xi$, we can actually bound the $p^{th}$ moments of elements of the homogeneous Wiener chaoses in terms of their second moments. Specifically, if $p \ge 2$ then [Hai16, Equation (7.2)] says that: \begin{align*}\Bbb E \big[\big|u_{n+1}(T,X)-u_{n+1}(T,Y)\big|^p\big]^{1/p} &\leq (p-1)^{(n+1)/2} \Bbb E \big[(u_{n+1}(T,X)-u_{n+1}(T,Y))^2 \big]^{1/2} \\ &\lesssim C^{(n+1)/2} (2p)^{(n+1)/2} e^{a(X+Y)/2} \frac{T^{(n+1)/4}}{\sqrt{(n/2)!}} |X-Y|^{1/2}.\end{align*} Using Minkowski's inequality and summing over all $n$, we then obtain $$\Bbb E\big[ \big|\mathscr Z_0(T,X) - \mathscr Z_0(T,Y)\big|^p \big]^{1/p} \leq \sum_{n \geq 1} \Bbb E \big[\big|u_{n}(T,X)-u_{n}(T,Y)\big|^p\big]^{1/p} \leq C(p,T)e^{a(X+Y)/2}|X-Y|^{1/2}.$$ Here $C(p,T):= \sum_n\frac{ (2CpT^{1/2})^{(n+1)/2}}{\sqrt{(n/2)!}}$, which is independent of $X,Y$ and increasing as a function of $T$. This is enough (by Kolmogorov's criterion) to ensure that $\mathscr Z_0$ is Holder continuous of exponent $1/2-\epsilon$ (on compact sets) in the spatial variable.
\\
\\
For the temporal regularity, one computes
\begin{align}\notag&\mathbb E[ (u_{n+1}(T,X)^2-u_{n+1}(S,X))^2 ] \\ \notag&= \Bbb E \bigg[ \bigg( \int_0^T \int_{\Bbb R_+} \mathscr P_{T-U}^T(X,Z) u_n(U,Z) \xi(dZdU) - \int_0^S\int_{\Bbb R_+} \mathscr P_{S-U}^S(X,Z) u_n(U,Z) \xi(dZdU)\bigg)^2\bigg] \\ \notag&= \int_0^S \int_{\Bbb R_+} \big( \mathscr P_{T-U}^T(X,Z) - \mathscr P_{S-U}^S(X,Z) \big)^2 \Bbb E[u_n(U,Z)^2]dZdU \\\notag &\;\;\;\;\;\;\;\;\;\;\;\;\;\;\;\;\;\;\;\;\;\;\;\;\;\;\;\;\;\;\;+ \int_S^T \int_{\Bbb R_+} \mathscr P_{T-U}^T(X,Z)^2 \Bbb E[u_n(U,Z)^2]dZdU ,\end{align}
Let us call the integrals in the last expression as $I_1,I_2$ respectively. As before, one has $\Bbb E[u_n(U,Z)^2] \leq e^{aZ}F_n(U) \leq e^{aZ}\frac{C^nU^{n/2}}{(n/2)!}$. Then one uses \eqref{tem} to bound the inner integral of $I_1$ by $$\int_{\Bbb R_+}\big( \mathscr P_{T-U}^T(X,Z) - \mathscr P_{S-U}^S(X,Z) \big)^2e^{aZ}dZ \leq Ce^{2aX}(S-U)^{-1/2} |T-S|^{1/2}, $$ and one also uses \eqref{dik} to to bound the inner integral of $I_2$ as $$\int_{\Bbb R_+} \mathscr P_{T-U}^T(X,Z)^2 e^{aZ} dZ \leq C(T-U)^{-1/2}e^{aX}.$$ Then one finally performs the integral over $U$ on the respective domains, and one can obtain that $I_1+I_2\le C^{n+1}e^{2aX}T^{(n+1)/2} |T-S|^{1/2} $. Then one uses hypercontractivity and sums over $n$ (exactly as in the spatial case), to get that $$\Bbb E\big[ \big|\mathscr Z_0(T,X) - \mathscr Z_0(S,X)\big|^p \big]^{1/p} \leq C(p,T) e^{aX} |T-S|^{1/4}.$$ Here $C(p,T)$ is an increasing function of $T$ (same as before) so it can be bounded from above on any compact set of $(X,T)$'s. This is enough to give Holder regularity of $1/4-\epsilon$ in time, by Kolmogorov's criterion.
\end{proof}

Next, we discuss the relationship of the $\mathscr{Z}$ we have constructed in Theorem \ref{exist} with the Dirichlet-boundary \eqref{SHE}.

\begin{prop}\label{eq} Any solution of the SPDE \eqref{spde} must a.s. satisfy the following relation for all $T,X>0$  $$\mathscr{Z}(T,X)\big( 2 \Phi(X/\sqrt{T})-1 \big) = Z_{Dir}(T,X)$$ where $Z_{Dir}$ solves the Dirichlet-boundary \eqref{SHE} as in Definition 2....., with the same initial data $f$.
\end{prop}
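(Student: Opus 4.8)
The plan is to recognize the kernel $\mathscr P$ as a Doob $h$-transform of the Dirichlet heat kernel $P^{Dir}$, and to convert the fixed-point equation \eqref{spde} into the Dirichlet Duhamel equation simply by multiplying through by the transform function. Concretely, write $h(T,X) := 2\Phi(X/\sqrt T)-1$; a direct Gaussian computation shows $h(T,X) = \int_{\Bbb R_+} P_T^{Dir}(X,Y)\,dY$, which is $>0$ for $T,X>0$ and is the survival probability of Brownian motion killed at the origin. From Definition \ref{sht} I would record the two algebraic identities
$$h(T,X)\,\mathscr{P}_T^T(X,Y) = P_T^{Dir}(X,Y), \qquad h(T,X)\,\mathscr{P}_{T-S}^T(X,Y) = P_{T-S}^{Dir}(X,Y)\,h(S,Y),$$
valid for $0<S<T$ and $X,Y>0$, which are exactly the $h$-transform relations between $\mathscr P$ and $P^{Dir}$.

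Next, given a solution $\mathscr Z$ of \eqref{spde} (necessarily the unique one of Theorem \ref{exist}), I would set $\tilde Z(T,X) := h(T,X)\,\mathscr Z(T,X)$ and multiply \eqref{spde} by the deterministic scalar $h(T,X)$. Since $h(T,X)$ does not involve the integration variables $(S,Y)$, it may be pulled inside the It\^o--Walsh integral by linearity; combining this with the two identities above and with $\tilde Z(S,Y)=h(S,Y)\mathscr Z(S,Y)$ turns \eqref{spde} into
$$\tilde Z(T,X) = \int_{\Bbb R_+} P_T^{Dir}(X,Y) f(Y)\,dY + \int_0^T\!\!\int_{\Bbb R_+} P_{T-S}^{Dir}(X,Y)\,\tilde Z(S,Y)\,\xi(dY\,dS),$$
that is, $\tilde Z$ is a mild solution of the Dirichlet-boundary \eqref{SHE} with initial data $f$. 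This already yields existence of $Z_{Dir}$.

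It then remains to identify $\tilde Z$ with $Z_{Dir}$. Because $0\le h\le 1$, the second-moment bound $\sup_{X\ge 0,\,T\le\tau}e^{-aX}\Bbb E[\mathscr Z(T,X)^2]<\infty$ from Theorem \ref{exist} passes to $\tilde Z$, so $\tilde Z$ is an admissible integrand and lies in a class in which the Dirichlet mild solution is unique; uniqueness there is established by the same It\^o-isometry-and-iteration argument as in the proof of Theorem \ref{exist}, now using the elementary bound $\int_{\Bbb R_+} P_{T-S}^{Dir}(X,Y)^2 e^{aY}\,dY \le C(T-S)^{-1/2}e^{aX}$ in place of \eqref{dik}. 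Hence $\tilde Z = Z_{Dir}$, which is precisely the asserted identity $\big(2\Phi(X/\sqrt T)-1\big)\mathscr Z(T,X) = Z_{Dir}(T,X)$ for $T,X>0$.

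The step I expect to be the only genuinely delicate point is the stochastic-calculus bookkeeping in the conversion: justifying that the deterministic prefactor $h(T,X)$ may be moved through the It\^o--Walsh integral and that the integrand $\mathscr{P}_{T-S}^T(X,Y)\mathscr Z(S,Y)$ may be replaced by the pointwise-equal $h(T,X)^{-1}P_{T-S}^{Dir}(X,Y)\tilde Z(S,Y)$. Both reduce to linearity of the integral together with the $L^2$-finiteness furnished by $h\le 1$ and the $P^{Dir}$ estimate above; the algebraic heart of the argument — the two transform identities linking $\mathscr P$ and $P^{Dir}$ — is immediate from Definition \ref{sht}, and the identification $\tilde Z = Z_{Dir}$ is then a routine uniqueness statement.
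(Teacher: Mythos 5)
Your argument is essentially the paper's own proof: the two transform identities you record are exactly the paper's relation \eqref{r}, and multiplying \eqref{spde} by $2\Phi(X/\sqrt{T})-1$ and pulling the deterministic scalar through the It\^o--Walsh integral is precisely how the paper shows that $A(T,X):=\mathscr Z(T,X)\big(2\Phi(X/\sqrt T)-1\big)$ is a mild solution of the Dirichlet-boundary \eqref{SHE} with initial data $f$. Your additional uniqueness step identifying $\tilde Z$ with a given $Z_{Dir}$ is a harmless supplement (the paper defers that remark to after the proposition), so the proposal is correct and follows the same route.
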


\begin{proof} One notes the following relation for $X>0$, which is immediate from Definition \ref{sht}: \begin{equation}\label{r}\mathscr{P}_t^T(X,Y) \big( 2\Phi(X/\sqrt{T})-1 \big) =\begin{cases} P_t^{Dir}(X,Y) \big( 2 \Phi(Y/\sqrt{T-t})-1\big), & t<T \\ P_T^{Dir}(X,Y), & t=T\end{cases}.\end{equation} So suppose $\mathscr{Z}$ solves \eqref{spde}, and define $$A(X,T):= \mathscr{Z}(T,X) \big( 2\Phi(X/\sqrt{T})-1 \big).$$
By multiplying both sides of \eqref{spde} by $2\Phi(X/\sqrt{T})-1$ and applying \eqref{r}, one has the relation
\begin{align*}
    A(T,X) &= \int_{\Bbb R_+} P_T^{Dir}(X,Y) f(Y) dY + \int_0^T \int_{\Bbb R_+} P_{T-S}^{Dir}(X,Y) \bigg[ \mathscr{Z}(S,Y) \big( 2\Phi(Y/\sqrt{S})-1 \big)\bigg] \xi(dY,dS) \\ &= \int_{\Bbb R_+} P_T^{Dir}(X,Y) f(Y) dY + \int_0^T \int_{\Bbb R_+} P_{T-S}^{Dir}(X,Y) A(S,Y) \xi(dY,dS), 
\end{align*}
so that $A$ is indeed a mild solution to the Dirichlet-boundary \eqref{SHE}.
\end{proof}

One thing we have not addressed is the \textit{uniqueness} of solutions to the Dirichlet-boundary \eqref{SHE} in some large-enough class of random space-time functions. This can be obtained from Theorem \ref{exist} with minimal work, and with the same conditions on the initial data $f$, one can in fact obtain existence/uniqueness in the space of $\xi$-adapted space-time functions $A$ satisfying $\sup_{T\leq \tau,\; X\geq 0} \Bbb E[ A(T,X)^2 ]<\infty$.

\begin{cor}\label{lim}
Consider any solution $Z_{Dir}$ of Dirichlet-boundary \eqref{SHE}, started from any initial data $f$ satisfying the assumptions of theorem \eqref{exist}. Then almost surely, for every $T>0$ the $\lim_{X\to 0} \frac{Z_{Dir}(T,X)}{X}$ exists.
\end{cor}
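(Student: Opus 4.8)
The plan is to read this off directly from Theorem \ref{exist} and Proposition \ref{eq}, using only the Taylor expansion of the standard normal cdf at the origin. First I would record the elementary fact that $\Phi(u) = \tfrac12 + \tfrac{u}{\sqrt{2\pi}} + O(u^3)$ as $u \to 0$, so that $2\Phi(X/\sqrt T) - 1 = X\sqrt{2/(\pi T)} + O(X^3)$ and hence, for each fixed $T>0$,
\[
\frac{2\Phi(X/\sqrt T)-1}{X}\;\longrightarrow\;\sqrt{\tfrac{2}{\pi T}}\qquad (X\to 0).
\]

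Next I would invoke Theorem \ref{exist} to produce a solution $\mathscr Z$ of the SPDE \eqref{spde} with the given initial data $f$ whose sample paths are, almost surely, jointly continuous (in fact locally Hölder of exponents $1/2-\epsilon$ and $1/4-\epsilon$) on all of $\Bbb R_+\times\Bbb R_+$, and in particular continuous \emph{up to} the boundary $X=0$. By Proposition \ref{eq}, the field $A(T,X):=\mathscr Z(T,X)\,(2\Phi(X/\sqrt T)-1)$ is a mild solution of the Dirichlet-boundary \eqref{SHE} with initial data $f$; moreover it lies in the $L^2$ uniqueness class discussed in the remark after Proposition \ref{eq}, since $|2\Phi-1|\le 1$ and $\mathscr Z\in\mathcal B$ forces $\sup_{T\le\tau,X\ge 0}\Bbb E[A(T,X)^2]<\infty$. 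Invoking that uniqueness statement, any solution $Z_{Dir}$ of Dirichlet-boundary \eqref{SHE} with the same initial data must coincide with $A$: there is one event of full probability on which $Z_{Dir}(T,X)=\mathscr Z(T,X)\,(2\Phi(X/\sqrt T)-1)$ for all $X>0$, $T>0$.

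On that event I would then simply write, for each fixed $T>0$,
\[
\frac{Z_{Dir}(T,X)}{X}\;=\;\mathscr Z(T,X)\cdot\frac{2\Phi(X/\sqrt T)-1}{X},
\]
and send $X\to 0$: the first factor tends to $\mathscr Z(T,0)$ by continuity of $\mathscr Z$ up to the boundary, and the second to $\sqrt{2/(\pi T)}$ by the expansion above. Thus $\lim_{X\to 0}Z_{Dir}(T,X)/X$ exists (and equals $\sqrt{2/(\pi T)}\,\mathscr Z(T,0)$) simultaneously for every $T>0$ on a single full-measure event, which is exactly the claim.

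I expect the only point requiring care — rather than a one-line Taylor estimate — to be the passage from ``$Z_{Dir}$ is \emph{some} mild solution'' to ``$Z_{Dir}$ agrees with $A$ on \emph{one} full-measure event, jointly in $(T,X)$ for $X>0$''. This needs the $L^2$-uniqueness for the Dirichlet equation (the Grönwall-type iteration sketched in the remark after Proposition \ref{eq}, identical in structure to the uniqueness argument inside Theorem \ref{exist}), together with the observation that both $A$ and the given $Z_{Dir}$ are continuous in $(T,X)$ for $X>0$, so that an almost-sure identity at each fixed point upgrades to an almost-sure identity of continuous fields. Everything else — the expansion of $\Phi$ and the two limits above — is routine.
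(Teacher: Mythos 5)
Your proposal is correct and follows essentially the same route as the paper: identify $Z_{Dir}$ with $\mathscr Z(T,X)\,(2\Phi(X/\sqrt T)-1)$ via Proposition \ref{eq} (and the uniqueness remark preceding the corollary), use the boundary continuity of $\mathscr Z$ from Theorem \ref{exist}, and conclude from the fact that $X\mapsto 2\Phi(X/\sqrt T)-1$ vanishes linearly (with nonzero derivative) at $X=0$. Your extra care about upgrading the pointwise a.s. identification to a single full-measure event matches the role of the uniqueness/coupling step the paper only sketches.
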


It is somewhat fascinating that we are able to obtain such a result via an intricate probabilistic analysis of seemingly unrelated uniform random-walk measures!

\begin{proof}
Consider the solution $\mathscr{Z}$ to \eqref{spde} started from initial data $f$. By the preceding Proposition, we can couple this with the solution to the Dirichlet-boundary \eqref{SHE} in such a way so that $$\mathscr{Z}(X,T) = \frac{Z_{Dir}(T,X)}{2\Phi(X/\sqrt{T})-1}$$ for all $X>0$ and $T\geq 0$. But we know that $\mathscr{Z}$ extends continuously to $X=0$ by Theorem \ref{exist}, hence we know that $$\lim_{X\to 0} \frac{Z_{Dir}(T,X)}{2\Phi(X/\sqrt{T})-1}$$ exists, and since $2\Phi(X/\sqrt{T})-1$ has nonzero derivative at $X=0$, the claim follows.
\end{proof}

\section{Convergence of the partition function to SHE}

In this section we use a discrete chaos expansion together with the methods of \cite{AKQ14a, CSZ17a} and the heat kernel estimates of the previous sections in order to prove Theorem \ref{thm2}. The first step (subsection 5.1) is to simplify the geometry of the region where our directed polymer lives, and then (in subsection 5.2) we will prove the convergence result in the simpler domain.

\subsection{Reduction from octant to quadrant}

In this subsection, we reduce the technicality of working with the partition function in an octant to working with it in a quadrant, which simplifies many computations. The dichotomy here is that the quadrant has a simple geometry which makes polymer-convergence results of the desired type quite straightforward; on the other hand, the octant has the advantage that one has nice identities such as those of Corollary \ref{cor}(3) which fail for a quadrant. Hence, one viewpoint is simpler for technical computations while the other is well-adapted for exact solvability. The results of this section are specific to the case of our positive random walk measures; however, the general outline and arguments which will be given may be easily modified for other random walk measures (such as the reflecting walk) as long as the analogous heat kernel bounds hold. Thus, this section may prove useful to other works of a similar flavor.
\\
\\
In what follows, we fix a sequence $\omega^n = \{\omega^n_{i,j}\}_{i,j \geq 0}$ of i.i.d. random environments with $n \in \Bbb N$. As always, we denote by $\Bbb E$ (resp. $\mathbb P$) the expectation (resp. probability) with respect to the environment $\omega^n_{i,j}$ and we denote by $\mathbf{E}_x^n$ (resp. $\mathbf{P}_x^n$) the expectation (resp. probability) with respect to the positive random walk measures of Section 3 (also mentioned briefly in Section 2, just before Proposition \ref{pt}). Furthermore, $T_n$ will denote the first time that this random walk $(S_n)$, started from $x \geq 0$, hits the diagonal line $\{(i,2n-i): i \geq 0\}$. 

\begin{lem}\label{killme}
Let $\mathfrak p_n^N(x,y)$ be the positive random walk transition probabilities defined at the beginning of Section 3. Then there exist constants $B,C,K>0$ such that for all $x ,n,k\geq 0$ and $a \ge 0$, $$\sum_{\substack{0\leq i_1<...<i_k \leq n \\ (x_1,...,x_k) \in \Bbb Z_{\geq 0}^k}} \mathfrak{p}_{i_1}^{n}(x,x_1)^2\mathfrak{p}_{i_2-i_1}^{n-i_1}(x_1,x_2)^2 \cdots \mathfrak{p}_{i_k-i_{k-1}}^{n-i_{k-1}}(x_{k-1},x_k)^2e^{ax_k} \leq B e^{ax+Ka^2n} C^k n^{k/2} / (k/2)!.$$
\end{lem}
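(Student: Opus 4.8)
The plan is to iterate the single-step estimate from Proposition \ref{fetiz}, which controls weighted $\ell^p$-sums of a single factor $\mathfrak p_n^N(x,\cdot)^p$, and then sum the resulting geometric-type bound over the ordered indices $0 \le i_1 < \cdots < i_k \le n$. First I would fix all indices $i_1 < \cdots < i_k$ and perform the sum over $(x_1,\dots,x_k)$ one variable at a time, starting from $x_k$ and working inward. The innermost sum $\sum_{x_k \ge 0} \mathfrak p_{i_k-i_{k-1}}^{n-i_{k-1}}(x_{k-1},x_k)^2 e^{ax_k}$ is bounded by Proposition \ref{fetiz} (with $p=2$) by $C^2 (i_k-i_{k-1}+1)^{-1/2} e^{ax_{k-1}+Ka^2(i_k-i_{k-1})}$. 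The factor $e^{ax_{k-1}}$ then merges with the next weight $\mathfrak p_{i_{k-1}-i_{k-2}}^{n-i_{k-2}}(x_{k-2},x_{k-1})^2$, so I can apply Proposition \ref{fetiz} again, and so on. Since the exponents $Ka^2(i_j-i_{j-1})$ add up telescopically to at most $Ka^2 n$, after $k$ applications the whole sum over the $x$'s is bounded by $C^{2k} e^{ax+Ka^2 n} \prod_{j=1}^k (i_j - i_{j-1}+1)^{-1/2}$, with $i_0:=0$.

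The remaining task is to sum $\prod_{j=1}^k (i_j-i_{j-1}+1)^{-1/2}$ over $0 \le i_1 < \cdots < i_k \le n$. Writing $d_j := i_j - i_{j-1} \ge 1$ for $j \ge 2$ and $d_1 := i_1 \ge 0$, this is at most $\sum_{d_1,\dots,d_k \ge 0,\ d_1+\cdots+d_k \le n} \prod_j (d_j+1)^{-1/2}$. The natural way to handle this is to compare it to an integral: $\sum_{d \ge 0, d \le m}(d+1)^{-1/2} \le C m^{1/2}$, and more generally the $k$-fold constrained sum behaves like the volume integral $\int_{\Delta} \prod t_j^{-1/2}\,dt$ over the simplex $\{t_j \ge 0, \sum t_j \le n\}$, which by the Dirichlet integral formula equals $n^{k/2}\,\Gamma(1/2)^k/\Gamma(k/2+1) = n^{k/2}\pi^{k/2}/(k/2)!$. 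Making this rigorous in the discrete setting is a standard induction on $k$: assuming $\sum_{d_1+\cdots+d_{k-1}\le m}\prod(d_j+1)^{-1/2} \le C^{k-1} m^{(k-1)/2}/((k-1)/2)!$, one sums over the last variable $d_k$ and uses $\sum_{d_k=0}^{n-m}(d_k+1)^{-1/2} \le C(n-m+1)^{1/2}$ together with a discrete Beta-function estimate $\sum_{m=0}^{n} m^{(k-1)/2}(n-m)^{1/2} \lesssim n^{k/2+1}\cdot B((k+1)/2, 3/2)$, and the Beta values chain together to give exactly the $1/(k/2)!$ growth. Absorbing the $C^{2k}$ from the heat-kernel step and the constants from the combinatorial step into a single $C^k$, and the $O(1)$ prefactor into $B$, yields the claimed bound.

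The main obstacle I anticipate is bookkeeping the constants carefully enough to land the factorial $1/(k/2)!$ rather than something weaker like $1/k!^{1/2}$ or $C^{k^2}$; the geometric constant $C^k$ from iterating Proposition \ref{fetiz} is harmless, but one must be sure the simplex-volume / iterated-Beta-function computation genuinely produces $n^{k/2}/\Gamma(k/2+1)$ and that the discrete-to-continuum comparison does not introduce a $k$-dependent constant worse than geometric. A clean way to avoid a delicate discrete induction is to note $(d+1)^{-1/2} \le \int_{d}^{d+1}(t\vee 1)^{-1/2}\,2\,dt$ type bounds turning the whole sum into $2^k \int_{\{t_j \ge 0,\ \sum t_j \le n+k\}} \prod (t_j \vee 1)^{-1/2}\,dt \le 2^k \int_{\{\sum t_j \le 2n\}}\prod t_j^{-1/2}\,dt$ (valid once $k \le n$, which we may assume since otherwise the empty-sum or trivial cases apply), and then invoke the exact Dirichlet formula; this keeps all constants purely geometric in $k$.
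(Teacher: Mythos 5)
Your proposal is correct and follows essentially the same route as the paper: iterate Proposition \ref{fetiz} (with $p=2$) over the spatial variables to extract $C^k e^{ax+Ka^2n}\prod_j (i_j-i_{j-1}+1)^{-1/2}$, then bound the resulting simplex sum by the Dirichlet integral $\int_{0\le t_1<\dots<t_k\le 1}\prod_j (t_j-t_{j-1})^{-1/2}\,dt \le B/(k/2)!$. Your more careful discrete-to-continuum comparison (with a geometric factor $2^k$) is if anything a cleaner justification of the Riemann-sum step that the paper states rather tersely, and the geometric factors are harmless since the statement allows $C^k$.
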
 

\begin{proof} By inducting (with respect to the variable $k$) on the bound in Proposition \ref{fetiz} (and noting that the constant there is independent of $x,n,N,m$), one sees that $$\sum_{(x_1,...,x_k) \in \Bbb Z_{\geq 0}^k} \mathfrak{p}_{i_1}^{n}(x,x_1)^2\mathfrak{p}_{i_2-i_1}^{n-i_1}(x_1,x_2)^2 \cdots \mathfrak{p}_{i_k-i_{k-1}}^{n-i_{k-1}}(x_{k-1},x_k)^2e^{ax_k}$$$$ \leq C^ke^{ax+Ka^2n} (i_1+1)^{-1/2} (i_2-i_1+1)^{-1/2}\cdots (i_k-i_{k-1}+1)^{-1/2} .$$
Thus the desired sum is bounded above by $$e^{ax+Ka^2n}\sum_{1 \leq i_1 <...< i_k \leq n+1} i_1^{-1/2}(i_2-i_1)^{-1/2} \cdots (i_k-i_{k-1})^{-1/2}. $$ Now one recognizes that $$n^{-k/2} \sum_{1 \leq i_1 <...<i_k \leq n+1} i_1^{-1/2}(i_2-i_1)^{-1/2} \cdots (i_k-i_{k-1})^{-1/2} $$$$= \frac{1}{n^k} \sum_{1 \leq i_1 <...<i_k \leq n+1} \big(\frac{i_1}{n}\big)^{-1/2}\big(\frac{i_2}{n}-\frac{i_1}{n}\big)^{-1/2} \cdots \big(\frac{i_k}{n}-\frac{i_{k-1}}{n}\big)^{-1/2}, $$ which (as a Riemann sum approximation) is bounded above by twice $$\int_{0 \leq t_1 <...<t_k \leq 1}  t_1^{-1/2}(t_2-t_1)^{-1/2} \cdots (t_k-t_{k-1})^{-1/2}dt_1 \cdots dt_k \leq B/(k/2)!,$$ where $B>0$. Hence the lemma is proved.
\end{proof}

\begin{lem}\label{omfg} Take a sequence $\omega^n = \{\omega^n_{i,j}\}$ of random environments satisfying the assumptions of ......... Furthermore, let $\{z_0^n(x)\}_{x \geq 0}$ be some sequence of non-negative stochastic processes with the property that $\Bbb E[z_0^n(x)^2] \leq K e^{an^{-1/2}x}$ for some constants $K,a$ which are independent of $n$ and $x$. Then there exists a constant $C$ such that for all $n,x \geq 0$ one has that $$\Bbb E \Bigg[ \sup_{0 \leq k \leq n} \mathbf{E}_x^n \bigg[z_0^n(S_n)\prod_{i=0}^{k} (1 + n^{-1/4} \omega_{i,S_i}^n) \bigg]^2 \Bigg] \leq Ce^{an^{-1/2}x}. $$
\end{lem}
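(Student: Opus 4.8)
The plan is to view the inner expectation, for fixed $n$ and $x$, as a discrete polynomial-chaos martingale in $k$, reduce the supremum to a single $L^2$ estimate via Doob's inequality, and then bound that $L^2$-norm by a chaos expansion combined with Lemma \ref{killme} and Proposition \ref{macky}. Set $M_k := \mathbf E_x^n[z_0^n(S_n)\prod_{i=0}^k(1+n^{-1/4}\omega^n_{i,S_i})]$ for $0\le k\le n$. Since the rows $\{\omega^n_{i,\cdot}\}_i$ are i.i.d., mean zero, and independent of the walk $S$ and of $z_0^n$, a one-line Fubini computation (pulling $\mathbf E_x^n$ outside, as the walk is independent of everything) shows that $(M_k)_{k=0}^n$ is a martingale for the filtration $\mathcal F_k=\sigma(\omega^n_{i,j}:i\le k)\vee\sigma(z_0^n)$: going from $M_k$ to $M_{k+1}$ only introduces the fresh row $\omega^n_{k+1,\cdot}$, which is $\mathcal F_k$-independent with mean zero. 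By Doob's $L^2$ maximal inequality, $\Bbb E[\sup_{0\le k\le n}M_k^2]\le 4\,\Bbb E[M_n^2]$, so it suffices to prove $\Bbb E[M_n^2]\le Ce^{an^{-1/2}x}$.

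To expand $M_n$ into chaoses, expand the product over subsets $J=\{i_1<\cdots<i_j\}\subseteq\{0,\dots,n\}$ and use the Markov property of $\mathbf P_x^n$ (Theorem \ref{rw}(1)); in particular $\mathbf E_x^n[z_0^n(S_n)\mid S_{i_1}=x_1,\dots,S_{i_j}=x_j]=\sum_{y\ge0}\mathfrak p_{n-i_j}^{n-i_j}(x_j,y)z_0^n(y)$, again by that property. This writes $M_n=\sum_{j=0}^nM_n^{(j)}$, where $M_n^{(j)}$ carries the scalar $n^{-j/4}$, a string of transition densities $\mathfrak p_{i_l-i_{l-1}}^{n-i_{l-1}}(x_{l-1},x_l)$, the terminal factor $\sum_y\mathfrak p_{n-i_j}^{n-i_j}(x_j,y)z_0^n(y)$, and $\prod_{l=1}^j\omega^n_{i_l,x_l}$. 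Because distinct times $i$ produce genuinely distinct, independent, mean-zero weights (even when the walk revisits a site) and $z_0^n$ is independent of $\omega^n$, the chaoses are $L^2(\Bbb P)$-orthogonal, and within a level the distinct $(i_l,x_l)$-configurations are orthogonal, so $\Bbb E[M_n^2]=\sum_j\Bbb E[(M_n^{(j)})^2]$, with $\Bbb E[(M_n^{(j)})^2]$ equal to $n^{-j/2}$ times $\sum_{i_1<\cdots<i_j}\sum_{x_1,\dots,x_j}\Bbb E[(\sum_y\mathfrak p_{n-i_j}^{n-i_j}(x_j,y)z_0^n(y))^2]\prod_l\mathfrak p_{i_l-i_{l-1}}^{n-i_{l-1}}(x_{l-1},x_l)^2\prod_l\Bbb E[(\omega^n_{i_l,x_l})^2]$.

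Finally I would estimate each summand. Since $\mathfrak p_{n-i_j}^{n-i_j}(x_j,\cdot)$ is a probability measure, Jensen's inequality plus the hypothesis $\Bbb E[z_0^n(y)^2]\le Ke^{an^{-1/2}y}$ give $\Bbb E[(\sum_y\mathfrak p_{n-i_j}^{n-i_j}(x_j,y)z_0^n(y))^2]\le K\,\mathbf E_{x_j}^{n-i_j}[e^{an^{-1/2}S_{n-i_j}}]$, which is at most $K'e^{an^{-1/2}x_j}$ by Proposition \ref{macky} applied with the parameter $an^{-1/2}$ (the factor $\exp(K(an^{-1/2})^2(n-i_j))\le\exp(Ka^2)$ is absorbed into the constant). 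Using $\sup_{n,i,j}\Bbb E[(\omega^n_{i,j})^2]<\infty$, the remaining sum is exactly $\sum_{i_1<\cdots<i_j\le n}\sum_{x_1,\dots,x_j}e^{an^{-1/2}x_j}\prod_l\mathfrak p_{i_l-i_{l-1}}^{n-i_{l-1}}(x_{l-1},x_l)^2$, which Lemma \ref{killme} (with $a$ replaced by $an^{-1/2}$ and $k$ by $j$) bounds by $Be^{an^{-1/2}x+Ka^2}C_0^jn^{j/2}/(j/2)!$. The $n^{j/2}$ exactly cancels the disorder factor $n^{-j/2}$, leaving $\Bbb E[(M_n^{(j)})^2]\le C_*e^{an^{-1/2}x}C_1^j/(j/2)!$ with $C_*,C_1$ depending only on $a$ and $K$; summing the convergent series $\sum_jC_1^j/(j/2)!$ gives $\Bbb E[M_n^2]\le Ce^{an^{-1/2}x}$, and Doob's inequality then finishes the proof.

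The only real obstacle is the scaling bookkeeping in this last step: one must check that the terminal data $z_0^n$ costs no more than the harmless factor $e^{an^{-1/2}x_j}$ — which is exactly what the quadratic-growth hypothesis buys, via Jensen and Proposition \ref{macky} — so that the $(n^{-1/4})^2$ per disorder variable precisely offsets the $n^{1/2}$ per chaos level coming from the square-summed kernels in Lemma \ref{killme}, leaving a series in $j$ that converges uniformly in $n$. The remaining ingredients — the martingale property, the $L^2$-orthogonality of the discrete chaoses, and the Markov reduction of the conditional expectation of $z_0^n(S_n)$ — are routine once the right filtration and expansion have been chosen.
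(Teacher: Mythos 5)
Your proposal is correct and follows essentially the same route as the paper: reduce the supremum to $\Bbb E[M_n^2]$ via the martingale structure and Doob's $L^2$ inequality, expand into orthogonal discrete chaoses, control the terminal $z_0^n$-factor by Jensen plus Proposition \ref{macky}, and bound the squared-kernel sums by Lemma \ref{killme} so that the $n^{-j/2}$ disorder factor cancels and the series in $j$ converges uniformly in $n$. The only cosmetic difference is that you keep $\Bbb E\big[\big(\sum_y \mathfrak p(x_j,y)z_0^n(y)\big)^2\big]$ in the expansion (which is the cleaner form) where the paper writes the squared mean before applying Jensen; both land on the same bound.
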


\begin{proof} First we fix some $n \in \Bbb N$, and we note that the process $$M_k^n := \mathbf{E}_x^n \bigg[ z_0^n(S_n) \prod_{i=1}^k (1+n^{-1/4}\omega_{i,S_i}^n) \bigg]$$ is a $\mathbb{P}$-martingale in the $k$ variable (with respect to the filtration $(\mathcal F_k^n)_{k \geq 0}$, where $\mathcal F_k^n$ is generated by $z_0^n$ and $\{\omega^n_{i,j}\}_{0\leq j \leq i \leq k})$. Therefore by Doob's inequality, it is clear that $\Bbb E[ \sup_{0\leq k \leq n} (M_k^n)^2] \leq 4 \Bbb E [(M^n_n)^2].$ This reduces our work to proving the claim without the supremum inside the expectation (and replacing $k$ by $n$ in the product). To do this, we set $x_0:=x$ and we write 
\begin{align*}
    &\Bbb E \Bigg[ \mathbf{E}_x^n \bigg[z_0^n(S_n)\prod_{i=1}^n (1 + n^{-1/4} \omega_{i,S_i}^n) \bigg]^2 \Bigg] \\ & = \Bbb E \bigg[\bigg( \sum_{k=1}^{n} n^{-k/4} \sum_{0\leq i_1 <...<i_k \leq n} \sum_{x \in \Bbb Z_{\geq 0}^{k+1}} z_0^n(x_{k+1}) \prod_{i=1}^k \mathfrak{p}_{i_j-i_{j-1}}^{2n-i_{j-1}} (x_{j-1},x_j)\omega_{i_jx_j} \cdot \mathfrak{p}^{2n-i_k}_{2n-i_k}(x_k,x_{k+1}) \bigg)^2 \bigg] \\ &= \sum_{k=1}^n n^{-k/2} \sum_{0\leq i_1 <...<i_k \leq n} \sum_{(x_1,...,x_{k}) \in \Bbb Z_{\geq 0}^{k}} \prod_{i=1}^k \mathfrak{p}_{i_j-i_{j-1}}^{2n-i_{j-1}} (x_{j-1},x_j)^2 \bigg[ \sum_{x_{k+1}\in \Bbb Z_{\ge 0}} \Bbb E[z_0^n(x_{k+1})] \mathfrak p_{2n-i_k}^{2n-i_k}(x_k,x_{k+1})\bigg]^2.
\end{align*}
We know by assumption that $\Bbb E[z_0^n(x_{k+1})^2] \leq e^{an^{-1/2}x_{k+1}}$. By Jensen we then have that
$$\bigg[ \sum_{x_{k+1}\in \Bbb Z} \Bbb E[z_0^n(x_{k+1})] \mathfrak p_{2n-i_k}^{2n-i_k}(x_k,x_{k+1})\bigg]^2 \leq \sum_{x_{k+1}\ge 0} \Bbb E[z_0^n(x_{k+1})^2]\mathfrak p_{2n-i_k}^{2n-i_k}(x_k,x_{k+1}) \leq Ce^{an^{-1/2}x_k} ,$$
where we applied Lemma \ref{macky} in the last bound. Thus we have \begin{align*}
    \Bbb E \Bigg[ \mathbf{E}_x^n \bigg[\prod_{i=1}^n (1 + n^{-1/4} \omega_{i,S_i}^n) \bigg]^2 \Bigg] &\leq \sum_{k=1}^n n^{-k/2} \sum_{0\leq i_1 <...<i_k \leq n}
    \sum_{(x_1,...,x_{k}) \in \Bbb Z_{\geq 0}^{k}} \prod_{i=1}^k \mathfrak{p}_{i_j-i_{j-1}}^{2n-i_{j-1}} (x_{j-1},x_j)^2e^{an^{-1/2}x_k} \\ &\leq \sum_{k=1}^n n^{-k/2} BC^ke^{an^{-1/2}x}n^{k/2}/(k/2)!\\& \leq Be^{an^{-1/2}x} \sum_{k=0}^{\infty} C^k/(k/2)!.
    \end{align*}
    This completes the proof.
\end{proof}

The key estimate of this section is as follows:

\begin{thm}[Key estimate]\label{ke} Fix $\alpha \in (0,1)$. Suppose that $(z_0^n(x))_{x \in \Bbb Z_{\ge 0}}$ is a family of non-negative, continuous random processes. Assume that \begin{itemize}
    \item $\Bbb E[|z_0^n(x)-z_0^n(y)|^p] \leq Cn^{-p/4}|x-y|^{p/2}e^{an^{-1/2}(x+y)}$ for some constants $C,a,p$ independent of $n,x,y$. Further assume $p>2$.
    
    \item with the same $a$, there exist square integrable random variables $D(n)$ such that $\sup_n \Bbb E[D(n)^2]<\infty$ and $z_0^n(x) \leq D(n) e^{an^{-1/2}x}$ for all $n,x$ almost surely.
    
    \item for each $n$, the process $z_0^n$ is independent of the environment $\{\omega^n_{i,j}\}_{i,j \geq 0}$.
    
\end{itemize}

Define the ``error" random variable $$\mathcal E (x,n):= \sup_{k \in [2n-n^{\alpha},2n]} \Bigg|\mathbf E_{x}^{2n} \bigg[ z_0^n(S_{2n})\prod_{i=1}^{2n} \big(1+ n^{-1/4} \omega_{i,S_i}^n \big)- z_0^n(S_k) \prod_{i=1}^{k}\big(1+n^{-1/4}\omega_{i,S_i}^n\big) \bigg]\Bigg|.$$ Then $\sup_{x \geq 0} e^{-3an^{-1/2}x} \Bbb E|\mathcal E(x,n)| \to 0$ as $n \to \infty$.
\end{thm}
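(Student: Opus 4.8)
The plan is to reformulate the bracket inside $\mathcal E(x,n)$ using the Markov property of $\mathbf P^{2n}_x$, to isolate a short-time quantity which is small because the window $[2n-n^\alpha,2n]$ has length $o(n)$, and to control the supremum over $k$ by peeling off a martingale. The workhorses throughout are: the chaos expansion of the discrete Feynman--Kac sum together with the kernel bounds of this section (Propositions~\ref{macky}, \ref{fetiz}, \ref{spatem}, Lemma~\ref{wconc}), packaged as in Lemmas~\ref{killme}--\ref{omfg}; the concentration bound of Theorem~\ref{conc}; and Doob's maximal inequality. The final weighted-$L^1$ statement is then deduced from $L^2$ bounds by Cauchy--Schwarz. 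Write $A_\ell:=\mathbf E^{2n}_x[z^n_0(S_\ell)\prod_{i=1}^\ell(1+n^{-1/4}\omega^n_{i,S_i})]$, so $\mathcal E(x,n)=\sup_k|A_{2n}-A_k|$, and $B_\ell:=\mathbf E^{2n}_x[z^n_0(S_{2n})\prod_{i=1}^\ell(1+n^{-1/4}\omega^n_{i,S_i})]$; since the omitted factors with index $>\ell$ have conditional mean $1$, $(B_\ell)_\ell$ is a martingale for the filtration generated by $z_0^n$ and $\{\omega^n_{i,j}\}_{i\le\ell}$, with $B_{2n}=A_{2n}$. I would then split $A_{2n}-A_k=(B_{2n}-B_k)+(B_k-A_k)$, with $B_k-A_k=\mathbf E^{2n}_x[(z^n_0(S_{2n})-z^n_0(S_k))\prod_{i=1}^k(1+n^{-1/4}\omega^n_{i,S_i})]$, and treat the two terms separately.

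The martingale term is the easy one. The quantitative input is $\Bbb E[(B_{\ell+1}-B_\ell)^2]\le Cn^{-1}e^{an^{-1/2}x}$, uniformly in $\ell<2n$ and $x$: writing $B_{\ell+1}-B_\ell=n^{-1/4}\mathbf E^{2n}_x[z_0^n(S_{2n})\prod_{i\le\ell}(1+n^{-1/4}\omega^n_{i,S_i})\,\omega^n_{\ell+1,S_{\ell+1}}]$, expanding in chaos, and using orthogonality of the noise, one is left with a sum of products of squared kernels $(\mathfrak p^{\cdot}_{\cdot})^2$; summing out the free endpoint with Proposition~\ref{macky} and the pinned site $S_{\ell+1}$ with Proposition~\ref{fetiz}, the remaining sum over the $r$ noise-times is a Beta-type Riemann sum of order $(2n)^{(r-1)/2}/\Gamma(\tfrac{r+1}{2})$ (up to $C^r$), and the extra $n^{-1/2}$ from the pinning together with the $n^{-r/2}$ from the chaos scaling produce the advertised $n^{-1}$, with a convergent sum over $r$. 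Hence $\Bbb E[(B_{2n}-B_{2n-n^\alpha})^2]\le Cn^{\alpha-1}e^{an^{-1/2}x}$, and Doob's $L^2$-inequality applied to the martingale $(B_k-B_{2n-n^\alpha})_{k\ge 2n-n^\alpha}$ gives $\Bbb E[\sup_k|B_{2n}-B_k|]\le Cn^{(\alpha-1)/2}e^{\frac12 an^{-1/2}x}\to0$.

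For the remainder $\sup_k|B_k-A_k|$ the crucial device is to use the \emph{same} intermediate time $\kappa:=2n-n^\alpha$ for every $k$. By the Markov property $B_k-A_k=\sum_{y\ge0}R(y)\,\Xi_k(y)$, where $R(y):=\mathbf E^{2n}_x[\prod_{i\le\kappa}(1+n^{-1/4}\omega^n_{i,S_i})\mathbf 1_{S_\kappa=y}]$ depends only on the noise before $\kappa$, while $\Xi_k(y):=\mathbf E^{n^\alpha}_y[(z^n_0(\widehat S_{n^\alpha})-z^n_0(\widehat S_{k-\kappa}))\prod_{j\le k-\kappa}(1+n^{-1/4}\omega^n_{\kappa+j,\widehat S_j})]$ depends only on $z_0^n$ and the noise after $\kappa$; by the hypotheses these are independent. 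Cauchy--Schwarz in $y$ with a weight $w(y)=e^{c\,an^{-1/2}y}$ ($c$ large enough) followed by independence gives $\Bbb E[\sup_k(B_k-A_k)^2]\le \Bbb E[\sum_y R(y)^2w(y)]\cdot\Bbb E[\sup_k\sum_y\Xi_k(y)^2w(y)^{-1}]$. The first factor is $\le Cn^{-1/2}e^{c\,an^{-1/2}x}$ by the same chaos/heat-kernel computation as in the previous paragraph, now pinning at the fixed time $\kappa$. For the second factor, the $p$th-moment hypothesis ($p>2$) gives $\Bbb E[(z^n_0(z)-z^n_0(z'))^2]\le Cn^{-1/2}|z-z'|\,e^{\cdots}$ by Jensen, and under $\mathbf P^{n^\alpha}_y$ the walk moves by only $O(n^{\alpha/2})$ over the window (Theorem~\ref{conc}), so $\Bbb E[\Xi_k(y)^2]\le Cn^{(\alpha-1)/2}e^{\cdots}$ for each fixed $k$; the weighted sum over $y$ (a geometric-type sum of size $O(n^{1/2})$) is then $O(n^{\alpha/2})$, and the product $n^{-1/2}\cdot n^{\alpha/2}=n^{(\alpha-1)/2}$ is of the right size.

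The genuine obstacle is precisely the supremum over $k\in[2n-n^\alpha,2n]$ in this last term: $(B_k-A_k)_k$ is \emph{not} a martingale in $k$ (the ``initial data'' $z^n_0(S_k)$ moves with $k$), so Doob does not apply directly, and a crude union bound loses a factor $n^\alpha$ which for $\alpha$ close to $1$ overwhelms the polynomial gains. The fix I would use is to observe that $\Xi_{\kappa+m}(y)=\mathcal G_{n^\alpha}(y)-\mathcal G_m(y)$ with $\mathcal G_m(y):=\mathbf E^{n^\alpha}_y[z^n_0(\widehat S_m)\prod_{j\le m}(1+n^{-1/4}\omega^n_{\kappa+j,\widehat S_j})]$, and to write $\mathcal G_m=\mathcal H_m-(\mathcal H_m-\mathcal G_m)$, where $\mathcal H_m$ keeps $z^n_0$ at the \emph{fixed} endpoint $\widehat S_{n^\alpha}$ and is therefore a martingale in $m$; this splits $\sup_m|\Xi_{\kappa+m}(y)|$ once more into a martingale piece (Doob, exactly as in the second paragraph but at scale $n^\alpha$) plus a remainder of the identical shape. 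For $\alpha$ not too large this closes at once; in general one iterates the reduction, driving the relevant time-scale down from $n$ to $n^\alpha$ to $n^{\alpha^2}$ and so on, and once the scale is only polylogarithmic the discarded product of near-$1$ factors has $L^2$ norm tending to $1$, so the supremum may finally be passed inside the walk-expectation by brute force. The two points needing real care are keeping all the exponential weights of the form $e^{c\,an^{-1/2}X}$ matched (which is exactly why the factor $3$ appears in the statement) and ensuring the iterated constants are independent of the number of steps; granting this, assembling the estimates and taking a square root yields $\sup_{x\ge0}e^{-3an^{-1/2}x}\,\Bbb E[\mathcal E(x,n)]\le C\varepsilon_n\to0$.
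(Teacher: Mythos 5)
Your decomposition of $\mathcal E$ into the martingale piece $\sup_k|B_{2n}-B_k|$ and the initial-data piece $\sup_k|B_k-A_k|$ is exactly the paper's split into $\mathcal E_1$ and $\mathcal E_2$, and your treatment of the martingale piece (Doob in the environment filtration plus the chaos/heat-kernel bounds of Propositions \ref{macky} and \ref{fetiz}, yielding $n^{(\alpha-1)/2}$) is sound and matches the paper. The gap is in the second piece, at precisely the point you flag as the genuine obstacle. After your Markov-property factorization at the fixed time $\kappa=2n-n^{\alpha}$ and the weighted Cauchy--Schwarz, you must control $\Bbb E\big[\sup_{m\le n^{\alpha}}\sum_y \Xi_{\kappa+m}(y)^2w(y)^{-1}\big]$, where $m=k-\kappa$ now ranges over the \emph{entire} window $[0,n^{\alpha}]$. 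Your proposed iteration treats the remainder $\mathcal H_m-\mathcal G_m$ as being ``of the identical shape'' at scale $n^{\alpha}$, but the self-similarity fails in the one respect that matters: in the original problem the smallness came from the supremum being restricted to the terminal sub-window $[2n-n^{\alpha},2n]$ of a length-$2n$ walk, whereas after one reduction the supremum runs over all of $[0,n^{\alpha}]$. There is therefore no smaller terminal window to peel off at the next stage: splitting off $[n^{\alpha}-n^{\alpha^2},n^{\alpha}]$ leaves the early times $m\le n^{\alpha}-n^{\alpha^2}$, for which the data increment $z_0^n(\widehat S_{n^\alpha})-z_0^n(\widehat S_m)$ spans a gap of order $n^{\alpha}$ and can only be handled by a union bound; that union bound costs a factor $n^{\alpha}$ and, combined with your per-$k$ estimate and the weighted $y$-sum of size $n^{1/2}$, closes only for $\alpha<1/3$ (as you essentially note). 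The endgame ``drive the scale down to polylogarithmic and pass the supremum inside the walk expectation'' presupposes a mechanism for reducing the scale which the scheme, as written, does not provide. Note also that you never use the hypothesis $z_0^n(x)\le D(n)e^{an^{-1/2}x}$ nor any pathwise regularity of $z_0^n$ (only $L^2$ increments via Jensen), which is a sign that the route is incomplete.

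For comparison, the paper sidesteps the supremum problem without any factorization at a fixed time: it conditions on $\mathcal G_k=\sigma(S_1,\dots,S_k)$ and bounds $\mathbf E_x^{2n}[z_0^n(S_{2n})-z_0^n(S_k)\,|\,\mathcal G_k]$ \emph{pathwise}, uniformly in $k$, by splitting according to whether the walk moves more than $M=\sqrt n$ over the remaining $\le n^{\alpha}$ steps. The near part is controlled by a microscopic H\"older modulus $C(M,n)$ of $z_0^n$ (obtained from the $p>2$ moment hypothesis via Garsia--Rodemich--Rumsey; this is where $p>2$ is really used) times the deterministic factor $(2n-k)^{\gamma/2}\le n^{\alpha\gamma/2}$, and the far part by the growth bound $D(n)e^{an^{-1/2}\cdot}$ together with the concentration Theorem \ref{conc}. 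After this, the supremum over $k$ lands only on the nonnegative quantities $\sup_k\mathbf E_x^{2n}[B_k^n]$ and $\sup_k\mathbf E_x^{2n}[e^{an^{-1/2}S_k}B_k^n]$, which are sups of environment-(sub)martingales and are dispatched by Doob via Lemma \ref{omfg} and the submartingale property \eqref{subgale}. If you want to salvage your route, you would need an analogous $k$-uniform pathwise input (or a genuine chaining argument over $m$), not the scale-iteration as described.
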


\begin{proof} By the triangle inequality, we have $\mathcal E(n) \le \mathcal E_1(n)+\mathcal E_2(n),$ where $$\mathcal E_1(n) := \sup_{k \in [2n-n^{\alpha},2n]} \Bigg|\mathbf E_{x}^{2n} \bigg[ z_0^n(S_{2n})\bigg(\prod_{i=1}^{2n} \big(1+ n^{-1/4} \omega_{i,S_i}^n \big)-  \prod_{i=1}^{k}\big(1+n^{-1/4}\omega_{i,S_i}^n\big) \bigg) \bigg]\Bigg|,$$ $$\mathcal E_2(n) := \sup_{k \in [2n-n^{\alpha},2n]} \Bigg|\mathbf E_{x}^{2n} \bigg[ \big(z_0^n(S_{2n})- z_0^n(S_k) \big)\prod_{i=1}^{k}\big(1+n^{-1/4}\omega_{i,S_i}^n\big) \bigg]\Bigg|.\;\;\;\;\;\;\;\;\;\;\;\;\;\;\;\;\;\;\;\;\;$$ We separately show that both of these satisfy the desired bound.
\\
\\
First we consider $\mathcal E_1$. For now let us fix some $n\in \Bbb N$. Let us define a martingale $$M^n_k:= \mathbf E_{x}^{2n} \bigg[ z_0^n(S_{2n}) \bigg( \prod_{i=2n-k}^{2n} (1+n^{-1/4}\omega_{i,S_i}^n) \;\;-\;\;1 \bigg)\bigg].$$
This is a $\mathbb P$-martingale in the $k$ variable, for fixed $n \in \Bbb N$ (with respect to the filtration $(\mathcal F_k^n)_{k \geq 0}$, where $\mathcal F_k^n$ is generated by $z_0^n$ and $\{\omega^n_{i,j}\}_{0\leq j \leq i \leq k})$. Consequently, Doob tells us \begin{equation}\label{doob1}\Bbb E \big| \sup_{k \in [0,n^{\alpha}]} M_k \big|^2 \leq 4 \Bbb E|M^n_{n^{\alpha}}|^2.\end{equation} Computing the right-hand side, one gets \begin{align*}&\Bbb E[(M_{n^{\alpha}}^n)^2 ] = \Bbb E \Bigg[ \mathbf E_{x}^{2n}\bigg[ z_0^n(S_{2n}) \bigg( \prod_{i=2n-n^{\alpha}}^{2n} (1+n^{-1/4}\omega_{i,S_i}^n ) \;- \;1\bigg) \bigg]^2 \Bigg] \\ &= \sum_{\substack{1 \leq k \leq n^{\alpha}\\0\leq i_1 <...<i_k\leq  n^{\alpha} \\ (x_1,...,x_{k})\in \Bbb Z_{\ge 0}^{k}}} n^{-k/2}\cdot \mathfrak p^{2n}_{2n-n^{\alpha}+i_1}(x,x_1)^2 \prod_{j=1}^{k-1} \mathfrak p_{i_j-i_{j-1}}^{n^{\alpha}-i_{j-1}} (x_j,x_{j+1})^2 \bigg[\sum_{x_{k+1} \in \Bbb Z_{\ge 0}} \mathfrak p_{n^{\alpha}-i_k}^{n^{\alpha}-i_k}(x_k,x_{k+1}) \Bbb E[z_0^n(x_{k+1})] \bigg]^2.\end{align*}By Jensen we compute that $$\bigg[\sum_{x_{k+1} \in \Bbb Z_{\ge 0}} \mathfrak p_{n^{\alpha}-i_k}^{n^{\alpha}-i_k}(x_k,x_{k+1}) \Bbb E[z_0^n(x_{k+1})] \bigg]^2 \leq \sum_{x_{k+1} \in \Bbb Z_{\ge 0}} \mathfrak p_{n^{\alpha}-i_k}^{n^{\alpha}-i_k}(x_k,x_{k+1}) \Bbb E[z_0^n(x_{k+1})^2] \leq Ce^{an^{-1/2}x_k},$$ where we used the given condition on $z_0$ and Proposition \ref{macky} in the last inequality. Combining this with the previous expression, we see that $$\Bbb E[(M_{n^{\alpha}}^n)^2] \leq \sum_{k=1}^{n^{\alpha}} n^{-k/2} \sum_{\substack{0\leq i_1 <...<i_k\leq  n^{\alpha} \\ (x_1,...,x_{k})\in \Bbb Z_{\ge 0}^{k}}} \mathfrak p^{2n}_{2n-n^{\alpha}+i_1}(x,x_1)^2 \prod_{j=1}^{k-1} \mathfrak p_{i_j-i_{j-1}}^{n^{\alpha}-i_{j-1}} (x_j,x_{j+1})^2e^{an^{-1/2}x_k}.$$
By repeatedly applying Proposition \ref{fetiz}, this is in turn bounded above by $$\sum_{k=1}^{n^{\alpha}} n^{-k/2}C^k e^{an^{-1/2}x}\sum_{1 \leq i_1<...<i_k\le n^{\alpha}+1} (2n-n^{\alpha}+i_1)^{-1/2}(i_2-i_1)^{-1/2}\cdots (i_k-i_{k-1})^{-1/2}. $$ 
We use the bound $(2n-n^{\alpha}+i_1)^{-1/2} \leq n^{-1/2}$ and then (by viewing it as a Riemann sum as we did in the proof of Lemma \ref{killme}) the sum over $i_1<...<i_k\le n^{\alpha}$ can be bounded above by $Cn^{(k+1)\alpha/2}/(k/2)!.$
Hence the entire sum is bounded above by $$ e^{an^{-1/2}x} \sum_{k=1}^{n^{\alpha}}n^{-(1-\alpha)k/2} C^k/(k/2)! \leq n^{-(1-\alpha)/2} e^{an^{-1/2}x} \sum_1^{\infty} C^k/(k/2)!  = Ce^{an^{-1/2}x} n^{-(1-\alpha)/2},$$ which implies the desired result on $I_1$.
\\
\\
Now we consider $\mathcal E_2(n)$. Recall the given condition that $\Bbb E[|z_0^n(x)-z_0^n(y)|^p] \leq Cn^{-p/4}|x-y|^{p/2}e^{an^{-1/2}(x+y)}$ Let $\gamma:=\frac12-\frac1p$. Define a ``microscopic modulus of Holder continuity" as follows: $$C(M,n): = \sup_{\substack{x \neq y\\ 0\le x,y\leq M}} \frac{|z_0^n(x)-z_0^n(y)|}{n^{\gamma/2}|x-y|^{\gamma}},$$ then the given condition is enough to show (by a stronger form of Kolmogorov's criterion, see for instance the Garsia-Rodemich-Rumsey inequality) that $\sup_n \Bbb E[C(M,n)^2] \leq Ce^{an^{-1/2}M}$, for a constant $C$ independent of $M,a$. With this in mind, we now bound $\mathcal E_2$.
\\
\\
Let us write $z_0^n(S_{2n})-z_0^n(S_k)=:A_k^n$ and $\prod_{i=1}^k (1+n^{-1/4}\omega^n_{i,S_i})=:B_k^n$. Define the $\sigma$-algebra $\mathcal G_k:=\sigma(S_1,...,S_k)$ (note that $\mathcal G_k$ implicitly depends on $x,n$ since the measures $\mathbf P_x^n$ are varying). Then we may write $$\mathbf E_x^{2n}[A^n_kB^n_k] = \mathbf E_x^{2n} \bigg[ \mathbf E_x^{2n}[A_k\;|\;\mathcal G_k] \; B_k^n \bigg].$$ 
By the Markov property we see that $\mathbf E_x^{2n}[(z_0^n(S_{2n})-z_0^n(S_k))\;|\;\mathcal G_k] = f(S_k,2n,n-k)$, where $f(x,n,i):=\mathbf E_x^i[(z_0^n(S_i)-z_0^n(x))].$ Then we write $$f(x,n,i) \le \mathbf E_x^i[|z_0^n(S_i)-z_0^n(x)|\cdot 1_{\{S_i \ge M+x\}} ]+ \mathbf E_x^i[|z_0^n(S_i)-z_0^n(x)|\cdot 1_{\{S_i < M+x\}} ] .$$ Let us call the terms on the right side as $I_1(M,x,n,i),I_2(M,x,n,i)$, respectively. We will individually bound the expectation relevant to both of these, starting with $I_2$. Since $S_i<M+x$ implies that $S_i\vee x \leq S_i+x \leq M+2x$, it is clear that $$I_2(M,x,n,i) \leq C(M+2x,n) \mathbf E_x^i[n^{-\alpha/2}|S_i-x|^{\alpha}] \leq C(M+2x,n) n^{-\gamma/2}i^{\gamma/2}.$$ The final bound is by Proposition \ref{pmoments}. Letting $C(M):= \sup_n \Bbb E[C(M,n)^2]^{1/2}$ (which is bounded above by $Ce^{an^{-1/2}M}$ as discussed above), we may conclude that 
\begin{align*}
\Bbb E\bigg[&\sup_{k\in[2n-n^{\alpha},2n]}\mathbf E_x^{2n}\big[ I_2(M,S_k,2n,n-k) B_k^n\big]\bigg] \\&\leq \Bbb E\bigg[n^{-\gamma/2}C(M+2x,n) \sup_{k\in [2n-n^{\alpha},2n]} (n-k)^{\gamma/2}\mathbf E_x^{2n}[B_k^n]\bigg] \\
&\leq n^{-\gamma/2} \Bbb E[C(M+2x,n)^2]^{1/2} \Bbb E\bigg[ \sup_{k\in [2n-n^{\alpha},2n]} (n^{\alpha})^{\gamma/2}\mathbf  E_x^{2n}[B_k^n]^2\bigg]^{1/2} \\ &\leq n^{-(1-\alpha)\gamma/2} C(M+2x) \Bbb E[\sup_{k \leq 2n} \mathbf E_x^{2n}[B_k^n]^2] \\ &\leq Ce^{an^{-1/2}(M+2x)} n^{-(1-\alpha)\gamma/2}e^{an^{-1/2}x}.
\end{align*}
In the last line, we applied Lemma \ref{omfg} to conclude that $\Bbb E[\sup_{k \leq 2n} \mathbf E_x^{2n}[B_k^n]^2] \leq Ce^{-an^{-1/2}x}$, where $C$ is independent of $x,n,M$. Next, we need to bound the same quantity with $I_2$ replaced by $I_1$. To bound $I_1$, first note (by Cauchy-Schwarz and then the concentration theorem) that 
\begin{align*}I_1(M,x,n,i) &\leq D(n) \mathbf E_x^i [ (e^{an^{-1/2}x}+e^{an^{-1/2}S_i}) \cdot 1_{\{S_i>M+x\}}] \\ &\leq D(n) \mathbf E_x^i[(e^{an^{-1/2}x}+e^{bn^{-1/2}S_i})^2]^{1/2} \mathbf P_x^i(S_i>M+x)^{1/2} \\ &\leq C D(n)e^{an^{-1/2}x} e^{-cM^2/i}, \end{align*}
where $D(n)$ are the $L^2$ random variables satisfying the conditions of the theorem statement. Thus we find that \begin{align*}
\Bbb E\bigg[&\sup_{k\in[2n-n^{\alpha},2n]}\mathbf E_x^{2n}\big[ I_2(M,S_k,2n,n-k) B_k^n\big]\bigg] \\&\leq C \Bbb E\bigg[D(n) \sup_{k\in [2n-n^{\alpha},2n]}e^{-cM^2/(n-k)} \mathbf E_x^{2n}[e^{an^{1/2}S_k}B_k^n]\bigg] \\ &\leq Ce^{-cM^2/n^{\alpha}} \Bbb E[D(n)^2]^{1/2} \Bbb E\bigg[ \sup_{k\leq n} \mathbf E_x^{2n}[e^{an^{1/2}S_k}B_k^n]^2\bigg].
\end{align*}
Now, $e^{an^{1/2}S_k}$ is a $\mathbf P_x^n$-submartingale by \eqref{subgale}, thus we have $$\mathbf E_x^{2n}[e^{an^{1/2}S_k}B^n_k] \leq \mathbf E_x^{2n}[\mathbf E_x^{2n}[e^{an^{1/2}S_n}|\mathcal G_k]B_k^n] = \mathbf E_x^{2n}[e^{an^{1/2}S_n}B^n_k],$$ since $B^n_k$ is $\mathcal G_k$ measurable. This means that $$\Bbb E\bigg[ \sup_{k\leq n} \mathbf E_x^{2n}[e^{an^{1/2}S_k}B_k^n]^2\bigg] = \Bbb E[ \mathbf E_x^{2n}[ e^{an^{1/2}S_{n}}B^n_k]^2]\leq Ce^{an^{-1/2}x},$$ where we used Lemma \ref{omfg} in the last bound. Summarizing our progress so far, we combine the bounds on $I_1$ and $I_2$ to see that $$\Bbb E|\mathcal E_2(n)| \leq Ce^{an^{-1/2}(M+3x)}n^{-(1-\alpha)\gamma/2} +Ce^{-cM^2/n^{\alpha}}e^{an^{-1/2}x}.$$ Take $M=\sqrt n$ and multiply both sides by $e^{-3an^{1/2}x}$; then let $n \to \infty$ and the result approaches $0$.
\end{proof}

We will now prove that in our situation, the conditions of the preceding theorem actually apply. First we have a lemma which will be useful in extracting the random variables $D(n)$ stated in the conditions of Theorem \ref{ke}.

\begin{lem}\label{supergale}
Let $(X_n)_{n \ge 0}$ be a non-negative $L^1$ supermartingale. Then $$\Bbb P\big(\sup_n X_n>a\big) \leq \frac{\Bbb E[X_0]}{a}.$$
\end{lem}

\begin{proof}We apply Doob-Meyer decomposition to write $X=M-A$, where $M$ is a martingale with $M_0=X_0$, and $A_0$ is a non-decreasing process with $A_0=0$. Then $M$ is a positive martingale and $X \leq M$. Doob's inequality then shows that $$\Bbb P\big( \sup_{n\leq N} X_n > a\big) \leq \Bbb P\big( \sup_{n\leq N} M_n > a \big) \leq \frac{\Bbb E[M_N]}{a} = \frac{\Bbb E[M_0]}{a}.$$ Since $M_0=X_0$, letting $N \to \infty$ gives the claim, because the right side does not depend on $N$ and the left side approaches $\Bbb P\big( \sup_n X_n >a\big)$ by monotone convergence.
\end{proof}

\begin{prop}\label{nun}
For each $n \in \Bbb N$, let $\{\omega^n_{i0}\}_{i \geq 1}$ be a family of iid random variables such that $\omega^n_{i0}$ has finite $p^{th}$ moment, with $p>2$. Also assume that $1+n^{-1/4}\omega_{i0}^n>0$ a.s. and that $\sup_n \Bbb E[|\omega^n_{10}|^p] <\infty$. Furthermore, assume that $\Bbb E[\omega^n_{i0}] = \mu n^{-1/4} +o(n^{-1/4})$ and var$(\omega^n_{i0}) = \sigma^2+o(1)$ as $n \to \infty$. Define $z_0^n(x):= \prod_{i=1}^x (1+n^{-1/4}\omega^n_{i0}).$ Then $z_0^n$ satisfies the conditions of the preceding theorem:
\begin{itemize}
    \item $\Bbb E[|z_0^n(x)-z_0^n(y)|^p] \leq Cn^{-p/4}|x-y|^{p/2}e^{an^{-1/2}(x+y)}$ for some constants $C,a$ independent of $n,x,y$. 
    
    \item with the same $a$, there exist square integrable random variables $D(n)$ such that $\sup_n \Bbb E[D(n)^2]<\infty$ and $z_0^n(x) \leq D(n) e^{an^{-1/2}x}$ for all $n,x$ almost surely.
    
\end{itemize}
\end{prop}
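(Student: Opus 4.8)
The plan is to reduce everything to two elementary facts about the i.i.d.\ factors $1+n^{-1/4}\omega^n_{i0}$: a uniform bound on their moments, and the martingale structure of the partial products $z_0^n(x)$. For the moment bound, one uses the elementary estimate $|(1+u)^q-1-qu|\le C_q(u^2+|u|^q)$, valid for all $u\ge-1$ and $q\ge1$; taking expectations with $u=n^{-1/4}\omega^n_{10}$ (legitimate since $1+n^{-1/4}\omega^n_{10}>0$ a.s.) and invoking the hypotheses $|\Bbb E[\omega^n_{10}]|=O(n^{-1/4})$, $\sup_n\Bbb E[(\omega^n_{10})^2]<\infty$, $\sup_n\Bbb E[|\omega^n_{10}|^p]<\infty$, together with $n^{-p/4}\le n^{-1/2}$ (here $p>2$ is used), gives $\Bbb E[(1+n^{-1/4}\omega^n_{10})^q]\le1+\kappa_q n^{-1/2}$ for $q\in\{2,p\}$ with $\kappa_q$ independent of $n$. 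Since the boundary weights within a row are i.i.d., this yields $\Bbb E[z_0^n(x)^q]=\big(\Bbb E[(1+n^{-1/4}\omega^n_{10})^q]\big)^x\le e^{\kappa n^{-1/2}x}$ for all $x,n$, where $\kappa:=\max(\kappa_2,\kappa_p)$; in particular $\|z_0^n(j)\|_p\le e^{\kappa n^{-1/2}j/p}$. This controlled exponential growth of moments is the workhorse behind both bullets.

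For the first bullet, assume without loss of generality $y\le x$ and set $m=x-y$. Then $z_0^n(x)-z_0^n(y)=z_0^n(y)\big(\prod_{i=y+1}^x(1+n^{-1/4}\omega^n_{i0})-1\big)$, and the two factors depend on disjoint index sets, so by independence and the law-equality $\prod_{i=y+1}^x(1+n^{-1/4}\omega^n_{i0})\stackrel{d}{=}z_0^n(m)$ one gets $\Bbb E[|z_0^n(x)-z_0^n(y)|^p]=\Bbb E[z_0^n(y)^p]\,\Bbb E[|z_0^n(m)-1|^p]$. Telescoping, $z_0^n(m)-1=n^{-1/4}\sum_{k=1}^m z_0^n(k-1)\,\omega^n_{k0}=n^{-1/4}(G_m+\nu_nH_m)$, where $\nu_n=\Bbb E[\omega^n_{10}]=O(n^{-1/4})$, $G_m=\sum_{k=1}^m z_0^n(k-1)(\omega^n_{k0}-\nu_n)$ is a martingale, and $H_m=\sum_{k=1}^m z_0^n(k-1)$. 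Burkholder--Davis--Gundy followed by Minkowski's inequality in $L^{p/2}$ (valid since $p/2>1$), the independence of $z_0^n(k-1)$ from $\omega^n_{k0}$, and the moment bound above give $\Bbb E[|G_m|^p]\le Cm^{p/2}e^{\kappa n^{-1/2}m}$, while a direct application of Minkowski gives $\Bbb E[|H_m|^p]\le m^{p}e^{\kappa n^{-1/2}m}$. Combining these and using $n^{-p/4}m^{p}=m^{p/2}(n^{-1/2}m)^{p/2}$ to absorb the extra $(n^{-1/2}m)^{p/2}$ into a slightly larger exponential (since $\sup_{u\ge0}u^{p/2}e^{-u}<\infty$) yields $\Bbb E[|z_0^n(m)-1|^p]\le Cn^{-p/4}m^{p/2}e^{(\kappa+1)n^{-1/2}m}$. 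Multiplying by $\Bbb E[z_0^n(y)^p]\le e^{\kappa n^{-1/2}y}$ and using $\kappa y+(\kappa+1)m\le(\kappa+1)(x+y)$ gives the first bullet, with $a=\kappa+1$.

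For the second bullet, set $D(n):=\sup_{x\ge0}z_0^n(x)e^{-an^{-1/2}x}$, so the stated inequality holds by definition and it remains only to bound $\Bbb E[D(n)^2]$ uniformly in $n$; this will force $a$ to be chosen large. Factor $z_0^n(x)=\widetilde N^n_x\,(1+n^{-1/4}\nu_n)^x$, where $\widetilde N^n_x:=\prod_{i=1}^x\frac{1+n^{-1/4}\omega^n_{i0}}{1+n^{-1/4}\nu_n}$ is a nonnegative martingale with $\widetilde N^n_0=1$ (each factor has mean $1$; note $1+n^{-1/4}\nu_n>0$ for every $n$), and $(1+n^{-1/4}\nu_n)^x\le e^{M'n^{-1/2}x}$ with $M':=\sup_n n^{1/2}\big|\log(1+n^{-1/4}\nu_n)\big|<\infty$ (the sequence inside converges to $|\mu|$). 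Thus for $a\ge M'$ one has $z_0^n(x)e^{-an^{-1/2}x}\le\widetilde N^n_x\,e^{-\theta n^{-1/2}x}$ with $\theta:=a-M'\ge0$. Partition $\Bbb Z_{\ge0}$ into blocks $[jn^{1/2},(j+1)n^{1/2})$; on the $j$-th block $e^{-\theta n^{-1/2}x}\le e^{-\theta j}$, so (up to harmless rounding) $D(n)\le\sum_{j\ge0}e^{-\theta j}\sup_{x<(j+1)n^{1/2}}\widetilde N^n_x$. Minkowski's inequality and Doob's $L^2$ maximal inequality for the nonnegative (sub)martingale $\widetilde N^n$ give $\big\|\sup_{x<(j+1)n^{1/2}}\widetilde N^n_x\big\|_2\le2\big\|\widetilde N^n_{(j+1)n^{1/2}}\big\|_2$, and a computation identical to the one above applied to the factors $(1+n^{-1/4}\omega^n_{10})/(1+n^{-1/4}\nu_n)$ (whose second moment equals $1+\sigma^2n^{-1/2}+o(n^{-1/2})$, hence is $\le1+\sigma_*^2n^{-1/2}$ for a constant $\sigma_*$ independent of $n$) gives $\Bbb E[(\widetilde N^n_x)^2]\le e^{\sigma_*^2 n^{-1/2}x}$, whence $\big\|\widetilde N^n_{(j+1)n^{1/2}}\big\|_2\le e^{\sigma_*^2(j+1)}$. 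Summing the geometric series $\sum_{j\ge0}e^{-\theta j}e^{\sigma_*^2(j+1)}$, which converges with bound uniform in $n$ as soon as $\theta>\sigma_*^2$, we get $\sup_n\Bbb E[D(n)^2]<\infty$ provided $a>M'+\sigma_*^2$. Finally, choosing $a:=\max(\kappa+1,\;M'+\sigma_*^2+1)$ makes both bullets hold simultaneously with this single $a$, since enlarging $a$ only weakens the first.

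The main obstacle is this uniform-in-$n$ second moment bound for $D(n)$: a crude union bound over all $x\in\Bbb Z_{\ge0}$ loses a factor of order $n^{1/2}$ and is useless. The resolution is to partition the half-line at the natural fluctuation scale $x\asymp n^{1/2}$ (on which $z_0^n$ has $O(1)$ fluctuations), apply Doob's maximal inequality on each block, and sum; the geometric decay of the block contributions—hence convergence of the sum—is bought precisely by taking $a$, and therefore $\theta$, large. One must keep track of the fact that $a$ cannot be arbitrary, and that the constants $\kappa$, $M'$, $\sigma_*$ are genuinely finite and uniform in $n$, which uses only the stated asymptotics $\Bbb E[\omega^n_{10}]=\mu n^{-1/4}+o(n^{-1/4})$, $\mathrm{var}(\omega^n_{10})=\sigma^2+o(1)$, and $\sup_n\Bbb E[|\omega^n_{10}|^p]<\infty$.
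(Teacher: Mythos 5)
Your proof is correct, and while the first bullet is close in spirit to the paper's argument, the second bullet follows a genuinely different route. For the Hölder-type bound you telescope the increment as $z_0^n(m)-1=n^{-1/4}\sum_k z_0^n(k-1)\omega^n_{k0}$ and split explicitly into a martingale part $G_m$ (handled by Burkholder--Davis--Gundy plus Minkowski in $L^{p/2}$) and a drift part $\nu_n H_m$; the paper instead normalizes by the mean, writes $z_0^n(x)-1$ via the martingale $z_0^n(x)/\Bbb E[z_0^n(x)]$, and applies BDG to that, treating the mean-correction as a separate term $E_1$. Both rest on the same product-moment bound $\Bbb E[z_0^n(x)^q]\le e^{\kappa n^{-1/2}x}$, and your explicit drift/martingale split is if anything a bit cleaner (you keep the full $p$-th power of the drift throughout, which is robust for all $p>2$). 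For the envelope bound the paper uses a slicker device: $z_0^n(x)^p/\Bbb E[z_0^n(x)^p]$ is a positive martingale, so the supermartingale (Ville/Doob) maximal inequality of its Lemma \ref{supergale} gives $\Bbb P(D(n)>a)\le a^{-p}$ directly, and $p>2$ then yields $\sup_n\Bbb E[D(n)^2]<\infty$ with the natural choice of $a$ coming from \eqref{pee}. You instead normalize by the mean, partition $\Bbb Z_{\ge 0}$ into blocks of length $n^{1/2}$, apply Doob's $L^2$ maximal inequality blockwise, and sum a geometric series; this buys you a proof that uses only second moments of the weights for this bullet, but at the price of having to take $a$ strictly larger than $M'+\sigma_*^2$. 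That enlargement is permissible, since the statement only asks for \emph{some} $a$ (the same in both bullets) and Theorem \ref{ke} accepts any fixed $a$, and you correctly note that increasing $a$ only weakens the first bullet, so the two bullets can indeed be arranged with a single $a$.
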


\begin{proof} Before proving either bullet point, we prove a preliminary bound which is useful. Using $|1+n^{-1/4}\omega^n_{i0}|^{p} = 1+pn^{-1/4}\omega^n_{i0} + \frac12(p^2-p)n^{-1/2}(\omega^n_{i0})^2+ o(n^{-1/2})$ which has expectation roughly $1+n^{-1/2}(p\mu+\frac{p^2-p}{2}\sigma^2)+o(n^{-1/2}) \leq 1+an^{-1/2},$ (for some $a=a(p)$ we see that \begin{align}\label{pee}\Bbb E[z_0^n(x)^p] &= \prod_{i=1}^x \Bbb E[(1+n^{-1/4}\omega^n_{i0})^p] \leq  (1+n^{-1/2}a)^{x} \leq e^{an^{-1/2}x},
\end{align}
since $1+v\leq e^v$. With this preliminary bound in mind, we proceed to the proof of the first bullet point. It suffices to prove the claim when $y=0$ (i.e., $z_0^n(y)=1$), by independence of the multiplicative increments of $z_0^n$. Let us begin by writing \begin{align*}
    \Bbb E[|z_0^n(x)-1|^p] \leq 2^p \bigg( \Bbb E\bigg[\bigg|z_0^n(x)-\frac{z_0^n(x)}{\Bbb E[z_0^n(x)]}\bigg|^p\bigg] + \Bbb E\bigg[\bigg|\frac{z_0^n(x)}{\Bbb E[z_0^n(x)]}-1\bigg|^p\bigg]\bigg).
\end{align*}
Let us call these expectations on the right side as $E_1$ and $E_2$, respectively. We bound each of these separately. For $E_1$, one notes by using \eqref{pee} that 
\begin{align*}
    E_1 & = \Bbb E[z_0^n(x)^p ] \bigg| 1- \frac1{\Bbb E[z_0^n(x)]}\bigg|^p \leq e^{an^{-1/2}x} \big| 1- e^{-an^{1/2}x}\big|^p \\ &\leq e^{an^{-1/2}x} \big(1-e^{-an^{-1/2}x}\big)^2\leq e^{an^{-1/2}x} \big( an^{-1/2}x \big)^2 = ae^{an^{-1/2}x}n^{-1}x^2,
\end{align*}
where we used $\Bbb E[z_0^n(x)] \leq \Bbb E[z_0^n(x)^p]^{1/p} \leq e^{an^{-1/2}x}$ (by \eqref{pee}) in the first inequality, and we used $1-e^{-v}\leq v$ in the third one. This already gives the desired bound on $E_1$.
\\
\\
Now we bound $E_2$. This is the difficult part, and one needs to somehow exploit cancellations which occur at the quadratic scale (e.g., via a Burkholder-type inequality). To do this, first note that the process $M^n_x:=\frac{z_0^n(x)}{\Bbb E[z_0^n(x)]}$ is a martingale in the $x$-variable (for fixed $n$). Define $\zeta^n_i:= \frac{1+n^{-1/4} \omega^n_{i0}}{\Bbb E[1+n^{-1/4}\omega_{i0}]}.$ Then Burkholder-Davis-Gundy says \begin{align}\label{bdg1}
    E_2 & \leq C\Bbb E\bigg[ \bigg(\sum_{i=1}^x (M_i^n-M_{i-1}^n)^2 \bigg)^{p/2} \bigg] = C\Bbb E \bigg[ \bigg( \sum_{i=1}^x (\zeta^n_1)^2\cdots (\zeta_{i-1}^n)^2 ( \zeta^n_i-1)^2\bigg)^{p/2}\bigg]
\end{align}
Now, using the given conditions, $|\zeta^n_i-1|$ is easily seen to be bounded above by $C(n^{-1/4}|\omega_{i0}^n|+n^{-1/2}),$ so the square is bounded by $C(n^{-1/2}(\omega^n_{i0})^2 +n^{-1}).$ Writing $\|A\|_2:= \Bbb E[A^2]^{1/2},$ we then notice by triangle inequality and independence of $\zeta^n_i$ that
\begin{align*}
    \bigg\|\sum_1^x (\zeta^n_1)^2\cdots (\zeta_{i-1}^n)^2 ( \zeta^n_i-1)^2 \bigg\|_{p/2} \leq Cn^{-1/2} \sum_1^x \|(\zeta^n_1)^2\|_{p/2}\cdots \|(\zeta^n_{i-1})^2\|_{p/2} \|(\omega^n_{i0})^2+n^{-1/2}\|_{p/2}.
\end{align*}
Now, it holds that $\|(\zeta^n_1)^2\|_{p/2} \leq e^{2an^{-1/2}/p},$ by \eqref{pee} (with $x=1)$. Hence each term of the sum may be bounded above by $e^{2an^{-1/2}x/p}.$ The contribution of the $n^{-1/2}$ term next to $(\omega^n_{i0})^2$ is then seen to be negligible so we disregard it. Hence the the entire sum may be bounded by $C n^{-1/2}x e^{2an^{-1/2}x/p},$ which (combined with \eqref{bdg1} and the fact that $\|(\omega^n_{i0})^2\|_{p/2}$ is bounded independently of $n$ by assumption) completes the proof.
\\
\\
Now we prove the second bullet point. Note that $\frac{z_0^n(x)^p}{\Bbb E[z_0^n(x)^p]}$ is a positive martingale in the $x$-variable (for fixed $n$). Let $D(n):=\sup_{x\geq 0} z_0^n(x)/\Bbb E[z_0^n(x)^p]^{1/p}$. Then it is clear from Lemma \ref{supergale} that $\Bbb P(D(n)^p >a) \leq a^{-1},$ so that $\Bbb P(D(n)>a) \leq a^{-p}.$ If $p>2$, then this easily implies that $\sup_n \Bbb E[D(n)^2] <\infty.$ But \eqref{pee} tells us that $\Bbb E[z_0^n(x)^p]^{1/p} \leq Ce^{an^{-1/2}x}$ so we are done.
\end{proof}

Next, we finally, prove the octant-quadrant reduction theorem, i.e., that we can actually replace $T_n$ with $2n$ as discussed in section 2.

\begin{prop}[Octant-Quadrant Reduction]\label{oqr}
Let $\omega^n_{i,j}$, $\mathbf E_x^n$, $S_n$, and $T_n$ be as defined in Section 2. Let
$$\mathscr E(x,n):=\mathbf{E}_{x}^{2n} \bigg[  z^n_0(S_{2n})\prod_{i=0}^{2n} (1+n^{-1/4} \hat{\omega}_{iS_i})\bigg] - \mathbf{E}_{x}^{2n} \bigg[  z^n_0(S_{T_n})\prod_{i=0}^{T_n} (1+n^{-1/4} \hat{\omega}_{iS_i})\bigg]. $$ Let $x_n$ be a sequence of non-negative integers such that $x_n \leq Cn^{1/2}$ for some $C>0$. Then $\mathscr E(x_n,n) \to 0$ in probability.
\end{prop}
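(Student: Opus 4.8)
The plan is to prove the stronger statement $\Bbb E[\mathscr E(x_n,n)^2]\to 0$, which gives convergence in probability by Chebyshev. Write $W_k:=z_0^n(S_k)\prod_{i=0}^k(1+n^{-1/4}\hat\omega_{iS_i})$, so $\mathscr E(x,n)=\mathbf E_x^{2n}[W_{2n}]-\mathbf E_x^{2n}[W_{T_n}]$. Fix an exponent $\alpha\in(\tfrac12,\tfrac23)$, set the \emph{deterministic} time $\tau:=2n-\lceil n^{\alpha}\rceil$, and decompose $\mathscr E(x,n)=\mathbf E_x^{2n}[W_{2n}-W_{\tau}]+\mathbf E_x^{2n}[W_{\tau}-W_{T_n}]=:E_I+E_{II}$. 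The term $E_I$ is essentially the quantity controlled by the Key Estimate: after factoring out the $i=0$ weight $1+n^{-1/4}\hat\omega_{0,x}$ (deterministic given the environment, bounded in $L^2$, and independent of everything else that appears) one has $|E_I|\le|1+n^{-1/4}\hat\omega_{0,x}|\cdot\mathcal E(x,n)$ with $\mathcal E$ as in Theorem~\ref{ke}, since $\tau\in[2n-n^{\alpha},2n]$. Because $z_0^n$ verifies the hypotheses of Theorem~\ref{ke} (Proposition~\ref{nun}) and $e^{3an^{-1/2}x_n}$ stays bounded thanks to $x_n\le C\sqrt n$, this gives $\Bbb E|E_I|\le C\,\Bbb E|\mathcal E(x_n,n)|\to 0$.

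For $E_{II}$ I would introduce the walk-measurable event $G:=\{T_n\ge\tau\}$. Since $i+S_i$ is nondecreasing and increases only by $0$ or $2$ per step, one has $\{T_n\le m\}=\{S_m\ge 2n-m\}$, so $G^{c}=\{S_{\tau-1}\ge\lceil n^{\alpha}\rceil+1\}$; using $x_n\le C\sqrt n<\tfrac12 n^{\alpha}$ and the concentration theorem (Theorem~\ref{conc}), $\mathbf P_{x_n}^{2n}(G^{c})\le Ce^{-cn^{2\alpha-1}}$, which vanishes precisely because $\alpha>\tfrac12$. To bound $E_{II}\mathbf 1_{G^{c}}$ in $L^2(\Bbb P)$ I would write its second moment as an integral over two independent copies $S,S'$ of the conditioned walk; taking the environment-expectation first (the bulk weights along the walk have mean exactly $0$) produces a factor $e^{n^{-1/2}L(1+o(1))}$ in the collision local time $L=\#\{i:S_i=S_i'\}$, and Cauchy--Schwarz peels off the factor $\mathbf P_{x_n}^{2n}(G^{c})$ while the residual second-moment quantity is bounded uniformly in $n$ by the chaos-series/heat-kernel estimates underlying Lemmas~\ref{omfg} and~\ref{killme}, together with Proposition~\ref{macky} for the $e^{aY}$ weights and the uniform exponential integrability of $S_{T_n}=2n-T_n$ (which follows from concentration). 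Hence $E_{II}\mathbf 1_{G^{c}}\to 0$ in $L^2$.

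On $G$ one has $T_n=\tau+\widetilde T$, where $\widetilde T=\inf\{k:\widetilde S_k\ge\lceil n^{\alpha}\rceil-k\}$ is the hitting time of the walk $\widetilde S_k:=S_{\tau+k}$ restarted at time $\tau$, with $\widetilde S_{\widetilde T}=\lceil n^{\alpha}\rceil-\widetilde T\le n^{\alpha}$. Conditioning on $\mathcal G_{\tau}=\sigma(S_0,\dots,S_{\tau})$ and using the Markov property (Proposition~\ref{pt}) rewrites $\mathbf E_x^{2n}[(W_{T_n}-W_{\tau})\mathbf 1_G]$ as $\mathbf E_x^{2n}\!\big[\prod_{i=0}^{\tau}(1+n^{-1/4}\hat\omega_{iS_i})\,\mathbf 1_G\,g(S_{\tau})\big]$, where $g(y)=\mathbf E_y^{\lceil n^{\alpha}\rceil}\big[z_0^n(\widetilde S_{\widetilde T})\prod_{k=1}^{\widetilde T}(1+n^{-1/4}\hat\omega_{\tau+k,\widetilde S_k})-z_0^n(y)\big]$. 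Splitting the bracket as $z_0^n(\widetilde S_{\widetilde T})\big(\prod_k(\cdots)-1\big)+\big(z_0^n(\widetilde S_{\widetilde T})-z_0^n(y)\big)$ writes $g=g_1+g_2$. For $g_1$, the environment-expectation of $\prod_{k=1}^{\widetilde T}(1+n^{-1/4}\hat\omega_{\tau+k,\widetilde S_k})$ equals $1$ (distinct rows, mean-zero weights), so $\Bbb E[g_1(y)]=0$; running the two-walk second moment, the $\prod_{i=0}^{\tau}$-factors again yield an $L^2$-bounded $e^{n^{-1/2}L_{\tau}(1+o(1))}$ while the $g_1$-part contributes $e^{n^{-1/2}\widetilde L(1+o(1))}-1\le Cn^{-1/2}\widetilde L\,e^{2n^{-1/2}\widetilde L}$ in the collision local time $\widetilde L$ of the two restarted walks — which run for only $\lceil n^{\alpha}\rceil$ steps, so $n^{-1/2}\widetilde L$ is genuinely of lower order. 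For $g_2$, the Hölder bound on $z_0^n$ (Proposition~\ref{nun}) combined with the concentration of $\widetilde S_{\widetilde T}-y$ over $\le n^{\alpha}$ steps gives a comparable gain. Tracking the $e^{aY}$-weighted heat-kernel sums through Propositions~\ref{macky}, \ref{fetiz} and the exponential integrability of $\widetilde S_{\widetilde T}$, one obtains $\Bbb E\big[\big(\mathbf E_x^{2n}[\prod_{i=0}^{\tau}(\cdots)\mathbf 1_G\,g_i(S_{\tau})]\big)^2\big]\le Cn^{3\alpha/4-1/2}\to 0$, the exponent being negative exactly because $\alpha<\tfrac23$. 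Assembling the three pieces proves the claim.

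I expect the heart of the matter to be the $G$-part, and within it the control of $g_1$: one must resist bounding $|\mathbf E_x^{2n}[\cdots]|$ by $\mathbf E_x^{2n}|\cdots|$, which would destroy the cancellation in the long product $\prod_{i=0}^{\tau}(\cdots)$ and blow up in $n$, and instead keep the walk-expectation intact, pass to two independent copies, and exploit that the \emph{restarted} walks live for only $n^{\alpha}\ll n$ steps so that their collision local time produces an $o(1)$ factor $e^{n^{-1/2}\widetilde L}-1$. Making all the weighted heat-kernel bookkeeping close, and in particular verifying that the window exponent can be chosen to make $G^{c}$ negligible ($\alpha>\tfrac12$) and the $G$-error vanish ($\alpha<\tfrac23$) simultaneously, is where the real work lies.
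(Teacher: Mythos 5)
Your proposal is correct in outline but takes a genuinely different, and considerably heavier, route than the paper. The paper's proof is two steps: use the concentration theorem (Theorem \ref{conc}) to show $\mathbf P_{x_n}^{2n}(T_n\le 2n-n^{2/3})\le Ce^{-c'n^{1/3}}$, and then invoke the Key Estimate (Theorem \ref{ke}) with $\alpha=2/3$ to absorb the whole window $[2n-n^{2/3},2n]$ in which $T_n$ lives, the point being that Theorem \ref{ke} was engineered precisely so that this application is immediate (any $\alpha>1/2$ works). You instead cut at the deterministic time $\tau=2n-\lceil n^\alpha\rceil$, use Theorem \ref{ke} only for the piece $\mathbf E_x^{2n}[W_{2n}-W_\tau]$, and then control the random-time discrepancy $\mathbf E_x^{2n}[W_\tau-W_{T_n}]$ by hand: off the good event $G=\{T_n\ge\tau\}$ via concentration plus a two-replica environment second moment, and on $G$ via the Markov property (Proposition \ref{pt}) and a $g_1+g_2$ splitting that essentially re-runs, for the restarted walk of length $n^\alpha$, the $\mathcal E_1/\mathcal E_2$ analysis inside the proof of Theorem \ref{ke}. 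What your route buys is that it confronts explicitly the fact that $T_n$ is random \emph{inside} the walk-expectation, whereas the sup in Theorem \ref{ke} ranges over deterministic $k$ — a point the paper's ``follows immediately'' passes over silently; what it costs is that you redo a substantial amount of chaos/collision-local-time bookkeeping that the paper has already packaged into Lemmas \ref{killme}, \ref{omfg} and the proof of Theorem \ref{ke}. Two quantitative claims in your sketch are asserted rather than derived and would need to be carried out: the uniform-in-$n$ bound on the replica quantity involving $e^{cn^{-1/2}L}$ (this is exactly the content of the chaos bound in Lemma \ref{killme}, so it is available, but you should route it through that lemma rather than through an exponential-moment statement the paper never proves), and the final exponent $n^{3\alpha/4-1/2}$ forcing $\alpha<2/3$; the precise exponent is immaterial as long as some negative power survives for some $\alpha>1/2$, but as written it is unverified. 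Also note a small mismatch: you announce an $L^2$ bound on $\mathscr E(x_n,n)$, yet the $E_I$ piece is only controlled in $L^1$ (Theorem \ref{ke} is an $L^1$ statement), so the conclusion should be assembled as convergence in probability of the sum of an $L^1$-small and two $L^2$-small pieces — harmless, but the opening claim should be adjusted.
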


\begin{proof}
First we will show that $\sum_n \mathbf P_{x_n}^{2n}(T_n \leq 2n-n^{2/3}) < \infty$. By Borel-Cantelli, this would imply that all $\mathbf P_{x_n}^{2n}$ may be coupled onto the same probability space in such a way so that one almost surely has $T_n > 2n-n^{2/3}$ for large enough $n$. Then the result follows immediately by applying Theorem \ref{ke} with $\alpha = 2/3$ (there is nothing special about $2/3$: one can take any $\alpha >1/2$).
\\
\\
To prove that $\sum_n \mathbf P_{x_n}^{2n}(T_n \leq 2n-n^{2/3}) < \infty$, one first notes that the event $\{T_n \leq 2n-n^{2/3}\}$ can only happen if $\sup_{i \leq n} S_i \geq n^{2/3}$. But by the concentration (Theorem \ref{conc}), we know that $$\mathbf P_{x_n}^{2n}\big(\sup_{i\leq n} S_i \geq n^{2/3}\big) \leq Ce^{-c(n^{2/3}-x_n)^2/n} \leq Ce^{-c(n^{2/3}-Cn^{1/2})^2/n} \leq Ce^{-c'n^{1/3}}.$$ The right side is summable as a function of $n$, completing the proof.
\end{proof}

\subsection{Convergence in a quadrant}

With the reduction (Proposition \ref{oqr}) finished, we may simply consider a modified partition function \begin{align}\notag Z_k(n,x) &:= \mathbf E_{x}^{2n} \bigg[ z_0^k(S_{2n}) \prod_{i=0}^{2n-1} (1+k^{-1/4}\omega^k_{iS_i})\bigg] \\ \label{dof} &= \sum_{r=0}^{2n}k^{-r/4} \sum_{\substack{0\leq i_1 < ... <i_r <2 n\\(x_1,...,x_{r+1})\in \Bbb Z_{\ge 0}^r}}\prod_{i=1}^r \mathfrak p_{i_j-i_{j-1}}^{2n-i_{j-1}}(x_{j-1},x_j) \omega_{i_jx_j}^n \cdot \big(z_0^n(x_{r+1}) \mathfrak p_{2n-i_r}^{2n-i_r}(x_r,x_{r+1})\big),
\end{align}
with $i_0:=0$ and $x_0:=x$. We are now going to show that the rescaled processes \begin{equation}\label{resc}
\mathscr Z_n(T,X):= Z_n(nT,n^{1/2}X)
\end{equation}converge in law (as $n \to \infty$, with respect to the topology of uniform convergence on compact subsets of $\Bbb R_+ \times \Bbb R_+$) to the solution of \eqref{spde}. The first step for doing this is proving tightness in the appropriate Holder space.

\begin{prop}[Tightness] Let $\mathscr Z_n$ be defined as in \eqref{resc}, and assume that (for each $k$), the iid weights $\{\omega^k_{ij}\}_{i,j}$ have $p_0>8$ moments, bounded independently of $k$. Also let $\|X\|_p:= \Bbb E[|X|^p]^{1/p}.$ Then for every $p \in [1,p_0], a \ge 0$, $\theta \in [0,1)$, and compact set $K \subset [0,\infty)^2$ there exists $C=C(a,p,\theta,K)>0$ such that one has the following estimates uniformly over all pairs of space-time points $(T,X),(S,Y)\in K$:
\begin{align}
    \|\mathscr Z_n(T,X) \|_p &\leq C,\label{unif}\\ \|\mathscr Z_n(T,X)-\mathscr Z_n(T,Y)\|_p &\leq C|X-Y|^{\theta/2},\label{spatial}\\ \|\mathscr Z_n(T,X) - \mathscr Z_n(S,X)\|_p &\leq C|T-S|^{\theta/4}.\label{temporal}
\end{align}
In particular, the laws of the $Z_n$ are tight with respect to the topology of uniform convergence on compact subsets of $C(\Bbb R_+ \times \Bbb R_+)$.
\end{prop}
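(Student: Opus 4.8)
The plan is to expand $\mathscr Z_n(T,X)=Z_n(nT,n^{1/2}X)$ as the discrete chaos series \eqref{dof}, prove the three estimates first with $p=2$, upgrade them to every $p\le p_0$ by hypercontractivity, check that the linear interpolation does no harm, and finally invoke a parabolic Kolmogorov--Chentsov criterion. I would never treat the three regimes $X=0$, $X>0$ separately: the chaos expansion handles them simultaneously.

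\textbf{Step 1: the $L^2$ estimates.} For \eqref{unif} with $p=2$ one expands $\Bbb E[\mathscr Z_n(T,X)^2]$ along \eqref{dof}; the cross terms vanish since the $\omega^n_{ij}$ are independent and centered, leaving a sum over the chaos order $r$ of $n^{-r/2}\sum_{i_1<\dots<i_r}\sum_{x_1,\dots,x_r}\prod_j\mathfrak p^2\cdot\big[\sum_{x_{r+1}}\Bbb E[z_0^n(x_{r+1})]\,\mathfrak p_{\bullet}^{\bullet}(x_r,x_{r+1})\big]^2$. Bounding the bracket by $Ce^{an^{-1/2}x_r}$ (Jensen, Proposition~\ref{macky}, and the bound $\Bbb E[z_0^n(x)^2]\le Ce^{an^{-1/2}x}$ from Proposition~\ref{nun}, exactly as in the proof of Lemma~\ref{omfg}) and then applying Lemma~\ref{killme}, each order contributes at most $(CT)^{r/2}/(r/2)!$ times $e^{aX}$, so the series converges uniformly on compacts. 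For the spatial increment only the first kernel $\mathfrak p^{2nT}_{i_1}(n^{1/2}X,\cdot)$ (and, in the $r=0$ term, the single kernel $\mathfrak p^{2nT}_{2nT}(n^{1/2}X,\cdot)$) depends on $X$; after summing out $x_2,\dots,x_{r+1}$ and $i_2,\dots,i_r$ with Lemma~\ref{killme} one is left with a sum over $i_1$ and $x_1$ of $\big(\mathfrak p^{2nT}_{i_1}(n^{1/2}X,x_1)-\mathfrak p^{2nT}_{i_1}(n^{1/2}Y,x_1)\big)^2e^{ax_1}$. Here comes the one genuinely delicate point: applying \eqref{spat1} directly (with $a\lesssim n^{-1/2}$) produces a factor $i_1^{-1}\,|n^{1/2}X-n^{1/2}Y|$ whose $i_1$-sum diverges logarithmically, so instead I would interpolate between \eqref{spat1} and the crude bound $\sum_{x_1}\mathfrak p^{2nT}_{i_1}(n^{1/2}X,x_1)^2e^{ax_1}\lesssim i_1^{-1/2}$ of Proposition~\ref{fetiz}, producing a factor $i_1^{-(1+\theta)/2}\,|n^{1/2}X-n^{1/2}Y|^{\theta}$ with $\theta<1$ arbitrary; the $i_1$-sum is now summable and, after recombining with the remaining chaos factor and summing over $r$, one obtains $\Bbb E[(\mathscr Z_n(T,X)-\mathscr Z_n(T,Y))^2]\le C|X-Y|^{\theta}$ uniformly in $n$. (The $i_1=0$ contribution fits the same bound because $|x-y|\ge1$ at the lattice scale, and the deterministic $r=0$ term is handled by Cauchy--Schwarz against the weight $e^{-ax_1}$ together with $\Bbb E[z_0^n(x)^2]\le Ce^{an^{-1/2}x}$ and the same interpolation.) The temporal increment \eqref{temporal} is treated analogously, but now one must control both the change of time index \emph{and} of the horizon $2nT$ versus $2nS$ in every kernel; this is exactly what estimate \eqref{tem1} delivers (again combined with the interpolation trick to keep $\theta<1$), while the extra weights carried by the longer walk are absorbed by a Doob martingale argument of the same shape as the bound on $\mathcal E_1$ in the proof of Theorem~\ref{ke}. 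This yields $\Bbb E[(\mathscr Z_n(T,X)-\mathscr Z_n(S,X))^2]\le C|T-S|^{\theta/2}$.

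\textbf{Step 2: upgrade to $L^p$.} Because the weights have $p_0>8$ moments bounded in $n$ (and $z_0^n$ has all moments of order $\le p_0$ by the preliminary bound in the proof of Proposition~\ref{nun}), each homogeneous chaos component $\Psi_r^n$ satisfies a hypercontractive moment comparison $\|\Psi_r^n\|_p\le C_p^{\,r}\|\Psi_r^n\|_2$ for $p\le p_0$ (alternatively one runs a Burkholder--Davis--Gundy induction on $r$, as in the proof of Proposition~\ref{nun}); since the factorial decay $1/\sqrt{(r/2)!}$ in the $L^2$ bounds of Step~1 dominates $C_p^{\,r}$, Minkowski's inequality upgrades all three estimates of Step~1 to the stated $L^p$ bounds, $\|\mathscr Z_n(T,X)\|_p\le C$, $\|\mathscr Z_n(T,X)-\mathscr Z_n(T,Y)\|_p\le C|X-Y|^{\theta/2}$, and $\|\mathscr Z_n(T,X)-\mathscr Z_n(S,X)\|_p\le C|T-S|^{\theta/4}$, uniformly over the compact set $K$. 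The randomness $z_0^n$, which is independent of the bulk environment, is handled throughout by conditioning on the boundary weights and carrying its moments as an additional factor in the comparison.

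\textbf{Step 3: interpolation and Kolmogorov.} Since $\theta/2\le1$, the linear interpolation used to define $\mathscr Z_n$ off the lattice does not spoil the above H\"older bounds: inside a single lattice cell the increment is $(x-y)/\epsilon$ times a one-step lattice increment already controlled by the estimates, which recovers a bound of the form $C|X-Y|^{\theta/2}$ (and likewise in time, including the slightly more careful estimate near $T=0$, where $\mathscr Z_n(0,X)=z_0^n(n^{1/2}X)$ and the given H\"older regularity of $z_0^n$ is used). The three estimates, valid for every $p\le p_0$ with $p_0>8$ and every $\theta<1$, thus supply moment bounds with respect to the parabolic metric $\rho((T,X),(S,Y))=|X-Y|+|T-S|^{1/2}$ whose exponent multiplied by $p_0$ exceeds the effective ``dimension'' $3$ of $(\Bbb R_+^2,\rho)$; hence the (multiparameter) Kolmogorov--Chentsov criterion applies and shows that the laws of $\mathscr Z_n$ are tight in $C(\Bbb R_+\times\Bbb R_+)$ with the topology of locally uniform convergence. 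The messiest part of the whole argument is the temporal increment, because there the kernel estimate \eqref{tem1} alone is not enough and one must simultaneously run the Doob-type control of the extra weights; the most conceptually load-bearing trick is the interpolation in Step~1 that kills the logarithmic divergence at the price of $\theta<1$, which is precisely why the statement cannot be improved to $\theta=1$ by this method.
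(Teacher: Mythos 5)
Your overall architecture is sound and lands on the same estimates, but it is organized differently from the paper: the paper never passes through an ``$L^2$ first, then hypercontractivity'' argument in the discrete setting. It iterates the Duhamel recursion \eqref{du} directly in $L^p$, applying Burkholder--Davis--Gundy (and Marcinkiewicz--Zygmund for the spatial sums) at every step, which carries the random initial data through simply as $\|z_0^k(\cdot)\|_p$ and sidesteps the fact that in your chaos expansion \eqref{dof} the coefficients are themselves random through $z_0^n$. If you keep your route, note that after conditioning on the boundary weights, hypercontractivity controls $\Bbb E[|\Psi_r|^p\,|\,z_0^n]$ by the \emph{conditional} second moment raised to the power $p/2$, so you need an $L^{p/2}$ bound on a quadratic functional of $z_0^n$ (available from \eqref{pee}), not merely the unconditional $L^2$ bound of your Step 1; this is exactly the extra factor you allude to, but it is where the real work of your Step 2 sits. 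Your central interpolation trick is identical to the paper's bound \eqref{geo} (geometric interpolation between Proposition \ref{fetiz} and \eqref{spat1}), with the same $\theta<1$ loss, and your Kolmogorov/Arzel\`a--Ascoli conclusion and exponent bookkeeping match the paper's.

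Two concrete gaps. First, your treatment of the $r=0$ (initial-data) term of the spatial increment by Cauchy--Schwarz against the weight $e^{-ax_1}$ fails near $T=0$: the factor $\sum_y z_0^n(y)^2 e^{-2an^{-1/2}y}$ costs order $n^{1/2}$, and combined with \eqref{geo} at microscopic time $2nT$ this yields a bound of order $T^{-(1+\theta)/2}|X-Y|^{\theta}$, which is not uniform on compact sets $K\subset[0,\infty)^2$ that touch $T=0$ (where $\mathscr Z_n(0,X)=z_0^n(n^{1/2}X)$). The paper instead writes this term as $E[z_0^k(S_n^x)-z_0^k(S_n^y)]$ using the coupling of Proposition \ref{coup} and then applies the H\"older bound on $z_0^n$ from Proposition \ref{nun}, giving $Ck^{-1/2}|x-y|e^{ak^{-1/2}(x+y)}$ uniformly in $T$; adopt that fix. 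Second, your temporal step is under-specified: comparing the chaos series at horizons $2nS$ and $2nT$ changes the horizon in \emph{every} kernel and the terminal kernel in both its time index and horizon, and it adds chaos terms with noise times in $(2nS,2nT]$; a telescoping over which kernel differs, each piece controlled by \eqref{tem1}, can be made to work, but the appeal to ``a Doob argument as for $\mathcal E_1$'' does not supply this bookkeeping. The paper's route avoids \eqref{tem1} entirely: condition at the earlier time, writing $Z_k(n,x)=\sum_y\mathfrak p^n_{n-m}(x,y)Z_k(m,y)$ plus the noise accumulated on $(m,n]$, so the temporal increment reduces to the already-proved spatial increment of $Z_k(m,\cdot)$ smeared over scale $\sqrt{n-m}$ (controlled via Corollary \ref{pmoments} and Proposition \ref{macky}) plus a kernel-squared sum bounded by Proposition \ref{fetiz}, yielding $|T-S|^{\theta/4}$ directly. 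With those two repairs your proof goes through.
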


We remark that the restriction $p\in [1,8+\epsilon]$ is only necessary to obtain tightness in the Holder space. Using more elegant arguments, this may be extended to $p_0 \ge 6$ (see [AKQ14a, Appendix B]). The one-point convergence result will only require two moments.

\begin{proof}
Note that the functions $Z_k$ defined in \eqref{dof} satisfy the following recursion:
\begin{align*}
    Z_k(n+1,x) = \mathfrak p_1^{n+1}(x,x+1) &Z_k(n,x+1) + \mathfrak p_1^{n+1}(x,x-1) Z_k(n,x-1) \\ &+ k^{-1/4}\omega^k_{(n+1)x} Z_k(n,x).
\end{align*}
Iterating this equation $n$ times and applying the semigroup property will give a Duhamel-form (mild) equation for $Z_k$, namely \begin{equation}\label{du}Z_k(n,x) = \sum_{y \ge 0} \mathfrak p_n^n(x,y) z_0^k(y) +k^{-1/4}\sum_{i=1}^{n}\sum_{y \ge 0} \mathfrak p_{n-i}^n(x,y)Z_k(i,y)\omega_{iy}^k.
\end{equation}
Define the martingale $M_r(x,n,k):= k^{-1/4}\sum_{i=0}^{r-1} \sum_{y \ge 0} \mathfrak p_{n-i}^n(x,y)Z_k(i,y)\omega^k_{(i+1)y}.$ This is a martingale in the $r$-variable (for fixed $x,n,k$), with respect to the filtration $\mathcal F^{k}_r:= \sigma(\{\omega^k_{ij}\}_{0\leq i \leq r;j\ge 0}).$ This is because $Z_k(i,y)$ is $\mathcal F^{k}_r$-measurable, and $\mathcal F^k_r$ is independent of the mean-zero random variables $\omega^k_{(r+1)y}$ with $y \geq 0$. Applying Burkholder-Davis-Gundy to $M_r(x,n,k)$ shows that 
\begin{align}\notag
    \|M_r(x,n,k)\|_p^2 &\leq C\bigg\|k^{-1/2} \sum_{i=0}^{r-1} \bigg[ \sum_{y\ge 0} \mathfrak p_{n-i}^n(x,y)Z_k(i,y) \omega^k_{(i+1)y}\bigg]^2 \bigg\|_{p/2} \\ \label{duk}&\leq Ck^{-1/2} \sum_{i=0}^{r-1} \bigg\|\sum_{y \ge 0} \mathfrak p_{n-i}^n(x,y)Z_k(i,y) \omega^k_{(i+1)y}\bigg\|_p^2.
\end{align}
Next, we notice that since the $\omega^k_{(i+1)y}$ are independent of $Z_k(i,y)$, another application of Burkholder-Davis-Gundy (or in this case, its more elementary version for independent sums, the Marcinkiewicz-Zygmund inequality) shows that \begin{equation}\label{ducks}\bigg\|\sum_{y \ge 0} \mathfrak p_{n-i}^n(x,y)Z_k(i,y) \omega^k_{(i+1)y}\bigg\|_p^2 \leq C\sum_{y \ge 0} \mathfrak p_{n-i}^n(x,y)^2 \|Z_k(i,y)\|_p^2 \|\omega^k_{(i+1)y}\|_p^2,
\end{equation}
Since $p \le p_0$ and the $p_0^{th}$ moments of $\omega^k_{iy}$ are bounded independently of $k,i,y$ it follows that $\|\omega^k_{(i+1)y}\|_p^2$ may be absorbed into the constant. Combining \eqref{du},\eqref{duk},\eqref{ducks}, one finds that 
\begin{equation}\label{budd}
    \|Z_k(n,x)\|_p^2 \leq C\bigg(\sum_{y \ge 0} \mathfrak p_n^n(x,y)\|z_0^k(y)\|_p\bigg)^2 + Ck^{-1/2}\sum_{i=0}^{n-1} \sum_{y \ge 0} \mathfrak p_{i}^n(x,y)^2 \|Z_k(n-i,y)\|_p^2.
\end{equation}
Now, we note that $\|z_0^k(y)\|_p \leq e^{ak^{-1/2}y}$ by \eqref{pee}. Hence, $\sum_y \mathfrak p_n^n(x,y)\|z_0^k(y)\|_p$ may be bounded above by $Ce^{ak^{-1/2}x+Ka^2k^{-1}n}$, by Proposition \ref{macky}. After this, we set $x_0:=x$ and $i_0:=0$ and we iterate \eqref{budd}. Then we get \begin{align} \notag \|Z_k(n,x)\|_p^2 &\leq C \sum_{r=0}^{n} k^{-r/2} \sum_{\substack{0\le i_1<...<i_r <n\\(x_1,...,x_r)\in\Bbb Z_{\ge 0}}} \prod_{j=1}^r \mathfrak p_{n-i_{j-1}}^{n-i_j}(x_{i-1},x_i)^2 \cdot e^{ak^{-1/2}x_r + Ka^2n/k} \\ \label{dap}&\stackrel{\text{Lemma }\ref{killme}}{\leq} Ce^{ak^{-1/2}x+Ka^2n/k}\sum_{r=0}^n C^k k^{-r/2} n^{r/2}/(r/2)! \notag \\ &\leq Ce^{ak^{-1/2}x +Bn/k},
\end{align} where $B$ is a large constant. Now replace $x$ by $n^{1/2}X$, $n$ by $nT$, and $k$ by $n$. This will give $\|\mathscr Z_n(T,X)\|_p^2 \leq Ce^{aX+BT}$. But $e^{aX+BT}$ can be bounded from above on any compact set, proving \eqref{unif}.
\\
\\
Now we will prove \eqref{spatial}. By applying Burkholder-Davis-Gundy (twice) in the same way which was used in proving \eqref{budd}, one sees that 
\begin{align}\notag \|Z_k(n,x)-Z_k(n,y)\|_p^2 \leq C&\bigg\|\sum_{w \ge 0}\big( \mathfrak p_n^n(x,w)-\mathfrak p_n^n(y,w)\big)z_0^k(w)\bigg\|_p^2\\\label{fum}&+ Ck^{-1/2}\sum_{i=0}^{n-1} \sum_{w \ge 0} \big(\mathfrak p_{n-i}^n(x,w)-\mathfrak p_{n-i}^n(y,w)\big)^2 \|Z_k(i,w)\|_p^2.
\end{align}
We will bound the first term using the coupling lemma. Specifically, let $P$ (and its expectation operator $E$) denote a coupling of $\mathbf E_x^n$ and $\mathbf E_y^n$ as in Proposition \ref{coup}, and let $(S^x,S^y)$ be the associated coordinate process. Recall from Proposition \ref{nun} that $\Bbb E[(z_0^k(x)-z_0^k(y))^4] \leq Ck^{-1}|x-y|^2e^{ak^{-1/2}(x+y)}$ for some constants $C,a$ independent of $n,x,y$. Then by independence of $z_0^k$ and $S$, one may apply Minkowski and Jensen to commute the respective expectations and obtain
\begin{align*}
    \bigg\|\sum_{w \ge 0}\big( \mathfrak p_n^n(x,w)-\mathfrak p_n^n(y,w)\big)z_0^k(w)\bigg\|_p^2 &= \big\| \mathbf E_x^n[z_0^k(S_n)] - \mathbf E_y^n[z_0^k(S_n)]\big\|_p^2 \\ = \big\| E[z_0^k(S_n^x) - z_0^k(S_n^y)]\big\|_p^2 &\leq E \big[ \big\|z_0^k(S_n^x) - z_0^k(S_n^y)\|_p^2 \big] \\ \leq C E \big[ k^{-1/2} |S_n^x-S_n^y| e^{ak^{-1/2}(S_n^x+S_n^y)}] &\le Ck^{-1/2}|x-y| \mathbf E_x^n[e^{2ak^{-1/2}S_n}]^{1/2} \mathbf E_y^n[e^{2ak^{-1/2}S_n}]^{1/2} \\ \stackrel{\text{Prop. } \ref{macky}}{\le} & Ck^{-1/2}|x-y| e^{ak^{-1/2}(x+y)},
\end{align*}
where we noted that $e^c+e^d\leq 2e^{c+d}.$ Next, we geometrically interpolate (i.e., $c \wedge d \leq c^{\theta}d^{1-\theta}$ for $\theta \in [0,1]$) between the bound of Proposition \ref{fetiz} and that of \eqref{spat1} (with $p=2$ for both). This will yield the following for all $\alpha \ge 0$:
\begin{equation}\label{geo}
    \sum_{z \ge 0} \big( \mathfrak p_n^N(x,z) - \mathfrak p_n^N(y,z) \big)^{2} e^{\alpha z} \leq C e^{\alpha (x+y)+K\alpha ^2n} (n^{-\frac12-\frac12\theta}+\alpha^{\theta} n^{-\frac12}) |x-y|^{\theta}
\end{equation}
Using these bounds and using equation \eqref{fum} in macroscopic coordinates, we will obtain:
\begin{align}\notag
    &\;\;\;\;\;\|Z_k(n,x) -  Z_k(n,y)\|_p^2 \\\notag &\le Ck^{-1/2}|x-y| e^{ak^{-1/2}(x+y)}+Ck^{-1/2}\sum_{i=0}^{n-1} \sum_{w \ge 0} \big(\mathfrak p_{n-i}^n(x,w) -\mathfrak p_{n-i}^n(y,w)\big)^2 \|Z_k(i,w)\|_p^2 \\\notag &\stackrel{\eqref{dap}}{\le} Ck^{-1/2}|x-y| e^{ak^{-1/2}(x+y)}+Ck^{-1/2}\sum_{i=1}^{n} \sum_{w \ge 0} \big(\mathfrak p_{n-i}^n(x,w) -\mathfrak p_{n-i}^n(y,w)\big)^2 Ce^{ak^{-1/2}w + Bi/k} \\\notag &\stackrel{\eqref{geo}}{\leq} Ck^{-1/2}|x-y| e^{ak^{-1/2}(x+y)}\\ \notag &\;\;\;\;\;\;\;\;\;\;\;\;\;\;\;\;\;\;\;\;\;\;+Ck^{-1/2}\sum_{i=1}^{n} e^{ak^{-1/2}(x+y)}\big[(n-i)^{-\frac12 -\frac12 \theta} +a^{\theta}k^{-\theta/2} (n-i)^{-1/2}\big]|x-y|^{\theta} e^{Bn/k}\\ \label{darts} &\leq Ck^{-1/2}|x-y| e^{ak^{-1/2}(x+y)}+C(n^{\frac12-\frac12\theta}+k^{-\theta/2} n^{1/2})k^{-1/2}e^{ak^{-1/2}(x+y)} |x-y|^{\theta}e^{Bn/k}.
\end{align}
In the last line, we used the bound $\sum_i (n-i)^{-\frac12-\frac12\theta} \leq Cn^{\frac12-\frac12\theta}$ for $\theta<1$. Now we convert to macroscopic coordinates ($k \to n$; $n \to nT$; $x \to n^{1/2}X;y\to n^{1/2}Y$), to get 
$$\|\mathscr Z_n(T,X)-\mathscr Z_n(T,Y)\|_p^2 \leq Ce^{2a(X+Y)} \big(|X-Y| + (T^{\frac12-\frac12\theta}+T^{1/2})|X-Y|^{\theta}\big)e^{BT}.$$ On any compact set $|X-Y|$ may be bounded by $C|X-Y|^{\theta}$ (since $\theta<1$).Similarly, we can also absorb $(1+T^{\frac12-\frac12\theta}+T^{1/2})e^{2a(X+Y)+BT}$ into the constant, proving \eqref{spatial}.
\\
\\
Now we will prove \eqref{temporal}. Let $m\le n$. For this, one writes $$Z_k(n,x) = \sum_{y \ge 0} \mathfrak p_{n-m}^{n}(x,y) Z_k(m,y)+k^{-1/4}\sum_{i=1}^{n-m} \sum_{y \ge 0} \mathfrak p^{n}_{n-m-i}(x,y) Z_k(i+m,y)\omega^k_{(i+m)y}.$$ Again imitating the proof of \eqref{budd} and using the fact that $\mathfrak p_{n-m}^n(x,\cdot)$ is a probability measure (then applying Jensen), one sees 
\begin{align*}\|Z_k(n,x)-Z_k(m,x)\|_p^2 \le C&\sum_{y \ge 0} \mathfrak p^n_{n-m}(x,y) \|Z_k(m,y)-Z_k(m,x)\|_p^2 \\ &+Ck^{-1/2} \sum_{i=1}^{n-m}\sum_{y \ge 0} \mathfrak p_{n-m-i}^n(x,y)^2 \|Z_k(i+m,y)\|_p^2.
\end{align*}
Let us call the sums on the right side $S_1(m,n,k,x),S_2(m,n,k,x)$, respectively. We bound these separately. We first compute that 
\begin{align*}
    &\sum_{y \ge 0} \mathfrak p_n^N(x,y) |x-y|^{\theta} e^{a(x+y)} = \mathbf E_x^N[|S_n-x|^{\theta}e^{a(S_n+x)}] \\ &\le \mathbf E_x^N[|S_n-x|^{2\theta}]^{1/2} \mathbf E_x^N[e^{2a(S_n+x)}]^{1/2} \leq Cn^{\theta/2} e^{2ax+Ka^2n},
\end{align*}
where the last inequality follows from Propositions \ref{pmoments} and \ref{macky}. Using this and \eqref{darts} we see that \begin{align*}S_1 &\leq C \sum_{y \ge 0}\mathfrak p_{n-m}^n(x,y) \cdot \big[ k^{-1/2}|x-y| e^{ak^{-1/2}(x+y)}\\&\;\;\;\;\;\;\;\;\;\;\;\;\;\;\;\;\;\;\;\;\;\;\;\;+C(m^{\frac12-\frac12\theta}+k^{-\theta/2} m^{1/2})k^{-1/2}e^{ak^{-1/2}(x+y)} |x-y|^{\theta}e^{Bm/k}\big] \\ &\leq Ck^{-1/2}(n-m)^{1/2}e^{2ak^{-1/2}x+Ka^2n/k} \\&\;\;\;\;\;\;\;\;\;\;\;\;\;\;\;\;\;\;\;\;\;\;\;\;+ C(m^{\frac12-\frac12\theta}+k^{-\theta/2} m^{1/2})k^{-1/2}e^{Bm/k} (n-m)^{\theta/2} e^{2ak^{-1/2}x+Ka^2n/k}.
\end{align*}
Next, to bound $S_2$, we are going to use \eqref{dap} with Proposition \eqref{macky} and we obtain
\begin{align*}
    S_2 &\le Ck^{-1/4} \sum_{i=1}^{n-m}\sum_{y \ge 0} \mathfrak p_{n-m-i}^n(x,y)^2 e^{ak^{-1/2}y+B(i+m)/k} \\&\leq Ck^{-1/2} \sum_{i=1}^{n-m} (n-m-i)^{-1/2} e^{ak^{-1/2}x +Bn/k} \\ &\le Ck^{-1/2}(n-m)^{1/2} e^{ak^{-1/2}x+Bn/k}.
\end{align*}
Combining the bounds for $S_1,S_2$ and then converting to macroscopic coordinates $(n \to nT; m\to nS; k\to n; x \to n^{-1/2}X)$ will yield the following bound:
$$\|\mathscr Z_n(T,X) -\mathscr Z_n(S,X)\|_p^2 \leq C(|T-S|^{1/2}+ (S^{\frac12-\frac12\theta} +S^{1/2}) |T-S|^{\theta/2}\big) e^{2aX+B'T}, $$ where $B'$ is a large constant depending on $a^2$ and $B$. Since $|T-S|^{1/2}\leq C|T-S|^{\theta/2}$ on compact sets and since $(1+S^{\frac12-\frac12\theta} +S^{1/2})e^{2aX+B'T}$ may be bounded from above on compact sets, this finishes the proof of \eqref{temporal}.
\\
\\
Now we need to argue tightness from these estimates. This is a direct corollary of the Kolmogorov continuity criterion (two-parameter version), Prokhorov's theorem, and the Arzela-Ascoli Theorem.
\end{proof}

Now that we proved tightness, we only need to obtain convergence of finite-dimensional marginals of $\mathcal Z_n$ to those of SPDE \eqref{spde}. Thanks to the Cramer-Wold device (and linearity of integration with respect to space-time white noise) this will not be any more difficult than just proving convergence of \textit{one-point} marginals. This can actually be done by using the convergence result (Proposition \ref{gence}) together with the machinery developed in the papers [AKQ14a, CSZ17a].
\\
\\
Specifically, we will use [CSZ17a, Theorem 2.3], which in turn was inspired by the results of [AKQ14a, Section 4]. We state this result in a version which is adapted to our own context. Throughout, we will fix $T>0$ and we will denote $\Delta_k(T):= \{(t_1,...,t_k):0<t_1<...<t_k<Tn,t_i \in \Bbb R\}$. Also denote by $\Delta_k^n(T):= \{(\frac{t_1}n,...,\frac{t_k}n):0<t_1<...<t_k<Tn,t_i \in \Bbb Z\}$, and let $(\Bbb R^d)_n:= (n^{-1/2}\Bbb Z)^d.$ Then define $$\mathcal L^n_k:= \Delta^n_k(T) \times (\Bbb R^k)_n,$$ and we equip $\mathcal L^n_k$ with $\sigma$-finite the measure which assigns mass $n^{-3/2} = n^{-1}\cdot n^{-1/2}$ to each distinct space-time point $(\frac{t}{n},\frac{x}{\sqrt{n}}).$ We denote by $L^2(\mathcal L^n_k)$ the $L^2$ space associated to this measure.

\begin{thm}[CSZ17a, Theorem 2.3]\label{convo} For each $n \in \Bbb N$, let $\{\omega^n_{i,j}\}_{i,j\ge 0}$ be a family of random weights with mean zero and $var(\omega^n_{i,j}) = \sigma^2 +o(1)$ (as $n \to \infty$). Let $\{F_k^n\}_{n,k \in \Bbb N}$ be a family of functions, defined on $\mathcal L^n_k$. Suppose that $F_k:\Delta_k(T) \times \Bbb R^k\to \Bbb R$ be a family of continuous functions such that that $\|F_k^n-F_k\|_{L^2(\mathcal L^n_k)} \to 0$ as $n \to \infty$, for every $k \in \Bbb N$. Furthermore, assume that $$\sup_n\sum_{k \ge 0} \|F_k^n\|^2_{L^2(\mathcal L^n_k)}<\infty.$$ Then define random variables $$X_n:= \sum_{k \ge 0} n^{-3k/4} \sum_{(\vec{t},\vec{x})\in \mathcal L^n_k} F^n_k(\vec t, \vec x) \omega_{i_1x_1}\cdots \omega_{i_kx_k}.$$ Then $X_n$ converges in distribution as $n \to \infty$ to the random variable $$\int_{\Delta_k(T)} \int_{\Bbb R_+^k} F_k(t_1,...,t_k;x_1,...,x_k)\xi(dx_1dt_1)\cdots \xi(dx_ndt_n),$$ where $\xi$ is a space-time white noise on $\Bbb R_+\times \Bbb R$.
\end{thm}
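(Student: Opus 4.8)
The plan is to prove Theorem \ref{convo} as a polynomial-chaos invariance principle, combining an $L^2$-truncation in the chaos index $k$ with a Lindeberg-type replacement of the weights $\omega^n_{i,j}$ by Gaussians. First I would split $X_n = X_n^{\le K} + R_n^{>K}$, separating the part with $k\le K$. For fixed $n$ the monomials $\omega_{i_1x_1}\cdots\omega_{i_kx_k}$ are pairwise orthogonal in $L^2(\Bbb P)$: since the time indices are strictly increasing the $k$ factors are distinct i.i.d.\ centered variables, so two monomials are correlated only if they coincide. Hence $\Bbb E[(R_n^{>K})^2]=\sum_{k>K}(\sigma^2+o(1))^k\|F_k^n\|_{L^2(\mathcal L^n_k)}^2$, which tends to $0$ as $K\to\infty$ uniformly in $n$ by the assumed summability; by Fatou applied to the counting measure in $k$ (using $\|F_k\|^2\le\liminf_n\|F_k^n\|_{L^2(\mathcal L^n_k)}^2$) the same holds for the $L^2$-tail of the limiting chaos series. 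It then suffices to show, for each fixed $K$, that $X_n^{\le K}$ converges in distribution to $\sum_{k\le K}\int_{\Delta_k(T)}\int_{\Bbb R_+^k}F_k\,\xi^{\otimes k}$, and to finish by a standard $3\varepsilon$ argument.

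Next I would replace the discrete kernels $F_k^n$ by the continuous ones $F_k$: for each $k\le K$ the chaos built from $F_k^n-F_k|_{\mathcal L^n_k}$ has $L^2(\Bbb P)$-norm $(\sigma^2+o(1))^{k/2}\|F_k^n-F_k\|_{L^2(\mathcal L^n_k)}\to 0$ by hypothesis, and a further $L^2$-approximation, uniform in $n$ by the same isometry, lets me assume each $F_k$ is continuous, compactly supported, and piecewise constant on a fixed dyadic mesh. With these reductions the surviving sum $X_n^{\le K}$ is finite, and each weight $\omega^n_{i,j}$ occurs in it to degree at most one, because the time indices within any monomial are distinct.

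The core of the argument is the Lindeberg replacement: introduce i.i.d.\ centered Gaussians $\eta_{i,j}$ of variance $\sigma^2$ and substitute them for the $\omega^n_{i,j}$ one lattice point at a time. Because each weight has degree $\le 1$, a third-order Taylor expansion of a smooth test function $g(X_n^{\le K})$ in a single weight leaves an error bounded by $\|g'''\|_\infty$ times $\Bbb E|\omega^n_{i,j}|^3$ times the third absolute moment of the corresponding coefficient; hypercontractivity for the (bounded-degree) chaos bounds the latter by the $3/2$ power of its second moment, which is $O(n^{-3/2})$ for compactly supported step kernels, so summing over the $O(n^{3/2})$ active lattice points gives a total replacement error of order $n^{-3/4}\sup_n\Bbb E|\omega^n_{1,1}|^3$, hence $o(1)$ when third moments are uniformly bounded. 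In the two-moment case — the one needed for the finite-dimensional convergence in Theorem \ref{thm2} — one first truncates, writing $\omega^n_{i,j}=\omega'_{i,j}+\omega''_{i,j}$ with $\omega'_{i,j}=\omega^n_{i,j}\mathbf 1\{|\omega^n_{i,j}|\le\varepsilon_n\sqrt n\}$ recentered to mean zero: uniform integrability (i.e.\ the Lindeberg condition) makes the $\omega''$-contribution $o(1)$ in $L^2(\Bbb P)$, the recentering correction is negligible, and $\omega'$ has third moment $O(\varepsilon_n\sqrt n)$, which keeps the cumulative error $o(1)$ for $\varepsilon_n\to 0$ slowly.

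After the replacement, $X_n^{\le K}$ is a finite sum of discrete multiple Wiener integrals of the step kernels against the Gaussian white-noise approximation $\xi_n:=n^{-3/4}\sum_{i,j}\eta_{i,j}\mathbf 1_{\mathrm{cell}(i,j)}$; since the kernels are step functions on a fixed mesh, each such integral equals, up to an $o(1)$ discrepancy between the off-diagonal sum and the full product (vanishing because the cell masses are $O(n^{-3/2})$), a fixed polynomial in block-sums $n^{-3/4}\sum_{\mathrm{block}}\eta$, each converging to a Gaussian by the classical CLT, and the joint limit over $k\le K$ is exactly $\sum_{k\le K}\int F_k\,\xi^{\otimes k}$ since all terms are continuous functionals of $\xi_n\Rightarrow\xi$. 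Undoing the (uniformly $L^2$-small) approximations of the previous paragraphs via Slutsky and the $3\varepsilon$ argument then yields the theorem. The step I expect to be the main obstacle is the Lindeberg replacement, especially its two-moment version: the truncation must be carried out without destroying the mean-zero property of the weights or the orthogonality used in the first step, and one must verify carefully that the accumulated Taylor remainders and recentering corrections really vanish at the relevant scales $n^{-3/4}$ per weight and $n^{3/2}$ lattice points — the bookkeeping of these errors against the kernel masses $\|F_k^n\|^2$ is the delicate part, while everything else is soft.
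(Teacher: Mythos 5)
A point of orientation first: the paper does not prove this statement at all — it is imported, in adapted notation, from [CSZ17a, Theorem 2.3] (itself modeled on [AKQ14a, Section 4]), and the proof in that reference is essentially the strategy you outline: truncation of the chaos expansion in the order $k$, an $L^2$ replacement of the discrete kernels by the limiting ones, a Lindeberg-type invariance principle for multilinear polynomials of independent weights (with truncation of the weights so that only two moments plus a Lindeberg condition are needed), and finally the Gaussian computation for step kernels. So your route is a reconstruction of the cited proof rather than an alternative to anything done in this paper, and at the level of scaling bookkeeping (coefficient second moments of order $n^{-3/2}$, $O(n^{3/2})$ active sites, error $O(n^{-3/4})$ per unit third moment) your sketch is consistent.

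There is, however, one genuine gap in your very first reduction. From $\sup_n\sum_{k\ge 0}\|F^n_k\|^2_{L^2(\mathcal L^n_k)}<\infty$ you cannot conclude that $\sum_{k>K}\|F^n_k\|^2_{L^2(\mathcal L^n_k)}\to 0$ as $K\to\infty$ \emph{uniformly in $n$}: take $F^n_k=0$ for $k\ne n$ and let $F^n_n$ put all its mass $n^{3n/4}$ on a single lattice configuration; then every fixed-$k$ kernel converges to $F_k\equiv 0$, the sup of the sums is $1$, yet $X_n=\omega_{i_1x_1}\cdots\omega_{i_nx_n}$ (e.g.\ a Rademacher variable for $\pm1$ weights) does not converge to the claimed limit $0$. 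So the statement with only the stated hypothesis is too weak, and your $3\varepsilon$ argument needs the stronger, uniform tail condition $\lim_{K\to\infty}\limsup_{n\to\infty}\sum_{k>K}\|F^n_k\|^2=0$, which is what [CSZ17a] actually assumes; in the application in Section 5 it holds because Lemma \ref{killme} gives $\|F^n_k\|^2\le BC^k/(k/2)!$ uniformly in $n$. Two further caveats you should make explicit: the hypercontractive bound on third moments of the coefficient polynomials is not free for non-Gaussian weights whose law varies with $n$ and is only assumed square integrable, so the truncation/recentering step is needed even for that bookkeeping, and a Lindeberg condition on the triangular array $\{\omega^n_{i,j}\}$ must be added to the hypotheses (already the $k=1$ chaos can fail the CLT without it; in the paper's applications the weights have higher moments uniformly in $n$, so it is satisfied). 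Finally, the $k$-th term of the limit should carry a factor $\sigma^k$ unless $\sigma^2=1$, as it is in the application to Theorem \ref{thm2}.
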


With this in place, we are now ready to prove Theorem \ref{thm2}.

\begin{proof}[Proof of Theorem \ref{thm2}] Using the discussion at the end of Section 2, we know that $z_0^n(n^{1/2}X)$ converges to a geometric Brownian motion with drift, specifically $e^{B_X-(A+1/2)X}.$ We exploit Skorohod's lemma to couple all of the $z_0^n$ onto the same probability space in such a way so that this convergence occurs almost surely.
\\
\\
Fix $x,t>0$. In our case, we set $$F^n_k(t_1,...,t_k;x_1,...,x_k):= \sum_{x_k \in n^{-1/2}\Bbb Z_{\ge 0}}z_0^n(n^{1/2}x_k)\prod_{i=1}^n \mathscr P_n(t_j-t_{j-1},T-t_{j-1}; x_{k-1};x_k),$$ $$F_k(t_1,...,t_k;x_1,...,x_k):= \int_{\Bbb R_+}e^{B_{x_k}-(A+1/2)x_k}\prod_{i=1}^n \mathscr P(t_j-t_{j-1},T-t_{j-1}; x_{k-1};x_k)\;dx_k,$$ where $\mathscr P_n$ was defined in Proposition \ref{gence} and where $(x_0,t_0):=(x,t)$. The condition that $$\sup_n\sum_{k \ge 0} \|F_k^n\|^2_{L^2(\mathcal L^n_k)}<\infty,$$ follows quite simply from Lemma \ref{killme}. Also the condition that $\|F_k^n-F_k\|_{L^2(\mathcal L^n_k)} \to 0$ as $n \to \infty,$ follows by inducting on the last statement in Proposition \ref{gence}.
\\
\\
By Theorem \ref{convo}, we conclude that the one-point marginals of $\mathscr Z_n$ converge to those of the solution of \eqref{spde}. The proof for multi-point is similar, but one defines a new family $\tilde F_k^n$ by taking linear combinations of the $F_k^n$ which are defined above, then one applies the Cramer-Wold device to make the conclusion.
\\
\\
The only thing which has not been explained is the normalization $\big(2\Phi\big(\frac{X+n^{-1/2}}{\sqrt{T}}\big)-1\big)^{-1}$ which appears in Theorem \ref{thm2}. This may be viewed as a simple consequence of the fact that (by the local central limit theorem), the asymptotic mass of the measures $\mu_{n^{1/2}X}^{nT}$ appearing in Theorem \ref{rw} is equal to $2\Phi\big(\frac{X+n^{-1/2}}{\sqrt{T}}\big)-1+o(n^{-1/2}).$
\end{proof}

\begin{appendix}

\section{A priori estimates and concentration of measure}

The purpose of this appendix is to gather estimates for the simple symmetric random conditioned to stay positive. The results are somewhat standard and the literature on such measures is extensive \cite{Ig74, Bol76, Car05, CC08, DIM77} etc., but we will only give a brief exposition of those selected estimates which apply to our result in the nearest-neighbor case (which we could not find in the above references). For completeness, we provide elementary proofs which are specialized to our particular case of nearest-neighbor jumps, but some of the results below have generalizations which can be found in those references. 
\\
\\
The main goal of this appendix will be to prove a powerful concentration inequality for the positive random walk measures $\mathbf P_x^n$ defined above, more specifically, we will show that $$\mathbf P_x^n\big(\sup_k |S_k-x|>u\big) \leq Ce^{-cu^2/n},$$ where $C,c$ are independent of $n,x$. This will in turn allow us to prove various $L^p$ moment bounds (to be used later in Section 5) and derive a Donsker principle. The methods used in proving these results will be coupling arguments and martingale techniques, many of which will be very useful in and of themselves. More specifically, the main key will be to notice that for fixed $n \in \Bbb N$, the process $$M_k^n:= \frac{S_k+1}{\psi(S_k,n-k)},\;\;\;\;\;\;\;\;\;\;\;\; 0\leq k \leq n,$$ is a $\mathbf{P}_x^n$-martingale with respect to the $k$-variable. Furthermore, we will see that it has bounded increments. First we state a few preliminary lemmas.

\begin{lem}\label{ohgod}
	Let $b \geq 0$. There exists a constant $C=C(b)>0$ such that for all $n \ge 0$ and all $x,y,z \geq 0$ one has $$p_n^{(1/2)}(x,y) \leq C\bigg[\frac1{\sqrt{n+1}}\wedge \frac{x+1}{n+1}\bigg]e^{-bn^{-1/2}|x-y|}.\;\; $$
	$$\big|p_n^{(1/2)}(x,y)-p_n^{(1/2)}(x,z)\big| \leq C\bigg[\frac{1}{n+1} \wedge \frac{x+1}{(n+1)^{3/2}}\bigg]|z-y|e^{-bn^{-1/2}\big(|x-y|\wedge|x-z|\big)}. $$ 
\end{lem}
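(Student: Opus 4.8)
The plan is to reduce both bounds to sharp estimates on the free lattice heat kernel $p_n$ on $\Bbb Z$ via the image identity $p_n^{(1/2)}(x,y)=p_n(x-y)-p_n(x+y+2)$, using two elementary inputs. The first is a Gaussian/exponential bound: for every $b\ge 0$ there is $C=C(b)$ with $p_n(m)\le C(n+1)^{-1/2}e^{-bn^{-1/2}|m|}$ for all $m\in\Bbb Z$, $n\ge 0$. This is proved in two regimes: for $|m|\lesssim\sqrt n$ one uses Stirling on $2^{-n}\binom{n}{(n+m)/2}$ to get $p_n(m)\le C(n+1)^{-1/2}$, which suffices there since $e^{-bn^{-1/2}|m|}$ is bounded below; for $|m|\gtrsim\sqrt n$ one uses the Chernoff bound $p_n(m)\le\Bbb P(S_n\ge|m|)\le(\cosh\lambda)^n e^{-\lambda|m|}\le e^{n\lambda^2/2-\lambda|m|}$, which with $\lambda=2b/\sqrt n$ produces genuine exponential decay absorbing the prefactor. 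The second input is the exact one-step ratio $p_n(m+2)/p_n(m)=(n-m)/(n+m+2)$, which gives the clean identity $D(m):=p_n(m)-p_n(m+2)=\tfrac{2(m+1)}{n+m+2}p_n(m)$ for $m\ge 0$, hence (with the first input) both a uniform gradient bound $D(m)\le C(b)(n+1)^{-1}e^{-bn^{-1/2}m}$ and the sharper $D(m)\le C(b)(m+1)(n+1)^{-3/2}e^{-bn^{-1/2}m}$; a one-line further computation with the same ratio yields $|p_n(m)-2p_n(m+2)+p_n(m+4)|\le C(b)(n+1)^{-3/2}e^{-bn^{-1/2}m}$.

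Given these, the first bound is short. Since $p_n\ge 0$ and $|x-y|\le x+y+2$ with the same parity, $0\le p_n^{(1/2)}(x,y)\le p_n(|x-y|)\le C(n+1)^{-1/2}e^{-bn^{-1/2}|x-y|}$. Writing $x+y+2=|x-y|+2(\min(x,y)+1)$ and telescoping,
$$p_n^{(1/2)}(x,y)=\sum_{j=0}^{\min(x,y)}D\big(|x-y|+2j\big)\le(\min(x,y)+1)\cdot\frac{C}{n+1}e^{-bn^{-1/2}|x-y|}\le\frac{C(x+1)}{n+1}e^{-bn^{-1/2}|x-y|},$$
and taking the minimum of the two estimates gives the first displayed bound.

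For the second bound I would reduce, by telescoping in steps of two (and swapping $y,z$ if needed), to the case $0\le z-y=2$; here one must keep in mind the parity convention noted after Proposition \ref{gence} (for fixed $n$, $p_n^{(1/2)}(x,\cdot)$ vanishes at one of two consecutive arguments, so the difference bound is understood relative to this constraint). Expanding $p_n^{(1/2)}(x,y)-p_n^{(1/2)}(x,z)=[p_n(x-y)-p_n(x-z)]-[p_n(x+y+2)-p_n(x+z+2)]$: for the $(n+1)^{-1}$ half, bound each bracket by $|D(\cdot)|\le C(n+1)^{-1}e^{-bn^{-1/2}(|\cdot|\wedge|\cdot+2|)}$, noting the arguments in the second bracket are $\ge x+\min(y,z)+2\ge|x-y|\wedge|x-z|$. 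For the $(x+1)(n+1)^{-3/2}$ half, reorganize: when $y\le x$ the expression equals $-D(x-y-2)-D(x+y+2)$, a sum of two non-negative terms bounded respectively by $C(x+1)(n+1)^{-3/2}e^{-bn^{-1/2}(|x-y|\wedge|x-z|)}$ (sharp $D$-bound) and $C(x+y+3)(n+1)^{-3/2}e^{-bn^{-1/2}(x+y+2)}\le 3C(x+1)(n+1)^{-3/2}e^{-bn^{-1/2}(|x-y|\wedge|x-z|)}$ (using $y\le x$ and $x+y+2\ge x-y-2$); when $y\ge x$ it equals $D(y-x)-D(x+y+2)$, whose arguments differ by $2(x+1)$, so telescoping in steps of two writes it as a sum of $x+1$ second differences $D(\cdot)-D(\cdot+2)$, each $\le C(n+1)^{-3/2}e^{-bn^{-1/2}|x-y|}$, for a total of $C(x+1)(n+1)^{-3/2}e^{-bn^{-1/2}|x-y|}$. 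The finitely many small-$|x-y|$ configurations, where $x-y-2$ is negative or zero, are handled directly from the closed form of $D$.

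The main obstacle is the gain of the factor $(x+1)/\sqrt n$ in the second bound (and the factor $(x+1)$ in the $(n+1)^{-1}$ version of the first bound): this reflects the genuinely two-sided nature of the mixed kernel $p_n(x-y)-p_n(x+y+2)$, and it is extracted either by telescoping across the reflection gap $x+y+2$ (roughly $x$ same-parity steps, each contributing a first difference of order $(n+1)^{-1}$) or by recognizing the relevant combination as a bona fide second difference of $p_n$. The computations go through most cleanly if one consistently uses the exact binomial ratio rather than Gaussian approximations, which also keeps the exponential factors $e^{-bn^{-1/2}|\cdot|}$ clean for arbitrary $b$; the only genuinely fiddly part is the bookkeeping of which of $x-y$, $x+y+2$ is positive and the corresponding several-case split, exactly in the spirit of the case analysis in Lemma \ref{wconc} and Lemma \ref{dack}.
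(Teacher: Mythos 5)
Your route is genuinely different from the paper's. The paper proves both bounds by contour integration: it represents $p_n(m)$ as an integral over a circle of radius $e^{bn^{-1/2}}$, bounds the modulus $f_n(t)$ of the symbol by a Gaussian window of width $n^{-1/2}$ (after folding $[-\pi,\pi]$ onto $[-\pi/2,\pi/2]$, which is where parity enters), and extracts the factors $(x+1)$ and $|y-z|$ from first and second derivatives of $u\mapsto e^{-u(bn^{-1/2}+it)}$ under the integral. You stay entirely on the lattice: the exact ratio $p_n(m+2)/p_n(m)=(n-m)/(n+m+2)$, the resulting identity $D(m)=\tfrac{2(m+1)}{n+m+2}p_n(m)$, telescoping across the reflection gap $x+y+2=|x-y|+2(\min(x,y)+1)$ for the first display, and second differences of $D$ for the second. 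I checked the $z-y=2$ case analysis (including writing $D(y-x)-D(x+y+2)$ as $x+1$ second differences) and it is correct; the derived bounds on $D$ and on $D(m)-D(m+2)$ do follow from your two inputs after the standard trick of running the free-kernel bound with a slightly larger $b$ and absorbing polynomial factors. Your parity caveat is also well taken and in fact necessary: for odd $|y-z|$ the second display is literally false (take $x=y\approx\sqrt n$, $z=y+1$: the left side is of order $n^{-1/2}$, the right side of order $n^{-1}$), so both your argument and the paper's really prove the bound on a fixed parity class, which is all that is used downstream.

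Two steps need patching. First, your proof of the input $p_n(m)\le C(b)(n+1)^{-1/2}e^{-bn^{-1/2}|m|}$ splits at $|m|\asymp\sqrt n$, but in the outer regime the Chernoff bound with $\lambda=2b/\sqrt n$ only leaves a spare factor $e^{-bn^{-1/2}|m|}\ge e^{-bA}$, a constant, which does not reproduce the prefactor $(n+1)^{-1/2}$; the clean fix is to let Stirling give the full local bound $p_n(m)\le C(n+1)^{-1/2}e^{-cm^2/n}$ (for $|m|\le n/2$, say, with Hoeffding handling larger $|m|$), and then complete the square, $e^{-cm^2/n}\le e^{b^2/(4c)}e^{-bn^{-1/2}|m|}$. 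Second, the reduction of the gradient bound to $z-y=2$ by summing the two-step bounds is valid only when $x$ lies outside $[y\wedge z,y\vee z]$, since then every intermediate argument $w$ satisfies $|x-w|\wedge|x-w-2|\ge|x-y|\wedge|x-z|$; when $x$ lies strictly between $y$ and $z$, the middle terms carry no exponential gain and the summed bound only yields $C\big[\tfrac1{n+1}\wedge\tfrac{x+1}{(n+1)^{3/2}}\big]|z-y|$ without the factor $e^{-bn^{-1/2}(|x-y|\wedge|x-z|)}$. That case needs a short separate argument: writing $m^*:=|x-y|\wedge|x-z|$, if $m^*\le\sqrt n$ the exponential is bounded below and the summed bound suffices, while if $m^*\ge\sqrt n$ you can bound the difference by the sum of the two kernels, apply your first display with $2b$ in place of $b$, and use $|z-y|\ge 2m^*\ge 2\sqrt n$ to recover the missing factor of $(n+1)^{-1/2}$. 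With these two repairs your argument goes through, and it is arguably more elementary than the paper's Fourier-analytic proof.
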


\begin{proof}
	The proof given here is inspired by the methods of [DT16, Appendix A]. 
	\\
	\\
	Let us start with the first bound. Note that it suffices to prove the bound for $y \ge x$. Indeed, if $x \ge y$, then by symmetry one has that $p_n^{(1/2)}(x,y) = p_n^{(1/2)}(y,x) \leq C\frac{y+1}{n+1} e^{-b|x-y|n^{-1/2}} \leq C\frac{x+1}{n+1}e^{-b|x-y|n^{-1/2}}$.
	\\
	\\
	Let $p_n(x)$ denote the standard discrete heat kernel on $\Bbb Z$. One first notes that for $z \in \Bbb C$ one has that $\sum_{x \in \Bbb Z} p_n(x) z^x = 2^{-n}(z+z^{-1})^n.$ Letting $C$ denote the unit circle oriented counterclockwise, Cauchy's integral formula says $$p_n(x) = \frac1{2\pi i } \oint_C z^{-x-1} 2^{-n}(z+z^{-1})^ndz.$$ Since the integrand is analytic away from the origin, one may deform the contour without changing the value. Specifically, we will expand the radius of the circle to $e^{bn^{-1/2}}$. Parametrizing this as $z = e^{bn^{-1/2}}e^{it}$, one finds that \begin{equation}\label{12}p_n(x) = \frac{1}{2\pi} \int_{-\pi}^{\pi} e^{-xbn^{-1/2}} e^{-itx} \bigg[ \cosh(b/\sqrt{n})\cos t + i \sinh(b/\sqrt{n})\sin t\bigg]^ndt.\end{equation} Now, by Taylor expanding $\sinh$ and $\cosh$, the key observation is that 
	\begin{align*}
	f_n(t):&=\bigg| \cosh(b/\sqrt{n})\cos t + i \sinh(b/\sqrt{n})\sin t\bigg|^n \\ &= \bigg[ \cosh^2(b/\sqrt{n})\cos^2 t + \sinh^2(b/\sqrt{n}) \sin^2 t \bigg]^{n/2} \\ &= \bigg[ \big(1+b^2/(2n) +O(n^{-2})\big) \cos^2t + \big( b^2/n + O(n^{-2}) \big)\sin^2 t\bigg]^{n/2} \\ &\le \bigg[ \cos^2 t + \frac{b^2}{n} +O(n^{-2}) \bigg]^{n/2},
	\end{align*}
	where the $O(n^{-2})$ terms denote quantities which are uniformly bounded above by $e^b b^4 n^{-2})$. Since $n^{-2}$ decays faster than $n^{-1}$, this means that the last expression is bounded above by $\big[ \cos^2 t +Cn^{-1} \big]^{n/2}$, where $C=C(b)$. Now, for $t \in [-\pi/2, \pi/2]$, it holds that \begin{equation}\label{boss}\big[ \cos^2 t +Cn^{-1} \big]^{n/2} \leq \begin{cases}
	\big[ 1+C/n \big]^n \leq e^C,& |t| \leq C/\sqrt{n} \\ 2^{-n}, & |t| \geq C/\sqrt{n} \end{cases}.\end{equation}
	Indeed, these bounds follow from the observation that $\cos t$ looks like $1-t^2/2$ near $t=0$, and therefore the quantity on the left side decays exponentially fast (as $n \to \infty$) uniformly outside of a $Cn^{-1/2}$-window of the origin ($C$ may need to be large).
	\\
	\\
	Because the left side of \eqref{boss} is an upper bound for $f_n(t)$, it easily follows from \eqref{boss} that $\int_{-\pi/2}^{\pi/2}f_n(t) dt \leq Cn^{-1/2}$ and $\int_{-\pi/2}^{\pi/2} |t|f_n(t) \leq Cn^{-1}$. With this in mind, we compute via \eqref{12} that \begin{equation}p_n(y-x)-p_n(y+x+2) \leq \frac1{2\pi} \int_{-\pi}^{\pi} \big| e^{-(y-x)(bn^{-1/2} + it)} - e^{-(y+x+2)(bn^{-1/2} + it)}\big| f_n(t)dt.\label{veg} \end{equation} Thanks to the absolute value and the trigonometric nature of the integrand, we may replace the integral over $[-\pi,\pi]$ with twice the integral over $[-\pi/2,\pi/2]$. Furthermore, using $|e^{is}-1| \leq |s|$ one computes that 
	\begin{align}\notag\big| e^{-(y-x)(bn^{-1/2} + it)} &- e^{-(y+x+2)(bn^{-1/2} + it)}\big|  = \big| e^{-(y-x)bn^{-1/2}} e^{-(y-x)it} \big( 1- e^{-2(x+1)(bn^{-1/2}+it)} \big) \big| \\ \notag & = e^{-(y-x) bn^{-1/2}} \big| 1- e^{-2(x+1)(bn^{-1/2}+it)} \big| \\ \notag &\leq e^{-(y-x) bn^{-1/2}} \big| 1- e^{-2(x+1)bn^{-1/2}} \big| + e^{-(y+x+2)bn^{-1/2}} \big| 1-e^{-2(x+1)it} \big| \\ \label{vog} &\leq 2be^{-(y-x)bn^{-1/2}} \frac{x+1}{n^{1/2}} + 2e^{-(y+x+2)bn^{-1/2}} (x+1)|t|. \end{align}
	Now we note that $e^{-(y+x+2)bn^{-1/2}} \leq e^{-(y-x)bn^{-1/2}}$ since $x,y \ge 0$. Combining \eqref{veg} and \eqref{vog}, together with the fact that $\int_{-\pi/2}^{\pi/2} f_n(t) dt \leq Cn^{-1/2}$ and $\int_{-\pi/2}^{\pi/2} |t|f_n(t) dt \leq Cn^{-1}$, proves that for $y \geq x$ one has $p_n^{(1/2)}(x,y) \leq C\frac{x+1}{n+1} e^{-bn^{-1/2}(y-x)},$ as desired. In order to obtain the other bound $p_n^{(1/2)}(x,y) \le \frac{1}{\sqrt{n+1}}e^{-bn^{-1/2}|x-y|}$, one replaces \eqref{vog} with the easier bound $$\big| e^{-(y-x)(bn^{-1/2} + it)} - e^{-(y+x+2)(bn^{-1/2} + it)}\big| \leq 2e^{-bn^{1/2}|x-y|},$$ where we used the triangle inequality and the fact that $|e^{is}|\le 1$. Then one uses \eqref{veg} and the fact that $\int f_n \leq Cn^{-1/2}$. This completes the proof of the first bound.
	\\
	\\
	Now we prove the second bound stated in the proposition. For this, one uses the same arguments, but one needs to replace \eqref{vog} with the approptiate bound. Specifically, we need to consider $$ e^{-(y-x)(bn^{-1/2} + it)} - e^{-(y+x+2)(bn^{-1/2} + it)} - \big( e^{-(z-x)(bn^{-1/2} + it)} - e^{-(z+x+2)(bn^{-1/2} + it)}\big).$$ We write this as $$g(y-x) - g(y+x+2) - g(z-x) +g(z+x+2) = \int_{y-x}^{y+x+2} \int_{v}^{v+z-y} g''(u)dudv,$$ where $g(u) = e^{-u(bn^{-1/2}+it)}.$ So one computes $g''(u) = (bn^{-1/2}+it)^2g(u).$ Now, for $u$ in the relevant range, it is clear that $|g(u)| = e^{-bn^{-1/2}u} \leq e^{-bn^{-1/2} (|z-x| \wedge |y-x|)}.$ Hence $$|g''(u)| = |bn^{-1/2}+it|^2 |g(u)| \leq \big( 2b^2n^{-1} + 2t^2 \big) e^{-bn^{-1/2}(|z-x|\wedge |y-x|)},$$ where we used $|p+q|^2 \leq 2|p|^2+2|q|^2$. Combining the previous two expressions, we find that $$\big|g(y-x) - g(y+x+2) - g(z-x) +g(z+x+2)\big| $$$$\leq 2(x+1)|z-y|\big( 2b^2n^{-1} + 2t^2 \big) e^{-bn^{-1/2}(|z-x|\wedge |y-x|)}.$$ Now multiplying by $f_n(t)$ and integrating over $[-\pi,\pi]$, we finally obtain $$|p_n^{(1/2)}(x,y) - p_n^{(1/2)}(x,z) | \leq C(b)(x+1)|z-y|e^{-bn^{-1/2}(|z-x|\wedge |y-x|)}\int_{-\pi}^{\pi} (n^{-1}+t^2)f_n(t)dt$$$$\leq C(x+1)|y-z|(n+1)^{-3/2},$$ where we use the bound \eqref{boss} for $f_n(t)$ in the last inequality. This already proves one part of the second bound, namely $|p_n^{(1/2)}(x,y) -p_n^{(1/2)}(x,z)|\leq \frac{C|y-z|(x+1)}{(n+1)^{3/2}}e^{-bn^{-1/2}(|z-x|\wedge |y-x|)}$. For the other bound $p_n^{(1/2)}(x,y) \leq \frac{C|y-z|}{n+1}e^{-bn^{-1/2}(|z-x|\wedge |y-x|)}$, we simply note that $$|g(y-x) - g(y+x+2) - g(z-x) +g(z+x+2)|$$$$ \leq |g(y-x)-g(z-x)|+|g(y+x+2)-g(z+x+2)|$$ $$\leq \int_{y-x}^{z-x} |g'(u)|du+ \int_{y+x+2}^{z+x+2}|g'(u)|du,$$ and then we apply similar arguments as before, noting $|g'(u)| \leq |bn^{-1/2}+it||g(u)|$.
\end{proof}

\begin{lem}\label{mass}
	Let $\psi(x,N)$ be as in Definition \ref{fuk}. Then there exists a constant $C>0$ such that for all $x,N \geq 0$ one has $$ \frac{x+1}{x+1+C\sqrt{N}}\leq \psi(x,N) \leq 1 \wedge \bigg( \frac{C(x+1)}{\sqrt{N}}\bigg).$$ Furthermore, for each $x \geq 0$ one has that $$\lim_{N \to \infty} \sqrt{N} \psi(x,N) = (x+1) \sqrt{2/\pi}.$$
\end{lem}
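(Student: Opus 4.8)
The plan is to rewrite $\psi(x,N)$ as a one–dimensional random–walk probability and then feed it through the local central limit theorem (LCLT), Hoeffding's inequality, and the ordinary CLT. First I would use the image formula $p_N^{(1/2)}(x,y)=p_N(x-y)-p_N(x+y+2)$ established in Section 3 and reindex the resulting telescoping sum to obtain
$\psi(x,N)=\sum_{y=-x}^{x+1}p_N(y)=\mathbb P_0(-x\le S_N\le x+1)$,
where $p_N$ is the heat kernel of a simple symmetric walk $S$ on $\mathbb Z$ started at $0$. Two structural facts will be used throughout: $\psi(x,N)\le 1$ because it is a probability, and, by periodicity of $S$, exactly $x+1$ of the $2x+2$ consecutive integers in $\{-x,\dots,x+1\}$ share the parity of $N$, so only those indices contribute to the sum.

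For the upper bound, the Stirling/LCLT estimate $\|p_N\|_\infty\le C/\sqrt N$ for $N\ge 1$ gives $\psi(x,N)\le 2(x+1)\|p_N\|_\infty\le C'(x+1)/\sqrt N$, and combining with $\psi\le 1$ yields $\psi(x,N)\le 1\wedge (C(x+1)/\sqrt N)$; the case $N=0$ is trivial (both sides equal $1$).

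For the lower bound I would split on whether $x+1\le \sqrt N$. If $x+1\le \sqrt N$, the uniform LCLT lower bound $p_N(y)\ge c/\sqrt N$, valid for $|y|\le \sqrt N$ and $y\equiv N\pmod 2$ once $N\ge N_0$ (the finitely many smaller $N$ being absorbed by enlarging the final constant), applies to each of the $x+1$ contributing terms, so $\psi(x,N)\ge c(x+1)/\sqrt N\ge (x+1)/(x+1+C\sqrt N)$ as soon as $cC\ge 1$. If $x+1>\sqrt N$, it is cleanest to control $(1-\psi(x,N))/\psi(x,N)$: the denominator is $\ge c_0>0$ since $\psi(x,N)\ge \mathbb P_0(|S_N|\le \sqrt N-1)\to 2\Phi(1)-1$ by the CLT, while $1-\psi(x,N)\le 2\,\mathbb P_0(S_N\ge x+1)\le 2e^{-(x+1)^2/2N}$ by Hoeffding; using $t e^{-t^2/2}\le e^{-1/2}$ with $t=(x+1)/\sqrt N\ge 1$ turns the bound into $e^{-(x+1)^2/2N}\le e^{-1/2}\sqrt N/(x+1)$, hence $(1-\psi)/\psi\le C\sqrt N/(x+1)$, which rearranges exactly to $\psi(x,N)\ge (x+1)/(x+1+C\sqrt N)$. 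Taking $C$ to be the maximum of the two constants produced completes the two-sided bound. The main obstacle is precisely this large-$x$ regime: matching the stated functional form $(x+1)/(x+1+C\sqrt N)$ requires simultaneously that $\psi\to 1$ with a $1/x$-type rate (Gaussian tail) and that $\psi$ stay bounded below (CLT), so a little constant-bookkeeping is genuinely needed; the rest is routine.

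For the limiting identity, fix $x$ and invoke the LCLT with uniform error: $p_N(y)=\sqrt{2/(\pi N)}\,e^{-y^2/2N}+o(N^{-1/2})$ for $y\equiv N\pmod 2$, and $p_N(y)=0$ otherwise. Then $\sqrt N\,\psi(x,N)=\sum_{y=-x}^{x+1}\sqrt N\,p_N(y)$; the $o(N^{-1/2})$ errors contribute at most $2(x+1)\cdot\sqrt N\cdot o(N^{-1/2})=o(1)$, each factor $e^{-y^2/2N}\to 1$ since $|y|\le x+1$ is bounded, and exactly $x+1$ indices $y$ contribute, so $\sqrt N\,\psi(x,N)\to (x+1)\sqrt{2/\pi}$, independently of the parity of $N$.
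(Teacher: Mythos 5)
Your proof is correct and follows essentially the same route as the paper: rewrite $\psi(x,N)$ as the free-walk probability $\sum_{-x\le y\le x+1}p_N(y)$, use the uniform bound $p_N\le C N^{-1/2}$ for the upper estimate, split at $x\sim\sqrt N$ with the local CLT below and a Hoeffding tail bound above for the lower estimate, and invoke the local CLT (with the parity count of contributing sites) for the limit. The only cosmetic difference is in the large-$x$ case, where you bound $(1-\psi)/\psi$ using a CLT lower bound on $\psi$ and the inequality $te^{-t^2/2}\le e^{-1/2}$, while the paper manipulates the Hoeffding bound $\psi\ge 1-2e^{-2(x+1)^2/N}$ directly; both yield the stated form $(x+1)/(x+1+C\sqrt N)$.
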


Note that this already proves Theorem \ref{rw}(3). Furthermore, note that the upper and lower bound on $\psi$ is strong enough to give an upper and lower envelope on $\psi$, i.e., $$ C^{-1}\frac{x+1}{x+1+\sqrt{N}}\leq \psi(x,N) \leq C \frac{x+1}{x+1+\sqrt{N}}.$$This is because $1\wedge w \leq \frac{2w}{1+w}.$ We now proceed to the proof.

\begin{proof} First we prove the upper bound. Let $p_N$ denote the standard heat kernel on the whole line $\Bbb Z$. Since $p_N$ is symmetric and sums to $1$, it holds that  \begin{align*}\psi(x,N)\stackrel{\text{def}}{=} \sum_{y \geq 0}\big(p_N(x-y)-p_N(x+y+2)\big) =p_N(x+1)+p_N(0) +2\sum_{1\leq u\leq x} p_N(u).\end{align*} Now, we use the simple bound $p_N \leq CN^{-1/2}$ to see that the right side of the last expression is bounded above by $2C(x+1)N^{-1/2}.$ On the other hand, it is obvious that $\psi(x,N) \leq 1$ for all $x,N$. So, we obtained the desired upper bound.
\\
\\
Next, we prove the lower bound. We consider two different cases: $x \leq \sqrt{N}$ and $x > \sqrt{N}$. 
\\
\\
First we consider the case $x > \sqrt{N}$. One may apply Hoeffding's inequality for the simple random walk to deduce that \begin{align*}\psi(x,N) &=p_N(x+1)+p_N(0)+2\sum_{1\le u\le x} p_N(u)   \geq \sum_{-x \leq u \leq x} p_N(u)  \geq 1-2e^{-2(x+1)^2/N} .\end{align*} Now set $q:= \frac{2(x+1)^2}{N}$. Then $q \geq 2$, so $2(q+2)\leq 2e^q$, and thus $\frac{1}{ 1-2e^{-q}} \leq 1+\frac2q.$ This means that $\psi(x,N)^{-1} \leq 1+ \frac{2N}{(x+1)^2}$. But since $x+1 \geq \sqrt{N}$, it follows that $\frac{N}{(x+1)^2} \leq \frac{\sqrt{N}}{x+1}$. Hence we obtain $\psi(x,N) \geq \frac{x+1}{x+1+2\sqrt{N}}$, whenever $x>\sqrt{N}$.
\\
\\
Now we consider the case $ x \leq \sqrt{N}$. For $u \leq x$, the local central limit theorem tells us that $p_N(u) \geq \frac{c}{\sqrt{N}} e^{-2u^2/n} \geq \frac{c}{\sqrt{N}} e^{-2},$ and hence $$\psi(x,N) =p_N(x+1)+p_N(0)+2\sum_{1\le u\le x} p_N(u)  \geq \sum_{0\leq u \leq x} p_N(u) \geq \frac{ce^{-2}}{\sqrt{N}}(x+1).$$ Now one simply notes that $\frac{ce^{-2}}{\sqrt{N}} \geq \frac{1}{x+1+c^{-1}e^2\sqrt{N}}$. This completes the proof of the lower bound.
	\\
	\\
	Finally, we prove the last statement about the limit. For this, let us write $$\psi(x,N) =p_N(x+1)+p_N(0)+2\sum_{1\le u\le x} p_N(u) $$ The local limit theorem tells us that for each $u$, the quantity $\sqrt{N}p_N(u)$ oscillates back and forth between $\sqrt{2/\pi}$ and zero (depending on the parity of $N$) as $N$ becomes large. This already implies that $N^{1/2}$ times the right side converges to $\big( 1 +x \big) \sqrt{2/\pi}.$
\end{proof}

\begin{lem}\label{ball}
	Let $(a_x)_{x \ge 0}$ be a sequence of non-negative numbers such that $$a_x \leq a_{x+1} ,\;\;\;\;\;\;\;a_{x+2}-a_{x+1} \leq a_{x+1}-a_{x}, \;\;\;\;\;\;\; \text{ for all }x \ge 0. $$ Then for all $x \ge 0$ and $k \ge 0$, one has that $$ \frac{a_x}{a_{x+k}} \leq \frac{a_{x+1}}{a_{x+k+1}}.$$
\end{lem}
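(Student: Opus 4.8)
The plan is to reduce the displayed inequality to the single-step ``log-concavity'' estimate $a_ja_{j+2}\le a_{j+1}^2$ and then telescope. First I would clear denominators: the assertion $\frac{a_x}{a_{x+k}}\le\frac{a_{x+1}}{a_{x+k+1}}$ is equivalent to the denominator-free statement $a_xa_{x+k+1}\le a_{x+1}a_{x+k}$, and whenever one of the denominators on the displayed line vanishes, monotonicity of $(a_x)$ forces $a_x=0$ as well, so the inequality holds trivially in that degenerate case. Hence it suffices to prove $a_xa_{x+k+1}\le a_{x+1}a_{x+k}$ for all $x,k\ge 0$.

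Next I would record the key fact that a non-negative concave sequence is log-concave: for every $j\ge 0$, concavity gives $a_j+a_{j+2}\le 2a_{j+1}$, and since all terms are non-negative, AM--GM yields $a_ja_{j+2}\le\big(\tfrac{a_j+a_{j+2}}{2}\big)^2\le a_{j+1}^2$. Equivalently, wherever the terms are strictly positive, the ratios $a_{j+1}/a_j$ are non-increasing in $j$.

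From here the conclusion is immediate. If $a_x>0$, then by monotonicity all of $a_x,a_{x+1},\dots,a_{x+k+1}$ are strictly positive, and chaining the ratio inequality gives $\frac{a_{x+1}}{a_x}\ge\frac{a_{x+2}}{a_{x+1}}\ge\cdots\ge\frac{a_{x+k+1}}{a_{x+k}}$; comparing the first and last terms and cross-multiplying (legitimate since $a_x,a_{x+k}>0$) gives $a_xa_{x+k+1}\le a_{x+1}a_{x+k}$. If instead $a_x=0$, the product inequality is trivial. Alternatively one can induct on $k$: the case $k=0$ is an equality, and multiplying the inductive hypothesis $a_xa_{x+k+1}\le a_{x+1}a_{x+k}$ by $a_{x+k+2}$ and then applying $a_{x+k}a_{x+k+2}\le a_{x+k+1}^2$ yields $a_xa_{x+k+1}a_{x+k+2}\le a_{x+1}a_{x+k+1}^2$, which upon dividing by $a_{x+k+1}$ (or noting $a_x=0$ when $a_{x+k+1}=0$) gives the statement for $k+1$.

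There is essentially no hard step here: the whole content is the one-line AM--GM argument for log-concavity plus a telescoping of ratios. The only point needing the slightest care is the bookkeeping around possibly-vanishing terms, which is exactly why I would work throughout with $a_xa_{x+k+1}\le a_{x+1}a_{x+k}$ rather than the ratio form; monotonicity of $(a_x)$ is invoked only to dispose of those degenerate cases, and is in any event available in all the intended applications, where the $a_x$ are strictly positive.
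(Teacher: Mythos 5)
Your proof is correct and takes essentially the same route as the paper: reduce to the single-step log-concavity $a_x a_{x+2}\le a_{x+1}^2$ and then telescope the ratios $a_{j+1}/a_j$. The only difference is in the one-step inequality, which you get by AM--GM from concavity and non-negativity while the paper uses the mean value theorem applied to $\log$; your version is marginally more robust, since working with the product form $a_xa_{x+k+1}\le a_{x+1}a_{x+k}$ also covers vanishing terms, which the paper's use of $\log a_x$ tacitly excludes.
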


\begin{proof}
	It suffices to prove the claim when $k=1$, because then one has that $$ \frac{a_x}{a_{x+k}} = \prod_{j=0}^{k-1} \frac{a_{x+j}}{a_{x+j+1}} \leq \prod_{j=0}^{k-1} \frac{a_{x+j+1}}{a_{x+j+2}} = \frac{a_{x+1}}{a_{x+k+1}}.$$ To prove the claim for $k=1$, one uses the mean value theorem to extract $u \in [a_x,a_{x+1}]$ and $v \in [a_{x+1},a_{x+2}]$ such that $$\log a_{x+1} -\log a_x = \frac{1}{u} (a_{x+1}-a_x) ,\;\;\;\;\;\;\; \log a_{x+2} - \log a_{x+1} = \frac1v (a_{x+2}-a_{x+1}).$$ Then clearly $\frac1v \leq \frac1u$, and by hypothesis, it is also true that $a_{x+2}-a_{x+1} \leq a_{x+1}-a_{x}.$ So we conclude that $\log a_{x+2} - \log a_{x+1} \leq \log a_{x+1} -\log a_x$.
\end{proof}

\begin{lem}[Monotonicity]\label{mono} Fix $n \in \Bbb N$. Then $\psi(x,n)$ is an increasing function of $x$. Thus, $\mathfrak p_1^n(x,x+1)\ge 1/2 \ge \mathfrak p_1^n(x,x-1)$ for all $x,n \ge 0$. Furthermore, $\mathfrak p_1^n(x,x+1)$ is a decreasing function of $x$, and $\mathfrak p_1^n(x,x-1)$ is an increasing function of $x$.
\end{lem}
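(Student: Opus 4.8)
The plan is to reduce everything to elementary facts about the mass function $\psi$ together with Lemma \ref{ball}, which is designed for exactly the last claim. Recall from the proof of Lemma \ref{mass} the identity $\psi(x,m)=\sum_{y=-x}^{x+1}p_m(y)$ (here $p_m$ is the standard kernel on $\Bbb Z$), and from $p_m(j)=\tfrac12(p_{m-1}(j-1)+p_{m-1}(j+1))$ the one-step recursion $\psi(x,n)=\tfrac12\psi(x-1,n-1)+\tfrac12\psi(x+1,n-1)$ for $x\ge1$, together with $\psi(0,n)=\tfrac12\psi(1,n-1)$. Since $p_1$ is supported on $\{-1,1\}$, one also has $p_1^{(1/2)}(x,x+1)=p_1(-1)-p_1(2x+3)=\tfrac12$ for $x\ge 0$ and $p_1^{(1/2)}(x,x-1)=p_1(1)-p_1(2x+1)=\tfrac12$ for $x\ge1$, so by Definition \ref{fuk} we have $\mathfrak p_1^n(x,x+1)=\tfrac12\psi(x+1,n-1)/\psi(x,n)$ and $\mathfrak p_1^n(x,x-1)=\tfrac12\psi(x-1,n-1)/\psi(x,n)$.

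I would first prove that $\psi(x,n)$ is non-decreasing in $x$: the identity gives $\psi(x+1,n)-\psi(x,n)=p_n(x+1)+p_n(x+2)\ge 0$. The ``Thus'' in the statement is then immediate from the recursion: for $x\ge1$, monotonicity of $\psi(\cdot,n-1)$ gives $\psi(x,n)=\tfrac12\psi(x-1,n-1)+\tfrac12\psi(x+1,n-1)\le\psi(x+1,n-1)$, hence $\mathfrak p_1^n(x,x+1)\ge\tfrac12$; and since $\mathfrak p_1^n(x,\cdot)$ is a probability measure supported on $\{x-1,x+1\}\cap\Bbb Z_{\ge0}$ (Proposition \ref{pt} and the explicit support of $p_1^{(1/2)}$), $\mathfrak p_1^n(x,x-1)=1-\mathfrak p_1^n(x,x+1)\le\tfrac12$. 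For $x=0$ the recursion gives $\mathfrak p_1^n(0,1)=1$, so there is nothing to prove.

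For monotonicity in $x$ I would use the recursion once more to rewrite, for $x\ge1$,
\[
\mathfrak p_1^n(x,x+1)=\frac{\psi(x+1,n-1)}{\psi(x-1,n-1)+\psi(x+1,n-1)}=\Big(1+\tfrac{\psi(x-1,n-1)}{\psi(x+1,n-1)}\Big)^{-1},
\]
so $\mathfrak p_1^n(x,x+1)$ is non-increasing in $x$ precisely when $x\mapsto\psi(x-1,n-1)/\psi(x+1,n-1)$ is non-decreasing; equivalently, with $a_x:=\psi(x,n-1)$, when $x\mapsto a_x/a_{x+2}$ is non-decreasing. This is Lemma \ref{ball} with $k=2$, whose hypotheses I must verify: $a_x\ge 0$ (clear), $a_x\le a_{x+1}$ (the previous step), and concavity $a_{x+2}-a_{x+1}\le a_{x+1}-a_x$. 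Writing $a_{x+1}-a_x=p_{n-1}(x+1)+p_{n-1}(x+2)$, concavity is equivalent to $p_{n-1}(x+3)\le p_{n-1}(x+1)$, which is the two-step monotonicity of the simple-random-walk kernel on $\Bbb Z_{\ge0}$ (both sides vanish for the ``wrong'' parity, and otherwise it is the unimodality of the binomial coefficients $\binom{m}{\cdot}$). Hence Lemma \ref{ball} applies, $\mathfrak p_1^n(\cdot,\cdot+1)$ is non-increasing on $\{1,2,\dots\}$, and it extends to all of $\Bbb Z_{\ge0}$ because $\mathfrak p_1^n(0,1)=1\ge\mathfrak p_1^n(1,2)$; finally $\mathfrak p_1^n(x,x-1)=1-\mathfrak p_1^n(x,x+1)$ is non-decreasing.

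The only step with any real content is the verification that $\psi(\cdot,n-1)$ is concave, which is exactly where one uses the fine structure of the simple-random-walk heat kernel (parity plus unimodality of binomial coefficients); everything else is the semigroup-type recursion for $\psi$ and a direct invocation of Lemma \ref{ball}. A small care point is the boundary $x=0$, where the recursion degenerates to $\psi(0,n)=\tfrac12\psi(1,n-1)$ and $\mathfrak p_1^n(0,\cdot)$ must be treated as the point mass at $1$.
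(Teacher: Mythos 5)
Your proof is correct and follows essentially the same route as the paper's: the difference identity $\psi(x+1,n)-\psi(x,n)=p_n(x+1)+p_n(x+2)$, the one-step recursion $\psi(x,n)=\tfrac12\psi(x-1,n-1)+\tfrac12\psi(x+1,n-1)$, and Lemma \ref{ball} with $k=2$ applied to $a_x=\psi(x,n-1)$ after rewriting $1/\mathfrak p_1^n(x,x+1)=1+\psi(x-1,n-1)/\psi(x+1,n-1)$. If anything, your verification of the concavity hypothesis (reducing it to $p_{n-1}(x+3)\le p_{n-1}(x+1)$ with the parity caveat) and your explicit treatment of the boundary case $\mathfrak p_1^n(0,1)=1$ are slightly more careful than the paper's wording.
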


\begin{proof}
	As in the proof of Lemma \ref{mass}, we write $$\psi(x,n) = p_n(0)+p_n(x+1) + 2 \sum_{y=1}^x p_n(y).$$ Consequently, it holds that \begin{equation}
	\label{bib}\psi(x+1,n)-\psi(x,n) = p_n(x+1)+p_n(x+2),\end{equation} and the right side is clearly non-negative, which proves the first statement. For the second statement, we just note that $$\mathfrak p_1^n(x,x+1) = \frac{\psi(x+1,n-1)}{2\psi(x,n)} \ge \frac{\psi(x-1,n-1)}{2\psi(x,n)} = \mathfrak p_1^n(x,x-1).$$ To prove the final statement, we note that $p_n$ is a non-increasing function of $|x|$, and thus the right side of \eqref{bib} is also a non-increasing function of $x$. Thus we may apply Lemma \ref{ball} with $k=2$ and $a_x = \psi(x,n)$, to conclude that $\frac{\psi(x,n)}{\psi(x+2,n)}$ is an increasing function of $x$. Now we write $$ \frac{1}{\mathfrak p_1^n(x,x+1)} =2 \frac{\psi(x,n)}{\psi(x+1,n-1)} = 1 + \frac{\psi(x-1,n-1)}{\psi(x+1,n-1)},$$ where we use the relation $\psi(x,n) = \frac12\psi(x+1,n-1)+\frac12 \psi(x-1,n-1). $ By the discussion of the previous paragraph, the right side is an increasing function of $x$, and so $\mathfrak p_1^n(x,x+1)$ is a decreasing function of $x$. Finally, this implies that $\mathfrak p_1^n(x,x-1)=1-\mathfrak p_1^n(x,x+1)$ is an increasing function of $x$.
\end{proof}

\begin{prop}[Coupling Lemma for Positive Walks]\label{coup} Fix $n \in \Bbb N$ and $x \geq 0$. There exists a coupling $\mathbf Q_{x,x+1}^n$ of the measures $\mathbf P_x^n$ and $\mathbf P_{x+1}^n$ which is supported on pairs $(s,s')$ of paths such that $|s_{i+1}-s_i|=1$ for all $i\leq n$. In other words, the positive walks started from $x$ and $x+1$ may be coupled in such a way so that their distance from each other is never greater than $1$.
\\
\\
More generally, for fixed $n \in \Bbb N$, the measures $\{\mathbf P_x^n\}_{x \ge 0}$ may all be coupled together in such a way that the coordinate processes associated to neighboring values of $x$ are never more than distance $1$ from each other.
\end{prop}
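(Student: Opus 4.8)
The plan is to realize all the coupled walks simultaneously on one probability space, driven by a single sequence of independent uniforms, with a \emph{position-dependent} up-threshold; the distance bound will then be forced by the monotonicity of Lemma \ref{mono}. First I would record the Markovian description of $\mathbf P_x^n$ supplied by Proposition \ref{pt}: under $\mathbf P_x^n$ the coordinate process $(S_k)_{k=0}^n$ is the inhomogeneous nearest-neighbor chain on $\Bbb Z_{\ge 0}$ whose one-step transition from time $i$ to time $i+1$ (for $0\le i<n$) sends $z$ to $z+1$ with probability $p_i^+(z):=\mathfrak p_1^{n-i}(z,z+1)$ and to $z-1$ with probability $\mathfrak p_1^{n-i}(z,z-1)=1-p_i^+(z)$, with the convention $p_i^+(0)=1$ (a walk at the wall is forced upward, since $\mathfrak p_1^{m}(0,1)=1$). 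The one input I will need is that, for each $i$, the map $z\mapsto p_i^+(z)$ is non-increasing; this is exactly Lemma \ref{mono}.

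Next I would construct the coupling. Let $U_0,\dots,U_{n-1}$ be i.i.d.\ $\mathrm{Unif}[0,1]$ random variables on a probability space $(\widetilde\Omega,\widetilde{\Bbb P})$, and for every $x\ge 0$ define $(S^x_i)_{0\le i\le n}$ by $S^x_0=x$ and
\[
S^x_{i+1}=S^x_i+1 \ \text{ if } \ U_i<p_i^+(S^x_i),\qquad S^x_{i+1}=S^x_i-1 \ \text{ otherwise}.
\]
For each fixed $x$, the variable $U_i$ is independent of $\sigma(U_0,\dots,U_{i-1})\supseteq\sigma(S^x_0,\dots,S^x_i)$, so $(S^x_i)$ is a Markov chain with precisely the transition probabilities above, and it stays non-negative by induction (using the convention at $0$); hence its law is $\mathbf P_x^n$. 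It then remains to show the distance bound
\[
|S^{x+1}_i-S^x_i|\le 1 \qquad\text{for all } 0\le i\le n \text{ and all } x\ge 0.
\]

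I would prove this by induction on $i$, the case $i=0$ being immediate. For the inductive step fix $x$ and put $y=S^x_i$, $y'=S^{x+1}_i$. If $y=y'$, the two walks are updated by the same rule with the same $U_i$, hence agree again. If $|y-y'|=1$, let $m=\min(y,y')$, so the walks sit at $m$ and $m+1$; the one at $m$ moves up iff $U_i<p_i^+(m)$ and the one at $m+1$ moves up iff $U_i<p_i^+(m+1)$, and since $p_i^+(m+1)\le p_i^+(m)$ (Lemma \ref{mono}) the event ``the walk at $m+1$ moves up'' is contained in ``the walk at $m$ moves up''. Consequently exactly one of three things happens: both move up (new positions $m+1,m+2$), both move down ($m-1,m$), or the lower moves up and the upper moves down ($m+1,m$); in each case the new positions differ by exactly $1$, while the only increment pair that would create a gap of $2$ --- lower down, upper up --- has been excluded. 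This yields $|S^{x+1}_{i+1}-S^x_{i+1}|\le 1$ and closes the induction. The joint law of $(S^x)_{x\ge0}$ under $\widetilde{\Bbb P}$ is then the claimed coupling (and, by the triangle inequality, $|S^x_i-S^y_i|\le|x-y|$ for all $x,y$).

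I expect the only genuinely delicate point to be that inductive case analysis, and specifically the realization that since $p_i^+$ is merely \emph{non-increasing} one cannot keep the walks ordered: at a gap of $1$ the increment pair ``lower up, upper down'' is permitted, so neighboring walks may swap relative position, and it is the signed difference, not its absolute value, that fails to be monotone. It is worth remarking that no coupling can simultaneously be order-preserving and keep neighboring walks within distance $1$: at a gap of $1$ that would force the two increments to coincide, contradicting $p_i^+(m)\ne p_i^+(m+1)$ in general.
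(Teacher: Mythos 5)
Your proposal is correct and is essentially the paper's own argument: a single uniform variable per step drives all walks via the position-dependent up-threshold $\mathfrak p_1^{n-i}(z,z+1)$, and the monotonicity of Lemma \ref{mono} rules out the ``lower down, upper up'' increment pair, exactly as in the paper's three-case construction. The only difference is presentational — you build the grand coupling of all $\{\mathbf P_x^n\}_{x\ge 0}$ at once, which is precisely the ``uniform coupling together with the monotonicity lemma'' that the paper states as a straightforward generalization of its pairwise construction.
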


The reason why this result is non-trivial is because of the lack of spatial and temporal homogeneity of these positive walks. The analogous result for the simple symmetric random walk on $\Bbb Z$ is completely trivial.

\begin{proof}
	Let $\{U_i\}_{i=1}^n$ be a sequence of iid uniform$[0,1]$ random variables. We make an inductive construction as follows. Let $S_0=x$ and $S_0'=x+1$.
	\\
	\\
	Suppose that $S_0,...,S_k$ and $S_0',...,S_k'$ have been constructed in such a way that $|S_i-S_i'|=1$ for all $k$. If $S_k'=S_k+1$, we define $$(S_{k+1},S_{k+1}'):= \begin{cases} (S_k-1,S_k'-1), & U_{k+1} > \mathfrak p_1^{n-k}(S_k,S_k+1) \\ (S_k+1,S_k'-1), & \mathfrak p_1^{n-k}(S_k,S_k+1) > U_{k+1} > \mathfrak p_1^{n-k}(S_k',S_k'+1) \\ (S_k+1,S_k'+1), & U_{k+1} < \mathfrak p_1^{n-k}(S_k',S_k'+1)\end{cases}.$$
	We know by lemma \ref{mono} that one of these cases must hold. Similarly, if $S_k'=S_k-1$, then we define $$(S_{k+1},S_{k+1}'):= \begin{cases} (S_k-1,S_k'-1), & U_{k+1} > \mathfrak p_1^{n-k}(S_k',S_k'+1) \\ (S_k-1,S_k'+1), & \mathfrak p_1^{n-k}(S_k',S_k'+1) > U_{k+1} > \mathfrak p_1^{n-k}(S_k,S_k+1) \\ (S_k+1,S_k'+1), & U_{k+1} < \mathfrak p_1^{n-k}(S_k,S_k+1)\end{cases}.$$
	Lemma \ref{mono} again shows that one of these cases must hold. This completes the inductive step.
	\\
	\\
	A close look at this construction reveals that for $x_1,...,x_n \ge 0$ one has \begin{align*}P(S_1=x_1,S_2=x_2,...,S_n=x_n) &= \mathfrak p_1^n(x,x_1)\prod_{j=1}^{n-1} \mathfrak p_1^{n-j}(x_j,x_{j+1}), \\ P(S_1'=x_1,S_2'=x_2,...,S_n'=x_n) &= \mathfrak p_1(x+1,x_1)\prod_{j=0}^{n-1} \mathfrak p_1^{n-j}(x_j,x_{j+1}). \end{align*}
	By Proposition \ref{pt}, $S$ is distributed as $\mathbf P_x^n$ and $S'$ is distributed as $\mathbf P_{x+1}^n$.
	
The proof of the more general statement is very similar. One simply uses a uniform coupling together with the monotonicity lemma, and the argument is a straightforward generalization of the one given above (for just two values of $x$).
\end{proof}

\begin{prop}[Martingales for Positive Walks]\label{mart} Fix $x,n,k\geq 0,$ with $k \leq n$. Let $S$ be distributed according to $\mathbf P_x^n$. For $i \leq k$ define a function $f(x,i) := \mathbf E_x^{n-i}[S_{k-i}]$. Then the process $$M_i=M_i^{(k,n)}:= f(S_i,i) ,\;\;\;\;\;\;\;\;0\leq i \leq k$$ is a martingale with respect to the natural filtration of $S$. Furthermore, it has bounded increments $$|M_{i+1}-M_i| \leq 2,\;\;\;\;\;\;\;\;0\leq i \leq k-1.$$ In the special case when $k=n$, one has the explicit form $f(x,i) = -1+\frac{x+1}{\psi(x,n-i)}$
\end{prop}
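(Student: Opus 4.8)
The plan is to establish the three assertions of Proposition~\ref{mart} in turn, each one reducing to the Markov property (Proposition~\ref{pt}) and the coupling lemma (Proposition~\ref{coup}).

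\textbf{Step 1: the martingale property.} The cleanest route is to recognize $M=(M_i)_{i=0}^k$ as a Doob martingale. I claim that
$$M_i = \mathbf E_x^n\big[\, S_k \,\big|\, \mathcal F_i\,\big], \qquad 0\le i\le k,$$
where $(\mathcal F_i)$ is the natural filtration of $S$. Indeed, by Proposition~\ref{pt}, conditionally on $(S_j)_{j\le i}$ the shifted path $(S_{i+j})_{j=0}^{n-i}$ is distributed as $\mathbf P_{S_i}^{n-i}$; applying this to the coordinate at time $k-i$ (which satisfies $0\le k-i\le n-i$ since $i\le k\le n$) gives $\mathbf E_x^n[S_k\mid\mathcal F_i]=\mathbf E_{S_i}^{n-i}[S_{k-i}]=f(S_i,i)=M_i$. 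As $0\le S_k\le n$ is integrable, the tower property makes $M$ an $(\mathcal F_i)$-martingale, and at $i=k$ one recovers $M_k=S_k$.

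\textbf{Step 2: bounded increments.} For $0\le i\le k-1$, conditioning the walk under $\mathbf P_{S_i}^{n-i}$ on its first step and applying Proposition~\ref{pt} once more yields
$$f(S_i,i)=\mathfrak p_1^{n-i}(S_i,S_i+1)\,f(S_i+1,i+1)+\mathfrak p_1^{n-i}(S_i,S_i-1)\,f(S_i-1,i+1),$$
where the second term is omitted when $S_i=0$ (then $\mathfrak p_1^{n-i}(0,-1)=0$ and $S_{i+1}=1$ almost surely, so $M_i=f(1,i+1)=M_{i+1}$). Hence in general $M_i$ is a convex combination of the two possible values $f(S_i\pm1,i+1)$ of $M_{i+1}$, so $|M_{i+1}-M_i|\le|f(S_i+1,i+1)-f(S_i-1,i+1)|$. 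Finally, the general form of Proposition~\ref{coup} lets us couple $\mathbf P_{S_i-1}^{n-i-1}$ and $\mathbf P_{S_i+1}^{n-i-1}$ (two steps apart) so that the two coordinate processes stay within distance $2$; comparing their positions at time $k-i-1$ gives $|f(S_i+1,i+1)-f(S_i-1,i+1)|\le 2$.

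\textbf{Step 3: the explicit form for $k=n$.} Write $m=n-i$. Since $\mathbf P_x^m$ is the $\tfrac12$-killed walk conditioned to survive and $\psi(x,m)=\sum_{y\ge 0}p_m^{(1/2)}(x,y)$ is its survival probability (see the proof of Proposition~\ref{pt}),
$$f(x,i)=\mathbf E_x^m[S_m]=\frac{1}{\psi(x,m)}\sum_{y\ge 0} y\,p_m^{(1/2)}(x,y).$$
Now insert the image formula $p_m^{(1/2)}(x,y)=p_m(x-y)-p_m(x+y+2)$ and re-index the two resulting sums by $u=x-y$ and $v=x+y+2$. Using only the symmetry $p_m(-u)=p_m(u)$ and the vanishing of the odd moment $\sum_{u\in\Bbb Z}u\,p_m(u)=0$ (all sums finite, as $p_m$ has bounded support), and abbreviating $a=\sum_{u\le x}p_m(u)$, $b=p_m(x+1)$, $c=\sum_{v\ge x+2}p_m(v)$, one has $a+b+c=1$, and (recalling $\psi(x,m)=\sum_{-x\le u\le x+1}p_m(u)$ from the proof of Lemma~\ref{mass}) $\psi(x,m)=a-c$, while the sum collapses to
$$\sum_{y\ge 0}y\,p_m^{(1/2)}(x,y)=(x+1)b+xa+(x+2)c=(x+1)(a+b+c)-(a-c)=(x+1)-\psi(x,m).$$
Dividing by $\psi(x,m)$ gives $f(x,i)=-1+\dfrac{x+1}{\psi(x,n-i)}$.

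\textbf{Main obstacle.} None of the steps is deep. The point requiring the most care is Step~2, where one must treat the boundary case $S_i=0$ correctly and appeal to the genuine (nontrivial) coupling of Proposition~\ref{coup} rather than any naive reflection argument, which fails for these inhomogeneous walks. Step~3 is the most computation-heavy, but the collapse is purely mechanical once the image formula and the closed form for $\psi$ are in hand.
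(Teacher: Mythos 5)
Your proof is correct and follows essentially the same route as the paper: the martingale property is obtained as a Doob martingale via the Markov property of Proposition~\ref{pt}, and the increment bound comes from the one-step decomposition of $f$ combined with the coupling lemma (Proposition~\ref{coup}), exactly as in the paper. The only difference is that you additionally derive the explicit formula $f(x,i)=-1+\frac{x+1}{\psi(x,n-i)}$ by a direct (and correct) computation with the image formula, a step the paper's proof omits; a slightly slicker route to the same identity is to observe that $y\mapsto y+1$ is harmonic for the $\tfrac12$-killed walk, so that $\sum_{y\ge 0}p_m^{(1/2)}(x,y)(y+1)=x+1$ and hence $\mathbf E_x^m[S_m+1]=\frac{x+1}{\psi(x,m)}$.
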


\begin{proof}
	Letting $\mathcal F_k$ denote the natural filtration of $S$, it is a simple consequence of the Markov property that $f(S_i,i) = \mathbf E_x^n[S_k|\mathcal F_i]$, which shows that $M$ is a martingale in the $i$-variable for fixed $x,n,k$.
	\\
	\\
	To prove that it has bounded increments, first note that \begin{align*}f(x,i)-f(x+1,i) = \mathbf E_x^{n-i}[S_{k-i}] -\mathbf E_{x+1}^{n-i}[S_{k-i}].\end{align*} By the coupling lemma (Proposition \ref{coup}), this is bounded in absolute value by $1$. Consequently, one finds that \begin{align*}&|f(x\pm 1,k+1)-f(x,k)| = \bigg| f(x\pm 1,k+1) - \sum_{y \in \{x-1,x+1\}} \mathfrak p_1^{n-k}(x,y)f(k+1,y) \bigg| \\ &\leq \sum_{y \in \{x-1,x+1\}} \mathfrak p_1^{n-k}(x,y) \big| f(x\pm 1,k+1) -f(y,k+1)\big|  = \mathfrak p_1^{n-k}(x,x\mp 1)\cdot 2 \leq 2, \end{align*}
	which clearly implies the desired result.
\end{proof}

We are almost ready to prove our concentration result, we just need one more lemma.

\begin{lem}\label{sqrtgr}
	There exists a constant $C>0$ such that for all $x \geq 0$ and all $n \geq k \geq 1$ one has that $$\mathbf E_x^n[S_k] \leq x+Ck^{1/2}.$$
\end{lem}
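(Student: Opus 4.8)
The plan is to use the martingale $M_i = f(S_i, i)$ from Proposition \ref{mart} (in the case $k \le n$) together with the fact that it has bounded increments, in order to control $\mathbf E_x^n[S_k]$. Recall that $M_i$ is a martingale with $M_0 = f(x,0) = \mathbf E_x^{n}[S_k]$ and $M_k = f(S_k, k) = S_k$. Since $\mathbf E_x^n[M_k] = M_0$ by the martingale property, this immediately gives the \emph{exact} identity $\mathbf E_x^n[S_k] = f(x,0)$; so the content of the lemma is really to bound $f(x,0)$ from above by $x + Ck^{1/2}$, which requires a genuine estimate rather than an identity.

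The natural route is to bound the \emph{variance} (or the second moment of the increment) of $M$ and then use Jensen/Cauchy-Schwarz. More precisely, I would first write $M_k - M_0 = \sum_{i=0}^{k-1}(M_{i+1}-M_i)$, where each increment satisfies $|M_{i+1}-M_i| \le 2$ by Proposition \ref{mart}. Since the increments are martingale differences, they are orthogonal in $L^2$, so
\[
\mathbf E_x^n\big[(M_k - M_0)^2\big] = \sum_{i=0}^{k-1} \mathbf E_x^n\big[(M_{i+1}-M_i)^2\big] \le 4k.
\]
Therefore $\mathbf E_x^n[(S_k - \mathbf E_x^n[S_k])^2] = \mathbf E_x^n[(M_k-M_0)^2] \le 4k$, i.e.\ the conditioned walk has variance at most $4k$ at time $k$. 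But this alone does not bound the \emph{mean} $\mathbf E_x^n[S_k]$ above by $x + Ck^{1/2}$: the mean could in principle drift upward by more than $O(k^{1/2})$. To close this gap, the key observation is a one-step comparison: conditioning to stay positive should, if anything, push the walk \emph{upward} compared to the unconditioned walk only by a bounded-rate amount near the origin. Concretely, I would analyze the drift $\mathbf E_x^n[S_{i+1} - S_i \mid S_i] = \mathfrak p_1^{n-i}(S_i, S_i+1) - \mathfrak p_1^{n-i}(S_i, S_i-1) = 2\mathfrak p_1^{n-i}(S_i,S_i+1) - 1 \ge 0$, which is nonnegative by Lemma \ref{mono}, and — crucially — decays: by Lemma \ref{mass} and the explicit form $\mathfrak p_1^{n-i}(x,x+1) = \tfrac{\psi(x+1,n-i-1)}{2\psi(x,n-i)}$, this drift is bounded by $C/(x+1)$ uniformly (this follows from $\psi(x+1,m) - \psi(x,m) = p_m(x+1)+p_m(x+2) \le \tfrac{C(x+1)}{m+1} \wedge \tfrac{C}{\sqrt{m+1}}$ divided by $\psi(x,m) \gtrsim \tfrac{x+1}{x+1+\sqrt m}$, giving a drift $\lesssim \tfrac{1}{x+1}\cdot\tfrac{\sqrt m}{\sqrt m} $, a bounded-in-$m$ quantity that is $O((x+1)^{-1})$). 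Summing the drift over $i = 0, \dots, k-1$ and using the variance bound to control how much time the walk spends near any given level, one obtains $\mathbf E_x^n[S_k] - x = \sum_{i=0}^{k-1} \mathbf E_x^n\big[2\mathfrak p_1^{n-i}(S_i,S_i+1)-1\big] \le C\sum_{i=0}^{k-1}\mathbf E_x^n[(S_i+1)^{-1}]$, and since $S_i \ge x$ and the walk's fluctuations are $O(i^{1/2})$, a worst-case bound $\mathbf E_x^n[(S_i+1)^{-1}] \le C i^{-1/2}$ (when $x = 0$) yields $\sum_{i<k} i^{-1/2} \le Ck^{1/2}$, giving the claim.

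The main obstacle I anticipate is the estimate $\mathbf E_x^n[(S_i+1)^{-1}] \le C i^{-1/2}$ in the boundary case $x = 0$: this is a small-ball-type lower bound saying the conditioned walk is not \emph{too} close to zero, and it must be uniform in $n \ge i$. One clean way around this is to bypass the level-visit count entirely: instead use the martingale $\widetilde M_i := S_i - \sum_{j=0}^{i-1} \big(2\mathfrak p_1^{n-j}(S_j,S_j+1)-1\big)$ (the Doob decomposition), note its increments are bounded by $2$, apply the $L^2$-orthogonality as above to get $\mathbf E_x^n[\widetilde M_k^2] \le 4k$ and hence $\mathbf E_x^n[\widetilde M_k] = 0$ combined with Jensen gives $\mathbf E_x^n[|\widetilde M_k|] \le 2k^{1/2}$; then $\mathbf E_x^n[S_k] = \mathbf E_x^n[\widetilde M_k] + \mathbf E_x^n[\text{(compensator)}] = \mathbf E_x^n[\text{(compensator)}]$, and the compensator is between $0$ and $\sum_{j<k} C\,\mathbf E_x^n[(S_j+1)^{-1}]$. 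Alternatively — and this is likely the slickest path — one sidesteps the whole issue by a direct comparison argument: couple $\mathbf P_x^n$ with the \emph{unconditioned} killed walk (transition kernel $p^{(1/2)}$) via the tilt $\mathfrak p_n^N(x,y) = p_n^{(1/2)}(x,y)\tfrac{\psi(y,N-n)}{\psi(x,N)}$, write $\mathbf E_x^n[S_k] = \sum_y y\, p_k^{(1/2)}(x,y)\tfrac{\psi(y,n-k)}{\psi(x,n)}$, bound the ratio $\tfrac{\psi(y,n-k)}{\psi(x,n)} \le C\tfrac{(y+1)(x+1+\sqrt n)}{(x+1)(y+1+\sqrt{n-k})}$ by Lemma \ref{mass}, and reduce to estimating $\sum_y y(y+1) p_k^{(1/2)}(x,y)$, which is a second-moment computation for the \emph{reflected-killed} walk handled directly by Lemma \ref{ohgod}. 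I expect the cleanest writeup to combine the bounded-increment martingale argument for the fluctuation part with this kernel-tilt comparison for the mean-drift part.
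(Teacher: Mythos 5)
Both of your routes have genuine gaps. Route A (summing the drift) hinges on the estimate $\mathbf E_x^n[(S_i+1)^{-1}] \le C i^{-1/2}$, uniformly in $n\ge i$, which you flag but do not prove; this is not a side issue, since it is a quantitative bound on the inhomogeneous kernel $\mathfrak p_i^n$ near the origin of exactly the same nature and difficulty as the lemma itself, so the argument is incomplete precisely at its core. Two further problems: the assertion ``$S_i\ge x$'' is false (the walk is conditioned to stay above $0$, not above $x$), so even reducing the small-ball estimate to $x=0$ needs an argument (e.g.\ the coupling of Proposition \ref{coup}); and the intermediate inequality $p_m(x+1)+p_m(x+2)\le C(x+1)/(m+1)$ is false for small $x$ (at $x=0$ the left side is of order $m^{-1/2}$). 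The drift bound $2\mathfrak p_1^{N}(x,x+1)-1\le C/(x+1)$ is nevertheless true, but one must use the exponential factor $e^{-bx/\sqrt N}$ in the heat-kernel bound to absorb the $1/\sqrt N$ term; your justification as written does not give it.

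Route B (the ``slickest path'') breaks exactly where the lemma is delicate: the coefficient of $x$ must be exactly $1$. For $x\gtrsim\sqrt n$, Lemma \ref{mass} only gives $1/\psi(x,n)\le C$ with $C>1$, so bounding the tilt $\psi(y,n-k)/\psi(x,n)$ and then a first or second moment of $p_k^{(1/2)}(x,\cdot)$ yields at best $C(x+\sqrt k)$, which is \emph{not} $\le x+C'\sqrt k$ once $x\gg\sqrt k$; moreover the step $\frac{y+1}{y+1+\sqrt{n-k}}\le\frac{y+1}{\sqrt{n-k}}$ is useless when $k$ is close to $n$, so a case split in $k$ is unavoidable. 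To repair it you must either center first, i.e.\ bound $\mathbf E_x^n[(S_k-x)_+]$ rather than $\mathbf E_x^n[S_k]$, or do what the paper does: for $k\le n/2$ reduce to $x=0$ via the coupling lemma (Proposition \ref{coup}), where your kernel-tilt computation with Lemmas \ref{ohgod} and \ref{mass} does work, and for $k>n/2$ use that $S$ is a $\mathbf P_x^n$-submartingale (Lemma \ref{mono}) together with the \emph{exact} martingale identity $\mathbf E_x^n[S_n+1]=(x+1)/\psi(x,n)\le x+1+C\sqrt n$ from Proposition \ref{mart} and Lemma \ref{mass}; it is this identity (and the coupling reduction) that keeps the coefficient of $x$ equal to one. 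Your bounded-increment/variance observations are correct but are neither needed nor used for this lemma.
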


\begin{proof} We consider two cases, $k>n/2$ and $k<n/2$.
\\
\\
\textit{Case 1.} $k>n/2$. First, we claim that $\mathbf E_x^n[S_k] \leq \mathbf E_x^n[S_n].$ In fact, it is even true that $S$ forms a $\mathbf P_x^n$-submartingale and thus $\mathbf E_x^n[S_k]$ is an increasing function of $k$ for ever $n$. This follows immediately from Lemma \ref{mono} after noticing that $\mathbf E_x^n[S_{k+1} | \mathcal F_k] = S_k + (2\mathfrak p_1^{n-k}(S_k,S_k+1)-1) \ge S_k$. Now, from the preceding proposition, we know that $M_k:= \frac{S_k+1}{\psi(S_k,n-k)}$ forms a martingale. Thus, we see that $$\mathbf E_0^n[S_n+1] = \mathbf E_x^n[M_0] = \frac{x+1}{\psi(x,n)} \leq x+1+Cn^{1/2},$$ where we applied the lower bound of Lemma \ref{mass} in the final bound. Since $k>n/2$, we see that $n^{1/2} \leq 2^{1/2} k^{1/2}$, which gives the desired bound in this case.
\\
\\
\textit{Case 2.} $k\leq n/2$. First we use the coupling lemma (Proposition \ref{coup}) to see that $\mathbf E_x^n[S_k] \leq 1+\mathbf E_{x-1}^n[S_k].$ Iterating this $x$ times shows that $$\mathbf E_x^n[S_k] \leq x+ \mathbf E_0^n[S_k].$$ Thus we only need to show that $\mathbf E_0^n[S_k] \leq Ck^{1/2}$. To prove this, let us write $\mathbf E_0^n[S_k] = \sum_{y \geq 0} \mathfrak p_k^n(0,y) y.$ Now we write $\mathfrak p_k^n(0,y) = p_k^{(1/2)}(0,y) \frac{\psi(y,n-k)}{\psi(0,n)}$. By Lemma \ref{mass} we know $\frac1{\psi(0,n)} \leq C\sqrt{n}$. Furthermore, we also know from the same lemma that $\psi(y,n-k)$ is bounded above by $1 \wedge (Cy(n-k)^{-1/2}),$ which is in turn bounded above by $1\wedge (Cyn^{-1/2})$ since $k \le n/2$. Moreover, we also know from Lemma \ref{ohgod} that $p_k^{(1/2)}(0,y) \leq \frac{C}{k+1}e^{-y/\sqrt{k}}$. Thus, we find that \begin{equation}\label{sack}\mathbf E_0^n[S_k] \leq \frac{C}{k+1}\bigg[\sum_{0\le y \leq \sqrt{n}} e^{-y/\sqrt{k}} n^{1/2} (n^{-1/2} y^2) + \sum_{y \geq \sqrt{n}} e^{-y/\sqrt{k}} (n^{1/2} y) \bigg].\end{equation}
Let us call the two sums inside the square brackets on the right side as $J_1$ and $J_2$, respectively. 
\\
\\
First we bound $J_1$. Now, we use the bound $\sum_{r \ge 0} r^2 \alpha^r \leq \frac{2}{(1-\alpha)^3}$ (valid for $\alpha <1$) and we see that $$J_1 \leq C \sum_{y \ge 0} y^2 e^{-y/\sqrt{k}} \leq \frac{C}{(1-e^{-1/\sqrt{k}})^3} \leq Ck^{3/2}.$$ In the last bound, we used the elementary bound $(1-e^{-q})^{-1} \le 1+q^{-1}$ (which in turn implies $(1-e^{-q})^{-3} \leq 2^3(1+q^{-3}))$ with $q = k^{-1/2}$.
\\
\\
Next, we bound $J_2$. Using the bound $\sum_{r \ge s} r\alpha^r \leq C\big[\frac{\alpha^s}{(1-\alpha)^2}+\frac{s\alpha^s}{1-\alpha}\big],$ we see that $$J_2= n^{1/2} \sum_{y \geq \sqrt{n}} e^{-y/\sqrt{k}}y \leq n^{1/2} \bigg[\frac{ e^{-\sqrt{n/k}}}{(1-e^{-1/\sqrt{k}})^2} + \frac{n^{1/2}e^{-\sqrt{n/k}}}{1-e^{-1/\sqrt{k}}}\bigg].$$ Now $$n^{1/2}e^{-\sqrt{n/k}} = k^{1/2} (n/k)^{1/2} e^{-\sqrt{n/k}} \leq k^{1/2} \sup_{u>0} ue^{-u} = Ck^{1/2}.$$ Similarly, one finds that $ne^{-\sqrt{n/k}} \leq Ck$. We also note that $(1-e^{-q})^{-1} \leq 1+q^{-1}$, and thus $(1-e^{-q})^{-2} \leq 2+2q^{-2}$. Taking $q=k^{-1/2}$ and then combining the last few expressions, one finally gets $J_2 \leq Ck^{3/2}$.
\\
\\
Combining the bounds of $J_1$ and $J_2$ with \eqref{sack}, we obtain the desired bound.
\end{proof}

Finally we have our concentration theorem, the main result of this appendix.

\begin{thm}[Concentration]\label{conc} As before, let $S = (S_k)_{0\le k \le n}$ denote the canonical process associated to $\mathbf P_x^n$. Then there exist $C,c>0$ such that for every $x \ge 0$, every $0 \le k \le n$, and every $u>0$ one has that $$\mathbf P_x^n\big(\sup_{0 \le i\le k} |S_i-x| >u \big) \le Ce^{-cu^2/k}.$$ In other words, the path measure $\mathbf P_x^n$ concentrates on scales of order $\sqrt n$.
\end{thm}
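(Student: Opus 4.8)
The plan is to exploit the martingale structure already established in Proposition \ref{mart}, together with the a priori bound of Lemma \ref{sqrtgr}, via a Doob-type maximal inequality combined with an exponential moment estimate. The key observation is that although $S$ itself is only a submartingale, the process $M_i := \frac{S_i+1}{\psi(S_i, n-i)}$ (with $k=n$ in Proposition \ref{mart}) is a genuine martingale with increments bounded by $2$. By the Azuma--Hoeffding inequality applied to this bounded-increment martingale, one gets $\mathbf P_x^n(\sup_{i \le k}|M_i - M_0| > v) \le 2e^{-v^2/(8k)}$ for all $v>0$. Since $\psi(\cdot, n-i)$ lies in $[c', 1]$ on the relevant range via the envelope bounds of Lemma \ref{mass}, deviations of $M_i$ control deviations of $S_i$ up to multiplicative constants, at least when the process does not travel too far; one must be a little careful because $\psi(S_i, n-i)$ depends on $S_i$, so a comparison of $|S_i - x|$ with $|M_i - M_0|$ requires knowing $S_i$ is not already huge.

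To handle this cleanly, I would instead apply Azuma directly to the martingale $M_i^{(k,n)} = f(S_i,i)$ with $f(x,i) = \mathbf E_x^{n-i}[S_{k-i}]$ from Proposition \ref{mart}, which also has increments bounded by $2$ and satisfies $M_0^{(k,n)} = \mathbf E_x^n[S_k]$ and $M_k^{(k,n)} = S_k$. This gives $\mathbf P_x^n(|S_k - \mathbf E_x^n[S_k]| > v) \le 2e^{-v^2/(8k)}$ for each fixed $k$. By Lemma \ref{sqrtgr} one has $\mathbf E_x^n[S_k] \le x + Ck^{1/2}$, and a symmetric lower bound $\mathbf E_x^n[S_k] \ge x - Ck^{1/2}$ follows the same way (or from $S \ge 0$ plus monotonicity), so $|\mathbf E_x^n[S_k] - x| \le Ck^{1/2}$. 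Hence $\mathbf P_x^n(|S_k - x| > u) \le \mathbf P_x^n(|S_k - \mathbf E_x^n[S_k]| > u - Ck^{1/2})$, which is $\le 2e^{-(u-Ck^{1/2})^2/(8k)}$ when $u > Ck^{1/2}$, and trivially $\le 1 \le Ce^{-cu^2/k}$ otherwise (adjusting constants). This yields the fixed-time bound $\mathbf P_x^n(|S_k - x| > u) \le Ce^{-cu^2/k}$.

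The remaining work is to upgrade the fixed-time bound to the maximal bound over $0 \le i \le k$. The cleanest route: for each $i \le k$, apply the martingale $M^{(i,n)}$ — but one needs the supremum inside the probability, not a union bound over all $i$ (which would lose a factor of $k$ and destroy the Gaussian tail for $u \sim \sqrt k$). The fix is to run the maximal inequality on a \emph{single} well-chosen martingale whose terminal-time evaluations recover $S_i$ for all $i$ simultaneously; this is exactly the role of the martingale $M_i := \frac{S_i + 1}{\psi(S_i, n-i)}$ from Proposition \ref{mart} (the $k=n$ case), since $f(S_i,i)$ there equals $-1 + \frac{S_i+1}{\psi(S_i,n-i)}$ and $M$ has increments bounded by $2$ with $M_0 = \frac{x+1}{\psi(x,n)}$. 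Applying Doob's/Azuma's maximal inequality to $M$ gives $\mathbf P_x^n(\sup_{i\le k}|M_i - M_0| > v) \le 2e^{-v^2/(8k)}$. One then translates $\sup_i |M_i - M_0| \le v$ into $\sup_i |S_i - x| \le Cv$ using the envelope $c' \le \psi \le 1$ of Lemma \ref{mass}: writing $S_i + 1 = M_i \psi(S_i, n-i)$ and $x+1 = M_0 \psi(x,n)$, on the event that $S_i$ stays in a bounded band one controls the denominators, and a standard first-passage/stopping-time decomposition (stop $M$ at the first time $S_i$ exits $[x-R, x+R]$) closes the argument with a union of the fixed-time estimate at the stopping time and the martingale maximal bound.

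The main obstacle I anticipate is precisely this last translation step: $\psi(S_i, n-i)$ both depends on the running position and degenerates (goes to $0$) as $i \to n$, so the naive inequality $|S_i - x| \lesssim |M_i - M_0|$ fails near the terminal time, and one needs either to restrict attention to $i \le k$ with $n - k$ bounded below, or to partition the time interval dyadically and handle the "near-terminal" block by the fixed-time estimate at $i = k$ plus the submartingale property of $S$. Everything else — Azuma's inequality, the $\psi$-envelope, Lemma \ref{sqrtgr} — is already available, so the proof is essentially an exercise in assembling these pieces with the correct stopping-time bookkeeping.
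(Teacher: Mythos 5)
Your fixed-time estimate is sound and is in fact exactly what the paper does at fixed $k$: Azuma applied to the bounded-increment martingale $M^{(k,n)}$ of Proposition \ref{mart}, combined with $x\le \mathbf E_x^n[S_k]\le x+Ck^{1/2}$ (submartingale property plus Lemma \ref{sqrtgr}), gives the pointwise tail bound \eqref{hgb}. The gap is in your maximal step. The claim that $\psi(\cdot,n-i)$ lies in $[c',1]$ is false: Lemma \ref{mass} only yields the envelope $\psi(y,N)\asymp (y+1)/(y+1+\sqrt N)$, so $\psi(S_i,n-i)$ is of order $(S_i+1)/\sqrt{n-i}$ whenever $S_i\ll\sqrt{n-i}$, which is the typical situation when $x\ll\sqrt n$. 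Worse, by the sharp asymptotics in Lemma \ref{mass} ($\sqrt N\,\psi(y,N)\to(y+1)\sqrt{2/\pi}$), for positions $y\ll\sqrt{n-i}$ one has $f(y,i)=-1+(y+1)/\psi(y,n-i)\approx\sqrt{\pi(n-i)/2}$, essentially \emph{independent} of $y$: the $k=n$ martingale $M_i=(S_i+1)/\psi(S_i,n-i)$ has increments bounded above by $2$ but not below, and it simply does not register $O(\sqrt k)$ displacements of the walk in the regime $k\ll n$, $x\ll\sqrt n$ --- precisely the regime where the theorem has content. Consequently no stopping-time or dyadic bookkeeping can convert a bound on $\sup_{i\le k}|M_i-M_0|$ into a bound on $\sup_{i\le k}|S_i-x|$ at scale $\sqrt k$; the multiplicative slack in the $\psi$-envelope alone already produces additive errors of order $\sqrt n$, and the problem is not confined to times near the terminal time.

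For contrast, the paper splits the maximum into downward and upward excursions: the downward one is controlled by coupling $S$ above a simple random walk (Lemma \ref{mono} gives $\mathfrak p_1^n(x,x+1)\ge\tfrac12$) and applying Doob to $e^{\lambda(T_i-x)}$; the upward one by showing $e^{\lambda S_i}$ is a $\mathbf P_x^n$-submartingale (again via Lemma \ref{mono}), applying Doob's maximal inequality, and then estimating $\mathbf E_x^n[e^{\lambda S_k}]$ from the fixed-time tail \eqref{hgb} before optimizing in $\lambda$. Alternatively, your own fixed-time martingale already suffices for the maximal bound if you use it at all times rather than only at time $k$: since $y\le f(y,i)=\mathbf E_y^{n-i}[S_{k-i}]\le y+C(k-i)^{1/2}$, one has $|M_i^{(k,n)}-S_i|\le Ck^{1/2}$ uniformly for $i\le k$, so $\sup_{i\le k}|S_i-x|>u$ forces $\sup_{i\le k}|M_i^{(k,n)}-M_0^{(k,n)}|>u-2Ck^{1/2}$, and the maximal form of Azuma closes the argument. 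The point is that the time-$k$ lookahead martingale tracks the position to within $O(\sqrt k)$ at every $i\le k$, whereas the terminal-time ($k=n$) martingale you selected does not track the position at all.
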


The idea of the proof is to ``squeeze" the path $S$ in between two martingales $T$ and $M$ which stay reasonably close to $S$, and then apply well-known concentration inequalities for bounded-increment martingales. The Gaussian decay constant $c$ will be obtained as $1/32$, however this is not sharp (though it will suffice for our purposes). It would be interesting to see what the optimal constant is. We conjecture it to be $1/2$, as is the case for the simple random walk.

\begin{proof}
	Throughout this proof, $x,n,$ and $k$ will be \textbf{fixed}. Let us write $$\mathbf P_x^n \big(\sup_{0\leq i \leq k} |S_i-x|>u \big)  = \mathbf P_x^n \big( \sup_{0\leq i \leq k} S_i>x+u \big)+\mathbf P_x^n \big( \inf_{0\leq i \leq k} S_i<x-u \big).$$
	Let us call the terms on the right side as $p_1,p_2$ respectively.
	\\
	\\
	First we bound $p_2$. Recall from Lemma \ref{mono} that $\mathfrak p_1^n(x,x+1) \geq 1/2 \geq \mathfrak p_1^n(x,x-1)$ for all $n,x \ge 0$. Using the same type of coupling argument as in the proof of Proposition \ref{coup}, this means that one may couple $\mathbf P_x^n$ with the law $P_x$ of a simple symmetric random walk $T=(T_i)_{i=0}^k$ of length $k$ started from $x$, in such a way that $S_i \geq T_i$ for all $i$ (or more precisely, such that $S$ takes an upward step whenever $T$ does). Then we have $$p_2 \leq P_x\big( \inf_{0\leq i \leq k} (T_i - x) < u \big)=P_x\big( \sup_{0\leq i \leq k} (T_i - x) > u \big).$$ Now, $e^{\lambda (T_i-x)}$ is a positive submartingale; thus  Doob's inequality says \begin{align}
	\notag &
	P_x\big( \sup_{0\leq i \leq k} (T_i - x) > u \big) \leq Ce^{-\lambda u} E_x[e^{\lambda (T_k-x)}]\\ &\label{dob1}=Ce^{-\lambda u} \cosh(\lambda)^k \leq Ce^{-\lambda u + \frac12 \lambda^2k} = e^{-u^2/2k},\end{align} where we set $\lambda:= u/k$ in the final equality. This proves the bound for $p_2$.
	\\
	\\
	 Now we will bound $p_1$. Letting $M=(M_i^{(n,k)})_{i=0}^k$ denote the martingale from the Proposition \ref{mart}, it is clear that $S_k = M_k$. Furthermore, $M_0 = f(x,0) =\mathbf E_x^n[S_k]  \leq Ck^{1/2}+x$ by Proposition \ref{sqrtgr}. Since the increments of $M$ are bounded above by 2, we may apply Azuma's concentration inequality to see that \begin{align*}\mathbf P_x^n(S_k>x+u) &= \mathbf P_x^n \big(  M_k>x+u \big) \leq \mathbf P_x^n \big( M_k-M_0 > u-Ck^{1/2}\big) \\ &\leq e^{-(u-Ck^{1/2})^2/8k} \leq Ce^{-u^2/16k}.\end{align*}
	In the last inequality, we used the fact that $(u-Ck^{1/2})^2 \geq \frac12 u^2 - C^2k $. This, in turn, is because $(a+b)^2 \leq 2(a^2+b^2)$. Combining the last expression with the bound \eqref{dob1} for $p_2$ shows that \begin{equation}\label{hgb}\mathbf P_x^n(|S_k-x|>u) \leq Ce^{-u^2/16k}.\end{equation} Next, we claim that for any $\lambda>0$, the process $(e^{\lambda S_i})_{i =0}^n$ is a $\mathbf P_x^n$-submartingale. To prove this, fix $i,n,x$ and set $q:= \mathfrak p_1^n(S_i,S_i+1)$. By Lemma \ref{mono} we know $q\ge 1/2$. Thus by convexity of $x \mapsto e^{\lambda x}$ we see that \begin{align}\label{subgale}
	\mathbf E_x^n [ e^{\lambda S_{i+1}} | \mathcal F_i] &\geq e^{\lambda \mathbf E_x^n[ S_{i+1}|\mathcal F_i]} =e^{\lambda\big(q(S_i+1)+(1-q)(S_i-1)\big)} = e^{\lambda S_i} e^{2q-1} \geq e^{\lambda S_i}.
	\end{align}
	Thus, we may apply Doob's inequality to see that $$p_1 \leq Ce^{-\lambda(x+u) } \mathbf E_x^n[e^{\lambda S_k}] = Ce^{-\lambda(x+u)} \bigg( 1+\int_0^{\infty} \lambda e^{\lambda y} \mathbf P_x^n(S_k>y)du \bigg). $$ Now we split the integral as $\int_0^x$ plus $\int_x^{\infty}$. We use the crude bound $\mathbf P_x^n(S_k>y)\leq 1$ for the integral over $[0,x]$, and we use the bound \eqref{hgb} for the other. This gives $$p_1 \leq Ce^{-\lambda u} + Ce^{-\lambda u} \int_x^{\infty} \lambda e^{\lambda (y-x)-(y-x)^2/16k}dy\leq C(e^{-\lambda u} + \lambda k^{1/2} e^{4\lambda^2k - \lambda u }).$$ Setting $\lambda = \frac{u}{8k}$ gives a bound of $C(e^{-u^2/8k} + uk^{-1/2} e^{-u^2/16k}).$ Now one simply notes that $r \leq Ce^{r^2/32}$, so that $uk^{-1/2} \leq Ce^{u^2/32k}$. This gives the desired bound on $p_1$, where the constant appearing in the theorem statement is $c:=1/32$.
\end{proof}

We now give a slightly generalized version of the concentration theorem.

\begin{cor}\label{conc2} In the same setting as the previous theorem, there exist $C,c>0$ such that for every $x \ge 0$, every $0 \le m\le k \le n$, and every $u>0$ one has that $$\mathbf P_x^n\big(\sup_{m \le i\le k} |S_i-S_m| >u \big) \le Ce^{-cu^2/(k-m)}.$$ Here, $C,c$ are the same as in the previous theorem.
\end{cor}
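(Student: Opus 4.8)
The plan is to obtain this as an immediate consequence of Theorem \ref{conc} together with the Markov property. First I would condition on the value of $S_m$: by Theorem \ref{rw}(1) (equivalently Proposition \ref{pt}), conditionally on $\{S_m = y\}$ the shifted process $(S_{m+j})_{j=0}^{n-m}$ has law $\mathbf P_{y}^{n-m}$, depending on $(S_i)_{i \le m}$ only through $y$. Reindexing by $j = i - m$, this lets me write
\begin{align*}
\mathbf P_x^n\big(\sup_{m \le i \le k}|S_i - S_m| > u\big)
&= \mathbf E_x^n\Big[\,\mathbf P_x^n\big(\sup_{0 \le j \le k-m}|S_{m+j} - S_m| > u \,\big|\, S_m\big)\Big]\\
&= \mathbf E_x^n\Big[\,\mathbf P_{S_m}^{\,n-m}\big(\sup_{0 \le j \le k-m}|S_j - S_0| > u\big)\Big].
\end{align*}

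Next I would apply Theorem \ref{conc} to the inner probability, with the triple $(x,n,k)$ appearing there replaced by $(S_m(\omega),\,n-m,\,k-m)$. Since $0 \le k-m \le n-m$, the hypotheses of Theorem \ref{conc} are met, and --- this is the point that makes everything work --- the constants $C,c$ furnished by Theorem \ref{conc} are uniform in the starting point and the total time horizon. Hence for every realization of $S_m$ one has $\mathbf P_{S_m}^{\,n-m}\big(\sup_{0 \le j \le k-m}|S_j - S_0| > u\big) \le C e^{-cu^2/(k-m)}$, and substituting this deterministic bound back into the displayed identity gives the corollary with exactly the same $C,c$.

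I do not anticipate any genuine obstacle: the only points requiring a moment's care are the uniformity of the constants in Theorem \ref{conc} (which is explicit in its statement) and the degenerate case $k=m$, in which the left-hand side is $\mathbf P_x^n(0>u)=0$ and the inequality is trivial. In short, Corollary \ref{conc2} is a one-line strengthening of Theorem \ref{conc} obtained purely from the time-homogeneity of the increments under the Markov property.
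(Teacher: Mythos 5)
Your proof is correct and is essentially the paper's own argument: condition on the first $m$ steps via the Markov property, so the tail probability becomes $\mathbf E_x^n$ of the corresponding concentration probability for the shifted walk under $\mathbf P_{S_m}^{n-m}$, and then apply Theorem \ref{conc}, whose constants are uniform in the starting point and horizon. No differences worth noting.
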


\begin{proof} Define $$g(k,n,x,u):= \mathbf P_x^n\big(\sup_{0 \le i\le k} |S_i-x| >u \big).$$
By the Markov property (conditioning on the first $m$ steps), we have that $$\mathbf P_x^n\big(\sup_{m \le i\le k} |S_i-S_m| >u \big) = \mathbf E_x^{n} \bigg[ g(k-m,n-m,S_m,u) \bigg].$$ But Theorem \ref{conc} tells us that $g(k,n,x,u) \leq Ce^{-cu^2/k}$ independently of $x,n$.
\end{proof}

We now derive an easy consequence of this concentration result, which (by Arzela-Ascoli) is enough to imply tightness of the associated measures if one rescales by a factor of $\sqrt{n}$.

\begin{cor}\label{pmoments}
Let $p> 0$. There exists a constant $C=C_p>0$ such that for every $x\geq 0$ and every $0\le k \le m\leq n$, one has $$\mathbf E_x^n \big[ |S_k-S_m|^p \big] \leq C|k-m|^{p/2}.$$ 
\end{cor}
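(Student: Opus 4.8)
The plan is to deduce this moment bound directly from the concentration estimate of Corollary \ref{conc2} by the standard ``integrate the tail'' argument. First I would note that it suffices to treat the case $m \le k$ (relabeling), and observe that the random variable $|S_k - S_m|$ is nonnegative, so that
\[
\mathbf E_x^n\big[|S_k-S_m|^p\big] = \int_0^\infty p\, u^{p-1}\, \mathbf P_x^n\big(|S_k - S_m| > u\big)\, du.
\]
Since $|S_k - S_m| \le \sup_{m \le i \le k} |S_i - S_m|$ pointwise, Corollary \ref{conc2} gives the bound $\mathbf P_x^n(|S_k - S_m| > u) \le C e^{-c u^2/(k-m)}$, uniformly in $x$ and $n$.

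Next I would plug this tail bound into the integral. Writing $j := k - m$ (and handling $j = 0$ trivially, since then $S_k = S_m$ a.s. and both sides vanish, or interpreting $|k-m|^{p/2}$ as $0$), one gets
\[
\mathbf E_x^n\big[|S_k-S_m|^p\big] \le \int_0^\infty p\, u^{p-1} C e^{-c u^2/j}\, du.
\]
The substitution $u = \sqrt{j}\, v$ turns the right-hand side into $C p\, j^{p/2} \int_0^\infty v^{p-1} e^{-c v^2}\, dv$, and the remaining integral is a finite constant depending only on $p$ and $c$ (it is essentially a Gamma-function value, $\tfrac12 c^{-p/2}\Gamma(p/2)$). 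Absorbing all constants into a single $C_p$ yields $\mathbf E_x^n[|S_k - S_m|^p] \le C_p\, |k-m|^{p/2}$, as claimed.

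There is really no serious obstacle here — the entire content is already in Corollary \ref{conc2}, and this corollary is just the routine conversion of a sub-Gaussian tail bound into a $p$-th moment bound. The only minor points to be careful about are: (i) that the concentration bound is uniform in $x$ and $n$, which is exactly what the statement of Corollary \ref{conc2} provides, so the final constant $C_p$ genuinely depends only on $p$; and (ii) the degenerate case $k = m$, which should be mentioned but is immediate. If anything deserves a sentence of justification, it is the passage from the sup over $i \in [m,k]$ in the concentration statement to the single increment $|S_k - S_m|$, which is just monotonicity of probability under the inclusion of events.
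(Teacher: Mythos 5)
Your proposal is correct and follows essentially the same route as the paper's own proof: the layer-cake formula for the $p$-th moment, the tail bound from Corollary \ref{conc2}, and the substitution $u=\sqrt{k-m}\,v$ to extract the factor $|k-m|^{p/2}$. The minor points you flag (uniformity in $x,n$ and the degenerate case $k=m$) are handled implicitly in the paper and pose no issue.
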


\begin{proof}
Let us write $$\mathbf E_x^n \big[ |S_k-S_m|^p \big] = \int_0^{\infty} pu^{p-1} \mathbf P_x^n ( |S_k-S_m|>u)du.$$ By Corollary \ref{conc2}, this is bounded above by $$C\int_0^{\infty} pu^{p-1}e^{-cu^2/(k-m)}du = Cp(k-m)^{1/2}\int_0^{\infty} v^{p-1} e^{-cv^2}dv= C_p (k-m)^{1/2}, $$ where we made a substitution $y = (k-m)^{-1/2}u$ in the first equality.
\end{proof}

Using this lemma, we can actually recover the results of \cite{Ig74} quite easily, but only for this nearest-neighbor case. Indeed, fix $X,T\ge 0$. For each $x,N \ge 0$, let $(S^{x,N}_n)_{n=0}^{N}$ be distributed according to $\mathbf P_x^N$. Then we claim that the processes $(N^{-1/2}S_{Nt}^{N^{1/2}X,NT})_{t \in [0,T]}$ converge in law (with respect to the uniform topology on $C[0,T]$, as $N \to \infty$) to $\mathbf W_X^T$.
\\
\\
To see this, first note that convergence of finite-dimensional distributions is clear from inducting on the $L^1$ convergence of pdf's in Theorem \ref{gence}. Furthermore, tightness of the laws of the rescaled processes follows immediately from Kolmogorov's continuity criterion, together with Arzela-Ascoli and Corollary \ref{pmoments}. Then the claim follows immediately, since any limit point on $C[0,T]$ of the rescaled laws must have the same finite-dimensional marginals as $\mathbf W_X^T$.
\\
\\
In fact it is also possible to recover the result of \cite{BJD06} (only in the nearest-neighbor case) very easily from Corollary \ref{pmoments}. The point is to realize that $\lim_{N \to \infty} \mathfrak p_n^N(x,y) = p_n^{(1/2)}(x,y)\frac{y+1}{x+1}$. This is quite easily shown to converge to the density of the three-dimensional Bessel process, and from this one can actually obtain convergence of finite-dimensional marginals quite easily. But the bound in Corollary \ref{pmoments} was independent of the terminal time, which implies tightness in the Holder space, completing the proof.
\\
\\
These invariance principles will not actually be needed in the main body of the paper, but they illustrate the power of the concentration theorem. The point is that it gives an extremely strong quantitative bound on the fluctuations of a typical path, and this will be repeatedly illustrated by its use in Sections 3 and 5.

\end{appendix}

\end{document}